\DeclareMathOperator{\vol}{vol}
\DeclareMathOperator{\Var}{Var}
\newtheorem{thm}{Theorem}[section]
\newtheorem{cor}[thm]{Corollary}
\newtheorem{prop}[thm]{Proposition}
\newtheorem{lem}[thm]{Lemma}
\newtheorem{claim}[thm]{Claim}
\theoremstyle{definition}
\newtheorem{exmp}[thm]{Example}
\newtheorem{rem}[thm]{Remark}
\theoremstyle{remark}
\numberwithin{equation}{section}
\def\S{\mathbb{S}}
\def\R{\mathbb{R}}
\def\bZ{\mathbb{Z}}
\def\bP{\mathbb{P}}
\def\bE{\mathbb{E}}
\def\cK{\mathcal{K}}
\def\cH{\mathcal{H}}
\def\cV{\mathcal{V}}
\def\cT{\mathcal{T}}
\def\cN{\mathcal{N}}
\newcommand{\snabla}{\slashed{\nabla}}
\newcommand{\norm}[1]{\left\lVert#1\right\rVert}
\newcommand{\norml}[1]{\|#1\|}
\newcommand{\braket}[2]{\langle #1,#2\rangle}
\newcommand{\abs}[1]{\left|#1\right|}
\newsavebox{\@brx}
\newcommand{\llangle}[1][]{\savebox{\@brx}{\(\m@th{#1\langle}\)}%
	\mathopen{\copy\@brx\kern-0.5\wd\@brx\usebox{\@brx}}}
\newcommand{\rrangle}[1][]{\savebox{\@brx}{\(\m@th{#1\rangle}\)}%
	\mathclose{\copy\@brx\kern-0.5\wd\@brx\usebox{\@brx}}}
\newlength{\intwidth}
\DeclareRobustCommand{\Bint}
{\mathop{%
		\text{%
			\settowidth{\intwidth}{$\int$}%
			\makebox[0pt][l]{\makebox[\intwidth]{$-$}}%
			$\int$}}}
\author{\'Alvaro Romaniega}
\address{Instituto de Ciencias Matem\'aticas, Consejo Superior de
	Investigaciones Cient\'\i ficas, 28049 Madrid, Spain}
\email{alvaro.romaniega@icmat.es}
\author{Andrea Sartori}
\address{ Departement of Mathematics, King's College London, Strand, London WC2R 2LS, England, Uk }
\email{andrea.sartori.16@ucl.ac.uk}
\begin{document}
			\title[Monochromatic waves satisfying the Random Wave Model]{ Nodal set of monochromatic waves satisfying the Random Wave model}
	\begin{abstract}
	We construct deterministic solutions to the Helmholtz equation in $\R^m$ which behave accordingly to the Random Wave Model. We then find the number of their nodal domains, their nodal volume and the topologies and nesting trees of their nodal set  in growing balls around the origin. The proof of the pseudo-random behaviour of the functions under consideration hinges  on a de-randomisation technique pioneered by Bourgain and proceeds via computing their $L^p$-norms. The study of their nodal set relies on its stability properties and on the evaluation of their doubling index, in an average sense. 
	\end{abstract}

\maketitle
\section{Introduction}

\subsection{The Random Wave Model and the nodal set}
\label{RWM}
 Given a compact Riemannian manifold $(M,g)$ without boundary of dimension $m$, let $\Delta_g$ be the Laplace-Beltrami operator. There exists an orthonormal basis for $L^2(M,d\vol)$ consisting of eigenfunctions $\{f_{\lambda_i}\}_{i=1}^\infty$	
\begin{align} \label{Helmholtz equation}
\Delta_g f_{\lambda_i}+ \lambda_i f_{\lambda_i}=0 
\end{align}
with $0=\lambda_1<\lambda_2\leq...$ listed taking into account multiplicity and $\lambda_i\rightarrow \infty$. Quantum chaos is concerned with the behaviour of $f_{\lambda}$ in the \textit{high-energy} limit, i.e. $\lambda \rightarrow \infty$. 

Berry \cite{B1,B2} conjectured that ``generic''  Laplace eigenfunctions on negatively curved manifolds can be modelled in the high-energy limit by monochromatic waves, that is, an isotropic  Gaussian field $F$ with covariance function
\begin{equation}\label{eq:kernel intro}
	\mathbb{E}[F(x)\overline{F(y)}]=\int_{\S^{m-1}}e^{2\pi i\langle x-y, \lambda \rangle} d\sigma(\lambda)= C_m\frac{ J_{\Lambda}\left(|x-y|\right)}{|x-y|^{\Lambda}},
\end{equation}
where $\sigma$ is the uniform measure on the $m-1$-dimensional sphere, $J_{\Lambda}(\cdot)$ is the $\Lambda$-th Bessel function with $\Lambda\coloneqq(m-2)/2$ and $C_m>0$ is some constant such that $\bE[|F(x)|^2]=1$. This is known as the Random Wave Model (RWM) and it is supported by  a large amount of numerical evidence, \cite{HR1992}. 

Noticeably, the RWM provides a general framework to heuristically describe the zero set or  \textit{nodal set} of  Laplace eigenfunctions. In particular, it provides insight into the number of their nodal domains, the connected components of $M\backslash f^{-1}_{\lambda}(0)$, and the volume of their nodal set and  also on its topology. The latter is a key factor in many physical properties, e.g., in Newtonian gravitation, Maxwell electromagnetic theory or Quantum Mechanics \cite{EPS18}.

More precisely, let us denote by $\mathcal{N}(f_\lambda)$ the number of connected components of $M\backslash f^{-1}_{\lambda}(0)$ and by 	$\mathcal{V}(f_{\lambda})\coloneqq \mathcal{H}^{m-1}(\{ x \in M: f_{\lambda}(x)=0\})$ the nodal volume of $f$, where $\mathcal{H}^{m-1}(\cdot)$ is the Hausdorff measure. Then the RWM together with the breakthrough work of Nazarov and Sodin \cite{NS}, suggests that \textquotedblleft typically\textquotedblright, under some conditions,
\begin{align}	\mathcal{N}(f_{\lambda})=c_{NS} \lambda^{m/2}(1+o_{\lambda \rightarrow \infty}(1)) \label{Bour},
\end{align}
where $c_{NS}$ is known as the Nazarov and Sodin constant. Similarly, the RWM together with the Kac-Rice formula  suggests that  \textquotedblleft typically\textquotedblright, under some conditions,
\begin{align}
	\label{volume growth}	\mathcal{V}(f_{\lambda})=c \lambda^{1/2}(1+o_{\lambda \rightarrow \infty}(1)).
	\end{align} 
Importantly, \eqref{volume growth}  agrees with Yau's conjecture \cite{Y82}, which predicts $\mathcal{V}(f_{\lambda}) \asymp \lambda^{1/2}$. The said conjecture is known for real-analytic manifolds thanks to the work of Donnelly-Fefferman \cite{DF}. In the smooth case, the lower bound was recently proved by Logunov and Malinnikova \cite{L2,L1,LM} together with a polynomial upper bound. 
  
We are only aware of one instance when the RWM can be deterministically implemented to obtain information about the nodal set: Bourgain \cite{BU} showed that certain eigenfunctions on the flat two dimensional torus behave accordingly to the RWM and deduced \eqref{Bour}. Subsequently, Buckley and Wigman \cite{BW} extended Bourgain's work to ``generic'' toral eigenfunctions and the second author \cite{SA} proved a small scales version of \eqref{Bour}.
 
 Here, we construct deterministic solutions to \eqref{Helmholtz equation} on  $\mathbb{R}^m$ which satisfy the RWM, in the sense of Bourgain \cite{BU}, in growing balls around the origin. We then use the RWM to study their nodal set, deduce the analogue of \eqref{Bour}, \eqref{volume growth} and also find the asymptotic number of nodal domains belonging to a fixed topological class and with a nesting tree configuration.   These results appear to be new for $m>2$ (the study of the nodal volume also for $m=2$) and they present new difficulties such as the existence of long and narrow nodal domains and the possible concentration of the nodal set in small portions of space. We overcome the far from trivial difficulties using precise bounds on the average \textit{doubling index}, an estimate of the growth rate introduced by Donnelly-Fefferman \cite{DF} (see Section \ref{sec:doubling index}), using recent ideas of Chanillo, Logunov, Malinnikova and Mangoubi, \cite{CLMM20}. In particular, our proofs show how integrability properties of the  doubling index allow to extrapolate information about the zero set  of Laplace eigenfunctions from the RWM. Furthermore, our new approach (based on the weak convergence of probability measures on $C^s$ spaces, Section \ref{SectWeakConv}, and Thom's Isotopy Theorem \ref{ThThom}) gives us an answer to previous questions raised by Wigman and Kulberg, see Section \ref{sec:KW}.
 
\subsection{The eigenfunctions}
\label{stament main reuslts}
Let $m\geq 2$ be a positive integer, $\S^{m-1}\subset \mathbb{R}^m$ be unit sphere and $\{r_n\}_{n\ge 1}\subset \S^{m-1}$ be a sequence  of vectors linearly independent over $\mathbb{Q}$ such that they are not all contained in a hyperplane\footnote{As it will be discusses later, this is a technical, but necessary requirement for our construction to be non-degenerate.}, we will give some properties and examples of such sequences in Section \ref{r_n} below. The functions we study are
\begin{align}
	f_{N}\equiv	f\coloneqq  \frac{1}{\sqrt{2N}}\sum_{|n|\leq N} a_ne(\langle r_n,  \cdot\rangle) \label{function} 
\end{align}
with domain  ${\R}^m$, $a_n$ are complex numbers such that $|a_n|=1$, $e(\cdot)\coloneqq e^{2\pi i\cdot}$ and $\langle \cdot, \cdot \rangle$ is the inner product in $\mathbb{R}^m$. Moreover, we require $\overline{a}_n=a_{-n}$ so that $f$ is real valued, as $r_{-n}\coloneqq-r_n$ for $n>0$.

Differentiating term by term, we see that 
$$ \Delta f=-4\pi^2 f ,$$  
thus, $f$ is a solution of the Helmholtz equation in $\mathbb{R}^m$. Moreover, the \textit{high-energy} limit of $f$ is equivalent to its behaviour in $B(R)=B(R,0)$, the ball of radius $R$ centred at the origin, as $R\rightarrow \infty$. Indeed, rescaling $f$ to $f_R\coloneqq f(R\cdot)$, then 
$$ \Delta f_R= -4\pi ^2 R^2 f_R.$$ 
Thus $(2\pi R)^2$ plays precisely the role of $\lambda$ of Section \ref{RWM}. 

The functions in \eqref{function} do not satisfy any boundary condition, so the spectrum is continuous; however, following Berry \cite{B2}, they can be adapted to satisfy either Dirichlet or Neumann boundary conditions on a straight line. It is plausible that our arguments work also in the boundary-adapted case with minor adjustments, but we do not pursue this here.  Moreover, we have assumed for the sake of simplicity that $|a_{n}|=1$, but more general coefficients could be considered.

Finally, it will be important to keep track of the position of the set $r:=\{r_n\}_{n\ge 1}$ through the following probability measure supported on $\S^{m-1}$:
\begin{align}
\mu_{r,N}=\mu_r= \frac{1}{2N}\sum_{|n|\leq N} \delta_{r_n} \label{muf},
\end{align}
where $\delta_{r_n}$ is the Dirac distribution supported at $r_n$. Since the set of probability measures on $\S^{m-1}$ equipped with the weak$^{*}$ topology is compact (as a standard diagonal argument shows), up to passing to a subsequence, from now on we assume that $\mu_r$ converges to some probability measure $\mu$ as $N\rightarrow \infty$. 

\subsection{Statement of main results, the nodal set of $f$}
\label{main result}

Let $R>1$ and denote by $\mathcal{N}(f,R)$ the number of nodal domains of $f$ in the ball of radius $R$ centred at $0$ which do not intersect $\partial B(R)$, the boundary of $B(R)$, and let $\mathcal{V}(f,R)\coloneqq\mathcal{H}^{m-1}\{x\in B(R): f(x)=0\} $.  Moreover, given a probability measure $\mu$ on $\S^{m-1}$, let $c_{NS}(\mu)$ be the Nazarov-Sodin constant, see Section \ref{additional Tools} below.  
Then, for the functions $f$ as in \eqref{function} we  prove the following  asymptotic statements:
\begin{thm}
	\label{thm 3}
	Let $f$ be as in \eqref{function}, then we have 
	\begin{align}
	\lim_{N\to\infty}\limsup_{R\to \infty}	\left|\frac{\mathcal{N}(f,R)}{ \vol B(R)}- c_{NS}(\mu)\right|=0,
	\end{align} 
	where $c_{NS}(\mu)$ is the Nazarov-Sodin constant of the field $F_\mu$. 
\end{thm}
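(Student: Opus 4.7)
The plan is to follow the Nazarov-Sodin integral-geometric framework for nodal counts, replacing the ergodicity of the stationary field $F_\mu$ with an equidistribution statement for the translates of $f$ established via Bourgain's de-randomisation technique. Fix a ``microscopic'' window radius $\rho > 1$ and, for $x \in \R^m$, let $\cN(f, x, \rho)$ count the nodal components of $f$ contained in $B(x, \rho)$. By a standard integral-geometric argument going back to \cite{NS}, the global density $\cN(f,R)/\vol B(R)$ is well-approximated by the average
\begin{align*}
\frac{1}{\vol B(R)} \int_{B(R)} \frac{\cN(f, x, \rho)}{\vol B(\rho)}\, dx,
\end{align*}
up to boundary errors vanishing as $R\to\infty$ and a contribution from nodal components too large to fit inside any window of radius $\rho$.

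First I would show that, for each fixed $\rho$, the empirical distribution of translates $\{f(x+\cdot)|_{B(\rho)}: x\in B(R)\}$ converges weakly as $R\to\infty$ in $C^s(B(\rho))$, for $s$ large enough so that the nodal count is stable under small perturbations, to the law of the stationary Gaussian field $F_{\mu_r}$. By the moment method on a compact set, this reduces to computing averages
\begin{align*}
\frac{1}{\vol B(R)}\int_{B(R)} \prod_{j=1}^{p} f(x+y_j)\, dx,
\end{align*}
which, after expanding $f$ as a trigonometric polynomial, become character sums indexed by tuples of frequencies. The $\mathbb{Q}$-linear independence of the $r_n$'s kills off-diagonal contributions as $R\to\infty$, while diagonal terms reproduce exactly the Gaussian moments of $F_{\mu_r}$. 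Letting $N\to\infty$ and using $\mu_{r,N}\to\mu$ then promotes this to weak convergence of laws towards $F_\mu$.

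Next, Thom's Isotopy Theorem implies that $g\mapsto\cN(g,0,\rho)$ is continuous on the open set of $\delta$-stable functions in $C^s(B(\rho))$, and by standard Gaussian non-degeneracy the field $F_\mu$ is $\delta$-stable almost surely outside a countable set of $\delta$. Combining continuity with weak convergence yields
\begin{align*}
\lim_{N\to\infty}\limsup_{R\to\infty}\frac{1}{\vol B(R)}\int_{B(R)}\frac{\cN(f,x,\rho)}{\vol B(\rho)}\, dx \;=\; \frac{\bE\bigl[\cN(F_\mu,0,\rho)\bigr]}{\vol B(\rho)},
\end{align*}
and the Nazarov-Sodin theorem for $F_\mu$ ensures that the right-hand side converges to $c_{NS}(\mu)$ as $\rho\to\infty$.

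\emph{The hard part} will be controlling the nodal components that are too large to be captured by any window of radius $\rho$: such a component is typically long and narrow and has no analogue in the genuinely random setting. I would handle this by adapting the doubling-index machinery of Chanillo-Logunov-Malinnikova-Mangoubi \cite{CLMM20} to show that the doubling index of $f$ on $B(x,\rho)$ is bounded on average over $x\in B(R)$. This average bound constrains the number of such oversize components to $o(\vol B(R))$ and also rules out pathological concentration of the nodal set, ensuring the averaging over translates captures a representative sample of nodal behaviour.
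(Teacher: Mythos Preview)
Your proposal follows essentially the same architecture as the paper's proof: integral-geometric localisation (Proposition~\ref{semi-locality m>2}), de-randomisation via moments (Theorem~\ref{thm 1}), continuity of the nodal count via Thom's isotopy on a full-measure open set (Proposition~\ref{PropContBdd} and Lemma~\ref{Bulinskaya's Lemma}), the Nazarov--Sodin result for the limiting constant (Theorem~\ref{ThNSSWatomic}), and the doubling-index machinery of \cite{CLMM20} combined with an anti-concentration bound on $|f|$ to control components crossing the window boundary (Section~\ref{last section}).

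One point deserves correction. You assert that for fixed $N$ the $R\to\infty$ limit of the empirical distribution of translates is the \emph{Gaussian} field $F_{\mu_r}$, and that ``diagonal terms reproduce exactly the Gaussian moments of $F_{\mu_r}$.'' This is false: for fixed $N$ the $R\to\infty$ limit is the non-Gaussian field $(2N)^{-1/2}\sum_n a_n \xi_n e(r_n\cdot y)$ with $\xi_n$ i.i.d.\ uniform on the unit circle, whose fourth moment at a point is $3-\tfrac{3}{2N}$ rather than $3$. Gaussianity emerges only in the joint limit $N,R\to\infty$ via a CLT mechanism. The paper makes this explicit by passing through an auxiliary function $\phi_x$ built from $K$ wave packets $b_k=(2N\mu_r(I_k))^{-1/2}\sum_{r_n\in I_k}a_ne(r_n\cdot x)$, each aggregating many phases; Lemma~\ref{moments} then shows $(b_k)_k$ becomes i.i.d.\ complex Gaussian as $N,R\to\infty$, and letting $K\to\infty$ afterwards recovers $F_\mu$. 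Your direct moment route is repairable (cf.\ Proposition~\ref{prop:Lp mom}, where the moments match Gaussian up to $O(1/N)$), but the intermediate identification with $F_{\mu_r}$ should be dropped.
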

\begin{rem}
	Note that this kind of double limits gives us the deterministic realizations we are looking for. Indeed, the statement is equivalent to: given some $\varepsilon>0$, then there exist some $N_0=N_0(\varepsilon,m)$ such that all $N\geq N_0$ the following holds: there exists some $R_0=R_0(N,\varepsilon,m)$ such that $R>R_0$, we have
	\begin{align}	
		\left|\frac{\mathcal{N}(f,R)}{ \vol B(R)}- c_{NS}(\mu)\right|\le \varepsilon,
	\end{align}
that is, it satisfies the Nazarov-Sodin growth with a constant as close as we want to $c_{NS}(\mu)$. The question of whether we can take the limit of $N$ first will be analyzed in Section \ref{comments}.
\end{rem}

\begin{thm}
	\label{thm 2}
	Let $f$ be as in \eqref{function}, then  we have 
	\begin{align}\label{eq:nodal volume det}
	\lim_{N\to\infty}\limsup_{R\to \infty}	\left|\frac{\mathcal{V}(f,R)}{\vol B(R)}- c(\mu)\right|=0,
	\end{align} 
	for some (explicit) constant $c(\mu)>0$. 
\end{thm}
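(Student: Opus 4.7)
The plan is to reduce the deterministic nodal-volume statement to a Kac--Rice computation for the Gaussian field $F_\mu$, using the same de-randomisation scheme as for Theorem \ref{thm 3} but applied to the $(m-1)$-Hausdorff measure functional in place of the component count. Fix an auxiliary radius $\rho>0$ and write $f_x(\cdot) := f(x+\cdot)$ for the translate centred at $x$. A Fubini-type identity gives
\begin{align*}
\int_{B(R)} \mathcal{V}(f_x, B(\rho))\,dx = \vol B(\rho)\, \mathcal{V}(f,R) + E_\rho(R),
\end{align*}
where the error $E_\rho(R)$ accounts for the annulus $R-\rho \le |y| \le R+\rho$ near $\partial B(R)$ and is $O_\rho(R^{m-1})$ by the polynomial nodal-volume bound of Logunov--Malinnikova. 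Dividing by $\vol B(R) \cdot \vol B(\rho)$, the theorem reduces to controlling the average $\frac{1}{\vol B(R)}\int_{B(R)} \mathcal{V}(f_x, B(\rho))\,dx$ in the double limit.

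The key input is that the empirical measure of translates $\frac{1}{\vol B(R)}\int_{B(R)} \delta_{f_x}\,dx$ on $C^s(B(\rho))$ converges weakly, first as $R\to\infty$ and then as $N\to\infty$, to the law of $F_\mu$. This is the RWM de-randomisation of Section \ref{SectWeakConv}: moment convergence of the translates follows from the $L^p$-estimates on $f$ in the spirit of Bourgain \cite{BU}, while tightness in $C^s$ comes from the same bounds applied to derivatives $\partial^\alpha f$ up to order $s+1$. By Bulinskaya's lemma, applicable because the non-degeneracy assumption on $\{r_n\}$ guarantees that $(F_\mu(0), \nabla F_\mu(0))$ has non-degenerate joint Gaussian law, $F_\mu$ almost surely has $0$ as a regular value on $\overline{B(\rho)}$. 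Hence the functional $F \mapsto \mathcal{V}(F, B(\rho))$ is continuous at $F_\mu$ with probability one in the $C^s$ topology (for $s\geq 1$), and one expects
\begin{align*}
\lim_{N\to\infty}\lim_{R\to\infty} \frac{1}{\vol B(R)}\int_{B(R)} \mathcal{V}(f_x, B(\rho))\,dx = \mathbb{E}\!\left[\mathcal{V}(F_\mu, B(\rho))\right] = c(\mu)\,\vol B(\rho),
\end{align*}
with $c(\mu)$ the Kac--Rice density of $F_\mu$, computable explicitly from the covariance of $\nabla F_\mu(0)$.

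The main obstacle is that $\mathcal{V}(\cdot, B(\rho))$ is \emph{unbounded} on $C^s$: the function can develop long narrow nodal components of arbitrarily large $(m-1)$-volume inside $B(\rho)$ under small $C^s$ perturbations, so weak convergence alone does not yield convergence of the integral and one needs uniform integrability of the family $\{\mathcal{V}(f_x, B(\rho))\}_{x \in B(R)}$. This is exactly where the average doubling-index estimates of Donnelly--Fefferman \cite{DF}, refined by the techniques of Chanillo--Logunov--Malinnikova--Mangoubi \cite{CLMM20}, become essential: combined with the $L^p$-control already produced in the weak-convergence step, they should yield an a priori bound
\begin{align*}
\frac{1}{\vol B(R)}\int_{B(R)} \mathcal{V}(f_x, B(\rho))^{1+\delta}\,dx = O_\rho(1),
\end{align*}
uniform in $R, N$ large, for some $\delta>0$. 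This upgrades the weak-convergence limit to convergence of the nodal-volume integrals and completes the proof of \eqref{eq:nodal volume det}, identifying $c(\mu)$ as the explicit Kac--Rice constant.
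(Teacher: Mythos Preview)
Your proposal is correct and follows essentially the same architecture as the paper's proof: Fubini semi-locality (Lemma~\ref{locality nodal volume}), weak convergence of $F_x$ to $F_\mu$ (Theorem~\ref{thm 1}), a.s.\ continuity of $\mathcal{V}(\cdot,B(\rho))$ via Bulinskaya and Thom's isotopy theorem (Lemmas~\ref{Bulinskaya's Lemma} and~\ref{continouty nodal length}), uniform integrability through a $(1+\alpha)$-moment bound (Proposition~\ref{anti-concentration prop}), and finally Kac--Rice (Lemma~\ref{Kac-Rice}) to identify $c(\mu)$.

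One clarification on the mechanism: the uniform integrability bound does rest on Donnelly--Fefferman's estimate $\mathcal{V}(f,B)\lesssim \mathfrak{N}_f(B)$, but the \emph{average} control of the doubling index in the paper comes not from \cite{CLMM20} (which is used only for the nodal-domain semi-locality, Proposition~\ref{semi-locality m>2}) but from a Hal\'asz-type anti-concentration inequality for $|f(x)|$ (Lemmas~\ref{anti-concentration} and~\ref{anti-concentration v2}), which bounds $\vol_R(\mathfrak{N}_f(B(x,H))>Q)$ by showing that $f$ is rarely small. Also, your annulus error $E_\rho(R)$ is $O_{\rho,N}(R^{m-1})$ rather than $O_\rho(R^{m-1})$, since the doubling index of $f$ on unit balls can be as large as $N$; this is harmless because $R\to\infty$ is taken first in the double limit.
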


\begin{rem}
	Note that, rescaling $f_R=f(R\cdot)$, then Theorem \ref{thm 2} gives 
	$$ 		\lim_{N\to\infty}\limsup_{R\to \infty}\left|\frac{\mathcal{V}(f_R,1)}{R}- c_1(\mu)\right|=0,$$
	for some constant $c_1(\mu)>0$, in accordance with \eqref{volume growth} if $(2\pi R)^2=\lambda$.
\end{rem}

One of the main new ingredient in the proof of Theorem \ref{thm 3} is, in the terminology of Nazarov and Sodin \cite{NS}, the semi-local behaviour of the nodal domains count of $f$, that is, we have the following: 
\begin{prop}
	\label{semi-locality m>2}
	Let $f$ be as in \eqref{function}, $R\geq W\geq 1$. Then, we have 
	\begin{align}
		\frac{\mathcal{N}(f,R)}{\vol B(R)}= \frac{1}{\vol B(W)}\Bint_{B(R)}\mathcal{N}(f,B(x,W))dx +	O\left( W^{-1} \right) + O_{N,W}\left(R^{-\Lambda-3/2}\right). \nonumber
	\end{align}
	\end{prop}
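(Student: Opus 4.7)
The plan is to adapt the integral-geometric Fubini argument of Nazarov--Sodin to the explicit function $f$. Let $\mathcal{D}_R$ denote the collection of connected components of $\R^m \setminus f^{-1}(0)$ that are contained in $B(R)$, so $\#\mathcal{D}_R = \mathcal{N}(f,R)$. For each $D \in \mathcal{D}_R$ introduce
\[
A_D := \{ x \in B(R) : D \subset B(x,W)\} = B(R) \cap \bigcap_{y \in D} B(y, W),
\]
which satisfies $|A_D| \le \vol B(W)$, with near-equality when $\operatorname{diam}(D) \ll W$ and $D$ sits well inside $B(R)$. Fubini then gives
\begin{equation*}
\int_{B(R)} \#\{ D \in \mathcal{D}_R : D \subset B(x,W)\}\,dx = \sum_{D \in \mathcal{D}_R} |A_D| = \mathcal{N}(f,R) \vol B(W) - \sum_{D \in \mathcal{D}_R}\bigl( \vol B(W) - |A_D|\bigr),
\end{equation*}
and the integrand on the left differs from $\mathcal{N}(f, B(x,W))$ by (i) the number of nodal domains of $f|_{B(x,W)}$ touching $\partial B(x, W)$, and (ii) the number of components of $\R^m \setminus f^{-1}(0)$ contained in $B(x,W)$ but cut by $\partial B(R)$. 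The proof thus reduces to controlling the defect together with (i) and (ii).

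Domains $D \in \mathcal{D}_R$ with $\operatorname{diam}(D) \ge W$ or sitting within distance $W$ of $\partial B(R)$ each contribute up to $\vol B(W)$ to the defect; the crucial bound is therefore that the total count of such domains is $O(\vol B(R)/W)$. A naive Faber--Krahn plus surface-area argument gives only $O(\vol B(R)/W^{1/(m-1)})$, insufficient for $m\ge 3$. The required sharper bound comes from the averaged doubling-index estimates of Chanillo, Logunov, Malinnikova and Mangoubi \cite{CLMM20} highlighted in the introduction: an elongated nodal domain forces a large doubling index on an enclosing ball, while the average of the doubling index is quantitatively bounded via $L^p$-estimates on $f$. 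The same machinery handles correction (i), since a nodal domain of $f|_{B(x,W)}$ touching $\partial B(x,W)$ extends outside $B(x,W)$ and leads to a similarly controlled elongated configuration; combined with the small-interior-domain contribution $O(\operatorname{diam}(D) W^{m-1})$ (which is absorbed into the same estimate), one obtains the $O(W^{-1})$ error.

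For correction (ii) and the near-$\partial B(R)$ part of the defect, I would exploit the explicit trigonometric-polynomial structure of $f$. The nodal-domain count in the thin annulus $B(R) \setminus B(R-W)$ is controlled via boundary integrals of the form $\int_{\partial B(s)} |f|^2\,d\sigma$ for $s \in [R-W, R]$. These admit an exact Fourier expansion: the diagonal produces a main term of order $s^{m-1}$, and each off-diagonal term is a Bessel integral proportional to $s^{m-1-\Lambda}|r_n - r_{n'}|^{-\Lambda} J_\Lambda(2\pi s |r_n - r_{n'}|)$. Applying the asymptotic $J_\Lambda(x) \ll x^{-1/2}$ bounds each off-diagonal contribution by $s^{m-\Lambda-3/2}$ (with an $N$-dependent multiplicity factor), and integrating over $s$ and normalising by $\vol B(R)$ yields the stated $O_{N,W}(R^{-\Lambda - 3/2})$ correction.

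The main obstacle is the long-nodal-domain count: the doubling-index technology of \cite{CLMM20} is essential to avoid the dimension-dependent loss in the naive argument. The boundary analysis via Bessel asymptotics is technical but otherwise routine once the identity for $\int_{\partial B(s)}|f|^2\,d\sigma$ has been set up and the overall Fubini bookkeeping of corrections (i)--(ii) has been made precise.
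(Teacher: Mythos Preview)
Your overall strategy---the integral-geometric sandwich together with the Chanillo--Logunov--Malinnikova--Mangoubi doubling-index machinery for the $O(W^{-1})$ term---is exactly the paper's. But you have misplaced the origin of the $O_{N,W}(R^{-\Lambda-3/2})$ error, and your proposed treatment of correction~(ii) is both unnecessary and not clearly workable.

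The near-$\partial B(R)$ contribution (your correction~(ii) and the boundary part of the defect) is disposed of in the paper by a one-line Faber--Krahn argument: every nodal domain of $f$ has volume $\gtrsim 1$, so the number of them meeting the shell $B(R+W)\setminus B(R-W)$ is $O(R^{m-1}W)$, which after normalising by $\vol B(R)$ gives $O(W/R)$. No surface $L^2$-integrals are needed, and it is in any case not evident how $\int_{\partial B(s)}|f|^2\,d\sigma$ would control a nodal-domain count.

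The $R^{-\Lambda-3/2}$ term enters instead through the \emph{averaged doubling-index estimate} that produces the $O(W^{-1})$ bound. The paper covers $\partial B(x,W)$ by $O(W^{m-1})$ balls of radius~$100$, applies the CLMM bound $\mathcal{NI}(f,x',100)\lesssim 1+\mathfrak{N}_f(B(x',100))^{m-1}$ (Lemma~\ref{bound intersections}) on each, and is left with showing
\[
\Bint_{B(R)} \mathfrak{N}_f(B(x,100))^{m-1}\,dx = O(1) + O_{N}(R^{-\Lambda-3/2}).
\]
This is Lemma~\ref{anti-concentration v2}, proved via a Hal\'asz-type small-ball estimate $\vol_R(|f(x)|\le\beta)\lesssim \beta + O_N(R^{-\Lambda-3/2})$ (Lemma~\ref{anti-concentration}). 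The Bessel asymptotics you anticipate are indeed the source of this error, but they appear when evaluating $\Bint_{B(R)} e(\langle v, x\rangle)\,dx$ for the off-diagonal frequency combinations $v$ in the characteristic function of $f$---that is, they feed the doubling-index tail bound, not the shell count near $\partial B(R)$.
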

For $m=2$, Proposition \ref{semi-locality m>2} follows from the bound $\mathcal{	V}(f,R)\lesssim R^m$, see for example Section \ref{sec:doubling index} below, which implies that most nodal domains have diameter at most $O(1)$. However, for $m>2$, this argument does not rule out the existence of many long and narrow nodal domains. Following  the recent preprint of  Chanillo, Logunov, Malinnikova and Mangoubi \cite{CLMM20}, $f$ should grow fast around such nodal domains and this can be estimated in terms of the \textit{doubling index} of $f$, see Section \ref{sec:doubling index} below. The proof of Proposition \ref{semi-locality m>2} then relies on precise estimates on the average growth of $f$, which we obtain in Section \ref{growth of $f$}. Using the aforementioned estimates,  we are also able to show that there is no concentration of nodal volume of $f$ in small portion of the space. That is, we prove the following proposition which will be one of the main ingredients in the proof of Theorem \ref{thm 2}:
 \begin{prop}
	\label{anti-concentration prop}
	Let $F_x$ be as \textnormal{\eqref{Fx}}, then for some (fixed) $\alpha>0$ there exists $R_0=R_0(N,W,\alpha)$ such that for all $R\geq R_0$, we have 
	\begin{align}
		\Bint_{B(R)}\mathcal{V}(f,B(x,W))^{1+\alpha}dx\lesssim W^{m(1+\alpha)}+W^{(m-1)(1+\alpha)^2}+O_{N,W,\alpha}(R^{-\Lambda-3/2}),\nonumber
	\end{align}
where the constant in the $\lesssim$-notation is independent of $N$. 
	\end{prop}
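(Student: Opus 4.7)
The strategy combines a pointwise comparison of the nodal volume in a small ball with the local \emph{doubling index} of $f$, together with the $L^{p}$ control of that doubling index provided by Section \ref{growth of $f$}. By a Logunov--Malinnikova type estimate for solutions of the Helmholtz equation (in the refined form exploited by Chanillo--Logunov--Malinnikova--Mangoubi \cite{CLMM20}),
\[
\mathcal{V}(f,B(x,W)) \;\lesssim\; W^{m-1}\bigl(1+N_{f}(B(x,W))\bigr)^{\gamma},
\]
with a dimensional exponent $\gamma=\gamma(m)$ and where $N_{f}(B(x,W))$ denotes the doubling index on $B(x,W)$. Raising this pointwise bound to the $(1+\alpha)$-th power and integrating over $B(R)$ reduces the task to estimating the average of $\bigl(1+N_{f}(B(x,W))\bigr)^{\gamma(1+\alpha)}$, weighted by the factor $W^{(m-1)(1+\alpha)}$.

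To bound that average I would invoke the derandomisation technique of Bourgain, as developed in Section \ref{growth of $f$}, which compares $f|_{B(x,W)}$ (in $L^{p}$) with the Gaussian monochromatic wave $F_{x}$ of \eqref{Fx}; the error is $O_{N,W,\alpha}(R^{-\Lambda-3/2})$, arising from the decay of the Bessel covariance kernel \eqref{eq:kernel intro} at distance $R$. For the isotropic Gaussian wave $F_{x}$ of unit wavenumber, standard concentration arguments give tail bounds for $N_{F_{x}}(B(W))$ on the natural scale $W$, so that $\bE\,N_{F_{x}}(B(W))^{p}\lesssim W^{p}$ in a moderate range of $p$, with heavier behaviour beyond. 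Splitting the integral at a threshold $T$, on $\{N_{f}(B(x,W))\le T\}$ the contribution is $\lesssim W^{(m-1)(1+\alpha)}T^{\gamma(1+\alpha)}$, whereas on the complement Markov's inequality combined with the moment bounds produces a term decreasing in $T$. Optimising in $T$ yields two regimes: the typical one ($T\sim W$, Gaussian-like tails dominant) contributes $W^{m(1+\alpha)}$, and the atypical one (heavier tails of $N_{f}$) contributes $W^{(m-1)(1+\alpha)^{2}}$, matching the two competing polynomial terms in the statement.

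The principal obstacle is to obtain moment control of $N_{f}$ with constants independent of $N$. The $L^{p}$-closeness of $f$ to $F_{x}$ is a statement about the \emph{values} of the function, whereas $N_{f}$ is a logarithmic growth functional that is pointwise in $x$; converting $L^{p}$-closeness into pointwise closeness on balls of density tending to one is thus the delicate step and is what forces the threshold $R\ge R_{0}(N,W,\alpha)$. This step also determines the dependence of the implicit constants in the estimates of Section \ref{growth of $f$}, and it is precisely there that one must ensure the dominant constants stay $N$-free, as required by the statement.
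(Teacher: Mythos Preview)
Your skeleton is right---bound $\mathcal{V}(f,B(x,W))$ pointwise by the doubling index, raise to the power $1+\alpha$, and then control the $(1+\alpha)$-moment of $\mathfrak{N}_f(B(x,W))$ over $x\in B(R)$. In fact the pointwise step is cleaner than you write: the Donnelly--Fefferman bound (Lemma~\ref{doubling f}) is \emph{linear} in the doubling index, $\mathcal{V}(f,B(x,W))\lesssim W^{m-1}\bigl(\mathfrak{N}_f(B(x,3W))+W\bigr)$, so no dimensional exponent $\gamma$ enters.

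The gap is in how you propose to control the moments of $\mathfrak{N}_f$. You describe Section~\ref{growth of $f$} as a Bourgain-type derandomisation comparing $f$ to the Gaussian wave in $L^p$, and then transferring Gaussian tail bounds for $N_{F_x}$ back to $f$. That is not what that section does, and the ``principal obstacle'' you flag---turning $L^p$-closeness of values into control of the $\sup$-based doubling index---is real and is \emph{not} resolved by your outline. The paper avoids it entirely. The actual mechanism is:
\begin{itemize}
\item[(i)] Hal\'asz anti-concentration (Lemma~\ref{anti-concentration}): compute the characteristic function $\psi_R(t)=\Bint_{B(R)} e^{itf(x)}\,dx$ explicitly via the Jacobi--Anger expansion and Graf's addition formula, obtaining $\psi_R(t)=\bigl(J_0(ct/\sqrt{N})\bigr)^N+O_N(R^{-\Lambda-3/2})$. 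Since $(J_0(s/\sqrt{N}))^N\le e^{-cs^2}$ uniformly in $N$ for small $s$ and is $\le \alpha^N$ for large $s$, this yields $\vol_R(|f(x)|\le\beta)\lesssim \beta$ with an $N$-free constant.
\item[(ii)] Elliptic regularity converts the sup in the numerator of $\mathfrak{N}_f$ into an $L^2$ norm of $f$ on a slightly larger ball, whose second moment over $x$ is $O(1)$; combined with (i) for the denominator this gives the tail bound of Lemma~\ref{anti-concentration v2}: $\vol_R(\mathfrak{N}_f(B(x,H))>Q)\lesssim Q^{-D}$ for $Q\gtrsim H$, with constants independent of $N$.
\end{itemize}
With (ii) in hand, the $(1+\alpha)$-moment of $\mathfrak{N}_f$ is bounded by integrating the tail in $Q$ (choosing $D=\alpha+2$), and no Gaussian comparison or threshold optimisation is needed. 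In short: the $N$-uniformity you worry about is obtained \emph{directly} from the exact characteristic function of $f(x)$ under $\vol_R$, not by passing through the RWM.
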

\subsection{De-randomisation}

In this section we make precise in which sense $f$ satisfies the RWM. We first need to introduce some notation: let $R\gg W>1$ be some parameters, where $R$ is much larger than $W$, and let $F_{x,W,R,N}=F_x$ be the restriction of $f$ to $B(x,W)$, the ball of radius $W$ centred at $x\in B(R)$, that is,
\begin{align}\label{Fx}
F_x(y)\coloneqq \frac{1}{\sqrt{2N}}\sum_{|n| \leq N} a_n e( r_n\cdot  x)e\left(r_n \cdot  y \right)
\end{align}
for $y\in B(W)$ and $x\in B(R)$. Here, we show that, as we sample $x$ uniformly in $B(R)$, the ensemble $\{F_x\}_{x\in B(R)}$  approximates, arbitrarily close, the centred stationary Gaussian field with spectral measure $\mu$. We denoted the said field by $F_{\mu}$ and collect  the relevant background in Section \ref{Gaussian random fields} below. 

 To quantify the distance between $F_x$ and $F_{\mu}$, given some integer $s\geq 0$ and $W\geq 1$, we consider their \textit{pushforward} probability measures (see Section \ref{sec:notation} below) on the space of (probability) measures on $C^s(B(W))$, the class of $s$ continuously differentiable functions on $B(W)$. Since the space of probability measure on $C^s(B(W))$ is metrizable via the Prokhorov metric $d_P$, we define the distance between $F_x$ and $F_{\mu}$ as the distance between their pushforward measures.  More precisely, given to random fields $F,F'$ defined on two, possibly different, probability spaces with measures $\mathbb{P}$ and $\mathbb{P}'$, we write $d_P(F,F')\coloneqq d_P(F_*\mathbb{P},F'_*\mathbb{P}')$, where $F_*\mathbb{P}$  is the pushforward probability measure. We collect the relevant background in Section \ref{SectWeakConv} below.
 
With this notation, we prove the following:
\begin{thm}
	\label{thm 1}
	Let $f$ and $F_x$ be as in \eqref{function} and \eqref{Fx} respectively, $W>1$, and $s\geq 0$. Then we have 
\begin{equation}\nonumber
\lim_{N\to\infty}\limsup_{R\to \infty}	d_{P}(F_x,F_{\mu})=0,
\end{equation}
where the convergence is with respect to the $C^s(B(W))$ topology. 
\end{thm}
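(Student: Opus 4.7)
The plan is to establish Theorem \ref{thm 1} via the method of moments, exploiting the fact that $F_\mu$ is Gaussian and therefore determined by its covariance. Fix $N$ and let $\nu_{N,R}$ denote the pushforward of the normalised volume measure on $B(R)$ under $x\mapsto F_x$. I will show (i) that $\nu_{N,R}$ converges weakly in $C^s(B(W))$ to $\mathrm{Law}(F_{\mu_{r,N}})$ as $R\to\infty$, and (ii) that $\mathrm{Law}(F_{\mu_{r,N}})$ converges weakly to $\mathrm{Law}(F_\mu)$ as $N\to\infty$; a triangle inequality in the Prokhorov metric $d_P$ then closes the argument.

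For any $k\ge 1$, multi-indices $\alpha_1,\ldots,\alpha_k$ with $|\alpha_j|\le s$, and points $y_1,\ldots,y_k\in B(W)$, expanding the product of trigonometric polynomials yields
\begin{align*}
\Bint_{B(R)}\prod_{j=1}^{k}\partial^{\alpha_j}F_x(y_j)\,dx = \frac{(2\pi i)^{\sum_j|\alpha_j|}}{(2N)^{k/2}}\sum_{|n_1|,\ldots,|n_k|\le N}\!\!\Bigl[\prod_j r_{n_j}^{\alpha_j}a_{n_j}e(r_{n_j}\!\cdot\! y_j)\Bigr]\Bint_{B(R)} e\!\Bigl(\sum_j r_{n_j}\!\cdot\! x\Bigr)dx.
\end{align*}
The inner oscillatory integral is the normalised Fourier transform of $\mathbf{1}_{B(R)}$ at $\sum_j r_{n_j}$, and tends to zero as $R\to\infty$ for any fixed nonzero frequency. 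Only tuples with $\sum_j r_{n_j}=0$ survive in the limit; by the $\mathbb{Q}$-linear independence of $\{r_n\}_{n\ge 1}$ together with $r_{-n}=-r_n$, such tuples are \emph{balanced}, meaning that for every $n>0$ the indices $+n$ and $-n$ appear equally often. Hence odd $k$ gives a vanishing limit; and for $k=2\ell$ the surviving configurations are the perfect matchings of $\{1,\ldots,2\ell\}$ into pairs $(j,j')$ with $n_j=-n_{j'}$. After the $(2N)^{-\ell}$ normalisation, each matching contributes the product over its pairs of
\begin{align*}
\frac{(-1)^{|\alpha_{j'}|}(2\pi i)^{|\alpha_j|+|\alpha_{j'}|}}{2N}\sum_{|n|\le N}r_n^{\alpha_j+\alpha_{j'}}e\bigl(r_n\!\cdot\!(y_j-y_{j'})\bigr) \;=\; \bE\bigl[\partial^{\alpha_j}F_{\mu_{r,N}}(y_j)\,\partial^{\alpha_{j'}}F_{\mu_{r,N}}(y_{j'})\bigr],
\end{align*}
while configurations with coincidences between different pairs give an $O_k(N^{-1})$ error. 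This reproduces Wick's formula for the Gaussian field $F_{\mu_{r,N}}$, so the joint moments of $(\partial^{\alpha_j}F_x(y_j))_j$ converge to the corresponding Gaussian moments as $R\to\infty$.

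For each fixed $N$, the functions $F_x$ lie in the finite-dimensional subspace of $C^s(B(W))$ spanned by $\{e(r_n\cdot\,\cdot)\}_{|n|\le N}$ with coefficients of size $(2N)^{-1/2}$; hence $\|F_x\|_{C^s(B(W))}$ is bounded uniformly in $x$ and in $R$, so $\{\nu_{N,R}\}_{R>0}$ is trivially tight. Together with the moment convergence above, this yields $\nu_{N,R}\Rightarrow\mathrm{Law}(F_{\mu_{r,N}})$. For the second limit, the bound $\bE|\partial^\alpha F_{\mu_{r,N}}(y)|^2\le (2\pi)^{2|\alpha|}$ holds uniformly in $N$, so a standard Sobolev embedding combined with Markov's inequality yields tightness of $\{\mathrm{Law}(F_{\mu_{r,N}})\}_N$ in $C^s(B(W))$. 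Since $\mu_{r,N}\to\mu$ weakly, the covariance kernel $K_{\mu_{r,N}}(z)=\int_{\S^{m-1}}e(r\cdot z)\,d\mu_{r,N}(r)$ and all its partial derivatives converge uniformly on $B(2W)$ to their analogues for $\mu$; as Gaussian laws are determined by their covariance, $\mathrm{Law}(F_{\mu_{r,N}})\Rightarrow\mathrm{Law}(F_\mu)$ in $C^s(B(W))$, completing the argument.

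The main obstacle is the de-randomisation step: the careful verification that averaging over $x\in B(R)$ extracts exactly the Wick pairings with matching normalisation. The $\mathbb{Q}$-linear independence of $\{r_n\}_{n\ge 1}$ is crucial to rule out spurious non-pairing solutions of $\sum_j r_{n_j}=0$, and the decay of off-diagonal oscillatory integrals must be quantified carefully (via stationary phase) so that the $R\to\infty$ limit can be taken uniformly in the combinatorial structure of the moments while remaining compatible with the subsequent $N\to\infty$ step.
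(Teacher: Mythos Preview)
Your approach is correct and genuinely different from the paper's. You compute the joint moments of $(\partial^{\alpha_j}F_x(y_j))_j$ directly, identify the Wick pairings via $\mathbb{Q}$-linear independence, and pass to $F_{\mu_{r,N}}$ as $R\to\infty$ before letting $N\to\infty$; tightness is free because for fixed $N$ the functions $F_x$ range over a compact subset of $C^{s+1}(B(W))$. The paper instead follows Bourgain more closely: it introduces an auxiliary function $\phi_x$ built from ``wave packets'' $b_k(x)=(2N\mu_r(I_k))^{-1/2}\sum_{r_n\in I_k}a_n e(r_n\cdot x)$ indexed by a partition $\{I_k\}$ of $\S^{m-1}$, shows $\Bint_{B(R)}\|F_x-\phi_x\|^2_{C^s}\to 0$ (Lemma~\ref{first approx}), proves the $b_k$ have asymptotically i.i.d.\ complex Gaussian moments (Lemma~\ref{moments}), and then passes through an intermediate field $F_{\mu_K}$ before reaching $F_\mu$.

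Your route is shorter: it avoids the extra parameters $K,\delta$ and the intermediate measure $\mu_K$ entirely, and your tightness argument is one line rather than a verification of the two conditions in Lemma~\ref{tightness}. What the paper's detour buys is an \emph{$L^2$ coupling} rather than mere Prokhorov convergence: $\phi_x$ and $F_x$ live on the same probability space $(B(R),\vol_R)$ and are close in $L^2(B(R);C^s(B(W)))$. The paper explicitly notes this (end of \S\ref{plan fo the proofs}) and exploits it in the proof of Proposition~\ref{PropConvE2}, where almost-sure $C^1$-closeness along a subsequence is combined with the continuity of $\cN(\cdot,\mathcal{S},W)$ on $C^1_*(W)$. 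That said, one could also run Proposition~\ref{PropConvE2} from your Prokhorov statement via the continuous mapping theorem (since $F_\mu\in C^1_*(W)$ a.s.\ and $\cN$ is continuous there), so the extra strength is a convenience rather than a necessity for Theorem~\ref{thm 1} itself.

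One small point worth tightening in your write-up: the balanced tuples are not literally the same as the perfect matchings---a tuple like $(n,n,-n,-n)$ is balanced but is hit by two matchings in Wick's expansion---so the identification holds only after discarding the $O_k(N^{-1})$ contribution from tuples with repeated values of $|n_j|$, as you acknowledge. The paper handles the analogous combinatorics explicitly in Claim~\ref{claim:moment bk}.
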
 

One of the main ingredients in the proof of Theorem \ref{thm 1} will be the computation of the $L^p$-norms of $F_x$ and from these deduce its Gaussian behaviour. In particular, in  Proposition \ref{prop:Lp mom} we will show that 
$$\frac{1}{\vol B(R)}\int_{B(R)} |F_x(y)|^{2p}dx= \frac{(2p)!}{p! 2^p}(1+o_{N,R\rightarrow \infty}(1)),$$
uniformly for $y\in B(W)$, that is, $F_x$ has (asymptotically) real Gaussian moments.
\subsection{Topologies and nesting trees}
\label{topology and trees}
In this section we present a strengthening of Theorem \ref{thm 3} in that we study nodal domains restricted to a particular topological class or nesting tree. First, we need to introduce some definitions following \cite{SW}.  Let $\Sigma\subset\R^m$ be  a smooth, closed, boundaryless, orientable
submanifold and denote by $[\Sigma]$ its diffeomorphism class, that is, $\Sigma'\sim\Sigma$ if and only if there exists a diffeomorphism $\Phi$ such that $\Phi\left(\Sigma\right)=\Sigma'$, and let $H(m-1)$ be the set of diffeomorphism types $[\Sigma]$. Moreover,  since $V(R):=f^{-1}(0) \cap B(R)$ is a smooth $m-1$-dimensional manifold (if the zero set is regular), we can decompose $V(R)$ into its connected components $V(R)= \bigcup_{c\in \mathcal{C}(f;R)} c$, where we ignore components which intersect $\partial B(R)$. Similarly, we can decompose $B(R)\backslash V(R)= \bigcup_{\alpha\in \mathcal{A}(f;R) } \alpha$ as an union of connected components. We define the tree $X(f;R)$ where the vertices are $\alpha \in \mathcal{A}(f,R)$ and there is an edge between $\alpha,\alpha' \in \mathcal{A}(f;R)$ if the share an (unique) common boundary $c\in \mathcal{C}(f;R)$. Let $\cT$ be the set of finite rooted trees.

 We define $\mathcal{N}(f,\mathcal{S},R)$ where $\mathcal{S}=\{[\Sigma_\beta]\}_{\beta\in \mathfrak{B}_\mathcal{S}}\subset H(m-1)$ as the number of nodal components of $f$ in $B(R)$, which do not intersect $\partial B(R)$ and diffeomorphic to some $\Sigma_\beta\in\mathcal{S}$. Given $T\in \cT$, we define $\cN(f, T ,R)$ similarly. With this notation, we prove the following: 
\begin{thm} \label{thm 5}
 Let $f$ be as in \eqref{function},  $\mathcal{S}\subset H(m-1)$ and $T\in \mathcal{T}$, then we have 
	\begin{align}
\lim_{N\to\infty}\limsup_{R\to \infty}	\left|\frac{\mathcal{N}(f,\mathcal{S},R)}{\vol B(R)} - c(\mathcal{S},\mu)\right| =0 \nonumber \\
\lim_{N\to\infty}\limsup_{R\to \infty}\left| \frac{\mathcal{N}(f,T,R)}{\vol B(R)} - c(T,\mu)\right| =0, \nonumber
	\end{align}
for some constants $c(\mathcal{S},\mu)$ and $c(T,\mu)$. 
\end{thm}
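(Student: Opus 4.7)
The plan is to mirror the proof of Theorem \ref{thm 3}, replacing the total count by the refined count $\mathcal{N}(f,\mathcal{S},R)$ (respectively $\mathcal{N}(f,T,R)$), and to combine a topology/tree-refined semi-locality statement with the de-randomisation Theorem \ref{thm 1} and Thom's Isotopy Theorem \ref{ThThom}, reducing matters to the Nazarov--Sodin/Sarnak--Wigman calculation for the Gaussian field $F_\mu$.

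First, I would establish the analogue of Proposition \ref{semi-locality m>2},
$$\frac{\mathcal{N}(f,\mathcal{S},R)}{\vol B(R)} = \frac{1}{\vol B(W)}\Bint_{B(R)}\mathcal{N}(F_x,\mathcal{S},B(0,W))\, dx + O(W^{-1}) + O_{N,W}(R^{-\Lambda-3/2}),$$
together with the analogous identity for $T$. The integral-geometric sandwich argument from the proof of Proposition \ref{semi-locality m>2} goes through because the diffeomorphism class (respectively the rooted subtree of $X(f;R)$) attached to a nodal component $c\subset B(R)$ is an intrinsic feature, invariant under restriction to any window $B(x,W)$ that contains $c$ together with all domains nested inside it. Components and nested sub-configurations of diameter $>W/2$ constitute a negligible fraction by the average doubling-index estimates of Section \ref{growth of $f$}.

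Next, I would evaluate the $x$-average via Theorem \ref{thm 1}. The functionals $g\mapsto \mathcal{N}(g,\mathcal{S},B(0,W))$ and $g\mapsto \mathcal{N}(g,T,B(0,W))$ are continuous on $C^s(B(W))$ at every $g$ with \emph{stable} nodal set in $B(W)$, i.e., $\nabla g$ does not vanish on $g^{-1}(0)$ and the nodal set is transverse to $\partial B(W)$: for such $g$, Thom's Isotopy Theorem \ref{ThThom} produces an ambient isotopy between the zero set of $g$ and that of any sufficiently $C^s$-close perturbation, and this isotopy preserves both the diffeomorphism type of each component and the nesting relations between enclosed domains (hence the tree). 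Under our non-degeneracy hypothesis on $\{r_n\}$, a standard Bulinskaya-type argument shows that $F_\mu$ has almost surely a stable nodal set in $B(W)$, so the portmanteau theorem combined with the weak convergence supplied by Theorem \ref{thm 1} yields
$$\lim_{N\to\infty}\limsup_{R\to\infty}\Bint_{B(R)}\mathcal{N}(F_x,\mathcal{S},B(0,W))\, dx = \mathbb{E}[\mathcal{N}(F_\mu,\mathcal{S},B(0,W))],$$
and analogously for $T$. Upgrading weak convergence to convergence of expectations requires uniform integrability in $x$, which is furnished by the doubling-index machinery of Section \ref{growth of $f$} (or alternatively by Proposition \ref{anti-concentration prop}, since each closed nodal component contributes a non-negligible portion to the nodal volume of a slight enlargement of $B(0,W)$).

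Finally, letting $W\to\infty$ and applying the Nazarov--Sodin integral-geometric argument as adapted to the topology/nesting tree setting by Sarnak--Wigman \cite{SW} produces the constants $c(\mathcal{S},\mu)$ and $c(T,\mu)$, concluding the proof. The main obstacle is the almost sure $C^s$-continuity of the tree-counting functional at $F_\mu$: the ambient isotopy from Thom's theorem must be shown to preserve not only individual components but also the full inclusion relations between the domains they enclose, which reduces to verifying $F_\mu$-almost sure stability of the entire stratified configuration of nested nodal hypersurfaces in $B(W)$, a standard non-degeneracy check. The topological refinement alone is more direct and follows from Thom's theorem essentially out of the box.
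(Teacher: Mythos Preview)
Your proposal is correct and follows essentially the same route as the paper: refined semi-locality (bounding the error by the total boundary-intersecting count, which is controlled by Proposition \ref{semi-locality m>2}), continuity of the refined count on $C^1_*(W)$ via Thom's isotopy, Bulinskaya to ensure $F_\mu\in C^1_*(W)$ a.s., then portmanteau and Theorem \ref{ThNSSWatomic}. One simplification you are missing: uniform integrability comes for free from the Faber--Krahn inequality, since $\mathcal{N}(F_x,\mathcal{S},W)\le \mathcal{N}(F_x,W)\lesssim W^m$ is a deterministic $L^\infty$ bound---no doubling-index machinery or Proposition \ref{anti-concentration prop} is needed at this step (that machinery is reserved for the nodal \emph{volume}, which is genuinely unbounded).
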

We observe that Theorem \ref{thm 3} follows from Theorem \ref{thm 5} choosing $\mathcal{S}= H(n-1)$. Therefore, we only need to prove Theorem \ref{thm 5}. 

\subsection{Examples and properties of the $r_n$'s}
\label{r_n}
In this section, we give two examples of sequences $\{r_n\}\subset \S^{m-1}$ being $\mathbb{Q}$-linearly independent. 
\begin{exmp}
	For $m=2$, identifying $\S^1$ with $\R/\bZ\simeq[0,1]$, we may	take a sequence of rational numbers $\{b_n\}$ in $(1,e)$ then $\log b_n=r_n$ is linearly independent over $\mathbb{Q}$ by Baker's theorem \cite{B}. For $m>2$, we may take a vector the first co-ordinate of which is $\log b_n$. 
\end{exmp}
\begin{exmp}
	\label{example2}For $\S^{m-1}$, we can construct the sequence as follows. Let $r_1$ be a point on $\S^{m-1}$ and define $S_1\coloneqq \S^{m-1}\backslash \overline{\mathbb{Q}}r_1$, the span with algebraic coefficients of $r_1$. As we are removing a countable set from an uncountable set, $S_1$ is non-empty, in fact, uncountable, thus we may choose any $r_2\in S_1$. For a general $n\geq 2$, let $S_n\coloneqq S_{n-1}\backslash \overline{\mathbb{Q}}r_{n-1} $	and $r_{n}\in S_n$. By induction, bearing in mind that for sets $A,B,C$, $(A\backslash B)\backslash C= A\backslash (B\cup C)$, the sequence is rational independent and, by construction, we can also choose the $r_n$'s such that they uniformly distribute over $\S^{m-1}$. In particular, we may choose a sequence of $r_n$ such that $\mu_r$ weak$^{*}$ converges to the Lebesgue measure on $\S^{m-1}$. 
\end{exmp}
In particular, Example \ref{example2} shows that if we chose the $r_n$ uniformly at random from $\S^{m-1}$, then the rational independence assumption would hold almost surely. This implies that our assumptions are somehow ``generic''. Finally, we will repeatedly use the following consequence of the $\mathbb{Q}$-linear independence of the vectors $\{r_n\}$: by a compactness argument, for any $N>1$ and $T\geq 1$ there exists some $\gamma=\gamma(N,T)$ such that for any $t\leq T$ 
\begin{align}
\left|	r_{n_1}+...+r_{n_t} \right| >\gamma(N,T)>0, \label{1}
\end{align}
for all $|n_1|\leq N$,...,$|n_t|\leq N$, unless $t$ is even and, up to permuting the indices,  $r_{n_1}= -r_{n_2},...,r_{n_{t-1}}=-r_{n_t}$.

\subsection{Plan of the proofs}
\label{plan fo the proofs}
\textit{Proof of Theorem \textnormal{\ref{thm 1}}, Section \textnormal{\ref{proof thm 1}}}. The proof of Theorem \ref{thm 1} follows from an application of Bourgain's de-randomisation:  roughly speaking, the linear independence of the sequence $\{r_n\}$ implies \textit{asymptotic} independence of the waves $e(\langle r_n,x\rangle)$ under the uniform measure in $B(R)$, thus the asymptotic Gaussian behaviour of $F_x$ as in \eqref{Fx} is \textit{expected} from the Central Limit Theorem, although we cannot directly apply the CLT as our waves are not independent.

To make this intuition precise, following Bourgain, we introduce an additional parameter $K\geq 1$ and consider an auxiliary function:
\begin{equation}\label{eq:def phi}
	\phi_x(y)\coloneqq \sum_{k\in\cK} \left[\frac{1}{\left(2N\mu_r(I_k)\right)^{1/2}}\sum_{r_n\in I_k} a_n e(r_n\cdot x)\right]{\mu_r(I_k)^{1/2}}e\left( \zeta^k\cdot y\right) 
\end{equation}
where the $\zeta^k \in I_k\subset \S^{m-1}$ for $k\in \cK$ are appropriately chosen points and the $I_k$ form a particular subset of a partition of the sphere, see \eqref{phi}. First, in Lemma  \ref{first approx}, using asymptotic results for Bessel functions, we show that  $\phi_x$ is, on average, a good approximation of $F_x$ as the number of $\zeta^k$ grows, that is,
\begin{align}
\nonumber
\Bint_{B(R)} \norm{\phi_x-F_x}_{C^s(B(W))}^2dx =o(1)~~~\text{ as }K,R \rightarrow \infty.
\end{align}
 The advance in passing to $\phi_x$ is that we isolate the contribution of the ``wave-packets'' 
$$
b_k\coloneqq\frac{1}{(2N\mu_r(I_k))^{-1/2}}\sum_{r_n\in I_k} a_n e(r_n\cdot R x);
$$
this allows us to show, see Lemma \ref{moments}, that the $b_{k}'s$ are asymptotically (as $N, R\rightarrow \infty$) i.i.d. complex standard Gaussian random variables. Thus, we can ``approximate'' $\phi_x$ in the $C^s(B(W))$ topology by the random field
\begin{equation}\label{eq:def muK}
\kappa_K F_{\mu_K}(y) \coloneqq \sum_{k\in \cK}\mu_r(I_k) c_ke\left( \zeta^k , y\right) 
\end{equation}
with the $c_{k}$ i.i.d. complex standard Gaussian random variables and $\kappa_K$ a normalizing factor, see \eqref{DefmuK}. Finally, we let $K$ go to infinity so that the field $F_{\mu_K}$ will ``converge'' to $F_{\mu}$. We observe that passing to $\phi_x$ gives a stronger statement than Theorem \ref{thm 1} because $\phi_x$ and $F_x$ are defined on the same probability space and are $C^s$ close in $L^2$, not just with respect to the Prokhorov distance. \\

\textit{Proof of Theorem \textnormal{\ref{thm 5}}, Sections \textnormal{\ref{last section}} and \textnormal{\ref{section 5}}}. We discuss the proof of the (simpler) Theorem \ref{thm 3}. The  starting point is  Proposition \ref{semi-locality m>2}: 
\begin{align}\frac{\mathcal{N}(f,R)}{\vol B(R)}= \frac{1}{\vol B(W)}\Bint_{B(R)} \mathcal{N}(F_x,W)dx +O\left(\frac{1}{W}\right)+ O_{N,W}\left(\frac{1}{R^{(m+1)/2}}\right). \label{semi-locality formula}
\end{align}
As mentioned in the introduction, to prove \eqref{semi-locality formula}, we need to discard the possibility of long and narrow nodal components of $f$ which intersect many balls $B(x,W)$. Following the recent preprint of  Chanillo, Logunov, Malinnikova and Mangoubi \cite{CLMM20},  $f$ has to grow very fast in balls around such nodal domains, this can be quantified using the \textit{doubling index}\footnote{In the literature, the doubling index is usually denote by $N$ or $\cN$. Since this would clash with the $N$ in \eqref{function} or the $\cN$ of nodal domains, we opted for $\mathfrak{N}(\cdot)$. We will slightly modify the definition later. } of $f$ in a ball $B(x,W)$: 
$$ \mathfrak{N}_f(B(x,W)):= \log \frac{\sup_{B(x,2W)}|f|}{\sup_{ B(x,W)}|f|} +1. $$
In Lemma \ref{anti-concentration v2}, we show that $\mathfrak{N}_f(x,W)$ is not too big in an appropriate average sense. Therefore long and narrow nodal domains are ``rare'' and contribute only to the error term in  \eqref{semi-locality formula}. This will be the content of Section \ref{last section}.

Next, we show that Theorem \ref{thm 1} together with the stability of the nodal set (Proposition \ref{PropContBdd}) imply that 
\begin{align}
	\mathcal{N}(F_x,W) \overset{d}{\longrightarrow}\mathcal{N}(F_{\mu},W)\quad\text{ as }   N,R\rightarrow \infty, \label{1.6.1}
\end{align}
where the convergence is in distribution. Thanks to the Faber-Krahn inequality \cite[Chapter 4]{Cbook}, see also \cite[Theorem 1.5]{M}, $$\sup_x\mathcal{N}(F_x,W)\lesssim W^m,$$ thus, uniform integrability or Portmanteau Theorem, together with  \eqref{semi-locality formula} and \eqref{1.6.1} give
\begin{align}\Bint_{B(R)} \mathcal{N}(F_x,W)dx = \mathbb{E}[ \mathcal{N}(F_{\mu},W)](1+o(1))\quad\text{ as }N,R\rightarrow \infty. \label{1.1.2}
\end{align}
This is proved in Proposition \ref{PropConvE2}. Finally, we evaluate the right hand side of \eqref{1.1.2} using the work of Nazarov-Sodin \cite{NS}, thus concluding the proof of Theorem \ref{thm 3}.

\textit{Proof of Theorem \textnormal{\ref{thm 2}}, Section \textnormal{\ref{nodal volume}}}. The proof of Theorem \ref{thm 2} follows the same strategy as the proof of Theorem \ref{thm 3}, with the additional difficulty that $\mathcal{V}(F_x)$ may be unbounded in the supremum norm. To circumvent this problem, and thus apply the uniform integrability theorem, we show  in Proposition \ref{anti-concentration prop} that $ \mathcal{V}(F_x,W)$  is uniformly integrable.  The proof relies on the estimate on $\mathfrak{N}_f(x,W)$ which we obtained in Section \ref{growth of $f$}. Once Proposition \ref{anti-concentration prop} is proved, the proof of Theorem \ref{thm 2} follows step by step the proof of Theorem \ref{thm 3}.  

Finally in Section \ref{comments} we collect some final comments and in the appendix some proofs for completeness.

\subsection{Related work}\label{SectRelatedWork}
\textit{De-randomisation}. Ingremeau and  Rivera \cite{IR20} applied the technique on Lagrangian states, that is, functions of the form $f_h(x)=a(x)e^{i\theta(x)/h}$. The authors show that the long time evolution by the semiclassical Schr\"{o}dinger operator of (a wide family of) Lagrangian states on a negatively curved compact manifold satisfies the RWM in a sense similar to Theorem \ref{thm 1}. Thus, they provide a family of functions on negatively curved manifolds satisfying the RWM. 

 \textit{Nodal domains}. The study of $\mathcal{N}$ for Gaussian fields started with the breakthrough work of Nazarov and Sodin \cite{NS09,NS}. They found the asymptotic law of the expected number for nodal domains of a stationary Gaussian field in growing balls, provided its spectral measure satisfies certain (simple) properties, importantly the spectral measure should not have atoms. That is, given a (nice) Gaussian field with spectral measure $\mu$,  there exists some constant $c_{NS}(\mu)>0$ such that 
 \begin{align}\lim\limits_{R \rightarrow \infty} \frac{\mathcal{N}(F_{\mu},R)}{\vol(B(R))}= c_{NS}(\mu), \label{NSresult}
 	\end{align}
 where the convergence is a.s. and in $L^1$. 
  
 As far as deterministic results about $\mathcal{N}$ are concerned,   Ghosh, Reznikov and Sarnak \cite{GRS1,GRS2}, assuming the appropriate Lindel\"{o}f  hypothesis, showed that $\mathcal{N}(\cdot)$ grows at least like a power of the eigenvalue for individual Hecke-Maass eigenfunctions. Jang and Jung \cite{JJ} obtained unconditional results for individual Hecke-Maass eigenfunctions of arithmetic triangle groups. Jung and Zelditch \cite{JZ}  proved, generalising the geometric argument in \cite{GRS1,GRS2}, that  $\mathcal{N}(\cdot)$ tends to infinity, for most eigenfunctions on certain negatively curved manifolds, and Zelditch \cite{Z2} gave a logarithmic lower bound. Finally,   Ingremeau \cite{I} gave examples of eigenfunctions with $\mathcal{N}(\cdot) \rightarrow \infty$ on unbounded negatively-curved manifolds. 

\textit{Topological classes}.  Sarnak and Wigman \cite{SW} and Sarnak and Canzani \cite{CS19} proved the analogous result of \eqref{NSresult} for $\mathcal{N}(F_{\mu}, T,R)$ and $\mathcal{N}(F_{\mu}, H,R)$, again, for spectral measures with no atoms. For deterministic results, Enciso and Peralta-Salas \cite{EPS13} proved the existence of functions $g$ (in the more general setting of elliptic equations and non-necessarily compact components) such that $\mathcal{N}(g,H,R)>0$ and this property is valid even if we perturb $g$ in a $C^k$ norm. This is the key element to prove the positivity of the constants $c(H,\mu)$ of the analogous result of \eqref{NSresult}. It is also worth mentioning that Enciso and Peralta-Salas' techniques can be applied to solve another problem raised by M. Berry \cite{Be01} related to the existence of (complex) eigenfunctions of a quantum system whose nodal set has components with arbitrarily complicated linked and knotted structure, \cite{EHP15}. Furthermore, somehow related techniques for the construction of specific structurally stable examples applied to dynamical systems play a fundamental role in an extension of Nazarov-Sodin's theory to Beltrami fields. These fields are (vector-valued) eigenfunctions of the $\text{curl}$ (instead of the Laplacian treated here) and they are a key element in fluid dynamics; turbulence can only appear in a fluid in equilibrium through Beltrami fields. This extension allows one to stablish V. I. Arnold's long standing conjecture on the complexity of Beltrami fields (i.e., a typical Beltrami field should exhibit chaotic regions
coexisting with a positive measure set of invariant tori of complicated topology), see \cite{EPRBeltrami}.

 \subsection{Notation}
 \label{sec:notation}
We will use the standard notation  $\lesssim$ to denote $\le C$, where the constant can change its value between equations, and $m\geq 2$ will be a positive integer which denotes the dimension of the space and $\Lambda=(m-2)/2$. Moreover, given a large parameter $R>1$, we denote by $B(R)$ the ball of radius $R$ in $\mathbb{R}^m$ and by $\overline{B(R)}$ its closure. Given some $r>0$ and a ball $B$, we denote by $rB$ the concentric ball with $r$-times the radius. We write
\begin{align}
	\Bint_{B(R)} h(x)dx\coloneqq\frac{1}{\vol B(R)}\int_{B(R)} h(x)dx =\int_{B(R)} h(x)d\vol_R(x) \nonumber
\end{align}
where $\vol_R$ for the uniform probability measure on $B(R)$. Furthermore, we denote by $(\Omega,\mathbb{P})$ an abstract probability space where every random object is defined and, given a probability measure $\mu$ on $\S^{m-1}$, we denote by $F_{\mu}$ the centred, stationary Gaussian field with spectral measure $\mu$, see Section \ref{Gaussian random fields} for more details. 

Given two measurable spaces $(Y,\Sigma)$ and $ (X,\mathcal{F})$, a measurable mapping ${ g\colon Y\to X}$ and a measure $\mu$ on $Y$, the \textit{pushforward} of $\mu$, denoted by $g_{*}\mu$, is
$$
{ g_{*}\mu(B)\coloneqq\mu \left(g^{-1}(B)\right)}
$$
for ${ B\in \mathcal{F}}$. Note that  $g_{*}\mu$ is well-defined as $g$ is measurable. Finally, given some function $g: \mathbb{R}^m \rightarrow \mathbb{R}$ and a set $A\subset \mathbb{R}^m$, we denote by $g\rvert_A$ the restriction of $g$ to $A$. 
\section{Preliminaries}
\label{preliminaries}
\subsection{Gaussian fields background}
\label{Gaussian random fields}
We  briefly collect some definitions about Gaussian fields (on $\mathbb{R}^m$). For us, a (real-valued) Gaussian field $F$ is a continuous map $F: \mathbb{R}^m \times \Omega\rightarrow \mathbb{R}$ for some probability space  $\Omega$,  such that all  finite dimensional distributions $(F(x_1, \cdot),...F(x_k,\cdot))$ are multivariate Gaussian. We say that $F$ is \textit{centred} if $\mathbb{E}[F]\equiv0$ and \textit{stationary} if its law is invariant under translations $x\rightarrow x+\tau$ for $\tau \in \mathbb{R}^m$. In this script, every Gaussian field is both centred and stationary. Then, the \textit{covariance} function of $F$ is 
\begin{align}
\mathbb{E}[F(x)\cdot F(y)]= \mathbb{E}[F(x-y)\cdot F(0)]. \nonumber
\end{align}
Since the covariance is positive definite, by Bochner's theorem, it is the Fourier transform of some measure $\mu$ on $\mathbb{R}^m$. So we have 
\begin{align}
\mathbb{E}[F(x)F(y)]= \int_{\mathbb{R}^m} e\left(\langle x-y, \lambda \rangle\right)d\mu(\lambda). \nonumber
\end{align}
The measure $\mu$ is called the \textit{spectral measure} of $F$ and, since $F$ is real-valued, it satisfies ${\mu(-I)}=\mu(I)$ for any (measurable) subset $I\subset \mathbb{R}^m$, that is, $\mu $ is a symmetric measure. By Kolmogorov theorem, $\mu$ fully determines $F$, so we simply write $F=F_{\mu}$.

\subsection{Weak convergence of probability measures in the $C^s$ space.\\}\label{SectWeakConv}
Let $S=C^s(V)$ be the space of $s$-times, $s\ge0$ integer, continuously differentiable functions on $V$, a compact set of $\R^m$. In this section we review the conditions to ensure that a sequence of probability measures $\{\mu_n\}$ on $S$ converges weakly to another probability measure, $\mu$, see also \cite[Chapter 7]{BI} for  $s=0$. 

First, since $S$ is a separable metric space, Prokhorov's Theorem \cite[Chapters 5 and 6]{BI} implies that $\mathcal{P}(S)$, the space of probability measures on $S$, is metrizable via the \textit{L\'evy–Prokhorov metric}. This is defined as follows:  for a subset $B\subset S$, let denoted by $B_{{+\varepsilon }}$ the  $\varepsilon$-neighbourhood of $B$, that is, 
$$
B_{{+\varepsilon }}:=\{p\in S~|~\exists~ q\in B,\ \norm{p-q}_{C^s}<\varepsilon \}.
$$
 The \textit{L\'evy–Prokhorov metric} $d_P :{\mathcal  {P}(S)\times{P}}(S)\to [0,+\infty )$ is defined for two probability measures $\mu$  and $\nu$  as:
\begin{align}\label{def of d_p}
	d_P (\mu ,\nu ):=\inf_{\varepsilon>0} \left\{\mu (B)\leq \nu (B_{{+\varepsilon }})+\varepsilon, \ \nu (B)\leq \mu (B_{{+\varepsilon }})+\varepsilon \ \forall~ B\in S\right\}. 
\end{align}

It is well-known \cite[Claim below Lemma 2]{Pri93} and \cite{Wil86} that if the finite dimensional distributions of some sequence $X_n$ taking values on $S$ converge to some random variable $X$, that is for all $y_1,...,y_l\in V$ 
\begin{align}
	(X_n(y_1),....,X_n(y_l))\overset{d}{\longrightarrow} (X(y_1),....,X(y_l)) \quad \text{ as }n\rightarrow \infty
	\end{align}
where the convergence is in distribution, and the sequence $\{(X_n)_{*}\mathbb{P}\}$ is \textit{tight}, then $(X_n)_{*}\mathbb{P}$ converges to $(X)_{*}\mathbb{P}$ in $\mathcal{P}(S)$ equipped with the metric $d_P$. A set of probability measured $\Pi$ on $S$ is tight if for any $\varepsilon >0$ there exists a compact subset $Q_{\varepsilon } \subset S$ such that, for all measures $\nu \in \Pi$, $\nu (Q_{\varepsilon}) > 1 - \varepsilon.$

A characterization of tightness in  $\mathcal{P}(S)$ is given in the next lemma, which can be seen as a probabilistic version of Arzelà-Ascoli Theorem.  Let us define the modulus of continuity of a function $g\in S$ as: 
\begin{align}
	\label{continuity mod}
\omega_{g}(\delta)\coloneqq \sup_{\norm{y-y'}\le\delta}\{|g(y)-g(y')|\}.
\end{align}
We then have following lemma \cite[Lemma 1]{Pri93}:
\begin{lem}
	\label{tightness} A sequence $\{\mu_n\}$ of probability measures on $S$ is tight if and only if
	\begin{itemize}
		\item[\textnormal{i)}] 	For some $y\in V$ and $\varepsilon>0$ there exists $M>0$ such that, uniformly in $n$:
		\begin{equation}\nonumber
		\max_{|\alpha|\leq s}~\mu_n(g:|D^\alpha g(y)|>M)\le \varepsilon\quad.
		\end{equation}
	
		\item[\textnormal{ii)}] For all multi-index $\alpha$ such that $|\alpha|=s$ and $\varepsilon>0$, we have 
			\begin{equation} \nonumber
\lim_{\delta\to 0}\limsup_{n\to \infty}	\mu_n(g:\omega_{D^\alpha g}(\delta)\ge \varepsilon)=0.
		\end{equation}	
		
	\end{itemize}
\end{lem}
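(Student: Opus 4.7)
The plan is to derive the lemma from the Arzel\`a-Ascoli characterisation of relatively compact subsets of $C^s(V)$ together with a union-bound argument, in the spirit of the classical proof of the analogous statement for $C^0$. Concretely, a subset $Q\subset C^s(V)$ has compact closure if and only if it is bounded in the $C^s$-norm and the top-order derivatives $\{D^\alpha g:g\in Q,\ |\alpha|=s\}$ form an equicontinuous family; a bootstrap via the fundamental theorem of calculus promotes conditions (i)--(ii) to exactly these two properties, since once the $D^\alpha g$ with $|\alpha|=s$ are uniformly bounded (equicontinuity plus boundedness at the base point $y$), each $D^\beta g$ with $|\beta|<s$ is uniformly Lipschitz and hence uniformly bounded on the compact set $V$ once its value at $y$ is.

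For the sufficient direction, given $\varepsilon>0$, I would use (i) to fix $M$ so that $\mu_n(\{g:|D^\alpha g(y)|>M\})<\varepsilon/(2A)$ for every $|\alpha|\le s$ and every $n$, where $A$ counts such multi-indices, and use (ii) to choose, for each $|\alpha|=s$ and each integer $k\ge 1$, a $\delta_{\alpha,k}$ with $\mu_n(\{g:\omega_{D^\alpha g}(\delta_{\alpha,k})\ge 1/k\})<\varepsilon\,2^{-(k+2)}/A$ for all sufficiently large $n$, shrinking $\delta_{\alpha,k}$ further to absorb the finitely many earlier indices (possible since each individual $\mu_n$ is tight on the Polish space $C^s(V)$). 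Let $Q_\varepsilon$ be the intersection of the complements of all these exceptional sets; a union bound gives $\mu_n(\overline{Q_\varepsilon})>1-\varepsilon$ uniformly in $n$, and the bootstrap above combined with Arzel\`a-Ascoli shows that $\overline{Q_\varepsilon}$ is compact.

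For the necessary direction, given $\varepsilon>0$, pick a compact $Q_\varepsilon\subset C^s(V)$ with $\mu_n(Q_\varepsilon)>1-\varepsilon$ for all $n$. Arzel\`a-Ascoli directly yields a uniform bound $M=M(\varepsilon)$ on $\sup_{g\in Q_\varepsilon}\max_{|\alpha|\le s}|D^\alpha g(y)|$, giving (i), and uniform equicontinuity $\sup_{g\in Q_\varepsilon}\omega_{D^\alpha g}(\delta)\to 0$ as $\delta\to 0$ for every $|\alpha|=s$, giving (ii). The main obstacle is precisely the bootstrap invoked above: the hypotheses only control the top-order derivatives' equicontinuity together with a single value, but $C^s$-compactness requires uniform bounds and equicontinuity at every order. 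This is handled by iterating
\begin{equation*}
D^\beta g(y')-D^\beta g(y)=\int_0^1\sum_{j=1}^m (y'_j-y_j)\,D^{\beta+e_j}g\bigl(y+t(y'-y)\bigr)\,dt
\end{equation*}
downward in $|\beta|$, thereby converting bounds on higher-order derivatives into Lipschitz estimates for the lower-order ones and, together with the pointwise bound at $y$, into uniform $C^s$-bounds on $V$.
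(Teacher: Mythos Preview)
Your argument is correct and is essentially the standard proof of this Arzel\`a--Ascoli-type tightness criterion. The bootstrap via the fundamental theorem of calculus, promoting (ii) from top-order equicontinuity to full $C^s$-precompactness once the pointwise bounds at $y$ from (i) are in hand, is exactly the right mechanism; your handling of the finitely many initial indices via tightness of each individual $\mu_n$ on the Polish space $C^s(V)$ is also correct.

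The paper does not prove this lemma at all: it is quoted verbatim from \cite[Lemma~1]{Pri93} and simply invoked. So there is nothing to compare against beyond noting that your write-up supplies what the paper outsources. One small caveat worth making explicit: your line-integral identity
\[
D^\beta g(y')-D^\beta g(y)=\int_0^1\sum_{j=1}^m (y'_j-y_j)\,D^{\beta+e_j}g\bigl(y+t(y'-y)\bigr)\,dt
\]
tacitly assumes the segment $[y,y']$ lies in $V$, i.e.\ that $V$ is convex (or at least star-shaped about $y$). The paper only applies the lemma with $V=\overline{B(W)}$, so this is harmless here, but if you intend the statement for a general compact $V\subset\mathbb{R}^m$ you should either add that hypothesis or replace the straight-line integral with a path argument and a quasiconvexity assumption.
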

 
Finally, we will need the following result of uniform integrability \cite[Theorem 3.5]{BI}.
\begin{lem}\label{th:DCTh}
	Let $X_n$ a sequence of random variables such that $X_n\overset{d}{\rightarrow} X$ (i.e., in distribution). Suppose that there exists some $\alpha>0$ such that $\mathbb{E}[|X_n|^{1+\alpha}]\le C<\infty$ for some $C>0$, uniformly for all $n\geq 1$. Then,
	\begin{equation}
		\bE X_n\to \bE X. \nonumber
	\end{equation}
\end{lem}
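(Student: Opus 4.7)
The plan is to reduce the problem to convergence of expectations of a bounded continuous truncation, with the tails controlled by the uniform $L^{1+\alpha}$ bound. This is the classical Vitali / uniform integrability route, and is in fact the standard textbook proof of the result the authors are citing. First, for each $K>0$, introduce the continuous bounded truncation $\phi_K\colon\R\to\R$ defined by $\phi_K(x)=x$ for $|x|\le K$ and $\phi_K(x)=K\operatorname{sgn}(x)$ for $|x|>K$. Since $|Z-\phi_K(Z)|\le |Z|\,\mathbbm{1}_{|Z|>K}$ pointwise, Markov's inequality combined with the moment hypothesis gives, for any random variable $Z$,
\[
\bigl|\bE Z-\bE\phi_K(Z)\bigr|\le \bE\bigl[|Z|\mathbbm{1}_{|Z|>K}\bigr]\le K^{-\alpha}\,\bE\bigl[|Z|^{1+\alpha}\bigr].
\]
Applying this with $Z=X_n$ and using $\bE[|X_n|^{1+\alpha}]\le C$ yields $|\bE X_n-\bE\phi_K(X_n)|\le CK^{-\alpha}$ uniformly in $n$.

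Next I would establish the same bound for the limit $X$. By the continuous mapping theorem, $|X_n|^{1+\alpha}\overset{d}{\to}|X|^{1+\alpha}$, so Fatou's lemma (applied via the Portmanteau characterization, or after a Skorokhod representation on a common probability space) gives $\bE[|X|^{1+\alpha}]\le\liminf_n\bE[|X_n|^{1+\alpha}]\le C$. In particular $X\in L^1$ and the truncation estimate $|\bE X-\bE\phi_K(X)|\le CK^{-\alpha}$ holds as well. Since $\phi_K$ is bounded and continuous, the very definition of $X_n\overset{d}{\to}X$ gives $\bE\phi_K(X_n)\to\bE\phi_K(X)$ for each fixed $K$. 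Adding the three estimates by the triangle inequality,
\[
\limsup_{n\to\infty}\bigl|\bE X_n-\bE X\bigr|\le 2CK^{-\alpha},
\]
and letting $K\to\infty$ concludes the proof.

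There is no real obstacle: the lemma is a standard consequence of the uniform integrability principle, and the role of the exponent $\alpha>0$ is exactly to make the tail $\bE[|X_n|\mathbbm{1}_{|X_n|>K}]$ decay quantitatively in $K$, uniformly in $n$. The only conceptual point worth stressing is that convergence in distribution alone does not imply convergence of expectations — mass can escape to infinity, as in $X_n=n\,\mathbbm{1}_{A_n}$ with $\mathbb{P}(A_n)=1/n$ — and the $L^{1+\alpha}$ hypothesis is precisely the minimal quantitative condition ruling out this pathology. In the applications of the lemma in this paper the random variables $X_n$ will be $\mathcal{N}(F_x,W)$ or $\mathcal{V}(f,B(x,W))$ sampled with $x\sim\vol_R$, and the uniform moment bound will be supplied respectively by the Faber--Krahn deterministic bound and by Proposition \ref{anti-concentration prop}.
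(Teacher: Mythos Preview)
Your proof is correct and is the standard uniform-integrability argument. The paper itself does not prove this lemma: it simply quotes it as \cite[Theorem 3.5]{BI} (Billingsley), so there is nothing to compare against beyond noting that what you wrote is essentially the textbook proof of the result being cited.
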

\subsection{Doubling index}\label{sec:doubling index}
Following  and Donnelly-Fefferman \cite{DF} and Logunov and Malinnikova\cite{L2,L1,LM}, given a function $h:\mathbb{R}^m\rightarrow \mathbb{R}$, we define the \textit{doubling index} of $h$ in $B$ as 
\begin{align}
	\label{def doubling index}
	\mathfrak{N}_h(B)\coloneqq \log \frac{\sup_{\varkappa_m B} |h|}{\sup_{B }|h|} +1,
\end{align}
with $\varkappa_m\coloneqq2\sqrt{m}$. The doubling index gives a bound on the nodal volume of $f$, as in \eqref{function}, thanks to the following result \cite[Proposition 6.7]{DF} and \cite[Lemma 2.6.1]{LMlecturenotes}. 
 \begin{lem}
 	\label{doubling index}
 	Let $B \subset \R^{m}$ be the unit ball, suppose that $h: 3B\rightarrow \mathbb{R}$ is an harmonic function, that is, $\Delta h=0$,	then 
 	\begin{align}
 		\mathcal{V}(h, 1/2)\lesssim  \mathfrak{N}_h(B) .   \nonumber
 	\end{align}

 \end{lem}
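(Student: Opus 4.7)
The plan is to combine integral geometry (to reduce the $(m-1)$-dimensional volume estimate to one-dimensional zero counts along lines) with complex analysis (Jensen's inequality on complexified restrictions), with the doubling index arising naturally as a logarithmic sup ratio.

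\textbf{Step 1: Integral-geometric reduction.} By the Cauchy–Crofton formula, the nodal volume of $h$ in $B(1/2)$ is controlled by an average over affine lines:
$$\mathcal{V}(h,1/2) \;\lesssim\; \int_{\mathcal{L}} \# \bigl(\{h=0\}\cap \ell\cap B(1/2)\bigr)\,d\mu(\ell),$$
where $\mathcal{L}$ is the space of affine lines in $\R^m$ and $\mu$ is the natural kinematic measure. Only lines meeting $B(1/2)$ contribute, and the measure of this set is $O(1)$. It therefore suffices to bound, uniformly over such lines $\ell$, the number of zeros of the real analytic function $h|_\ell$ on $\ell\cap B(1/2)$ by $\mathfrak{N}_h(B)$.

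\textbf{Step 2: Complexification on a line.} Fix such a line $\ell$ and parametrise $\ell\cap \varkappa_m B$ by a real interval $I$. Since $h$ is harmonic, it is real analytic with a quantitative radius of convergence at every point of $B$, so $h|_\ell$ extends to a holomorphic function $g$ on a complex disk $D\subset\mathbb{C}$ whose image under the natural embedding into $\mathbb{C}^m$ lies inside the complex tube around $B$ contained in $\varkappa_m B=2\sqrt{m}\,B$. The constant $\varkappa_m=2\sqrt{m}$ is exactly the diagonal of the axis-aligned cube of side $2$ that encloses $B$, so that a complexification of radius comparable to $1$ about any point of $B\cap\ell$ fits inside $\varkappa_m B$. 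Using plurisubharmonicity (or a Poisson representation for $h$ on $\varkappa_m B$), one obtains the upper bound
$$\sup_{D}|g|\;\le\;\sup_{\varkappa_m B}|h|.$$

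\textbf{Step 3: Jensen estimate.} Choose a concentric sub-disk $D'\subset D$ whose real trace still contains $\ell\cap B(1/2)$ but also a neighbourhood of a point of $\ell\cap B$ where $|h|$ is nearly maximal, so that by the maximum principle applied to the holomorphic $g$ one has $\sup_{D'}|g|\gtrsim \sup_{B}|h|$. Applying Jensen's formula to $g$ on $D\supset D'$ gives
$$\# \bigl(\text{zeros of }g\text{ in }D'\bigr)\;\lesssim\;\log\frac{\sup_{D}|g|}{\sup_{D'}|g|}\;\lesssim\;\log\frac{\sup_{\varkappa_m B}|h|}{\sup_{B}|h|}+1\;=\;\mathfrak{N}_h(B).$$
Since real zeros of $h|_\ell$ on $\ell\cap B(1/2)$ are a subset of the zeros of $g$ in $D'$, this yields the desired line bound, and inserting it in Step 1 concludes the proof.

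\textbf{Main obstacle.} The sensitive point is the complex–real comparison at the heart of Step 2 and Step 3, namely the pair of inequalities $\sup_D|g|\le \sup_{\varkappa_m B}|h|$ and $\sup_{D'}|g|\gtrsim\sup_B|h|$ with the precise geometric constant $\varkappa_m=2\sqrt{m}$. The upper bound is where harmonicity (as opposed to smoothness) of $h$ is genuinely used, via the fact that harmonic functions have a controlled holomorphic extension to a Grauert tube of explicit width. The lower bound requires a careful choice of the sub-disk $D'$ so that its real trace captures a near-maximum of $h$ inside $B\cap \ell$. Once these two estimates are secured with the stated constants, the remainder is a routine combination of Jensen's formula and the Cauchy–Crofton formula.
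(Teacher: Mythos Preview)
The paper does not prove this lemma; it simply cites \cite[Proposition 6.7]{DF} and \cite[Lemma 2.6.1]{LMlecturenotes}. Your outline is indeed the classical Donnelly--Fefferman strategy (Crofton $+$ complexification along lines $+$ Jensen), so the approach is the right one. Step~1 and the upper bound in Step~2 are fine.

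The gap is in Step~3. You claim that a sub-disk $D'$ can be chosen so that $\sup_{D'}|g|\gtrsim\sup_B|h|$, because its real trace contains ``a point of $\ell\cap B$ where $|h|$ is nearly maximal''. But $g$ is the holomorphic extension of $h|_\ell$, so its values are determined by $h$ on the single line $\ell$; there is no mechanism by which $\sup_{D'}|g|$ can recover $\sup_B|h|$ when the line misses the maximum of $|h|$. Concretely, take $h(x)=x_1$ and any line $\ell\subset\{x_1=\epsilon\}$ with $\epsilon$ small: then $g\equiv\epsilon$, so $\sup_{D'}|g|=\epsilon$ while $\sup_B|h|=1$. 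Your Jensen bound on such a line is $\log(\sup_{\varkappa_m B}|h|/\epsilon)$, which blows up as $\epsilon\to 0$. The per-line estimate you assert,
\[
\#\bigl(h^{-1}(0)\cap\ell\cap B(1/2)\bigr)\lesssim\mathfrak N_h(B)\quad\text{uniformly in }\ell,
\]
is simply false; only the Crofton \emph{average} is $\lesssim\mathfrak N_h(B)$. Your ``Main obstacle'' paragraph appears to notice this (you switch to ``a near-maximum of $h$ inside $B\cap\ell$''), but that weaker denominator does not give $\mathfrak N_h(B)$, and you never bridge the gap.

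The standard resolutions use an extra ingredient you have not invoked: either one centers all complex disks at the point $x_0\in\overline B$ where $|h|$ attains $\sup_B|h|$ (so that Jensen has $|h(x_0)|=\sup_B|h|$ in the denominator) and then uses almost-monotonicity of the doubling index (Hadamard three-spheres for harmonic functions) together with a covering argument to pass from lines through $x_0$ to the full Crofton integral; or one works directly with the holomorphic extension $H$ on $\mathbb C^m$ and uses the Lelong--Jensen formula, as in the original Donnelly--Fefferman proof. This is not a cosmetic point: it is exactly where harmonicity (beyond analyticity) and the specific structure of the doubling index are used, and it is the substantive content of the lemma.
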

Applying Lemma \ref{def doubling index} to the lift $h(x,t)\coloneqq f(x)e^{2\pi t}:\mathbb{R}^{m+1}\rightarrow \mathbb{R}$, we obtain the following: 
 \begin{lem}
 	\label{doubling f}
	Let $f$ be as \textnormal{(\ref{function})} and $r>1$ be some parameter, then
	\begin{align}
		\mathcal{V}(f,B(r)) \cdot r^{-m+1} \lesssim \mathfrak{N}_f(B(3r))+r.  \nonumber
	\end{align} 
\end{lem}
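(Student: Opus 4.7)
The plan is to implement the lift sketched in the statement and reduce the estimate to Lemma \ref{doubling index}. First, I would verify that $h\colon\mathbb{R}^{m+1}\to\mathbb{R}$ defined by $h(x,t)=f(x)e^{2\pi t}$ is harmonic on all of $\mathbb{R}^{m+1}$. Since $\Delta f=-4\pi^{2}f$, we have
\begin{equation}\nonumber
\Delta_{\mathbb{R}^{m+1}}h=e^{2\pi t}\Delta f+(2\pi)^{2}e^{2\pi t}f=-4\pi^{2}h+4\pi^{2}h=0.
\end{equation}
Hence Lemma \ref{doubling index} is applicable to $h$ on any ball of $\mathbb{R}^{m+1}$.

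Next, I would relate $\mathcal{V}(f,B(r))$ to $\mathcal{V}(h,B_{m+1}(\rho))$. The zero set of $h$ is the cylinder $\{f=0\}\times\mathbb{R}$, so Fubini gives
\begin{equation}\nonumber
\mathcal{V}(h,B_{m+1}(\rho))=\int_{\{f=0\}\cap B_{m}(\rho)}2\sqrt{\rho^{2}-|x|^{2}}\,d\mathcal{H}^{m-1}(x).
\end{equation}
Setting $\rho=c r$ for a fixed constant $c>1$ and restricting the domain of integration to $|x|\le r$ bounds the integrand below by $2\sqrt{c^{2}-1}\,r$, which yields $r\,\mathcal{V}(f,B(r))\lesssim\mathcal{V}(h,B_{m+1}(cr))$. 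A direct rescaling of Lemma \ref{doubling index} to the ball $B_{m+1}(2cr)$ (harmonic functions are dimensionless in the required sense) gives $\mathcal{V}(h,B_{m+1}(cr))\lesssim (cr)^{m}\mathfrak{N}_{h}(B_{m+1}(2cr))$. Combining the two inequalities,
\begin{equation}\nonumber
\mathcal{V}(f,B(r))\cdot r^{-m+1}\lesssim \mathfrak{N}_{h}(B_{m+1}(2cr)).
\end{equation}

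To finish, I would control $\mathfrak{N}_{h}$ by $\mathfrak{N}_{f}$ plus a term linear in $r$, the latter being the only new contribution produced by the exponential factor. For any $\rho>0$, the trivial bounds $|h(x,0)|=|f(x)|$ and $|h(x,t)|\le e^{2\pi\rho}|f(x)|$ on $B_{m+1}(\rho)$ yield
\begin{equation}\nonumber
\sup_{B_{m}(\rho)}|f|\le\sup_{B_{m+1}(\rho)}|h|\le e^{2\pi\rho}\sup_{B_{m}(\rho)}|f|.
\end{equation}
Inserting these in the definition of $\mathfrak{N}_{h}(B_{m+1}(2cr))$ produces a contribution $2\pi\,\varkappa_{m+1}\cdot 2cr=O(r)$ from the exponential factor and a contribution $\log\bigl(\sup_{B_{m}(2c\varkappa_{m+1}r)}|f|/\sup_{B_{m}(2cr)}|f|\bigr)$ from the $f$-part, which is bounded by $\mathfrak{N}_{f}(B(3r))$ up to an absolute constant.

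The main (and essentially only) obstacle is purely bookkeeping: the radii $\varkappa_{m+1}\cdot 2cr$ and $2cr$ produced by lifting do not coincide with $\varkappa_{m}\cdot 3r$ and $3r$ used in the definition of $\mathfrak{N}_{f}(B(3r))$. I would fix this either by tuning the constant $c$ and invoking the elementary monotonicity $\sup_{B(s)}|f|\le\sup_{B(s')}|f|$ for $s\le s'$, or, if the naive choice fails for small $m$, by iterating the doubling index a bounded number of times (equivalently covering $B_{m}(\varkappa_{m+1}\cdot 2cr)$ by finitely many balls of radius $\varkappa_{m}\cdot 3r$ and summing), both of which cost only a multiplicative constant that can be absorbed into $\lesssim$.
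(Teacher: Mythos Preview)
Your proposal is correct and follows essentially the same route as the paper: lift $f$ to the harmonic function $h(x,t)=f(x)e^{2\pi t}$, compare nodal volumes via the cylindrical structure of $h^{-1}(0)$, rescale and apply Lemma~\ref{doubling index}, and finally absorb the exponential factor as the $+r$ term when passing from $\mathfrak{N}_h$ back to $\mathfrak{N}_f$. The paper's own argument is in fact terser than yours on the radius bookkeeping---it simply asserts the bound holds with $\mathfrak{N}_f(B(cr))$ for any $c>2\sqrt{2}$---so your explicit discussion of how to reconcile the radii (monotonicity of $\sup$ plus, if needed, a bounded iteration of the doubling inequality) is a welcome clarification rather than a gap.
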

\begin{proof}
	First, we observe that the function $h(x,t)\coloneqq f(x)e^{2\pi t}$ is harmonic in a ball  ${B}(\sqrt{2}r) \supset B(r)\times[-r,r]$ and that 
	$$  \mathcal{H}^{m-1}\{x\in B(r): f(x)=0\} \times 2r \leq \mathcal{H}^{m} \{(x,t)\in {B}(\sqrt{2}r): h(x,t)=0\}$$
	Therefore, rescaling ${B}(\sqrt{2}r)$ to a ball of radius one, the lemma follows from Lemma \ref{doubling index}, upon noticing that
	$$
	\cV(f,B(r))r^{-m+1}\lesssim r+ \mathfrak{N}(f,B(cr))
	$$	
	for any $c>2\sqrt{2}$ and that the supremum norm is scale invariant.  
\end{proof}
In particular, we can control the doubling index of $f$ using the well-known Nazarov-Turan Lemma, see \cite{N} and \cite{FM} for the multi-dimensional version:
\begin{lem}
	\label{Nazarov-Turan}
	Let $g(x)= \sum_{j=1}^J a_j e( \xi_j \cdot x)$ for $x\in \mathbb{R}^m$ and $\xi_1,...\xi_J$ distinct frequencies, moreover let $B\subset \R^{m}$ be a ball and $I\subset B $ be a measurable subset. Then there exist absolute constants $c_1,c_2>0$ so that 
	\begin{align}
		\sup_{B}|g|\lesssim \left(c_1\frac{|B|}{|I|}\right)^{c_2J}\sup_{I}|g| \nonumber
	\end{align}
\end{lem}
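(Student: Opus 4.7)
The plan is to reduce the multi-dimensional statement to the classical one-dimensional Nazarov--Tur\'an lemma by a slicing argument. The one-dimensional input, which I would take as a black box, states that for any exponential polynomial $p(t)=\sum_{j=1}^J a_j e(\lambda_j t)$ with $J$ distinct real frequencies, any interval $J_0\subset\R$, and any measurable $E\subset J_0$ of positive measure,
\[
\sup_{J_0}|p|\;\le\; \left(\frac{c_1|J_0|}{|E|}\right)^{c_2 J}\sup_{E}|p|.
\]
This is Nazarov's refinement of the classical Tur\'an lemma, whose proof relies on a careful factorisation/induction on $J$ combined with a Remez-type inequality.

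First, I would pick a point $x^{*}\in\overline{B}$ realising $\sup_{B}|g|$, which exists by continuity and compactness. Up to enlarging $B$ by a bounded factor (and inflating $c_1$ accordingly), I may assume $x^{*}$ is the centre of $B$; let $R$ denote the radius, so $|B|\asymp R^{m}$. For each direction $\omega\in\S^{m-1}$, let $\ell(\omega)$ be the chord of $B$ through $x^{*}$ in direction $\omega$ (an interval of length at most $2R$), and let $I(\omega)\subset\ell(\omega)$ be the one-dimensional slice of $I$ along this chord. Using polar coordinates centred at $x^{*}$,
\[
|I|\;=\;\int_{\S^{m-1}}\int_{0}^{R}\mathbf{1}_{I}(x^{*}+r\omega)\,r^{m-1}\,dr\,d\omega\;\le\;R^{m-1}\int_{\S^{m-1}}|I(\omega)|_{1}\,d\omega,
\]
where $|\cdot|_{1}$ denotes one-dimensional Lebesgue measure. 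By pigeonhole there exists $\omega_0\in\S^{m-1}$ with $|I(\omega_0)|_{1}\gtrsim |I|/R^{m-1}\asymp R\cdot|I|/|B|$.

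On the chord $\ell(\omega_0)$ the restriction $t\mapsto g(x^{*}+t\omega_0)=\sum_{j=1}^{J}a_j e(\xi_j\cdot x^{*})\,e(t(\xi_j\cdot\omega_0))$ is a one-dimensional exponential polynomial with at most $J$ distinct frequencies (fewer when the inner products $\xi_j\cdot\omega_0$ collide, which only improves the bound). Applying the 1D Nazarov--Tur\'an lemma to this restriction on the interval $\ell(\omega_0)$ with subset $I(\omega_0)$, and using $\sup_{\ell(\omega_0)}|g|\ge|g(x^{*})|=\sup_{B}|g|$, yields
\[
\sup_{B}|g|\;\le\;\left(\frac{c_1R}{|I(\omega_0)|_{1}}\right)^{c_2J}\sup_{I(\omega_0)}|g|\;\le\;\left(\frac{c_1'|B|}{|I|}\right)^{c_2J}\sup_{I}|g|,
\]
which is the claimed estimate. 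The main obstacle is the 1D Nazarov--Tur\'an inequality itself: this is the genuine analytic input, and Nazarov's refinement (as opposed to the classical Tur\'an bound for intervals) is essential in order to allow an arbitrary measurable set $E$. Once that is in hand, the extension to $\R^m$ is essentially elementary geometry; a minor technical point is verifying the joint measurability of $\omega\mapsto|I(\omega)|_{1}$ to legitimate Fubini, which is automatic for measurable $I$.
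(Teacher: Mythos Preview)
Your reduction is correct. The paper does not prove this lemma at all: it is quoted as a known result, with a citation to Nazarov for the one-dimensional inequality and to Fontes-Merz for the multi-dimensional version. Your slicing argument---choosing the point $x^{*}$ where the supremum is (essentially) attained, averaging in polar coordinates to find a chord through $x^{*}$ on which the slice $I(\omega_0)$ has one-dimensional measure $\gtrsim R\,|I|/|B|$, and then invoking the one-dimensional Nazarov--Tur\'an inequality on that chord---is precisely the standard route to the $\R^m$ statement, and is presumably what the cited multi-dimensional reference does as well. The observation that collisions $\xi_j\cdot\omega_0=\xi_k\cdot\omega_0$ only decrease the effective number of frequencies, and hence only help, is correct; and the enlargement of $B$ so that $x^{*}$ becomes the centre costs only a dimensional constant absorbed into $c_1$. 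As you note, the only genuine analytic content is the one-dimensional Nazarov inequality (the Remez-type refinement allowing arbitrary measurable $E$ rather than just subintervals), which both you and the paper take as a black box.
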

 Combining Lemma \ref{Nazarov-Turan} with Lemma \ref{doubling f}, we obtain the following:
 \begin{lem}
	\label{Yau length}
	Let $f$ be as \textnormal{(\ref{function})} and $r>0$ be some parameter, then
	\begin{align}
		\mathcal{V}(f,B(r)) \cdot r^{-m+1} \lesssim N+r.  \nonumber
	\end{align} 
\end{lem}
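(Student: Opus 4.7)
The proof is a direct combination of the two preceding lemmas. The plan is to control the doubling index $\mathfrak{N}_f(B(3r))$ appearing in Lemma \ref{doubling f} by means of the Nazarov--Turan estimate of Lemma \ref{Nazarov-Turan}, applied to $f$ viewed as an exponential sum of $2N+1$ frequencies $\{r_n\}_{|n|\le N}$.

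Concretely, I would apply Lemma \ref{Nazarov-Turan} with $g=f$, $J=2N+1$, the ``big'' ball $B'=\varkappa_m B(3r)$ (the ball appearing in the numerator of the doubling index) and the measurable subset $I=B(3r)\subset B'$. Since $\varkappa_m=2\sqrt{m}$ is a fixed dimensional constant, $|B'|/|I|=\varkappa_m^{m}$ is an absolute constant depending only on $m$, so
\begin{equation*}
\sup_{\varkappa_m B(3r)}|f|\;\lesssim\;\bigl(c_1\varkappa_m^{m}\bigr)^{c_2(2N+1)}\sup_{B(3r)}|f|.
\end{equation*}
Taking logarithms and adding $1$, this yields $\mathfrak{N}_f(B(3r))\lesssim N+1$, where the implicit constant depends only on $m$.

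Plugging this bound into Lemma \ref{doubling f} gives
\begin{equation*}
\mathcal{V}(f,B(r))\cdot r^{-m+1}\;\lesssim\;\mathfrak{N}_f(B(3r))+r\;\lesssim\;N+1+r\;\lesssim\;N+r,
\end{equation*}
which is the desired inequality (for $N\ge 1$ the term $+1$ is absorbed into $N$, and we may assume $N\ge 1$ since otherwise the sum \eqref{function} is trivial). I do not foresee any real obstacle here: the key observation — that Nazarov--Turan reduces the doubling index of an exponential sum with $2N+1$ frequencies to $O(N)$ on balls of comparable radii — is exactly what the hypothesis on the frequencies of $f$ allows. The only minor point to check is that the ratio $|\varkappa_m B(3r)|/|B(3r)|$ is a dimensional constant so that the Nazarov--Turan exponent contributes only $O(N)$ and not $O(N\log r)$, which is immediate from scale invariance of the Lebesgue measure under dilations.
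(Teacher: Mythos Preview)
Your proposal is correct and follows exactly the approach indicated in the paper, which simply says the lemma is obtained by combining Lemma~\ref{Nazarov-Turan} with Lemma~\ref{doubling f}. The only (harmless) slip is the count of frequencies: the sum in \eqref{function} has $2N$ terms rather than $2N+1$, but this does not affect the $O(N)$ bound on the doubling index.
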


Finally, to study the nodal domains of $f$, we will to use the doubling index to control the growth of $f$ in sets which might not be balls. That is, we will   need the following lemma \cite{LMlecturenotes}: 
\begin{lem}[Remez type inequality]
	\label{Remez type inequality} 
	Let $B$ be the unit ball in $\mathbb{R}^m$ and suppose that $h:2B \rightarrow \mathbb{R}$ ia an harmonic function. Then there exist constants $c_1,c_2>0$, independent of $h$, such that 
	$$ \sup_B |h| \lesssim \sup_E |h|\left( c_1\frac{|B|}{|E|}\right)^{c_2 \mathfrak{N}_h(2B)}$$
	for any set $E\subset B$ of positive measure. 
\end{lem}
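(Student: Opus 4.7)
The plan is to reduce the lemma to the classical multidimensional polynomial Remez--Brudnyi inequality
$$
\sup_B|P|\leq\left(c_1\frac{|B|}{|E|}\right)^{c_2\deg P}\sup_E|P|,
$$
valid for any polynomial $P$ on $\R^m$ and any measurable $E\subset B$ of positive measure, with $c_1,c_2$ depending only on $m$. The strategy is to approximate $h$ by a polynomial whose degree is controlled by $\mathfrak{N}:=\mathfrak{N}_h(2B)$, apply the polynomial Remez inequality to the approximant, and show that the approximation error is swallowed by the right-hand side.

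To produce the polynomial approximation, I would first upgrade the doubling hypothesis into uniform polynomial growth on concentric balls. Iterating the definition of $\mathfrak{N}$ along a chain of radii in geometric progression inside the domain of harmonicity (or, equivalently, invoking the Hadamard three-ball principle and the convexity of $\log\sup_{B(r)}|h|$ as a function of $\log r$), one deduces
$$
\sup_{B(r)}|h|\leq C_1\,r^{C_2\mathfrak{N}}\sup_B|h|
$$
for $r$ in a suitable intermediate range, with $C_1,C_2$ depending only on the dimension. Expanding $h$ into its solid spherical harmonic series $h=\sum_{k\geq 0}h_k$, with $h_k(r\omega)=r^kY_k(\omega)$ and $Y_k$ a spherical harmonic of degree $k$, the orthogonality of spherical harmonics combined with the growth bound forces the tail to satisfy
$$
\sup_{B}\Bigl|\sum_{k>D}h_k\Bigr|\leq C_3\,2^{C_2\mathfrak{N}-c_0 D}\sup_B|h|.
$$
For $D$ of order $\mathfrak{N}\log(|B|/|E|)$, I would then apply the polynomial Remez inequality to the truncation $P_D:=\sum_{k\leq D}h_k$ (a polynomial of degree $D$), and use the triangle inequality on $B$ and on $E$ to transfer the estimate back to $h$. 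Repackaging the resulting constants (absorbing an extra factor $\log(|B|/|E|)$ in the exponent into $c_1$) produces the desired form.

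The main technical obstacle is the balancing of $D$: the polynomial Remez factor $(|B|/|E|)^D$ grows and the tail $2^{C_2\mathfrak{N}-c_0 D}$ decays exponentially in $D$, so one must verify that a single choice $D\asymp\mathfrak{N}+\mathfrak{N}\log(|B|/|E|)$ simultaneously defeats the error term compared with the target $\sup_E|h|\cdot(c_1|B|/|E|)^{c_2\mathfrak{N}}$. A minor subtlety is that $\mathfrak{N}_h(2B)$ formally requires $h$ to be defined on $\varkappa_m\cdot 2B$, a slight enlargement of the stated domain, which is part of the implicit setup of the lemma and must be kept in mind when tracking constants along the chain of radii.
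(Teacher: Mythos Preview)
The paper does not prove this lemma; it is simply quoted from the Logunov--Malinnikova lecture notes \cite{LMlecturenotes}. So there is no in-paper proof to compare against, and the relevant question is whether your argument actually closes.

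It does not, and the failure is exactly at the step you flag as the ``main technical obstacle''. Write $t=|B|/|E|$, $M=\sup_B|h|$, $h=P_D+R_D$ with $P_D$ the degree-$D$ truncation. Your tail estimate gives $\sup_B|R_D|\le e^{-c_0 D+C\mathfrak{N}}M$, and polynomial Remez gives $\sup_B|P_D|\le (c t)^{c_2 D}\sup_E|P_D|$. Feeding $\sup_E|P_D|\le\sup_E|h|+\sup_B|R_D|$ into the chain produces
\[
M\le (ct)^{c_2 D}\sup_E|h|+\bigl((ct)^{c_2 D}+1\bigr)e^{-c_0 D+C\mathfrak{N}}M .
\]
To absorb the last term you need $(ct)^{c_2 D}e^{-c_0 D}\ll 1$, i.e.\ $D(c_2\log(ct)-c_0)\ll 0$. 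For $t$ larger than a fixed dimensional constant this is impossible for \emph{any} $D$: the Remez amplification beats the geometric tail decay. Taking $D\asymp\mathfrak{N}$ therefore only works when $|E|$ is not too small. Your fallback $D\asymp\mathfrak{N}\log t$ does control the cross term, but then the main term is $(ct)^{c_2\mathfrak{N}\log t}=\exp\bigl(c_2\mathfrak{N}(\log t)^2\bigr)$, and the claim that this can be ``absorbed into $c_1$'' is false: one would need $c_2(\log t)^2\le c_2'(\log c_1+\log t)$ for all $t\ge 1$ with $c_1,c_2'$ fixed, which is impossible since $(\log t)^2/\log t\to\infty$.

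The upshot is that naive polynomial approximation loses a genuine $\log(|B|/|E|)$ in the exponent. The sharp inequality in the cited reference is obtained by a different route (propagation of smallness for harmonic functions via subharmonicity/three-ball convexity and Cartan-type estimates on sublevel sets), which exploits the doubling hypothesis more directly than passing through a fixed-degree polynomial approximant.
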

 Using the harmonic lift $h$ of $f$ as in Lemma \ref{doubling f} and rescaling, we deduce the following:
 \begin{lem}
	\label{Remez for f} 
	Let $B(r) \subset\mathbb{R}^m$ be a ball of radius $r>0$ and $f$ be as in \eqref{function}	then there exist constants $c_1,c_2>0$, such that 
	$$ \sup_{B(r)} |f| \lesssim \sup_{E(r)} |f|  \left( c_1\frac{|B(r)|}{|E(r)|}\right)^{c_2 (\mathfrak{N}_f(B(2r))+r)}$$
	for any set $E(r)\subset B(r)$ of positive measure. 
\end{lem}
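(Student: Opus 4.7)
The plan is to reduce the statement to the Remez-type inequality for harmonic functions (Lemma~\ref{Remez type inequality}) via the same harmonic lift that was used to deduce Lemma~\ref{doubling f}. Set
\begin{equation*}
h(x,t) := f(x)\,e^{2\pi t} \qquad \text{for } (x,t) \in \mathbb{R}^{m+1}.
\end{equation*}
Since $\Delta f = -4\pi^{2} f$, a direct computation gives $\Delta h \equiv 0$ on $\mathbb{R}^{m+1}$, so the Remez inequality is applicable to $h$.

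Let $x_{0}$ denote the centre of $B(r)$, and consider the $(m+1)$-dimensional ball $\tilde{B} := B^{m+1}((x_{0},0), \sqrt{2}\,r)$, which contains the cylinder $B(r) \times [-r,r]$, as well as the cylindrical subset $\tilde{E} := E(r) \times [-r,r] \subset \tilde{B}$. Applying Lemma~\ref{Remez type inequality} to $h$ on $\tilde{B}$ (after rescaling to the unit ball) with the set $\tilde{E}$ yields
\begin{equation*}
\sup_{\tilde{B}} |h| \;\lesssim\; \sup_{\tilde{E}} |h| \left( c_{1}\, \frac{|\tilde{B}|}{|\tilde{E}|} \right)^{c_{2}\, \mathfrak{N}_{h}(2\tilde{B})}.
\end{equation*}
I now translate each factor back to $f$: restricting $h$ to the slice $t=0$ gives $\sup_{B(r)} |f| \leq \sup_{\tilde{B}} |h|$; the product structure of $\tilde{E}$ gives $\sup_{\tilde{E}} |h| = e^{2\pi r} \sup_{E(r)} |f|$; and the identity $|\tilde{E}| = 2r\,|E(r)|$ together with $|\tilde{B}| \asymp_{m} r^{m+1}$ yields $|\tilde{B}|/|\tilde{E}| \asymp_{m} |B(r)|/|E(r)|$. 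The factor $e^{2\pi r}$ is then absorbed into the geometric factor at the cost of an additional $O(r)$ contribution to the exponent, which is compatible with the form $c_{2}(\mathfrak{N}_{f}(B(2r)) + r)$ in the conclusion.

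What remains is the bound $\mathfrak{N}_{h}(2\tilde{B}) \lesssim_{m} r + \mathfrak{N}_{f}(B(2r))$, which is the main (and only slightly delicate) bookkeeping step. For this I combine the elementary sandwich
\begin{equation*}
\sup_{B^{m}(x_{0},s)} |f| \;\leq\; \sup_{B^{m+1}((x_{0},0),s)} |h| \;\leq\; e^{2\pi s}\, \sup_{B^{m}(x_{0},s)} |f|,
\end{equation*}
valid for every $s>0$, with the definition of $\mathfrak{N}_{h}$ on $2\tilde{B}$. Taking logarithms in the numerator and denominator of $\mathfrak{N}_{h}(2\tilde{B})$, the exponential factors contribute the desired linear-in-$r$ error, and what remains is a log-ratio of suprema of $|f|$ over two concentric $m$-dimensional balls whose radii are comparable to $r$ with ratios depending only on $m$. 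The mismatch between the dilation constant $\varkappa_{m+1} = 2\sqrt{m+1}$ used for $h$ and $\varkappa_{m} = 2\sqrt{m}$ used for $f$ is handled by a bounded (in $m$) chain of $f$-doublings, each dominated by $\mathfrak{N}_{f}(B(2r))$ up to an $m$-dependent multiplicative constant. Since everything is additive at the level of logarithms, no genuine obstacle arises beyond this bookkeeping.
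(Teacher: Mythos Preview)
Your proposal is correct and follows exactly the route the paper indicates: the paper's entire proof is the one sentence ``Using the harmonic lift $h$ of $f$ as in Lemma~\ref{doubling f} and rescaling, we deduce the following,'' and you have carried out precisely that lift-and-rescale argument with the details filled in. The only place where your write-up is slightly loose is the final bookkeeping step, where you claim that the chain of $f$-doublings bridging the $\varkappa_{m+1}$-dilate of $2\tilde B$ down to $B(2r)$ is ``each dominated by $\mathfrak{N}_f(B(2r))$''; strictly speaking the natural output of the lift is $\mathfrak{N}_f(B(Cr))$ for a dimensional constant $C>2$, and comparing this to $\mathfrak{N}_f(B(2r))$ is not automatic. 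The paper does not address this point either (and is equally casual about the constant when it applies the lemma in Appendix~\ref{copying}), so this is a shared imprecision rather than a gap in your argument relative to the paper's.
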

\subsection{Additional Tools}
\label{additional Tools}
In this section we extend for our purposes the work of Nazarov-Sodin \cite{NS} and Sarnak-Wigman \cite{SW} to the case of a  possibly atomic symmetric spectral measure and give a sufficient condition for the positivity of the constants $c_{NS}(\cdot)$, $c(T,\cdot)$ and $c(H,\cdot)$ appearing in Theorems \ref{thm 1} and \ref{thm 5}. For dimension two and for nodal domains, this was done in \cite[Proposition 1.1]{KW2}, see also Section \ref{sec:KW} below for some additional results.  The proof essentially follows \cite{NS}, we reproduce some details for completeness.

Given a probability measure $\mu$ on $\S^{m-1}$ and an integer $s\geq 1$, let $\overline{\mathcal F\, L^2_{\tt H}(\mu)}^{C^s}$, the closure in the Fr\'echet topology of $C^s$ compact convergence of the Fourier transform of Hermitian
functions $h\colon \R^m\to\mathbb C$ with $\int |h|^2\, d\mu < \infty $. Then, bearing in mind the notation in Section \ref{topology and trees}, we have the following:

\begin{thm}
	\label{ThNSSWatomic}
	Let $\mu$ be symmetric probability measure on $\S^{m-1}$. Let $\mathcal{S}\subset H(m-1)$ and $T\in \cT$. Then, there exist constants $c(\mathcal{S},\mu), c(T,\mu)$ such that
	\begin{align}
		\mathbb{E}[ \mathcal{N}(F_\mu,\cdot,R) ]&= \vol B(R)(c(\cdot,\mu) + o_\mu\left(1 \right)) \nonumber
	\end{align} 	
	as $R\to\infty$.  The constant $c(\mathcal{S},\mu)$ will be positive if	there is a function $F_0$ with a regular (i.e., the gradient doesn't vanish) connected component in $\mathcal{S}$ contained in $B(r)$ for some $r>0$
	and $F_0\in \overline{\mathcal F\, L^2_{\tt H}(\mu)}^{C^s}$, similarly for $c(T,\mu)$.
\end{thm}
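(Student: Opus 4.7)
The plan is to adapt the arguments of Nazarov--Sodin \cite{NS} and Sarnak--Wigman \cite{SW}, pinpointing the places where absolute continuity of the spectral measure was previously invoked. The crucial observation is that the theorem asserts convergence only of \emph{expectations}, not of almost-sure quantities, so the ergodicity-based step of \cite{NS}---which is what genuinely fails for atomic $\mu$---can be bypassed; only stationarity of $F_\mu$ is needed for the existence part.

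For existence, set $\nu_r \coloneqq \mathbb{E}[\mathcal{N}(F_\mu, \mathcal{S}, B(r))]$, with components meeting $\partial B(r)$ excluded as in the introduction. Stationarity of $F_\mu$ and Fubini give
\begin{equation}\nonumber
\int_{B(R-r)} \mathbb{E}[\mathcal{N}(F_\mu, \mathcal{S}, B(x,r))]\, dx = \vol B(R-r) \cdot \nu_r.
\end{equation}
The integral-geometric sandwich of \cite{NS} compares the integrand with $\mathcal{N}(F_\mu, \mathcal{S}, B(R))$: every nodal component counted in $B(R-r)$ sits inside some $B(x,r)$ and is counted $O(\vol B(r))$ times in the integral, up to boundary losses of order $\vol(B(R)\setminus B(R-r))=O(R^{m-1}r)$. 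Dividing by $\vol B(R)$ and letting first $R\to\infty$ and then $r\to\infty$ yields existence of the limit, which we set as $c(\mathcal{S},\mu)$; the same argument defines $c(T,\mu)$, using stability of the nesting tree $X(g,B(r))$ in place of the diffeomorphism type.

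For positivity, the strategy is the barrier construction of \cite{NS}. Given $F_0 \in \overline{\mathcal F\, L^2_{\tt H}(\mu)}^{C^s}$ with a regular connected component $\Sigma$ of type in $\mathcal{S}$ inside some $B(r_0)$, Thom's isotopy theorem (Theorem \ref{ThThom}) produces $\delta>0$ such that any $g \in C^s(B(r_0))$ with $\|g-F_0\|_{C^s(B(r_0))}<\delta$ admits a component diffeomorphic to $\Sigma$ inside $B(r_0)$. The Gaussian support theorem applied to $F_\mu$, together with the identification of its Cameron--Martin space with (the $C^s$-completion of) $\mathcal F\, L^2_{\tt H}(\mu)$, places $F_0$ in the topological support of the $C^s(B(r_0))$-law of $F_\mu$, so
\begin{equation}\nonumber
p \coloneqq \mathbb{P}\bigl(\|F_\mu - F_0\|_{C^s(B(r_0))}<\delta\bigr) > 0.
\end{equation}
Packing $B(R)$ with $M\gtrsim R^m$ disjoint translates $B(x_i,r_0)$ and letting $A_i$ denote the barrier event shifted to $B(x_i,r_0)$, each $A_i$ forces a distinct component of the right type (the translates being disjoint), so by stationarity $\mathbb{E}[\mathcal{N}(F_\mu,\mathcal{S},B(R))] \geq \sum_i \mathbb{P}(A_i) = Mp$, and dividing by $\vol B(R)$ gives $c(\mathcal{S},\mu)>0$. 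The tree case is identical.

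The main obstacle is the positivity step when $\mu$ is purely atomic. The classical Gaussian support theorem is usually stated under a spectral density hypothesis, so one has to verify---directly from the Banach-space characterisation of the support of a centred Gaussian measure as the closure of its Cameron--Martin space---that the $C^s(B(r_0))$-support of $F_\mu$ coincides with the $C^s$-closure of $\{\widehat{h}|_{B(r_0)} : h\in L^2_{\tt H}(\mu)\}$, which is exactly the space appearing in the hypothesis. The union-bound for the first moment avoids any correlation-decay requirement, so the atomicity of $\mu$ causes no difficulty once the support identification is in place; the rest of the argument is then parallel to \cite{NS,SW}.
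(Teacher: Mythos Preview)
Your proof is correct and follows essentially the same skeleton as the paper: the integral-geometric sandwich plus stationarity for existence, and the support identification (Cameron--Martin space equals $\overline{\mathcal F\, L^2_{\tt H}(\mu)}^{C^s}$) combined with Thom's isotopy for positivity. The positivity step is identical in both; your packing of disjoint balls is just a spelled-out version of the sandwich lower bound $c(\cdot,\mu)\ge \mathbb{E}[\Phi_r^a]$ that the paper invokes, and the paper cites precisely the references (\cite[Appendix A.7, A.12]{NS}, \cite[Proposition 3.8]{EPRBeltrami}) for the support statement you flag as the main obstacle.

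The one genuine difference is in the existence step. The paper follows \cite{NS} literally and applies Wiener's ergodic theorem to the translation action, obtaining that $\mathcal{N}(F_\mu,\cdot,R)/\vol B(R)$ converges a.s.\ and in $L^1$ to a (random) limit $c(F_\mu,\cdot,\mu)$, and then takes expectations; this uses only stationarity, not ergodicity, so it survives atomic $\mu$. You bypass the ergodic theorem entirely and work with $\nu_R=\mathbb{E}[\mathcal{N}(F_\mu,\cdot,R)]$ directly. That is legitimate and more economical for the stated conclusion, but your one-line justification (``boundary losses of order $O(R^{m-1}r)$'') only handles the volume factor $\vol B(R-r)/\vol B(R)\to 1$, not the gap between the lower and upper sandwich bounds. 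The clean way to close your argument is to note that the lower sandwich bound alone gives $\liminf_R \nu_R/\vol B(R)\ge \nu_r/\vol B(r)$ for every $r$, hence $\liminf_R\ge \sup_r \nu_r/\vol B(r)\ge \limsup_R$, with finiteness of the supremum supplied by Faber--Krahn; no control of $\mathcal{NI}$ is needed. What your route buys is a shorter proof tailored to first moments; what the paper's route buys is the stronger random-variable limit, which they actually use later (e.g.\ for the discrepancy functional in Section~\ref{sec:KW}).
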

The last condition means that $F_0$ can be approximated in $C^s(K)$, for $K$ any compact set, by functions in $\mathcal{F} L^2_H(\mu)$.
\begin{proof}
	Let $r>0$,	we define:
	$$
	\Phi_r^\textnormal{a}(G) \coloneqq \frac{\mathcal{N}(G,\cdot,r)}{\vol B(r)}, \qquad
	\Psi_r^\textnormal{a}(G) \coloneqq \frac{\mathcal{NI}(G,\cdot,R)}{\vol B(r)}\,,
	$$	
	where $\mathcal{NI}(G,\cdot,R)$ denotes de number of nodal domains intersecting the boundary of $S(R)$.
	Since $F_{\mu}$ is translation invariant by Bochner's Theorem,  Wiener's Ergodic Theorem  \cite[Section 6]{NS} implies\footnote{We can apply the Ergodic Theorem, despite our field might not be ergodic (by Fomin-Grenander-Maruyama Theorem, see, e.g., \cite{NS}, as $\mu$ might have atoms) because we only need the translational invariance.} that
	\begin{align}
		&\frac1{\vol B(R)}\int_{B(R)} \Phi_r^\textnormal{a}(\tau_v \cdot)\, dv\to \bar{\Phi}^\textnormal{a}_r  & R\rightarrow \infty \label{2.2.1}
	\end{align}
	a.s. and in $L^1$,  where $\tau_v G\coloneqq G(\cdot+v)$ for $v\in \R^m$. Moreover, $\bar{\Phi}^\textnormal{a}_r$ is invariant under $\tau_v$, $\bE[\bar{\Phi}^\textnormal{a}_r]=\bE[{\Phi}^\textnormal{a}_r]$ and similarly for $\Psi_r^\textnormal{a}$.
	
	Thanks to the integral-geometric sandwich (\cite[Lemma 1]{NS}), and following the proof of \cite[Theorem 1]{NS}, see also the proof of Proposition \ref{semi-locality m>2}, we have that \eqref{2.2.1} implies that the limit
	\[
	c(G,\cdot,\mu) \coloneqq \lim_{R\to\infty} \frac{\mathcal{N}(G,\cdot,R)}{\vol B(R)}
	\]
	exists a.s. and in $L^1$. Note that it is not a constant but a random variable, thus letting
	$c(\cdot,\mu)\coloneqq \bE\left(c(F_\mu,\cdot,\mu)\right),$ the first statement of the theorem follows from the $L^1$ convergence. 
	Let us now consider the positivity of the constants. From \eqref{2.2.1} and the integral-geometric sandwich, we have 
	\begin{equation}\nonumber
		\bE\left(\Phi_r^\textnormal{a}\right) \le c(\cdot,\mu) \le 	\bE\left(\Phi_r^\textnormal{a}\right) + 	\bE\left(\Psi_r^\textnormal{a}\right).
	\end{equation}	
	Thus, in order to prove that $c(\cdot,\mu)>0$, thanks to Chebyshev's inequality, it is enough to show that 
	\begin{align}\label{claim positivity}
			\bP\big(\big\{ F_\mu\in C^s(\R^m): \mathcal{N}(F_\mu,\mathcal{S},r)\ge 1\big\}\big)>0. 
	\end{align}
 Let $F_0$ be as in the statement of the theorem, by \cite[Appendix A.7, A.12]{NS} for $s=0$ and \cite[Proposition 3.8]{EPRBeltrami} for general $s$, $F_0$ is in the support of the measure on the space of $C^s$ functions of our random field $F_\mu$, that is, for any compact set $K\subset\R^m$ and
	 each~$\varepsilon>0$,
	 \begin{equation}\label{Support}
	 	\bP\big(\big\{ F_\mu\in C^s(\R^3): \|F_\mu-F_0\|_{C^s(K)}<\varepsilon\big\}\big)>0\,.
	 \end{equation} 
	Now, as the connected component of $F_0$ in $\mathcal{S}$ is regular by hypothesis, we can apply Thom's Isotopy, Theorem \ref{ThThom} below, to conclude that if 
	\begin{align}
	\|F-F_0\|_{C^s(K)}<\delta, \label{3.1}
	\end{align}
	where the connected component of $F_0$ is in the interior of $K$, then $F$ also has a connected component diffeomorphic to $\mathcal{S}$. Finally \eqref{claim positivity} follows from \eqref{Support}, taking $\varepsilon=\delta$ in \eqref{3.1}. 	We can proceed similarly for nesting trees and conclude the proof.  	
\end{proof}

\begin{exmp}\label{RemPosCte}
	If $\mu=\sigma_{m-1}$, the Lebesgue measure on the sphere, then it is enough to show $F_0$ is a solution to the Helmholtz equation as this set equals $\overline{\mathcal F\, L^2_{\tt H}(\sigma)}^{C^s}$ \cite[Proposition 6]{CS19}.  However, in this case, the construction of the particular functions for topological classes gives $F_0$ as a (finite) sum of the form \cite{EPS13,CS19}
	\begin{equation*}
		F_0(x)=(2\pi)^{\frac n2} \sum_{l=0}^L\sum_{m=1}^{d_l} a_{lm} \,
		Y_{lm}\bigg(\frac x{|x|}\bigg) \,\frac{J_{l+\frac n2-1}(|x|)}{|x|^{\frac n2-1}}\,,
	\end{equation*}
	so by \cite[Proposition 2.1]{EPR19} or by
	Herglotz Theorem~\cite[Theorem 7.1.28]{Hor15} and the rapid decay of Bessel functions, $F_0\in{\mathcal F\, L^2_{\tt H}(\sigma)}$. For instance, the example mentioned above could be the spherical Bessel functions
	$$
	C_m\frac{J_{\frac n2-1}(|x|)}{|x|^{\frac n2-1}}=\int_{\S^{m-1}}e^{i\braket{x}{\omega}}d\sigma_{m-1}(\omega)
	$$
	where $C_m$ is as in \eqref{eq:kernel intro}. They are radial solutions to the Helmholtz equation, so the nodal sets are spheres with the radii the zeros of $J_{\frac n2-1}(|x|)$. See Figure \ref{FigNod1} for the case of $n=2$. This proves $c_{NS}(\sigma_{m-1})\coloneqq c(H(m-1),\sigma_{m-1})>0$ as $c(\sigma_{m-1},\{[\S^{m-1}]\})>0$. See also \cite[Condition ($\rho4$), Appendix C]{NS} for sufficient conditions to ensure $c_{NS}(\mu)>0$ and \cite{RI18} for an explicit lower bound together with some numerical estimates
\end{exmp}
The stability property of the nodal set used above is given by the following theorem.
\begin{thm}[Compact Thom's Isotopy Theorem]\label{ThThom}
	Let $V$ be an domain in $\R^m$ and let $h:V\to\R$ be a $C^\infty$ map. Consider a (compact) connected component $L\subset\subset V$ (i.e., which is compactly embedded in $V$) of the zero set $h^{-1}(0)$ and suppose that:
	$$
	|\nabla h\vert_{L}|>0.
	$$
	Then, given any $\varepsilon>0$ and $p\ge 1$, there exists some $U\subset\subset V$ neighbourhood of $L$ and $\delta>0$ such that for any smooth function $g:U\to\R^m$ with
	\begin{equation} \nonumber
		\| h- g\|_{C^p(U)}<\delta
	\end{equation}
	one can transform $L$ by a diffeomorphism $\Phi$ of $\R^m$ so that $\Phi(L)$ is the intersection of the zero set $g^{-1}(0)$ with $U$. The diffeomorphism $\Phi$ only differs from the identity in a proper subset of $U$ (i.e., a subset $\subsetneq U$) and satisfies $\|\Phi-\textnormal{id}\|_{C^p(\R^m)}<\varepsilon$.
\end{thm}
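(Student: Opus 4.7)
The plan is to reduce the theorem to a local problem in a tubular neighbourhood of $L$, where the perturbed zero set becomes a graph over $L$, and then build the diffeomorphism $\Phi$ explicitly as a small normal translation cut off by a smooth bump. I work in the smooth category; the same argument goes through for $C^p$ perturbations.

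First I would invoke the regular value theorem. Since $|\nabla h|>0$ on the compact set $L$ and $L\subset\subset V$, there is a relatively compact open neighbourhood $U_0\subset\subset V$ of $L$ on which $|\nabla h|\ge c_0>0$. The normal bundle of $L\subset\R^m$ is a trivial line bundle (a regular level set of a real-valued function is orientable via $\nabla h/|\nabla h|$), so the tubular neighbourhood theorem furnishes smooth coordinates $(y,t)\in L\times(-\rho,\rho)\to U_0$ in which $h^{-1}(0)\cap U_0=L\times\{0\}$ and, after a further reparametrisation along gradient lines, $\partial_t h\ge c_0/2$ throughout $U_0$.

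Next I would fix $U\subset\subset U_0$ slightly smaller and apply the implicit function theorem. If $\|g-h\|_{C^1(U_0)}<\delta$ with $\delta$ small enough, then $\partial_t g\ge c_0/4$ on $U_0$, so for each $y\in L$ there is a unique $\psi(y)\in(-\rho/2,\rho/2)$ with $g(y,\psi(y))=0$. Differentiating the identity $g(y,\psi(y))=0$ repeatedly and using the lower bound on $\partial_t g$ gives, by induction on the order of differentiation, the quantitative estimate
\begin{equation*}
\|\psi\|_{C^p(L)} \;\le\; C_p\,\|g-h\|_{C^p(U_0)},
\end{equation*}
where $C_p$ depends only on $h$, $L$, $p$ and the tubular coordinates. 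Hence $\psi$ can be made arbitrarily small in $C^p(L)$ by shrinking $\delta$.

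I would then construct $\Phi$ explicitly. Pick a smooth cutoff $\chi:\R\to[0,1]$ with $\chi(0)=1$ and $\text{supp}\,\chi\subset(-\rho/4,\rho/4)$. In tubular coordinates set
\begin{equation*}
\Phi(y,t)\;\coloneqq\;\bigl(y,\;t+\chi(t)\psi(y)\bigr),
\end{equation*}
and extend $\Phi$ by the identity outside $U_0$. Since the Jacobian of the second coordinate is $1+\chi'(t)\psi(y)$, which is uniformly close to $1$ once $\|\psi\|_{C^1}$ is small, $\Phi$ is a smooth diffeomorphism of $\R^m$. By construction $\Phi(L)=\{(y,\psi(y)):y\in L\}=g^{-1}(0)\cap U$, and $\Phi=\text{id}$ outside the proper subset $\{|t|<\rho/4\}\subsetneq U$. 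Finally the bound $\|\Phi-\text{id}\|_{C^p(\R^m)}\lesssim\|\chi\|_{C^p}\|\psi\|_{C^p(L)}\le\varepsilon$ is achieved by taking $\delta$ small enough.

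The main obstacle I anticipate is the quantitative $C^p$ control: the implicit solution $\psi$ is defined only through $g(y,\psi(y))=0$, and producing the inductive bound for $D^\alpha\psi$ up to $|\alpha|=p$ requires careful chain-rule bookkeeping combined with the uniform lower bound on $\partial_t g$ on the compact manifold $L$. Once that estimate is in place, choice of parameters $\delta\ll\varepsilon$ closes the argument, and the construction of $\Phi$ and its properties follow by direct inspection.
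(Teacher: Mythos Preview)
Your argument is correct and essentially self-contained, whereas the paper takes a different route: it only carries out the construction of the neighbourhood $U$ (using the gradient flow of $X=\nabla h/|\nabla h|^2$ to show that the sublevel set $\{|h|<\eta\}$ connected to $L$ stays inside a compact region on which $|\nabla h|$ is bounded below), and then defers the actual existence of the diffeomorphism $\Phi$ and its $C^p$ smallness to \cite[Theorem~3.1]{EPS13}. Your tubular coordinates are in fact the same gradient-flow foliation dressed up differently; the genuine difference is that you proceed further and \emph{build $\Phi$ explicitly} as a cutoff normal push $\Phi(y,t)=(y,t+\chi(t)\psi(y))$, with $\psi$ obtained from the implicit function theorem and the quantitative bound $\|\psi\|_{C^p(L)}\lesssim\|g-h\|_{C^p}$ derived by inductive differentiation of $g(y,\psi(y))=0$ using $\partial_t g\ge c_0/4$. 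This buys you a fully elementary proof with transparent control of all constants, at the cost of the chain-rule bookkeeping you flag; the paper's approach is shorter on the page but relies on an external black box for the heart of the statement.
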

The proof follows from \cite[Theorem 3.1]{EPS13}, we reproduce some details for completeness. 
\begin{proof} We have to construct a domain $U$ and find some $\eta>0$ such that the component of $h^{-1}(B(0,\eta))$ connected with $L$ is contained in $U$ and $\inf_U\norm{\nabla h}>0$. For this purpose, let us define the following vector field:
	$$X(x):=\frac{\nabla h(x)}{\norm{\nabla h(x)}^2} $$
	which is well defined if the gradient does not vanish. Denote by $\varphi^t$, the associated flow, that is, the solution to $\partial_t\varphi^t(x)=X(\varphi^t(x))$.	Considering the derivative with respect to time, if $h(x)=0$ then $h(\varphi^t(x))=t,$ and 
	\begin{equation}\label{DerS}
	\norm{\partial_t\varphi^t(x)}	=\norm{X(\varphi^t(x))}=\frac{1}{\norm{\nabla h(\varphi^t(x))}}.
	\end{equation}
	By compactness and regularity of the connected component, $\norm{\nabla h\vert_{L}}\in [c, C]$, with $c>0$. Since $\varphi(t,x)\coloneqq \varphi^t(x) $ is a smooth map, if we define $H:\R\times L$ as $H(t,x)\coloneqq \norm{\nabla h(\varphi(t,x))}$, then $H^{-1}(c-\delta,\infty)$ is an open set of
	$\R\times L$, for any $\delta>0$,  and it includes $\{0\}\times L$. By compactness and the product topology, there exists a finite number of $t_i>0$, $U_i^L$ open sets of $L$ (induced topology) such that
	$$
	\norm{\nabla h(\varphi(t,x))}>c_1
	$$
	for $t\in(-t_i,t_i)$, $x\in U_i^L$ and $c_1\coloneqq c-\delta.$ If we define $\eta\coloneqq \frac{1}{2}\min\{t_i,c_1 d\},$ where $d\coloneqq \text{dist}(L,\partial V)$, then we claim that $U\coloneqq L_{+\eta/c_1}$ is the desired neighbourhood. Indeed, if $y$ is the component of $h^{-1}(B(0,\eta))$ connected with $L$, then $y=\varphi^{\eta'}(x)$ with $\eta'<\eta$, $x\in L$ so by \eqref{DerS} and Lagrange Theorem
	$$
	\norm{y-x}=\|\varphi^{\eta'}(x)-x\|\le \frac{\eta}{c_1},
	$$	
	hence, $y\in B(x,\eta/c_1)\subset U$. Furthermore, if $y\in U$, then $\forall~v\in\partial V$
	$$
	\norm{y-v}\ge\norm{x-v}-\norm{y-x}\ge d-d/2>0,
	$$
	where $y\in B(x,\eta/c_1)$ and $\eta/c_1<d/2$ by definition.
\end{proof}

\section{Bourgain's de-randomisation, proof of Theorem \ref{thm 1}.}
\label{proof thm 1}
The content of this section follows closely the proofs in \cite{BU,BW} to extend the ideas from $\mathbb{T}^2$ to $\R^m$. 
\subsection{The function $\phi_x$}
\label{notation}
Let $m\geq 2$ be fixed, $R\gg W>1$ be in section \ref{stament main reuslts}. Using hyperspherical coordinates, that is, writing $x\in \S^{m-1}$ as $x=G(\theta)$ where
$$
G(\theta)\coloneqq(\cos\pi\theta_1, \sin \pi\theta_2\cos \pi\theta_2,...,\\ \sin\pi\theta_1\cdots\sin\pi\theta_{m-2}\sin2\pi\theta_{m-1})
$$
such that $G\lvert_{(0,1)^{m-1}}$ is a diffeomorphism onto $\S^{m-1}\backslash S'$, where $S'$ is a set of measure zero, we identify $\S^{m-1}$ with $[0,1]^{m-1}$.  Now, let $K>1$ be a (large) parameter and divide $[0,1]^{m-1}$ into $K^{m-1}$ cubes and use hyper-spherical coordinates to divide the sphere into $K^{m-1}$ regions which we call $I_k$.   Let $\{\zeta^k\}\subset \S^{m-1}$ be the \textquotedblleft centres \textquotedblright of such regions (centre is defined again picking the centre in $[0,1]^{m-1}$ and projecting onto the sphere using hyper-spherical coordinates).  Finally, pick another parameter $\delta>0$ and let  $\mathcal{K}$ to be the set of $k$'s such that $k\in \cK$ if and only if
\begin{align}\label{MassDelta}
\mu_r( I_k)>\delta.
\end{align}
We will need the following two simple properties of this partition: 
\begin{claim} 
	\label{useful claim}We  have the following: 
	\begin{enumerate}
		\item[\textnormal{i)}] $\sum_{k\in \mathcal{K}}\mu_r(I_k)= 1- \sum_{k\not\in \mathcal{K}}\mu_r(I_k)= 1 + O(\delta K^{m-1})$
		\item[\textnormal{ii)}] If $r_n\in I_k$, then $\norm{r_n-\zeta^k}=O(K^{-1})$. 
	\end{enumerate}
\end{claim}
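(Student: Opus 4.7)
\textit{Proof plan for Claim \textnormal{\ref{useful claim}}.} Both statements are essentially bookkeeping and do not require any deep input; the strategy is simply to unpack the definitions carefully.

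For part (i), the plan is to use that $\{I_k\}_{k=1}^{K^{m-1}}$ partitions $\S^{m-1}$ up to the measure-zero set $S'$ where $G$ fails to be a diffeomorphism, and that $\mu_r$ is supported on the countable set $\{r_n\}$. I would first argue, possibly by perturbing $S'$ slightly (or noting that the $r_n$'s can be taken to lie in $\S^{m-1}\setminus S'$ without loss of generality, since the defining property of the sequence in Section \ref{r_n} is preserved by small perturbations), that $\sum_{k=1}^{K^{m-1}}\mu_r(I_k)=1$. Then, by the definition \eqref{MassDelta} of $\cK$, each index $k \notin \cK$ contributes at most $\delta$, and there are at most $K^{m-1}$ such indices, so that
\[
\sum_{k\notin\cK}\mu_r(I_k)\le \delta\, K^{m-1},
\]
which gives the claimed identity.

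For part (ii), I would use that the hyperspherical parametrisation
\[
G\colon [0,1]^{m-1}\longrightarrow \S^{m-1}
\]
is a smooth map on a compact set, hence Lipschitz with some constant $L_m$ depending only on $m$. Each region $I_k$ is by construction the $G$-image of a cube of side length $1/K$, whose Euclidean diameter is $\sqrt{m-1}/K$, and the centre $\zeta^k$ is defined as the image under $G$ of the centre of that cube. Thus, for any $r_n\in I_k$ with $r_n=G(\theta)$ and $\zeta^k=G(\theta^k)$,
\[
\|r_n-\zeta^k\|\le L_m\,\|\theta-\theta^k\|\le L_m\,\frac{\sqrt{m-1}}{K}=O(K^{-1}),
\]
with implicit constant depending only on $m$.

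The only subtle point is part (i): one must be sure that the partition $\{I_k\}$ really exhausts $\mu_r$-mass, i.e., that no $r_n$ falls on the measure-zero seam $S'$. Since $S'$ is a finite union of lower-dimensional sub-manifolds of $\S^{m-1}$ and the $r_n$'s are $\Q$-linearly independent and not contained in a hyperplane, one can arrange (for instance by a fixed rotation of the coordinate frame defining $G$, applied once and for all, or by replacing strict inequalities by half-open conventions in the definition of the cubes) that $\{r_n\}\cap S'=\emptyset$. This minor technicality aside, the claim is immediate.
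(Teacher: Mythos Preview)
Your proposal is correct and follows essentially the same argument as the paper: for (i) the paper simply notes there are at most $K^{m-1}$ indices in the complement of $\cK$, each with mass at most $\delta$; for (ii) the paper invokes the Lipschitz property of $G$ on $[0,1]^{m-1}$ exactly as you do. Your additional care about the seam $S'$ is not addressed in the paper's proof, but it is a minor technicality that does not affect the argument.
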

\begin{proof}
	i) follows from the fact that there are at most $K^{m-1}$ elements in the complement of $\cK$. ii) follows from the fact that $G\vert_{[0,1]^{m-1}}$ is a smooth function so it is Lipschitz and, writing $G(\theta_k)=\zeta^k$, we have 
	$$
	\norm{G(\theta)-G(\theta_k)}\le C_G\norm{\theta-\theta_k}.
	$$
\end{proof}
 As $r_n=-r_{-n}$, in order to count only one these points, we define $\cK^+$ as the set of $k\in\cK$ such that $(\zeta^k)_j>0$ with $j\coloneqq\max_{1\le i\le m}\{(\zeta^k)_i\neq 0\}$, where $(\zeta^k)_i$ denotes the $i$-th component of $\zeta^k$. Note that, by definition, 
\begin{align}\label{MassInfty}
 2N\delta\le 2N\mu_r(I_k)=\#\{|n|\le N\hspace{2mm}/\hspace{1mm}r_n\in I_k\}\rightarrow \infty \quad\text{ as } N\rightarrow \infty. 
\end{align}
 Finally, we define the auxiliary function, as in \eqref{eq:def phi}
\begin{align}\label{phi}
	\phi_x(y)\coloneqq \sum_{k\in\cK} \left[\frac{1}{\left(2N\mu_r(I_k)\right)^{1/2}}\sum_{r_n\in I_k} a_n e(r_n\cdot x)\right]{\mu_r(I_k)^{1/2}}e\left( \zeta^k\cdot y\right). 
\end{align}
The next lemma shows that $\phi_x$ is, on average, a good approximation of $F_x$.                    
\begin{lem}
	\label{first approx}
Let $F_x$ and $\phi_x$ be as in \eqref{Fx} and \eqref{phi} respectively, $R\gg W>1$ and $K,\delta>0$ be as in Section \textnormal{\ref{notation}} with $\delta<K^{-m+1}$, $s\geq 0$ be some integer and $l=\lfloor\frac{m}{2}+1\rfloor$. Then, we have  
	\begin{align}                                                   \Bint_{B(R)}\norm{F_x- \phi_x}^2_{C^s(B(W))} \lesssim W^{2(s+l)+m}\left(\delta K^{m-1} + W^2K^{-2}\right)\left(1+O_N(R^{-\Lambda-3/2}) \right) \nonumber.
	\end{align}	

\end{lem}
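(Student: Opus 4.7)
My plan is to decompose $F_x-\phi_x=A_x+B_x$, where $A_x$ accounts for approximating $r_n$ by the cell-centre $\zeta^k$ in each good cell and $B_x$ collects the cells discarded by the threshold \eqref{MassDelta}:
\begin{align*}
A_x(y) &\coloneqq \frac{1}{\sqrt{2N}}\sum_{k\in\cK}\sum_{r_n\in I_k} a_n\, e(r_n\cdot x)\bigl[e(r_n\cdot y)-e(\zeta^k\cdot y)\bigr],\\
B_x(y) &\coloneqq \frac{1}{\sqrt{2N}}\sum_{k\notin\cK}\sum_{r_n\in I_k} a_n\, e(r_n\cdot x)\, e(r_n\cdot y).
\end{align*}
Since $l>m/2$, the Sobolev embedding $H^l(B(1))\hookrightarrow C^0(B(1))$ rescaled to $B(W)$ gives $\|g\|_{C^s(B(W))}^2\lesssim W^{2l-m}\|g\|_{H^{s+l}(B(W))}^2$, a bound comfortably inside the $W^{2(s+l)+m}$ in the statement. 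It therefore suffices to estimate $\int_{B(W)}\Bint_{B(R)}|D_y^\gamma(A_x+B_x)(y)|^2\,dx\,dy$ for each multi-index with $|\gamma|\le s+l$.

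Expanding the square and applying Fubini produces diagonal terms plus cross terms of the form $\Bint_{B(R)}e\bigl((r_{n_1}-r_{n_2})\cdot x\bigr)\,dx$. The $\mathbb Q$-linear independence assumption via \eqref{1} ensures $|r_{n_1}-r_{n_2}|\ge\gamma(N,2)>0$ whenever $n_1\ne n_2$, and the classical radial Fourier evaluation of $\mathbf 1_{B(R)}$ in terms of Bessel functions yields
$$
\Bigl|\Bint_{B(R)}e(\xi\cdot x)\,dx\Bigr|\lesssim \frac{1}{R^{\Lambda+3/2}|\xi|^{\Lambda+3/2}},\qquad R|\xi|\gg 1,
$$
using $(m+1)/2=\Lambda+3/2$. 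With at most $(2N)^2$ off-diagonal pairs, their total contribution is $O_N(R^{-\Lambda-3/2})$ times the diagonal bound, accounting for the multiplicative $(1+O_N(R^{-\Lambda-3/2}))$ factor in the lemma.

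For the diagonal contribution from $A_x$, Claim \ref{useful claim}(ii) gives $|r_n-\zeta^k|\lesssim K^{-1}$ on each good cell, so for $|y|\le W$ and $|r_n|,|\zeta^k|\le 1$ the mean value theorem applied separately to the polynomial factor $r_n^\gamma$ and to the oscillation $e(r_n\cdot y)$ produces $\bigl|r_n^\gamma e(r_n\cdot y)-(\zeta^k)^\gamma e(\zeta^k\cdot y)\bigr|\lesssim W/K$ uniformly in $|\gamma|\le s+l$. Summing the squared modulus over at most $2N$ admissible indices and dividing by $2N$ yields a diagonal bound $(2\pi)^{2|\gamma|}W^2/K^2$. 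For $B_x$, Claim \ref{useful claim}(i) bounds the discarded fraction by $\sum_{k\notin\cK}\mu_r(I_k)\le \delta K^{m-1}$, contributing $(2\pi)^{2|\gamma|}\delta K^{m-1}$. Integrating over $y\in B(W)$ (volume $\lesssim W^m$), summing over the $O((s+l)^m)$ multi-indices, and substituting into the Sobolev bound finishes the proof.

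No deep obstacle is expected: the only structural ingredient is the Bessel decay, whose implicit constant depends on $\gamma(N,2)^{-\Lambda-3/2}$ and on the $N^2$ pair count, both absorbed into the $O_N$ notation and thereby forcing $R$ to be taken large after $N$, in accordance with the order of limits in Theorem \ref{thm 1}. The exponent $W^{2(s+l)+m}$ is not sharp, so the $(2\pi)^{2|\gamma|}$ factors from differentiation and the volume factor from the $y$-integration fit inside it without further care.
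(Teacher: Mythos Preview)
Your proposal is correct and follows essentially the same route as the paper: the same splitting into the ``discarded cells'' piece $B_x$ and the ``approximation of $r_n$ by $\zeta^k$'' piece $A_x$ (the paper's two summands in its display \eqref{4}), the same passage from $C^s$ to $H^{s+l}$ via Sobolev embedding, the same Bessel-function evaluation \eqref{5} to show that all off-diagonal terms in $x$ contribute $O_N(R^{-\Lambda-3/2})$, and the same use of Claim \ref{useful claim} (i) and (ii) to produce the factors $\delta K^{m-1}$ and $W^2K^{-2}$ respectively. The only cosmetic difference is that the paper rescales $y$ to $B(1)$ first, so the $W$-powers appear through the derivatives $D^\alpha(e(\langle r_n,Wy\rangle))$, whereas you keep $y\in B(W)$ and let the $W$-powers enter through the Sobolev scaling and the volume of $B(W)$; the bookkeeping is equivalent.
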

   
\begin{proof}
	Using Sobolev's Embedding Theorem, we bound the ${C}^s(B(W))$ norm by the $H^{s+l}(B(W))$ norm, and rescaling to a ball or radius one, we obtain  
	\begin{align}
	\Bint_{B(1)}	\norm{F_{Rx}-\phi_{Rx}}^2_{C^s(B(W))}dx \lesssim \sum_{|\alpha| \leq s+l}\Bint_{B(1)} \norm{D^{\alpha}(F_{Rx}-\phi_{Rx})}_{L^2(B(W))}^2dx  \nonumber
	\end{align}
	where $D^{\alpha}$ is the multi-variable derivative. If $\alpha=0$,  denoting $\S^{m-1}_*\coloneqq\mathbb{S}^{m-1}\backslash \bigcup_{k\in\cK} I_k$ and rescaling the ball of radius $W$ to a ball of radius $1$, we have
	\begin{align}\label{4}
	&\Bint_{B(1)} \norm{(F_{Rx}-\phi_{Rx})}^2_{L^2(B(W))}dx \lesssim W^m\Bint_{B(1)} \norml{\frac{1}{2N} \sum_{r_n\in \S^{m-1}_*} a_n e(\braket{r_n}{R x})e\left(\langle r_n, Wy\rangle\right)}^2dx  \nonumber \\
	&+ W^m\Bint_{B(1)} \norml{ \sum_{k\in \mathcal{K}} \left(2N\right)^{-1/2}\sum_{r_n\in I_k}a_n e(\braket{r_n}{R x})\left(e\left(\langle r_n, Wy\rangle\right)- e\left(\langle \zeta ^k, Wy\rangle\right)\right)}^2 dx.
	\end{align}
 To evaluate the integrals in \eqref{4}, we will need the following claim: 
		\begin{align}\label{5}
\Bint_{\!B(1)} e(\langle r_n-r_{n'},Rx\rangle)dx= \begin{cases}
1 & n=n' \\
C_m\dfrac{J_{\Lambda+1}(2\pi R\norm{r_n-r_{n'}})}{(R\norm{r_n-r_{n'}})^{\Lambda+1}} & n \neq n' 
\end{cases} = \delta_{n,n'} +O_N(R^{-\Lambda-3/2}) 
\end{align}
Indeed, by the Fourier Transform of spherical harmonics \cite[Proposition 2.1]{EPR19}:  
	\begin{equation}\label{FouSH}
	\int_{\S^{m-1}}Y_{\ell
			}e^{i \braket{x}{\cdot}}d\sigma=(2\pi)^{\frac{m}{2}}\,(-i)^\ell\, Y_{\ell} \left(\frac x{|x|}\right) \frac{J_{\ell + \Lambda}(|x|)}{|x|^{\Lambda}}\quad \Lambda\coloneqq\frac{m-2}{2},
	\end{equation}
	where $\ell$ is the index associated with the eigenvalue and $J_\alpha$ represents the Bessel function of first order and index $\alpha$. Setting $\ell=0$ in \eqref{FouSH} and using polar coordinates:
	\begin{equation}
	\label{3.1.1}
	\int_{B(1)} e^{i2\pi \braket{x}{y}}dy=(2\pi)\int_0^1 r^{m/2} \frac{J_{\Lambda}(r 2\pi|x|)}{|x|^\Lambda} \, d r=\frac{J_{\Lambda+1}(2 \pi  |x|)}{|x|^{\Lambda+1}}.
	\end{equation}
	Moreover, by the standard asymptotic expansion of Bessel functions \cite[Chapter 7]{Wbook}:
	\begin{equation}\label{jal}
	J_\alpha(z) = \sqrt{\frac{2}{\pi z}} \, \cos\bigg(z-\frac{\pi \alpha}{2}-\frac{\pi}{4}\bigg) +  O_\alpha(z^{-3/2})
	\end{equation}
	Thus, for $n\neq n'$, using  \eqref{3.1.1} and \eqref{jal}, and bearing in mind \eqref{1},  \eqref{5} follows upon noticing that 
	$$
	J_{\Lambda+1}(2\pi R\norm{r_n-r_{n'}})=O\left((R\norm{r_n-r_{n'}})^{-1/2}\right)=O_N(R^{-1/2}).
	$$

	In order to bound the first term of the RHS of \eqref{4}, we expand the square, use Fubini and \eqref{5} to obtain 
\begin{align}\label{5.1}
	&\frac{W^m}{2N}	\int_{B(1)}dy\Bint_{\!B(1)}dx\sum_{r_n\in \S^{m-1}_*}\sum_{r_{n'}\in \S^{m-1}_*} a_n \overline{a}_{n'} e(\braket{r_n-r_{n'}}{R x})e\left(\langle r_n-r_{n'}, Wy\rangle\right)=\nonumber\\
	&=\frac{W^m}{2N}\int_{B(1)}dy	\sum_{r_n\in \S^{m-1}_*} |a_n|^2+ O_N\left( R^{-\Lambda-3/2}\right) \sum_{r_n\in \S^{m-1}_*}\sum_{r_{n'}\in \S^{m-1}_*}a_n \overline{a}_{n'} e\left(\langle r_n-r_{n'}, Wy\rangle\right),
\end{align}
with $r_{n'}\neq r_n$ in the second summand. Since $|a_n|=1$, bearing in mind \eqref{MassDelta} and using Claim \ref{useful claim}, we can bound \eqref{5.1} by
\begin{align}
\text{RHS}\eqref{5.1}\lesssim W^m \delta K^{m-1}\left(1+O_N(R^{-
	\Lambda-3/2})\right). \label{5.3}
\end{align}	
For the second summand of the RHS of \eqref{4} we proceed similarly, taking into account Claim \ref{useful claim}, we have
	$$
	|e\left(\langle r_n, Wy\rangle\right)- e\left(\langle \zeta ^k, Wy\rangle|\right)\le \norm{r_n-\zeta ^k}\norm{W y}\lesssim \frac{W}{K}.
	$$
	Thus, expanding the square and using  Fubini and \eqref{5}, we can bound the second term on the right hand side of \eqref{4} as 
	\begin{align}
		&\Bint_{B(1)} \norml{ \sum_k \left(2N\right)^{-1/2}\sum_{r_n\in I_k}a_n e(\braket{r_n}{R x})\left(e\left(\langle r_n, Wy\rangle\right)- e\left(\langle \zeta ^k, Wy\rangle\right)\right)}^2 dx\lesssim \nonumber\\
		&\lesssim \sum_k \left(2N\right)^{-1}\sum_{r_n\in I_k} |1-e\left(\langle r_n-\zeta^k, Wy\rangle\right)|^2+|\sum_{r_n\neq r_{n'}}	C_m\dfrac{J_{\Lambda+1}(2\pi R\norm{r_n-r_{n'}})}{(R\norm{r_n-r_{n'}})^{\Lambda+1}}\times\nonumber\\
		&\left.\times (2N)^{-1} a_n\overline{a}_m \left(e\left(\langle r_n, Wy\rangle\right)- e\left(\langle \zeta ^k, Wy\rangle\right)\right) \left(e\left(\langle r_{n'}, Wy\rangle\right)- e\left(\langle \zeta ^{k'}, Wy\rangle\right)\right)\right|\lesssim \nonumber\\
		&\lesssim\hspace{1mm} W^2K^{-2}(1+ O_N(R^{-\Lambda-3/2})). \label{5.4}
	\end{align}
	All in all, using \eqref{5.3} and \eqref{5.4},  we obtain 
	\begin{align}
	\Bint_{B(1)} \norm{(F_{Rx}-\phi_{Rx})}_{L^2(B(1))}dx \lesssim W^m\left(\delta K^{m-1} + W^2K^{-2}\right)\left(1+ O_N(R^{-\Lambda-3/2})\right). \nonumber
	\end{align}
	For $\alpha \neq 0$, observe that if we differentiate with respect to $x$,
	$$
	D^{\alpha}(e(y,x))=(2\pi i)^{|\alpha|}\left(\prod_{i=1}^m y_i^{\alpha_i}\right)e(y,x).
	$$
	Thus,
	$$
	D^{\alpha}(e\left(\langle r_n, Wy\rangle\right))=(2\pi W)^{|\alpha|} e\left(\langle r_n, Wy\rangle\right)\prod_{i=1}^m r_{n,i}^{\alpha_i}.
	$$
	Also,	
	\begin{equation*}
	|D_\alpha\left(e\left(\langle r_n, Wy\rangle\right)- e\left(\langle \zeta ^k, Wy\rangle\right)\right)|=
	(2\pi W)^{|\alpha|} \abs{ \prod_{i=1}^m r_{n,i}^{\alpha_i}-\prod_{i=1}^m (\zeta_{i}^k)^{\alpha_i}e\left(\langle \zeta ^k-r_n, Wy\rangle\right)}.
	\end{equation*}
	Now, adding and subtracting $\prod_{i=1}^m (\zeta_{i}^k)^{\alpha_i}$ and using the triangle inequality, gives:
	\begin{align*}
	&|D_\alpha\left(e\left(\langle r_n, Wy\rangle\right)- e\left(\langle \zeta ^k, Wy\rangle\right)\right)|\le (2\pi W)^{|\alpha|} \Bigg( \abs{ \prod_{i=1}^m r_{n,i}^{\alpha_i}-\prod_{i=1}^m (\zeta_{i}^k)^{\alpha_i}}+\\
	&\quad+\abs{\prod_{i=1}^m (\zeta_{i}^k)^{\alpha_i}}\abs{ 1- e\left(\langle \zeta ^k-r_n, Wy\rangle\right) } \Bigg)
		\end{align*}
	Since $|e^{ix}-e^{iy}|\le|x-y|$, we bound the last expression by:
	\begin{equation}\label{5.2}
	W^{|\alpha|}\left(\abs{\zeta^{k}-r_n}+W\abs{\zeta^{k}-r_n}\right) \lesssim\frac{W^{|\alpha|+1}}{K}.
	\end{equation}
	Hence, following a similar argument as in the case $\alpha=0$, combined with \eqref{5.2}, we conclude that
	{\small \begin{align}
	\int_{B(1)} \norm{D^{\alpha}(F_{Rx}-\phi_{Rx})}^2_{L^2(B(1))}dx \lesssim W^{2|\alpha|+m}\left(\delta K^{m-1} + W^2K^{-2}\right)\left(1+O_N(R^{-\Lambda-3/2}) \right) \nonumber
 	\end{align}}
 finishing the proof.
\end{proof}

\subsection{Gaussian moments}
\label{Gaussian moments}
Let us define:
$$b_k(x)\coloneqq\frac{1}{(2N\mu_r(I_k))^{1/2}}\sum_{r_n\in I_k} a_n e(r_n\cdot R x).$$ 
 We are going to show that the pseudo-random vector $(b_k)_{k\in \mathcal{K}}$  approximates a Gaussian vector $(c_k)_{k\in \mathcal{K}}$, where $c_k$ are i.i.d. complex standard Gaussian random variables subject to $\overline{c}_k=c_{-k}$. More specifically, we prove the following quantitative lemma:
\begin{lem}
	\label{moments}
Let  $N\geq 1$, $R>0$, $b_k$ be as above and $\cK^+,K,\delta$ be as in Section \textnormal{\ref{notation}}. Moreover, let $D>1$ be some large parameter and fix two sets of positive  integers $\{s_k\}_{k\in \cK^+}$ and $\{t_k\}_{k\in \cK^+}$  such that $ \sum_k s_k+t_k \leq D $, then we have 
\begin{align}
\left|\Bint_{B(R)}\left[\prod_{k\in \cK^+}b_k^{s_k}\overline{b}_k^{t_k}\right]- \mathbb{E}\left[\prod_{k\in \cK^+}c_k^{s_k}\overline{c}_k^{t_k}\right]\right|= O_D(\delta^{-1}N^{-1}) + O_{N,D}(R^{-\Lambda -3/2}). \nonumber
\end{align}

\end{lem}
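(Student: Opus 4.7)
The plan is to expand $\prod_{k\in\cK^+} b_k^{s_k}\overline{b}_k^{t_k}$ as a sum over tuples of indices, integrate over $B(R)$ term-by-term using the Bessel integral identity \eqref{5}, and then invoke the $\mathbb{Q}$-linear independence of $\{r_n\}$ in the form \eqref{1} to identify the diagonal contributions. Writing the product out gives
\[
\prod_{k\in\cK^+} b_k^{s_k}\overline{b}_k^{t_k} = \prod_{k}\bigl(2N\mu_r(I_k)\bigr)^{-\frac{s_k+t_k}{2}}\sum_{\vec n,\vec n'} \Bigl(\prod_k a_{\vec n_k}\overline{a}_{\vec n'_k}\Bigr) \, e(v\cdot Rx),
\]
where $\vec n_k$ ranges over $s_k$-tuples of indices $n$ with $r_n\in I_k$, similarly $\vec n'_k$, and $v\coloneqq \sum_k\sum_i r_{n_{k,i}} - \sum_k\sum_j r_{n'_{k,j}}$. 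Since $\sum_k(s_k+t_k)\le D$, there are $O_{N,D}(1)$ such terms (after the normalising factors are absorbed). Averaging $e(v\cdot Rx)$ over $B(R)$ gives $1$ if $v=0$ and $O_N(R^{-\Lambda-3/2})$ otherwise by \eqref{5}, so the total off-diagonal contribution is $O_{N,D}(R^{-\Lambda-3/2})$.

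For the diagonal $v=0$ terms, the $\mathbb{Q}$-linear independence \eqref{1} (applied with $T=D$) forces the positive $r$'s to pair up with the negative $r$'s. Since the $I_k$'s are pairwise disjoint and each $r_n$ lies in exactly one $I_k$, this pairing must occur within each $I_k$ separately, and in particular requires $s_k=t_k$ for every $k\in\cK^+$; otherwise the diagonal vanishes. On the Gaussian side, as $\{c_k\}_{k\in\cK^+}$ are i.i.d.\ standard complex Gaussians, the expectation factorises as $\prod_k \mathbb{E}[c_k^{s_k}\overline{c}_k^{t_k}]$, which equals $\prod_k s_k!$ when $s_k=t_k$ for all $k$ and vanishes otherwise.

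When $s_k=t_k=:q_k$ for all $k$, the diagonal count at each $k$ is the number of ordered pairs $(\vec n_k,\vec n'_k)$ with identical multisets; the coefficient $\prod_k a_{\vec n_k}\overline{a}_{\vec n'_k}$ then reduces to $\prod_n|a_n|^{2q_{k,n}}=1$. Writing $M_k\coloneqq 2N\mu_r(I_k)\ge 2N\delta$ by \eqref{MassDelta}, tuples with all entries distinct contribute $M_k(M_k-1)\cdots(M_k-q_k+1)\cdot q_k! = q_k!\,M_k^{q_k}\bigl(1+O_{q_k}(M_k^{-1})\bigr)$, where the factor $q_k!$ counts the permutations $\vec n'_k$ of $\vec n_k$, while tuples containing a repeat contribute $O_{q_k}(M_k^{q_k-1})$. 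Dividing by $M_k^{q_k}$ and then multiplying over $k$, using that only $O(D)$ indices have $q_k>0$ and that $\sum q_k\le D$, yields $\prod_k q_k! + O_D(\delta^{-1}N^{-1})$, which matches the Gaussian side up to the claimed error.

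The heart of the argument is the combinatorial matching step: one has to verify that $\mathbb{Q}$-linear independence together with the disjointness of $\{I_k\}$ forces all pairings strictly within each $I_k$ (ruling out cross-$I_k$ cancellations), and then count these pairings precisely enough to recover the Gaussian moment $\prod_k q_k!$. Controlling the near-diagonal error from repeated indices is where the mass condition $\mu_r(I_k)>\delta$ is essential, as it ensures $M_k\ge 2N\delta$ and hence a relative error of at most $(2N\delta)^{-1}$, giving the announced $O_D(\delta^{-1}N^{-1})$.
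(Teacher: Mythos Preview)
Your proposal is correct and follows essentially the same strategy as the paper: expand the product, integrate term by term using \eqref{5} and \eqref{1} to isolate the diagonal $v=0$, observe that $\mathbb{Q}$-linear independence together with disjointness of the $I_k$ forces $s_k=t_k$ and matching multisets within each $I_k$, and then count. The paper organises the counting slightly differently---it first proves the single-$k$ case as a separate claim, stratifying by the number $d_k$ of distinct $r_n$'s used---but your ``all-distinct versus at-least-one-repeat'' dichotomy is equivalent and arguably cleaner.
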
    
\begin{proof}
	For $c_k$, first note that the independence properties of the Gaussian variables~$c_k$ (which have zero mean)
	imply that $\bE
	(c_k\overline{c_{k'}})=0$ if $k'\not\in \{k,-k\}$. When
	$k'=k$ one has
	\[
	\bE [|c_k|^2]= \bE[ (\textnormal{Re}(c_k))^2]+ \bE[ (\textnormal{Im} (c_k))^2]=1\,,
	\]
	and when $k'=-k$,
	\[
	\bE [(c_k)^2]= \bE[ (\textnormal{Re} (c_k))^2]- \bE[ (\textnormal{Im}
	(c_k))^2]+2i \,\bE[ (\textnormal{Re} (c_k))(\textnormal{Im} (c_k))]=0\,.
	\]
	Therefore, 
	\begin{equation}\label{UncorrelationCoeff}
	\bE	(c_k\overline{c_{k'}})=\delta_{k k'}.
	\end{equation}	
	Thus by independence and  a similar calculation for $\bE\left[ c_k^{t_k}\bar{c}_k^{s_k}\right]$,
	\begin{equation}\label{NormalMom}
	\bE\left[\prod_{k\in\cK^+} c_k^{t_k}\bar{c}_k^{s_k}\right]=\prod_{k\in\cK^+} \bE\left[ c_k^{t_k}\bar{c}_k^{s_k}\right]=\prod_{k\in\cK^+} \delta_{t_k,s_k}s_k!.     
	\end{equation}
	For the moments of $b_k$, we can prove the following:
	\begin{claim}\label{claim:moment bk} For $k\in\cK^+$ we have
		$$
		\Bint_{B(R)}\left[b_k^{s_k}\overline{b}_k^{t_k}\right]=	\delta_{s_kt_k}\left(s_k!+O_{s_k}\left((\delta N)^{-1}\right)\right)+O_{D,N}\left(R^{-\Lambda-3/2}\right).
		$$  
	\end{claim}
	\begin{proof}[Proof of Claim \textnormal{\ref{claim:moment bk}}]
		We have that:
		$$
		b_k^{s_k}\overline{b}_k^{t_k}=|I_k|^{-(s_k+t_k)/2}\sum_{\mathcal{C}_k}\prod_{i=1}^{s_k}\prod_{j=1}^{t_k} a_{i,k}\overline{a}_{j,k} e(\braket{r_{i,k}-r_{j,k}}{Rx})
		$$
		where $r_{i,k}\coloneqq r_{n_{i,k}}\in I_k$, $a_{i,k}\coloneqq a_{n_{i,k}}$ and $\mathcal{C}_k$ represents the set of all possible choices of $n_{i,k}$ and $n_{j,k}$ with $i\in\{1,...,s_k\}$ and $j\in\{1,...,t_k\}$. Then, rescaling to a ball of radius $1$, we have 
		\begin{align}\label{MomentB}                                                             
			&\Bint_{B(R)}\left[\prod_{k\in \cK^+}b_k^{s_k}\overline{b}_k^{t_k}\right]= \left(\prod_{k\in\cK^+}|I_k|^{-(s_k+t_k)/2}\right)\sum_{\mathcal{C}}\prod_{k\in\cK^+} \prod_{i=1}^{s_k}\prod_{i=1}^{t_k} a_{i,k}\overline{a}_{j,k} \times\nonumber \\ 
			&\times \Bint_{B(1)} e\left(\sum_{k\in\cK^+}\left(\sum_{i=1}^{s_k} r_{i,k}-\sum_{j=1}^{t_k} r_{j,k}\right)\cdot Rx\right) dx
		\end{align}
		where $\mathcal{C}$ represents the set of all possible choices of $n_{i,k}$ and $n_{j,k}$ with $i\in\{1,...,s_k\}$, $j\in\{1,...,t_k\}$ and $k\in\cK^+$. To estimate this, let us begin by fixing $k$:
		\begin{align*}
			&\Bint_{B(R)}\left[b_k^{s_k}\overline{b}_k^{t_k}\right]= |I_k|^{-(s_k+t_k)/2}\sum_{\mathcal{C}_k} \prod_{i=1}^{s_k}\prod_{i=1}^{t_k} a_{i,k}\overline{a}_{j,k} \Bint_{B(1)}e\left(\left(\sum_{i=1}^{s_k} r_{i,k}-\sum_{j=1}^{t_k} r_{j,n}\right)\cdot Rx\right) dx.
		\end{align*}
		In the inner sum, 
		$$\sum_{i=1}^{s_k} r_{i,k}-\sum_{j=1}^{t_k} r_{j,k}=\sum_{r_n\in I_k}\alpha_nr_n-\sum_{r_n\in I_k}\beta_nr_n=\sum_{r_n\in I_k}\gamma_nr_n,$$
		with $\alpha_n,\beta_n,\gamma_n$ integers. By rational independence \eqref{1}, the sum vanishes if and only if  $\gamma_n=0$ for every $n$. So $\mathcal{C}_k$ can be divided into the combinations where $\gamma_n=0$ for every $n$, $\mathcal{C}_{k,1}$, and the remaining terms, $\mathcal{C}_{k,2}$:
		\begin{align}
			&\Bint_{B(R)}\left[b_k^{s_k}\overline{b}_k^{t_k}\right]=|I_k|^{-(s_k+t_k)/2}\Bigg(\sum_{\mathcal{C}_{k,1}} 1+ \nonumber\\
			&+\sum_{\mathcal{C}_{k,2}} \prod_{i=1}^{s_k}\prod_{i=1}^{t_k} a_{i,k}\overline{a}_{j,k} \Bint_{B(1)}e\left(\left(\sum_{i=1}^{s_k} r_{i,k}-\sum_{j=1}^{t_k} r_{j,n}\right)\cdot Rx\right)dx\Bigg). \label{5.5}
		\end{align}
		For the first term, note that if $\gamma_n=0$, then $\alpha_n=\beta_n$, thus $s_k=\sum_{r_n\in I_k}\alpha_n=\sum_{r_n\in I_k}\beta_n=t_k.$
		Then, we denote by $d_k\coloneqq\#\{n~|~\alpha_n\neq 0\}$ and write $
		\mathcal{C}_{k,1}=\bigsqcup_{d_k=1}^{s_k}\mathcal{C}_{k,1,d_k}
		$
		where $\mathcal{C}_{k,1,d_k}$ the set of indexes of $\mathcal{C}_{k,1}$ where the number of $\alpha_n\neq 0$ equals $d_k$. Thus, the first term on the right hand side of \eqref{5.5} is
		\begin{align}
			|I_k|^{-(s_k+t_k)/2}\Bigg(\sum_{\mathcal{C}_{k,1}} 1\Bigg)=\delta_{t_k,s_k}\frac{\sum_{d_k=1}^{s_k}\#\mathcal{C}_{k,1,d_k}}{|I_k|^{s_k}} \label{5.6.1}
		\end{align}
		Assume now that we fix the set of $\{\alpha_n\}$ and $d_k$, let us calculate the number of possible indexes in $\mathcal{C}_{k,1,d_k}$  the number of $\alpha_n\neq 0$ equals $d_k$. If we assume that $N$ is large enough such that $|I_k|\ge \max s_l$, which is possible by \eqref{MassInfty}, we may write
		$\mathcal{C}_{k,1,d_k}=\frac{s_k!}{\prod_{n\in\mathcal{J}_k}\alpha_n!},$
		where $\mathcal{J}_k$ is an index set associated with the $r_n\in I_k$.	Thus, 
		\begin{equation}\label{CardIdk}
			\#\mathcal{C}_{k,1,d_k}=\sum \left(\frac{s_k!}{\prod_{n\in\mathcal{J}_k}\alpha_n!}\right)^2
		\end{equation}
		where the sum runs over the possible combination of $\alpha_n$ such that $d_k=\#\{n~|~\alpha_n\neq 0\}$ and $\sum_{n\in\mathcal{J}_k}\alpha_n=s_k.$ If $d_k=s_k$, then $\alpha_k\le 1$ and
		$$
		\frac{	\#\mathcal{C}_{k,1,d_k}}{|I_k|^{s_k}}=\frac{s_k!^2}{\left(2N\mu_{r}(I_k)\right)^{s_k}}\frac{(2N\mu_{r}(I_k))!}{\left(2N\mu_{r}(I_k)-s_k\right)!s_k!}=s_k! (1-\varepsilon_{s_k,k}),
		$$
		as there are $\displaystyle\binom{|I_k|}{s_k}$ ways of choosing the elements with $2N\mu_{r}(I_k)=|I_k|$. We have that $\varepsilon_{s_k,k}>0$  will be:
		\begin{align*}
			1-\prod_{i=0}^{s_k-1}\left(1-\frac{i}{2N\mu_{r}(I_k)}\right)\le 1- &\left(1-\frac{s_k}{2N\mu_{r}(I_k)}\right)^{s_k}\\
			&=\sum_{i=1}^{s_k}\binom{s_k}{i}\left(\frac{s_k}{2N\mu_{r}(I_k)}\right)^i\lesssim_{s_k}(2N\delta)^{-1}.
		\end{align*}

		Now, consider the case when $1\le d_k<s_k$, the unlabelled sum in \eqref{CardIdk} will have
		$\displaystyle\binom{2N\mu_{r}(I_k)}{d_k}$ elements, thus it can be bounded by
		\begin{align*}
			|\#\mathcal{C}_{k,1,d_k}|< \frac{s_k!^2}{\left(2N\mu_{r}(I_k)\right)^{s_k}}&\frac{2N\mu_{r}(I_k)}{\left(2N\mu_{r}(I_k)-d_k\right)!d_k!}=\\
			&=\frac{s_k!^2}{\left(2N\mu_{r}(I_k)\right)^{s_k-d_k}d_k!}\frac{2N\mu_{r}(I_k)!}{\left(2N\mu_{r}(I_k)-d_k\right)!\left(2N\mu_{r}(I_k)\right)^{d_k}},
		\end{align*}
		with
		$$                                                                                        
		\frac{\left(2N\mu_{r}(I_k)\right)!}{\left(\left(2N\mu_{r}(I_k)\right)-d_k\right)!\left(2N\mu_{r}(I_k)\right)^{d_k}}=(1-\varepsilon_{d_k,k})
		$$
		and $\varepsilon_{d_k,k}= O_{d_k}(N\delta)^{-1}$. Therefore, the first term on the right hand side of \eqref{5.5} via \eqref{5.6.1} is 
		\begin{equation}\label{MomentFactorial}
			|I_k|^{(-s_k+t_k)/2}\sum_{\mathcal{C}_{k,1}}1 = 	\delta_{s_kt_k}\left(s_k!+O_{s_k}\left((\delta N)^{-1}\right)\right).
		\end{equation}	
		For the second term, by construction, the inner sum does not vanish, so:
		\begin{align*}
			\sum_{\mathcal{C}_{k,2}} \prod_{i=1}^{s_k}\prod_{i=1}^{t_k} a_{i,k}\overline{a}_{j,k} \Bint_{B(1)}&e\left(\left(\sum_{i=1}^{s_k} r_{i,k}-\sum_{j=1}^{t_k} r_{j,n}\right)\cdot Rx\right)dx=\\
			&=\sum_{\mathcal{C}_{k,2}} \prod_{i=1}^{s_k}\prod_{i=1}^{t_k} a_{i,k}\overline{a}_{j,k}C_m\dfrac{J_{\Lambda+1}\left(2\pi R\norm{\sum_{i=1}^{s_k} r_{i,k}-\sum_{j=1}^{t_k} r_{j,n}}\right)}{\left(R\norm{\sum_{i=1}^{s_k} r_{i,k}-\sum_{j=1}^{t_k} r_{j,n}}\right)^{\Lambda+1}}, 
		\end{align*}
		by \eqref{5}. Using \eqref{5} again and \eqref{1}, the second term is $O_{D,N}\left(R^{-\Lambda-3/2}\right)$. Thus, via  \eqref{5.5} and \eqref{MomentFactorial}, we finally obtain:
		$$
		\bE\left[b_k^{s_k}\overline{b}_k^{t_k}\right]=	\delta_{s_kt_k}\left(s_k!+O_{s_k}\left((\delta N)^{-1}\right)\right)+O_{D,N}\left(R^{-\Lambda-3/2}\right).
		$$  
	\end{proof} 
	
	Similarly, we claim that, if $k\neq k'$, then
	\begin{equation}\label{MomentsIndep}
	\Bint_{B(R)}\left[b_k^{s_k}\overline{b}_{k'}^{t_{k'}}\right]=O_{D,N}\left(R^{-\Lambda-3/2}\right).
	\end{equation}
	Indeed, as $k\neq k'$, the inner sum of \eqref{MomentB} will be
	$$\sum_{i=1}^{s_k} r_{i,k}-\sum_{j=1}^{t_{k'}} r_{j,k'}=\sum_{r_n\in I_k}\alpha_nr_n-\sum_{r_n\in I_{k'}}\beta_nr_n$$ 
	and by rational independence \eqref{1}, if the sum vanishes, then $\alpha_n=\beta_n=0$. Thus, there is only the contribution when the inner sum doesn't vanish and, as above, this term decays as $R$ goes to infinity due to \eqref{jal}. Now, we can deduce the expression for the general case of \eqref{MomentB}. For the inner sum we can write:
	$$
	\sum_{k\in\cK^+}\left(\sum_{i=1}^{s_k} r_{i,k}-\sum_{j=1}^{t_k} r_{j,k}\right)=\sum_{k\in\cK^+}\sum_{r_n\in I_k}\gamma_n^k r_n,
	$$	
	so the integral is, by rational independence \eqref{1} and \eqref{5},
	\begin{align*}
	\Bint_{B(1)}e&\left(\sum_{k\in\cK^+}\left(\sum_{i=1}^{s_k} r_{i,k}-\sum_{j=1}^{t_k} r_{j,k}\right)\cdot Rx\right)dx =\\
	&\hspace{3cm}=\begin{cases}
	1 & \gamma_n^k=0 \hspace{1.5mm}\forall n,k,\\
	C_m\dfrac{J_{\Lambda+1}(2\pi R\norm{\sum_{k\in\cK^+}\sum_{r_n\in I_k}\gamma_n^k r_n})}{(R\norm{\sum_{k\in\cK^+}\sum_{r_n\in I_k}\gamma_n^k r_n})^{\Lambda+1}} & \text{otherwise.}
	\end{cases} 
	\end{align*}
This splits $\mathcal{C}$ into $\mathcal{C}_1\subset\mathcal{C}$ of all choices of $n_{i,k},n_{j,k}$ such that $\gamma^k_n=0$ and $\mathcal{C}_2\coloneqq \mathcal{C}\backslash\mathcal{C}_1$, as in \eqref{5.5}. Now,
$$
\sum_{\mathcal{C}_1} \prod_{k\in\cK^+}\prod_{i=1}^{s_k}\prod_{i=1}^{t_k} a_{i,k}\overline{a}_{j,k}= \prod_{k\in\cK^+}\sum_{\mathcal{C}_{k,1}}\prod_{i=1}^{s_k}\prod_{i=1}^{t_k} a_{i,k}\overline{a}_{j,k}=\prod_{k\in\cK^+}	\delta_{s_kt_k}\left(s_k!+O_{s_k}\left((\delta N)^{-1}\right)\right)
$$
where we used \eqref{MomentFactorial} for the last equality. Finally, the sum over $\mathcal{C}_2$, arguing as above, it is going to be $O_{D,N}\left(R^{-\Lambda-3/2}\right)$.
\end{proof}
Similarly, we can prove that the function $F_x$ has (asymptotically) real Gaussian moments for its $L^p$ norms:
\begin{prop}\label{prop:Lp mom}
	Let $p$ be a positive integer. Then, 
	$$\lim_{N\to \infty}\limsup_{R\to \infty}\Big|\Bint_{B(R)} |F_x(y)|^{2p}dx- \frac{(2p)!}{p! 2^p}\Big|=0,~\lim_{N\to \infty}\limsup_{R\to \infty}\Big|\Bint_{B(R)} |F_x(y)|^{2p+1}dx\Big|=0,$$	
	uniformly in $y\in B(W)$.
\end{prop}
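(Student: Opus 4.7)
The plan is to proceed by direct moment computation, in the same spirit as Lemma \ref{moments}, by expanding the powers of $F_x$ and applying the identity \eqref{5}. Write
\begin{align*}
F_x(y)^{q} = \frac{1}{(2N)^{q/2}} \sum_{|n_1|,\ldots,|n_q|\le N} \Bigl(\prod_{i=1}^{q} a_{n_i}\Bigr)\, e\!\left( \textstyle\sum_{i}r_{n_i}\cdot y\right) e\!\left( \textstyle\sum_{i}r_{n_i}\cdot x\right),
\end{align*}
note that $F_x$ is real-valued (since $\overline{a_n}=a_{-n}$ and $r_{-n}=-r_n$), so $|F_x|^q=F_x^q$, then integrate in $x$ over $B(R)$ and apply \eqref{5} to each frequency $\sum_i r_{n_i}$.

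\textbf{Even moments.} Set $q=2p$. By the rational independence condition \eqref{1}, the frequency $\sum_{i=1}^{2p} r_{n_i}$ vanishes if and only if the indices can be paired up as $n_{i_1}=-n_{i_2}$. For each such pairing the product $\prod a_{n_i}$ reduces to a product of $p$ terms of the form $a_n a_{-n}=|a_n|^2=1$, and $e(0\cdot y)=1$, so the $y$-dependence disappears and the main term is independent of $y$. It remains to count the number of sequences: grouping by the profile $(c_k)_{k=1}^{N}$ where $c_k$ counts how many pairs use the absolute value $k$ (so $\sum_k c_k = p$), the count is
\begin{align*}
\#\{\text{admissible sequences}\}=\sum_{\substack{(c_k)\\ \sum c_k=p}}\frac{(2p)!}{\prod_k (c_k!)^2}\cdot\#\{\text{choices of }k\text{'s}\}.
\end{align*}
The dominant contribution comes from profiles with all $c_k\in\{0,1\}$, which gives $\binom{N}{p}(2p)!$ sequences; dividing by $(2N)^p$ yields $\tfrac{(2p)!}{p!\,2^p}(1+O_p(1/N))$. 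Profiles with some $c_k\ge 2$ contribute at most $O_p(N^{p-1})$ sequences, hence $O_p(1/N)$ after normalisation. The off-diagonal terms in \eqref{5} contribute a total error bounded by $(2N)^{p}\cdot O_{N,p}(R^{-\Lambda-3/2})$, which vanishes as $R\to\infty$ for fixed $N$. Taking first $R\to\infty$ and then $N\to\infty$ gives the desired limit $\tfrac{(2p)!}{p!\,2^p}$, uniformly in $y\in B(W)$ since the main term does not depend on $y$ and the error bounds are uniform.

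\textbf{Odd moments.} Set $q=2p+1$. By \eqref{1}, the sum $\sum_{i=1}^{2p+1} r_{n_i}$ is bounded below in absolute value by $\gamma(N,2p+1)>0$ for \emph{every} choice of indices (since $2p+1$ is odd, no pairing is possible). Consequently, the main term is absent entirely and only the Bessel-decay error from \eqref{5} survives, giving a contribution $(2N)^{(2p+1)/2}\cdot O_{N,p}(R^{-\Lambda-3/2})\to 0$ as $R\to\infty$ with $N$ fixed. Uniformity in $y\in B(W)$ is immediate since $|e(\sum r_{n_i}\cdot y)|=1$.

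\textbf{Main obstacle.} The computation is essentially combinatorial and parallels Claim \ref{claim:moment bk}; the only subtlety is bookkeeping the contributions of profiles $(c_k)$ with repeated absolute values and verifying that the total count of such ``collisions'' is a lower-order term of size $O_p(1/N)$. This will involve writing the count as $\sum_{(c_k)}\frac{(2p)!}{\prod(c_k!)^2}\cdot\frac{N!}{(N-|\{k:c_k>0\}|)!}$ and splitting according to the number of distinct absolute values used, with at most $p-1$ distinct values giving a factor $N^{p-1}$ against the denominator $N^p$. This is routine, so no genuinely hard step is expected.
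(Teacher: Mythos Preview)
Your approach is essentially identical to the paper's: expand $F_x^{q}$, integrate in $x$ using \eqref{5}, and count the tuples with $\sum_i r_{n_i}=0$ via rational independence. Your combinatorial bookkeeping (profiles $(c_k)$ with the dominant contribution from $c_k\in\{0,1\}$) is in fact slightly more careful than the paper's, which asserts $(2p)!$ orderings per profile and then counts $\binom{N+p-1}{p}$ profiles; the two counts agree to leading order $N^p/p!$, so the conclusion is the same.

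One genuine slip: your sentence ``$F_x$ is real-valued \ldots\ so $|F_x|^q=F_x^q$'' is false for odd $q$. In fact the statement with the absolute value cannot hold for $q=2p+1$: since $\Bint |F_x|^2\to 1$, the odd absolute moments are bounded below away from zero (e.g.\ by H\"older, $\Bint|F_x|^{2p+1}\ge (\Bint|F_x|^{2p})^{(2p+1)/(2p)}(\Bint 1)^{-1/(2p)}$ in the limit). What your argument actually shows---and what the paper's proof computes as well---is that the \emph{signed} odd moment $\Bint_{B(R)} F_x(y)^{2p+1}\,dx$ tends to $0$, which is the correct ``Gaussian moment'' statement. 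So drop the absolute-value claim and state the odd case for $F_x^{2p+1}$; with that correction your proof is complete and matches the paper.
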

\begin{proof}
	\begin{align*}
		\Bint_{B(R)}F_x^{p'}(y) dx=\frac{1}{(2N)^{p'/2}}\sum\prod_{i=1}^{p'}a_{n_i}e\left(\sum_{i=1}^{p'} r_{n_i}\cdot y\right)\Bint_{B(1)}e\left(Rx\cdot\sum_{i=1}^{p'}r_{n_i}\right)dx
	\end{align*}
where the sum $\Sigma$ means $\sum_{\{|n_i|\le N~:~1\le i\le {p'}\}}$. By \eqref{3.1.1}, the principal contribution will be when
$\sum_{i=1}^{p'} r_{n_i}=0$. In this case, if we define $\sum_{i=1}^{p'} r_{n_i}=\sum_{n=1}^N \alpha_n^+ r_n-\sum_{n=-N}^1 \alpha_n^- r_n=\sum_{n=1}^N \alpha_n r_n$, $\sum_{n=1}^N(\alpha_n^++\alpha_n^-)=2\sum_{n=1}^N\alpha_n^+=p'$ must be even, so for $p'=2p+1$ it will not be zero. For $p'=2p$ then, fixing a vector $\alpha^+\coloneqq(\alpha_n^+)_{n=1}^N$ there are $2p!$ ways of choosing $\{|n_i|\le N~:~1\le i\le 2p\}$ for that $\alpha^+$, so
	\begin{align*}
	\Bint_{B(R)}F_x^{2p} dx=\frac{1}{(2N)^{p}}(2p!)\sum 1+O_{N,p}(R^{-\Lambda-\frac32})
\end{align*}
by \eqref{1}, where the sum runs over all the possible $\alpha^+$. There are $\binom{N+p-1}{p}$ of those, so the leading term is:
\begin{equation*}
	\frac{1}{(2N)^{p}}(2p!)\frac{N+p-1!}{p!(N-1)!}=\frac{2p!}{2^pp!}\frac{N+p-1!}{(N-1)!N^p}=\frac{2p!}{2^pp!}(1+O_p(1/N)),
\end{equation*}
concluding the proof.
\end{proof}
\subsection{From deterministic to random: passage to Gaussian fields.}
The aim of this section is to prove the following technical proposition:
\begin{prop}
	\label{main prop pairing}
	Let $\phi_x$ be as in Section \textnormal{\ref{notation}}, $\varepsilon>0$, $W>1$ and $s\geq 0$ an integer.  
	Then there exist some  $K_0=K_0(\varepsilon,W,s)$,  $N_0=N_0(K,\varepsilon,W,s)$ and $R_0=R_0(N,\varepsilon,W,s)$ such that if  $K\geq K_0$, $\delta\lesssim K^{-m}$, $N\geq N_0$ and $R\geq R_0$ we have 
	$$
	d_P(\phi_x, F_{\mu}) < \varepsilon
	$$
where the convergence is with respect to the $C^s(B(W))$ topology. 
\end{prop}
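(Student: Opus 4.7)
The plan is to introduce an auxiliary discretized Gaussian field
\[
\Phi_K(y) := \sum_{k\in\cK^+}\mu_r(I_k)^{1/2}\bigl[c_k\, e(\zeta^k\cdot y) + \overline{c_k}\, e(-\zeta^k\cdot y)\bigr],
\]
with $c_k$ i.i.d.\ standard complex Gaussians, together with its limiting version $\tilde\Phi_K$ obtained by replacing $\mu_r(I_k)$ by $\mu(I_k)$, and to bound
\(
d_P(\phi_x,F_\mu) \le d_P(\phi_x,\Phi_K) + d_P(\Phi_K,\tilde\Phi_K) + d_P(\tilde\Phi_K,F_\mu).
\)
I will first choose $K$ large (depending only on $\varepsilon,W,s$) to make the third term below $\varepsilon/3$, then $N$ large enough to make the second below $\varepsilon/3$, and finally $R$ large enough to make the first below $\varepsilon/3$.

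\emph{Step 1: $\phi_x\Rightarrow\Phi_K$ as $N,R\to\infty$ for $K$ fixed.} Since $\phi_x$ is a deterministic, uniformly $C^s(B(W))$-bounded linear function of the pseudo-random vector $(b_k(x))_{k\in\cK^+}$, and $\Phi_K$ is the same linear function applied to the Gaussian vector $(c_k)$, it suffices to establish convergence of finite-dimensional distributions and tightness in $C^s(B(W))$. Finite-dimensional convergence follows from the method of moments: Lemma \ref{moments} shows that, as $N,R\to\infty$, every bounded joint mixed moment of $(b_k,\overline{b_k})$ matches the corresponding Gaussian moment of $(c_k,\overline{c_k})$, and Gaussian laws are determined by their moments. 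Tightness would be verified via Lemma \ref{tightness}: condition (i) follows from the second-moment bound $\Bint_{B(R)}|D^\alpha\phi_x(y)|^2\,dx\lesssim_\alpha 1$ for $|\alpha|\le s$, obtained by expanding the square and applying Lemma \ref{moments} together with $|\zeta^k|=1$; condition (ii) would come from a Sobolev argument as in Lemma \ref{first approx}, using the same moment estimate applied with $|\alpha|=s+\lfloor m/2\rfloor+1$ to control the modulus of continuity of the top-order derivatives.

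\emph{Step 2: $\Phi_K\to\tilde\Phi_K\to F_\mu$.} The three fields $\Phi_K,\tilde\Phi_K,F_\mu$ are all centred stationary Gaussians on $B(W)$, so convergence in $C^s(B(W))$ reduces to pointwise convergence of the covariance kernel and of all its derivatives up to order $2s$, together with tightness. The covariance of $\Phi_K$ is
\[
K_{\Phi_K}(y-y')=\sum_{k\in\cK}\mu_r(I_k)\,e\bigl(\zeta^k\cdot(y-y')\bigr),
\]
and that of $\tilde\Phi_K$ is the same sum with $\mu(I_k)$ in place of $\mu_r(I_k)$. The difference of these two kernels, together with all its derivatives up to order $2s$, is $O_K(\|\mu_r-\mu\|_{C^{2s}(\S^{m-1})^\ast})$, which vanishes as $N\to\infty$ by weak$^*$ convergence $\mu_r\to\mu$ against the smooth test functions $\xi\mapsto (2\pi i\xi)^\beta e(\xi\cdot(y-y'))$ evaluated at the midpoints $\zeta^k$. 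For $\tilde\Phi_K$ versus $F_\mu$, the kernel $K_{\tilde\Phi_K}$ is a Riemann sum for $\int_{\S^{m-1}} e(\xi\cdot(y-y'))\,d\mu(\xi)$: the contribution of cells discarded from $\cK$ is $O(\delta K^{m-1})$ by Claim \ref{useful claim}(i), and replacing $\xi\in I_k$ by $\zeta^k$ costs $O(W/K)$ by Claim \ref{useful claim}(ii), both of which vanish as $K\to\infty$ with $\delta\lesssim K^{-m}$. Tightness for $\{\tilde\Phi_K\}$ and $\{\Phi_K\}$ follows from Lemma \ref{tightness} together with the standard Gaussian estimates on their derivatives, which are controlled uniformly in $K$ by the same covariance computations.

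\emph{Main obstacle.} The principal difficulty lies not in the finite-dimensional convergence, but in the tightness part of Step 1: Lemma \ref{moments} only provides $L^2$-control in $x$ of the random quantities $D^\alpha\phi_x(y)$, which is too weak to bound the sup-norm directly. One is forced to keep track of roughly $\lfloor m/2\rfloor+1$ extra derivatives in order to invoke the Sobolev embedding $H^{s+l}\hookrightarrow C^s$ as in Lemma \ref{first approx}, and then translate the resulting $L^2$-bound into a quantitative Kolmogorov-Chentsov-style modulus-of-continuity estimate uniform in $N,R$. Additionally, the error terms $O_D((\delta N)^{-1})$ and $O_{N,D}(R^{-\Lambda-3/2})$ in Lemma \ref{moments} must be ordered carefully so that the triple limit really operates in the prescribed order $\lim_K\limsup_{N}\limsup_R$; this bookkeeping is the delicate part.
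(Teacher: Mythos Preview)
Your overall architecture matches the paper's: an intermediate discretized Gaussian field, finite-dimensional convergence via the method of moments (Lemma~\ref{moments}), tightness via Lemma~\ref{tightness}, and then convergence of the Gaussian approximant to $F_\mu$. The paper packages Step~2 slightly differently---it works with the single field $F_{\mu_K}$ (spectral measure $\mu_{K,N}$ as in \eqref{DefmuK}) and proves $d_P(\mu_{K,N},\mu)\to 0$ on $\S^{m-1}$, then invokes the general Lemma~\ref{LemConvGaussianFields} to pass to the fields; this sidesteps your Riemann-sum argument, which as written is slightly shaky because $\cK$ is defined via $\mu_r$, so the $\mu$-mass of the discarded cells is not directly controlled by Claim~\ref{useful claim}(i).

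The more interesting point is your ``main obstacle''. You are making the tightness harder than it is. Because $\phi_x$ is a \emph{finite} linear combination of exponentials with frequencies on $\S^{m-1}$, one has the elementary pointwise Lipschitz bound
\[
|D^\alpha\phi_x(y)-D^\alpha\phi_x(y')|\;\lesssim_\alpha\;\|y-y'\|\sum_{k\in\cK}|b_k(x)|\,\mu_r(I_k)^{1/2}\;\le\;\|y-y'\|\sum_{k\in\cK}|b_k(x)|.
\]
No Sobolev embedding or extra $\lfloor m/2\rfloor+1$ derivatives are needed. Since $(b_k)\xrightarrow{d}(c_k)$, the Continuous Mapping Theorem gives $\sum_k|b_k|\xrightarrow{d}\sum_k|c_k|$, a variable with finite mean; Portmanteau plus Chebyshev then immediately yields condition~(ii) of Lemma~\ref{tightness}. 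Condition~(i) follows from Chebyshev and the second-moment bound you already noted. So the ``delicate bookkeeping'' you flag is in fact a two-line argument, and the diagonal choice of $(N_j,R_j)$ to kill the $O_D$ and $O_{N,D}$ errors simultaneously (which the paper does spell out at the start of Lemma~\ref{PropConvMeasure1}) is routine.
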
 
To ease the exposition, we divide the proof of Proposition \ref{main prop pairing} into two lemmas. In the first lemma we introduce the auxiliary field $F_{\mu_K}$ where
\begin{align}\label{DefmuK}
\mu_K\coloneqq \mu_{K,N}\coloneqq \sum_{k\in\cK}\gamma_{K,N}\delta_{\zeta^k}~\text{ where }~\gamma_{K,N}\coloneqq\frac{\mu_r(I_k)}{\sum_{k\in\cK}\mu_r(I_k)}\equiv \frac{\mu_r(I_k)}{\kappa_K^2} .
\end{align}
We note that, by Claim \ref{useful claim}, $\kappa_K\to 1$ as $\delta K^{m-1}\to0$.
\begin{lem}\label{PropConvMeasure1}
 Let $\varepsilon>0$, $s\geq 0$ and $K\geq 1$, $\delta>0$ be as in Section \textnormal{\ref{notation}}. Then there exist some  $N_0=N_0(\delta,K, \varepsilon,W,s)$ and $R_0=R_0(N,\varepsilon,W,s)$ such that for all $N\geq N_0$, $R\geq R_0$, we have 
$$
d_P(\kappa_K^{-1}\phi_x, F_{\mu_K}) < \varepsilon
$$
where the convergence is with respect to the $C^s(B(W))$ topology. 
\end{lem}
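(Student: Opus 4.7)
The plan is to reduce the $C^s(B(W))$-level convergence to a finite-dimensional statement about the vector of coefficients $(b_k(x))_{k\in\cK^+}$ and then invoke Lemma \ref{moments} through the method of moments. First I would rewrite both fields in spectral form. Using the reality constraint $\overline{a}_n=a_{-n}$,
\begin{equation*}
\kappa_K^{-1}\phi_x(y)=\sum_{k\in\cK^+}\gamma_{K,N}^{1/2}\bigl(b_k(x)\,e(\zeta^k\cdot y)+\overline{b_k(x)}\,e(-\zeta^k\cdot y)\bigr),
\end{equation*}
while the atomic Gaussian field $F_{\mu_K}$ admits the representation
\begin{equation*}
F_{\mu_K}(y)=\sum_{k\in\cK^+}\gamma_{K,N}^{1/2}\bigl(c_k\,e(\zeta^k\cdot y)+\overline{c_k}\,e(-\zeta^k\cdot y)\bigr),
\end{equation*}
where $(c_k)_{k\in\cK^+}$ are i.i.d.\ standard complex Gaussian random variables. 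Since $\#\cK^+<\infty$, the map $\Psi_{K,N}\colon \mathbb{C}^{\#\cK^+}\to C^s(B(W))$ sending $(z_k)$ to $\sum_{k}\gamma_{K,N}^{1/2}\bigl(z_k e(\zeta^k\cdot y)+\overline{z_k}\,e(-\zeta^k\cdot y)\bigr)$ is bounded linear, and in particular continuous.

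Next, by the continuous mapping theorem applied to $\Psi_{K,N}$, it suffices to establish convergence in distribution, as $R\to\infty$ followed by $N\to\infty$, of the random vector $(b_k(x))_{k\in\cK^+}$ (with $x$ uniformly distributed on $B(R)$) to the Gaussian target $(c_k)_{k\in\cK^+}$ in $\mathbb{C}^{\#\cK^+}$. This is exactly where Lemma \ref{moments} enters: every mixed moment $\int_{B(R)}\prod_k b_k^{s_k}\overline{b}_k^{t_k}\,d\vol_R$ converges to $\bE\prod_k c_k^{s_k}\overline{c_k}^{t_k}$, with quantitative errors $O_D(\delta^{-1}N^{-1})+O_{N,D}(R^{-\Lambda-3/2})$ that vanish under the prescribed iterated limit. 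Since the law of $(c_k)_{k\in\cK^+}$ is Gaussian, its absolute moments grow no faster than factorially, so Carleman's condition holds and the joint distribution is uniquely determined by its moments; convergence of all joint moments therefore forces weak convergence in $\mathbb{C}^{\#\cK^+}$.

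Having obtained weak convergence of $\kappa_K^{-1}\phi_x$ to $F_{\mu_K}$ in the Polish space $C^s(B(W))$, I would finish by invoking that the Prokhorov metric metrizes weak convergence on such a space, which delivers thresholds $N_0(K,\delta,\varepsilon,W,s)$ and $R_0(N,\varepsilon,W,s)$ with the claimed property $d_P(\kappa_K^{-1}\phi_x,F_{\mu_K})<\varepsilon$. The main conceptual step is the reduction to a finite-dimensional moment problem through the continuous linear map $\Psi_{K,N}$; the genuine difficulty is already packaged inside Lemma \ref{moments}, where rational independence of $\{r_n\}$ and Bessel asymptotics combine to match the empirical moments of the $b_k$ with the Wick formula for complex Gaussians. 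The only residual subtlety is moment-determinacy of the complex Gaussian, which I would dispatch by reducing to real coordinates $\operatorname{Re} c_k,\operatorname{Im} c_k$ and invoking the classical Carleman theorem.
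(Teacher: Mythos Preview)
Your argument is correct and takes a genuinely different, more economical route than the paper. The paper proceeds by first establishing convergence of finite-dimensional distributions (via Lemma~\ref{moments}, the method of moments, and Cram\'er--Wold) and then separately verifying tightness of the laws of $\kappa_K^{-1}\phi_x$ in $C^s(B(W))$ by checking the two Arzel\`a--Ascoli--type conditions of Lemma~\ref{tightness}. You instead observe that $\kappa_K^{-1}\phi_x$ and $F_{\mu_K}$ are both images, under the \emph{same} continuous real-linear map $\Psi_{K,N}:\mathbb{C}^{\#\cK^+}\to C^s(B(W))$, of the finite-dimensional vectors $(b_k)_{k\in\cK^+}$ and $(c_k)_{k\in\cK^+}$; since $\#\cK^+<\infty$, weak convergence of the coefficient vectors (delivered by Lemma~\ref{moments} plus moment-determinacy of the Gaussian) transports through $\Psi_{K,N}$ by the continuous mapping theorem, and tightness in $C^s(B(W))$ comes for free. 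This is a cleaner packaging: the explicit tightness verification in the paper is, in hindsight, redundant once one notices that both fields live in a fixed finite-dimensional subspace spanned by $\{e(\pm\zeta^k\cdot y)\}_{k\in\cK^+}$. The only point where a bit more care is warranted is the iterated-limit structure (the paper spells this out via a diagonal choice of $(N_j,R_j)$ so that the moment errors in Lemma~\ref{moments} vanish for every order $D$), but your formulation ``$R\to\infty$ followed by $N\to\infty$'' together with the Lipschitz bound on $\Psi_{K,N}$ (whose constant depends only on $K,W,s$, not on $N$) handles this as well.
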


\begin{proof}
We begin by explicating the dependence of $R$ on $N$: we choose $N$ and $R$ such that the error term in Lemma \ref{moments} tends to zero for every $D$. In order to do so, we will follow a diagonal argument. Let us write explicitly the error terms in Lemma \ref{moments} as
\begin{equation*}
	|O_D(\delta^{-1}N^{-1})|\leq C_D\delta^{-1}N^{-1},\quad |O_{N,D}(R^{-\Lambda -3/2})|\leq C_{N,D}R^{-\Lambda -3/2},
\end{equation*}
and notice that, up to changing the constants, we may assume that $C_{D'}\le C_D$ and $C_{N,D'}\le C_{N,D}$ for any $D'<D$. Now, let us define $M_D$ and $R_{M,D}$ such that
\begin{equation*}
C_D M_D^{-1}\to 0\quad C_{M_D,D}R_{M_D,D}^{-\Lambda -3/2}\to 0
\end{equation*}
as $D\to\infty$. Then, we choose $N,R$ to go to infinity as any sequence satisfying $N_j\ge M_j$ and $R_j\ge R_{N_j,j}$. Taking said sequence of $N,R$, for any fixed $D$, we have
\begin{equation}\label{ErrorRN}
O_D(\delta^{-1}N_j^{-1})\to 0,  \quad O_{N_j,D}(R_j^{-\Lambda -3/2})\to 0
\end{equation}
as $j$ goes to infinity due to the fact that $C_{D'}\le C_D$, $C_{N,D'}\le C_{D,N}$ if $D'<D$. With this choice of $N$ and $R$ and mind, we simply say that $N,R$ tend to infinity.

 Let $\{b_k\}$ and $\{c_k\}$ be defined as in Section \ref{Gaussian moments}. Then,	since a Gaussian random variable is determined by its moments (as the moments generating functions exists) and the moments of all orders converge by \eqref{ErrorRN}, we can apply the method of moments,  \cite[Theorem 30.2]{Bi08}, to see that 
	\begin{align}\label{ConvPhi}
	\sum_{k}	\alpha_{k}b_{k}\overset{d}{\longrightarrow} \sum_{k}	\alpha_{k}c_{k} \quad\text{ as } R,N \rightarrow \infty
	\end{align}
  for any $\{\alpha_{k}\}\in \mathbb{R}^k$. Bearing in mind the definition of $\phi_x$ in \eqref{phi}, \eqref{eq:def muK}, the Cramér–Wold theorem \cite[Page 383]{Bi08}, implies that, for any $y_1,...,y_{\ell}\in B(W)$  with $\ell$ a positive integer and $\beta \geq 0$, we have 
\begin{align}\label{PointConv}
&\kappa_K^{-1}\left(\phi_x(y_1),...,\phi_x(y_{\ell})\right)\overset{d}{\longrightarrow} \left(F_{\mu_K}(y_1),...,F_{\mu_K}(y_{\ell})\right) & R,N \rightarrow \infty \nonumber \\
&\kappa_K^{-1}\left(D^{\alpha}\phi_x(y_1),...,D^{\alpha}\phi_x(y_{\ell})\right)\overset{d}{\longrightarrow} \left(D^{\alpha}F_{\mu_K}(y_1),...,D^{\beta}F_{\mu_K}(y_{\ell})\right) & R,N \rightarrow \infty 
\end{align}
where we have used the multi-index notation $D^{\alpha}\coloneqq \partial_{y_1}^{\alpha_1}...\partial_{y_n}^{\alpha_n}$. Thus, thanks to \eqref{PointConv} and  the discussion in Section \ref{SectWeakConv}, in order to prove the Lemma, we are left with checking the hypothesis of Proposition \ref{tightness}. 
 
 \textbf{Condition ii) in Proposition \ref{tightness}} 
Let $\phi^\alpha_x\coloneqq \kappa_K^{-1}D^\alpha_{y_i}\phi_x$ with $\alpha$ a multi-index, the Cauchy-Schwarz inequality gives 
\begin{align}                                      
|\phi^\alpha_x(y)-\phi^\alpha_x(y')|&\lesssim(2\pi)^{|\alpha|} |\sum_{k\in\cK} b_k(x)\mu_r(I_k)^{1/2}\prod_{i=1}^m(\zeta^k_i)^{\alpha_i}\left(e\left( \zeta^k\cdot  y\right)-e\left( \zeta^k\cdot y'\right)\right) | \nonumber \\
&\lesssim \norm{y-y'}\sum_{k\in\cK} |b_k(x)|. \label{5.6}
\end{align}
 Moreover, by \eqref{ConvPhi} and the Continuous Mapping Theorem  \cite[Theorem 2.7]{BI}, we have 
\begin{align}
\sum_{k\in\cK} |b_k(x)| \overset{d}{\longrightarrow} G\quad\text{ as } R,N\rightarrow \infty                                    
\end{align}
 where $G$ is a random variable with finite mean, i.e. the sum of folded normal variables. By Portmanteau Theorem and Chebyshev's inequality, we deduce that 
\begin{align}
\limsup_{R,N\to \infty}\vol_R\left(\sum_{k\in\cK} |b_k(x)|\ge \varepsilon' \right)\le \bP\left(G\ge \varepsilon' \right)\le \bE[G] \varepsilon'^{-1}, \label{5.7}
\end{align}
as $[\varepsilon',\infty)$ is a closed set.
Therefore, by \eqref{5.6} and \eqref{5.7}, using the notation in \eqref{continuity mod}, we have 
$$
\limsup_{R,N\to \infty}\vol_R\left(\omega_{\phi^\alpha_x}(\delta)\ge \varepsilon\right)\le\limsup_{R,N\to \infty}\vol_R\left(\sum_{k\in\cK} |b_k(x)|\ge C_{\cK}\varepsilon\delta^{-1}\right)\le  C'_{\cK}\varepsilon^{-1}\delta.
$$
Hence, we can conclude that, for all $\varepsilon>0$ and all $i$, we have 
$$\lim_{\delta\to 0}\limsup_{R,N\to \infty}\bP\left(\omega_{\phi^\alpha_x}(\delta)\ge \varepsilon\right)=0.$$
This establishes (ii) in Proposition \ref{tightness}.

\textbf{Condition i) in Proposition \ref{tightness}}  By \eqref{phi} and \eqref{UncorrelationCoeff}, for any point $y\in B(W) $ we have $\Bint_{B(R)}|\phi_x^0(y)|^2 dx =O(1)$ as in Proposition \ref{prop:Lp mom} for $p=1$. Thus, by the Chebyshev's inequality, we have  
$$
	\bP_R(|\phi^0_x(y)|>M)\lesssim M^{-2}.
$$
We can proceed similarly with $\phi_x^\alpha(y)$ and this establishes i).  
\end{proof}
To prove the next result, we need the following lemma, compare the statement with \cite[Lemma 4]{So} (here only a weaker version is needed).
\begin{lem}
	\label{LemConvGaussianFields}
	Let $\mu_n$ be a sequence of probability measures on $\S^{m-1}$ such that $\mu_n$ converges weakly to some probability measure $\mu$, then 
	 \begin{align}
	 \nonumber
	 	d_P\left(F_{\mu_n}, F_{\mu}\right)\rightarrow 0 \quad \text{ as }n \rightarrow \infty,
	 \end{align}
	 where the convergence is with respect to the $C^s(B(W))$ topology.
\end{lem}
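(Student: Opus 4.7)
My plan is to apply the criterion recalled in Section \ref{SectWeakConv}: it suffices to show that the finite-dimensional distributions of $F_{\mu_n}$ (together with those of its derivatives up to order $s$) converge to those of $F_\mu$, and that the sequence of pushforward measures $\{(F_{\mu_n})_*\mathbb{P}\}$ is tight in $\mathcal{P}(C^s(B(W)))$. Given these two ingredients, the usual Prokhorov argument forces any subsequential weak limit of $(F_{\mu_n})_*\mathbb{P}$ to equal $(F_{\mu})_*\mathbb{P}$, yielding $d_P(F_{\mu_n},F_\mu)\to 0$.

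\textbf{Step 1: convergence of finite-dimensional distributions.}
Since $F_{\mu_n}$ is a centred Gaussian field, the joint law of
$$\bigl(D^{\alpha_1}F_{\mu_n}(y_1),\ldots,D^{\alpha_\ell}F_{\mu_n}(y_\ell)\bigr),\qquad y_i\in B(W),\ |\alpha_i|\le s,$$
is centred Gaussian and hence determined by its covariance matrix, whose entries can be written using the spectral representation as
$$\mathbb{E}\bigl[D^{\alpha_i}F_{\mu_n}(y_i)\,D^{\alpha_j}F_{\mu_n}(y_j)\bigr]=\int_{\S^{m-1}}(2\pi i\lambda)^{\alpha_i}(-2\pi i\lambda)^{\alpha_j}e(\langle y_i-y_j,\lambda\rangle)\,d\mu_n(\lambda).$$
The integrand is a bounded continuous function of $\lambda\in\S^{m-1}$, so weak-$*$ convergence $\mu_n\to\mu$ yields convergence of each entry to the corresponding entry for $F_\mu$. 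Convergence in distribution of Gaussian vectors with convergent covariance matrices (and zero means) then gives the desired finite-dimensional convergence.

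\textbf{Step 2: tightness.}
I verify the two conditions of Lemma \ref{tightness}. For (i), since $|\lambda|=1$ on $\S^{m-1}$, for every multi-index $\alpha$ with $|\alpha|\le s$ and every $y\in B(W)$,
$$\mathbb{E}\bigl[|D^{\alpha}F_{\mu_n}(y)|^2\bigr]=\int_{\S^{m-1}}|2\pi\lambda|^{2|\alpha|}\,d\mu_n(\lambda)\le (2\pi)^{2s},$$
uniformly in $n$, so Chebyshev's inequality supplies the required uniform tail bound. For (ii), taking $|\alpha|=s$ and using $|e(\langle y,\lambda\rangle)-e(\langle y',\lambda\rangle)|\le 2\pi|y-y'|$, the same spectral formula gives
$$\mathbb{E}\bigl[|D^{\alpha}F_{\mu_n}(y)-D^{\alpha}F_{\mu_n}(y')|^2\bigr]\le C_s\,|y-y'|^2,$$
uniformly in $n$. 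Since $D^{\alpha}F_{\mu_n}$ is a centred Gaussian process, Gaussian hypercontractivity upgrades this to
$$\mathbb{E}\bigl[|D^{\alpha}F_{\mu_n}(y)-D^{\alpha}F_{\mu_n}(y')|^{2p}\bigr]\le C_{s,p}\,|y-y'|^{2p}$$
for every integer $p\ge 1$. Choosing $p$ large enough (depending only on $m$) and applying Kolmogorov's continuity criterion uniformly in $n$ yields, for every $\varepsilon>0$,
$$\lim_{\delta\to 0}\limsup_{n\to\infty}\mathbb{P}\bigl(\omega_{D^{\alpha}F_{\mu_n}}(\delta)\ge\varepsilon\bigr)=0,$$
which is condition (ii) of Lemma \ref{tightness}.

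Combining Steps 1 and 2 with the discussion in Section \ref{SectWeakConv} proves that $(F_{\mu_n})_*\mathbb{P}\to (F_\mu)_*\mathbb{P}$ in $\mathcal{P}(C^s(B(W)))$ equipped with $d_P$, as required. The only potentially delicate point is obtaining the uniform-in-$n$ estimates needed for tightness at the top derivative order $s$; however the compactness of $\S^{m-1}$ trivialises this, since $|\lambda|=1$ makes every spectral integral bounded by an $n$-independent constant, and no finer cancellation is used.
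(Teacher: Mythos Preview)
Your argument is correct. The finite-dimensional convergence step matches the paper's reasoning (covariances are integrals of bounded continuous functions against $\mu_n$, hence converge), and your tightness step via second-moment increment bounds, Gaussian hypercontractivity, and Kolmogorov's criterion is a valid way to verify condition~(ii) of Lemma~\ref{tightness}.

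The paper obtains tightness by a different but equally standard route: it bounds $\mathbb{E}\|F_{\mu_n}\|_{H^t(B(W))}^2$ uniformly in $n$ by writing this quantity as an integral of derivatives of the covariance kernel $K_n$, uses the Sobolev embedding $H^t\hookrightarrow C^{s+1}$ for $t$ large, and then invokes the Arzel\`a--Ascoli fact that $C^{s+1}$-bounded sets are precompact in $C^s$. So the paper works one derivative above the target space and uses compact embedding, whereas you stay at order $s$ and use a quantitative Kolmogorov estimate. Both exploit the same underlying point --- that $\S^{m-1}$ is compact, so all spectral moments are uniformly bounded --- and neither requires any delicate cancellation. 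Your approach is slightly more hands-on but avoids the Sobolev machinery; the paper's approach is a bit slicker once that machinery is available.
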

 
We provide a proof of Lemma \ref{LemConvGaussianFields} in Appendix \ref{AppendixGaussianFields}. Using Lemma \ref{LemConvGaussianFields}, we deduce the following:
\begin{lem}
\label{PropConvMeasure2.1}
	Let $\varepsilon>0$ and $s\geq 0$. Then there exist some $K_0=K_0(\varepsilon ,W,s)$ and some $N_0=N_0(\varepsilon,W,s)$ such  that for all $K\geq K_0$, $\delta\lesssim K^{-m}$ and $N\geq N_0$, we have 
	$$
	d_P(F_{\mu_K}, F_{\mu}) < \varepsilon
	$$
	where the convergence is with respect to the $C^s(B(W))$ topology. 
\end{lem}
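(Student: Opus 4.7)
The plan is to reduce Lemma~\ref{PropConvMeasure2.1} to Lemma~\ref{LemConvGaussianFields} by establishing that $\mu_K$ converges weakly to $\mu$ on $\S^{m-1}$, first as $N\to\infty$ with $K$ fixed and then as $K\to\infty$ with $\delta \lesssim K^{-m}$. Once weak convergence is in hand, Lemma~\ref{LemConvGaussianFields} immediately yields $d_P(F_{\mu_K}, F_\mu) < \varepsilon$ in the $C^s(B(W))$ topology for the announced range of parameters.

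To prove the weak convergence, I would first arrange that the partition $\{I_k\}$ of $\S^{m-1}$ satisfies $\mu(\partial I_k)=0$ for every $k$, which can be done by a small perturbation of the hyper-spherical grid since $\mu$ has at most countably many atoms. Then, for any continuous function $g\colon \S^{m-1}\to \mathbb{R}$ (hence uniformly continuous), I would decompose
\begin{align*}
\int g\, d\mu_K - \int g\, d\mu
&= (\kappa_K^{-2}-1)\sum_{k\in\cK} g(\zeta^k)\mu_r(I_k) + \sum_{k\in\cK} g(\zeta^k)(\mu_r(I_k)-\mu(I_k)) \\
&\quad + \sum_{k\in\cK}\int_{I_k}(g(\zeta^k)-g(y))\,d\mu(y) - \sum_{k\notin\cK}\int_{I_k} g\, d\mu,
\end{align*}
and bound each of the four terms separately. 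The first is $O_g(\delta K^{m-1})=O_g(K^{-1})$ by Claim~\ref{useful claim}(i) together with $\delta\lesssim K^{-m}$; the second vanishes as $N\to \infty$ for fixed $K$ by the weak convergence of $\mu_r$ to $\mu$ (using $\mu(\partial I_k)=0$); the third is controlled by the modulus of continuity $\omega_g(c/K)$ via Claim~\ref{useful claim}(ii); and the last is $O_g(K^{m-1}(\delta+o_N(1)))$ by the same Portmanteau argument applied to $\S^{m-1}\setminus\bigcup_{k\in\cK} I_k$. Choosing $K$ large first and then $N$ large relative to $K$ makes all four contributions smaller than any prescribed threshold, yielding $\mu_K\to\mu$ weakly.

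The main subtlety will be the order of quantifiers: since Lemma~\ref{LemConvGaussianFields} is stated for a single sequence of spectral measures, one has to extract a sequence $\{\mu_{K_j}\}$ with $N_j$ chosen large enough to make the $\mu_r$-versus-$\mu$ discrepancy tolerable at scale $K_j$, and then verify that this convergence delivers the uniformity that the stated form $K_0(\varepsilon,W,s)$, $N_0(\varepsilon,W,s)$ asks for. Apart from this bookkeeping, the remaining ingredients are standard: uniform continuity on the compact sphere disposes of the Riemann-sum error, the hypothesis $\delta\lesssim K^{-m}$ kills the boundary and normalisation contributions, and the upgrade from weak convergence of spectral measures to Prokhorov convergence of Gaussian fields on $C^s(B(W))$ is precisely the content of Lemma~\ref{LemConvGaussianFields}.
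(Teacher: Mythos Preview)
Your reduction to Lemma~\ref{LemConvGaussianFields} is the same as the paper's, but the proof of weak convergence of $\mu_K$ to $\mu$ is organised differently. The paper inserts $\mu_r$ via the triangle inequality, $d_P(\mu_K,\mu)\le d_P(\mu_K,\mu_r)+d_P(\mu_r,\mu)$, and then verifies the Prokhorov inequalities $\mu_K(B)\le \mu_r(B_{+C/K})+C/K$ and $\mu_r(B)\le \mu_K(B_{+C/K})+C/K$ directly from Claim~\ref{useful claim}. The point is that the bound $d_P(\mu_K,\mu_r)\le C/K$ is \emph{uniform in $N$}, so one genuinely obtains $K_0=K_0(\varepsilon)$ and $N_0=N_0(\varepsilon)$ with no interdependence, exactly as stated. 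Your approach---testing against continuous $g$ and comparing $\mu_K$ to $\mu$ directly---is also valid, but the second and fourth terms in your decomposition force $N$ to be large depending on $K$ (because the set $\cK$ itself depends on $N$, and you need $\mu_r(I_k)\to\mu(I_k)$ for each of $K^{m-1}$ pieces). You flag this yourself as the ``main subtlety''; it is resolvable by a diagonal/subsequence argument, but the paper's route avoids it entirely and delivers the announced quantifiers cleanly. Your approach also needs the extra hypothesis $\mu(\partial I_k)=0$, which the paper's direct Prokhorov estimate does not.
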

\begin{proof}

Let $\mu_{K,N}$ be as in \eqref{DefmuK} 
and let $\delta\lesssim K^{-m}$. Then, in light of Lemma \ref{LemConvGaussianFields} it is enough to prove the following: let $\varepsilon>0$, there exists some $K_0=K_0(\varepsilon)$ and some $N_0=N_0(\varepsilon)$ such  that for all $K\geq K_0$ and $N\geq N_0$, we have 
	\begin{align}
d_P (\mu_{K,N},\mu)  < \varepsilon. \nonumber
	\end{align}  
Since $\mu_r$ weak$^{\star}$-converges to $\mu$ as $N\rightarrow \infty$, we may assume that $N_0= N_0(\varepsilon)$ so that 
$d_P(\mu_r, \mu)< \varepsilon/2$. Therefore, using the triangle inequality, it is enough to prove that $d_P(\mu_{K,N},\mu_r)< \varepsilon/2$ for $K$ large enough depending on $\varepsilon$ only, which bearing in mind \eqref{def of d_p}, is equivalent to the following: 
\begin{align}
	&\mu_{K,N}(B)\leq \mu_r(B_{+\varepsilon/2}) + \varepsilon/2 &\mu_r(B) \leq \mu_{K,N}(B_{+\varepsilon/2}) +\varepsilon/2   \label{to prove}
\end{align} 
for all Borel sets $B\subset \S^{m-1}$. 	For the sake of simplicity, from now on we write $\mu_{K,N}=\mu_{K}$. By Claim \ref{useful claim} ii) for some $C>1$ if $r_n\in I_k$, then $\norm{r_n-\zeta^k}<CK^{-1}$, therefore 
\begin{align} \mu_{K}(B) = \frac{1}{2N}\sum_{\substack{ k\in \mathcal{K} \\ \zeta^{k}\in B}}\sum_{r_n \in I_k} 1 \Big/ \sum_{k\in\cK}\mu_r(I_k) \leq \frac{1}{2N}\sum_{r_n \in B_{+C/K}} 1 \Big/ \sum_{k\in\cK}\mu_r(I_k)  \nonumber
\end{align}
which, together with Claim \ref{useful claim} i) and our choice of $\delta\lesssim K^{-m}$, gives 
$$ \mu_{K}(B) \leq \mu_r(B_{+C/K}) + C/K.$$
This proves the first part of \eqref{to prove}, with $\varepsilon/2=C/K$. Since
$$ \mu_K(B_{+C/K})=\frac{\sum_{k\in \cK}\mu_r(I_k)\delta_{\zeta^k}(B_{+C/K})}{\sum_{k\in \cK}\mu_r(I_k)}\ge \frac{\sum_{k\in \cK}\mu_r(I_k\cap B)}{\sum_{k\in \cK}\mu_r(I_k)}=\frac{\mu_r(B)-\mu_r(B\cap I_\cK^c)}{\mu_r(I_\cK)},
$$
where $I_\cK\coloneqq\bigcup_{k\in\cK}I_k$. Therefore, we have 
$$  \mu_{r}(B) \leq \mu_K(B_{+C/K})+C/K . $$
This proves the second part of \eqref{to prove}, with $\varepsilon/2=C/K$ and hence Lemma \ref{PropConvMeasure2.1}.
\end{proof}
We are finally ready to prove Proposition \ref{main prop pairing}.
\begin{proof}[Proof of Proposition \textnormal{\ref{main prop pairing}}]
	The proposition follows from Lemma \ref{PropConvMeasure1} and Lemma \ref{PropConvMeasure2.1} together with the triangle inequality for the Prokhorov distance with the following order in the choice of the parameters: $W>1$, $s\geq 0$ a natural number, $\varepsilon>0$ are given, then $K$ is large depending on $\varepsilon,W,s$ according to Lemma \ref{PropConvMeasure2.1}, $\delta\lesssim K^{-m}$; $N$ is large depending on $\delta,\varepsilon,W,s$ according to  Lemma \ref{PropConvMeasure1} and Lemma \ref{PropConvMeasure2.1};  finally $R$ is large depending on all the previous parameters according to Lemma \ref{PropConvMeasure1}.
\end{proof}
\subsection{Concluding the proof of Theorem \ref{thm 1}}
We are finally ready to prove Theorem \ref{thm 1}: 
\begin{proof}[Proof of Theorem \textnormal{\ref{thm 1}}]
Fix $W>1$, $s\geq 0$ and $\varepsilon>0$. Let $K$ large enough according to Lemma \ref{PropConvMeasure2.1} applied with $\varepsilon/4$ and such that $CW^{2(s+l)+m}(\delta K^{m-1}+W^2K^{-2})<\varepsilon/4$ if $\delta\lesssim K^{-m}$ where $C,l$ were defined in Lemma \ref{first approx} and also such that $|\kappa_K^{-1}-1|$ is small enough by Claim \ref{useful claim}. Then we take an $N$ large enough according to Lemma \ref{PropConvMeasure1} applied with $\varepsilon/4$. Finally, let $R$ large enough as in Lemma \ref{PropConvMeasure1} applied with $\varepsilon/2$ and such that in Lemma \ref{first approx}, $O_N(R^{-\Lambda-3/2})<1$. With this, we have 
$$\Bint_{B(R)}\norm{F_x- \phi_x}^2_{C^s(B(W))} < \varepsilon/2	~\text{ and }~d_P(\phi_x, F_{\mu})< \varepsilon/2, $$
concluding the proof.

\end{proof}
\section{Proof of Proposition \ref{semi-locality m>2},  semi-locality.}
\label{last section}
Let $f$ be as in \eqref{function} and denote by $\mathcal{NI}(f,x,W)$ the number of nodal domains that intersect the boundary of $B(x,W)$. In order to prove Proposition \ref{semi-locality m>2} we need to obtain bounds on $\mathcal{NI}(f,x,W)$.  This is the content of the next section, where we follow the recent preprint of Chanillo, Logunov, Malinnikova and Mangoubi \cite{CLMM20}, see also Landis \cite{L63}.
\subsection{A bound on $\mathcal{NI}$}
 We  begin by introducing a piece of notation borrowed from \cite{CLMM20}:  we say that a domain $A$  is $c_0$-narrow (on scale $1$) if 
$$\frac{|A\cap B(x,1)|}{|B(x,1)|}\leq c_0$$ 
for all $x\in \overline{A}$. We will use the following bound [Section 3.2]\cite{CLMM20}:
\begin{lem}
	\label{bound intersections}
	Let $r\in (1,R)$ and denote by $\Omega$ a nodal domain of $f$. Then, we have  
	\begin{align} 
		\nonumber\mathcal{NI}(f, x , r)\lesssim_m \left|\{\Omega: \Omega \cap B(x,r) \textit{ is not $c_0$-narrow}\} \right|+ \mathfrak{N}_f(B(x,r))^{m-1} + r^{m-1},
\end{align}
where $\mathfrak{N}(\cdot)$ is as in Section \textnormal{\ref{sec:doubling index}}.
\end{lem}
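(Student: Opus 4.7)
The strategy is to follow closely the recent work of Chanillo--Logunov--Malinnikova--Mangoubi \cite{CLMM20}, specialised to our function $f$ via its harmonic lift $h(x,t)=f(x)e^{2\pi t}$, which allows us to invoke the propagation-of-smallness estimates (Lemmas \ref{doubling f}--\ref{Remez for f}) for eigenfunctions of the Helmholtz operator. I would begin by partitioning the collection of nodal domains intersecting $\partial B(x,r)$ into two classes: those $\Omega$ for which $\Omega\cap B(x,r)$ is \emph{not} $c_0$-narrow on scale $1$, and those for which it is. The first class is trivially counted by the first term on the right-hand side, so the work reduces to bounding the number of $c_0$-narrow nodal tubes that reach the boundary sphere $\partial B(x,r)$.

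For the narrow class, the basic dichotomy from \cite{CLMM20} is: either the narrow tube has volume substantially smaller than a unit ball at every scale and $f$ therefore oscillates rapidly along it (by a Faber--Krahn type lower bound on the first Dirichlet eigenvalue of a narrow domain), or it carries enough mass to be detected by a covering argument. Concretely, I would cover $\partial B(x,r)$ with a minimal family $\{B(x_j,1)\}_{j=1}^M$ of unit balls, with $M\lesssim r^{m-1}$, and estimate, inside each $B(x_j,1)$, the number of $c_0$-narrow nodal components of $f$ intersecting $\partial B(x,r)$. Using the Remez-type inequality of Lemma \ref{Remez for f} applied to the harmonic lift $h$ on $B(x_j,2)\subset B(x,3r)$, a narrow component forces $\sup_{B(x_j,1)}|f|$ to be controlled by $\sup$ on a much smaller set, which in turn couples back to the local doubling index $\mathfrak{N}_f(B(x_j,1))$.

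The final step is to aggregate these local contributions. Following \cite{CLMM20}, a combinatorial/pigeonhole argument on the tree of dyadic sub-balls of $B(x,r)$ (using the sub-additivity of the doubling index across scales) shows that the total number of $c_0$-narrow nodal tubes reaching $\partial B(x,r)$ is bounded by $C_m\bigl(\mathfrak{N}_f(B(x,r))^{m-1}+r^{m-1}\bigr)$: the first term absorbs the contributions from unit balls where $f$ oscillates heavily, while the second accounts for the baseline volume of the covering itself. Combining this with the count of non-narrow domains yields the asserted inequality.

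The main obstacle is the iteration/pigeonhole that produces the exponent $m-1$ on the doubling index; this is the technical heart of \cite{CLMM20} and is non-trivial to transcribe, since it relies on combining the Faber--Krahn narrowness condition with quantitative propagation of smallness across nested scales. A secondary point requiring care is uniformity in the parameter $N$ appearing in \eqref{function}: since all relevant constants in Lemmas \ref{doubling index}--\ref{Remez for f} depend only on the dimension $m$ (the Nazarov--Turan input is used only for global growth bounds, not here), the estimate extends uniformly in $N$, as needed for the subsequent applications in Sections \ref{nodal volume}--\ref{section 5}.
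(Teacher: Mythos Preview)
Your narrow/non-narrow split is right, but the core mechanism you sketch for the narrow class is not the one that actually works, and it has a structural gap. Covering $\partial B(x,r)$ by $\lesssim r^{m-1}$ unit balls and counting narrow components in each ball will over-count: a single narrow tube can thread through many of those unit balls, and there is no sub-additivity of the doubling index that lets you aggregate local counts into a global bound of the shape $\mathfrak N_f(B(x,r))^{m-1}$. (The boundary covering you describe is in fact how the lemma is \emph{applied} later, in the proof of Proposition~\ref{semi-locality m>2}, after reducing to $r=O(1)$; it is not how the lemma itself is proved.)

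The paper's argument is organised differently. One rescales $B(x,r)$ to a unit ball, sets $D\coloneqq \mathfrak N_f(B(x,2r))+r$ (the additive $r$, coming from the harmonic lift, is what ultimately produces the $r^{m-1}$ term), assumes for contradiction that the number $Z$ of narrow domains meeting the boundary exceeds $C_m D^{m-1}$, and chooses one point $x_i$ in each such $\Omega_i$. The crucial object is the filtration
\[
S(p)=\Bigl\{\Omega_i:\ \tfrac{|\Omega_i\cap B(x_i,2^{-j})|}{|B(x_i,2^{-j})|}\le c_1 D^{-(m-1)}\ \text{for all }j\le p\Bigr\}.
\]
A volume pigeonhole gives $|S(0)|\ge \tfrac34 Z$. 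For $\Omega_i\in S(p)$, iterating Lemma~\ref{Lemma 2.5 CLMM20} (exponential decay of $f$ inside a narrow tube) across the scales $2^{-j}$, $j=0,\dots,p$, yields $\sup_{\Omega_i\cap B(x_i,2^{-p})}|f_r|\le 2^{-c(p+1)D}\sup_{B(x_i,2)}|f_r|$. One then applies the Remez inequality (Lemma~\ref{Remez for f}) \emph{once, globally} with $E=\bigcup_{\Omega_i\in S(p)}\Omega_i\cap B(x_i,2^{-p-1})$, which forces $\sum_i|\Omega_i\cap B(x_i,2^{-p-1})|/|B(x_i,2^{-p-1})|\le 4^{-p-2}$ and hence $|S(p)\setminus S(p+1)|\le Z\cdot 4^{-p-2}$. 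Summing, $|S(p)|\ge Z/2$ for every $p$, contradicting the fact that $S(p)$ is eventually empty (since each $\Omega_i$ has full density at $x_i$ at sufficiently small scales). This is the pigeonhole that delivers the exponent $m-1$; the doubling index enters as a single number $D$ through the Remez step, not via any additivity across a dyadic tree.
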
 
Since the proof of Lemma \ref{bound intersections} follows step by step \cite{CLMM20}, we decided to present it in Appendix \ref{copying}. We observe that, if a nodal domain is not $c_0$-narrow, then $|\Omega \cap B(x_0,1)|> c_1$ for some constant $c_1>0$ and for some $x_0\in\overline{\Omega}$. Thus, the number of \textit{non} $c_0$-narrow nodal domains in $B(x,W)$ is $O(W^m)$. Hence, Lemma \ref{Nazarov-Turan} together with Lemma \ref{bound intersections} gives the following bound: 
\begin{cor}
	\label{abs bound}
Let $F_x$ be as in \eqref{Fx}, then, provided that $N$ is sufficiently large with respect to $W$, we have  
$$	\nonumber\mathcal{NI}(f,x,W)\lesssim_m W^m + N^{m-1}\lesssim N^{m-1}.$$ 
\end{cor}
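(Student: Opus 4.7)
The plan is to combine Lemma \ref{bound intersections} with the Nazarov--Turan bound (Lemma \ref{Nazarov-Turan}) and a volume-packing argument for non-narrow domains, in that order.

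First, I would dispose of the first term in Lemma \ref{bound intersections}. By definition, if $\Omega$ is a nodal domain with $\Omega\cap B(x,W)$ not $c_0$-narrow, then there is some $x_0\in\overline{\Omega\cap B(x,W)}$ for which $|\Omega\cap B(x_0,1)|>c_1|B(x_0,1)|$ for an absolute constant $c_1>0$. Since the $\Omega$'s are pairwise disjoint and all such $B(x_0,1)$ lie in $B(x,W+1)$, a volume-packing argument forces
\begin{equation*}
\#\{\Omega:\Omega\cap B(x,W)\text{ is not }c_0\text{-narrow}\}\lesssim_m W^m.
\end{equation*}

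Next, I would bound the doubling index $\mathfrak{N}_f(B(x,W))$ by a constant multiple of $N$ using Lemma \ref{Nazarov-Turan}. The function $f$ is an exponential sum with $2N$ distinct frequencies $\{r_n\}_{|n|\leq N}$, so applying Lemma \ref{Nazarov-Turan} with the outer ball $\varkappa_m B(x,W)$ and the inner set $I=B(x,W)$ yields
\begin{equation*}
\sup_{\varkappa_m B(x,W)}|f|\lesssim\bigl(c_1\varkappa_m^{\,m}\bigr)^{c_2\cdot 2N}\sup_{B(x,W)}|f|.
\end{equation*}
Taking logarithms and recalling the definition of the doubling index in Section \ref{sec:doubling index}, this gives $\mathfrak{N}_f(B(x,W))\lesssim_m N$, hence $\mathfrak{N}_f(B(x,W))^{m-1}\lesssim_m N^{m-1}$.

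Plugging both bounds into Lemma \ref{bound intersections} with $r=W$ yields
\begin{equation*}
\mathcal{NI}(f,x,W)\lesssim_m W^m+N^{m-1}+W^{m-1}\lesssim_m W^m+N^{m-1}.
\end{equation*}
The second inequality in the statement then follows by assuming $N$ is taken large enough depending on $W$ so that $W^m\lesssim N^{m-1}$ (recall $m\geq 2$, so $N^{m-1}\to\infty$ with $N$). There is no serious obstacle here: the entire argument is a short two-line reduction, and the only minor point to watch is making sure that the constant $\varkappa_m^{\,m}$ from the Remez--type inflation is absorbed into an $m$-dependent constant in the final bound.
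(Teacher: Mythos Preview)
Your proposal is correct and follows essentially the same approach as the paper: the paper's argument (given in the paragraph immediately preceding the corollary) is precisely the volume-packing bound $O(W^m)$ for non-narrow domains combined with the Nazarov--Turan Lemma \ref{Nazarov-Turan} to get $\mathfrak{N}_f(B(x,W))\lesssim N$, then plugged into Lemma \ref{bound intersections} with $r=W$.
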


\subsection{Small values of $f$}
\label{growth of $f$}
 In this section we prove the following lemma which will also be our main tool in controlling the doubling index of  $f$.  
\begin{lem}
	\label{anti-concentration}
	Let $f$ be as in \eqref{function}, $\beta>0$ and $D>1$ be two parameters, then 
	\begin{align}
	\vol_R(|f(x)|\leq \beta)\lesssim \beta + N^{-D} + O_{N}(R^{-\Lambda-3/2}). \nonumber
	\end{align} 
\end{lem}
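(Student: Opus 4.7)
My plan is to transfer the Gaussian anti-concentration estimate $\bP(|Z|\le\beta)\le\sqrt{2/\pi}\,\beta$ for $Z\sim\mathcal{N}(0,1)$ to $f$ by exploiting the fact, established in Proposition \ref{prop:Lp mom}, that the even moments of $f$ under the uniform measure on $B(R)$ match those of a real standard Gaussian, with quantitative errors in $N$ and $R$.

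First, I would dominate the indicator $\mathbf{1}_{[-\beta,\beta]}$ by the smooth positive bump
\[
\phi_\beta(t):=e^{1/2}\exp\!\left(-\frac{t^2}{2\beta^2}\right),
\]
which satisfies $\phi_\beta(t)\ge\mathbf{1}_{|t|\le\beta}(t)$ everywhere and $\bE[\phi_\beta(Z)]=e^{1/2}\beta/\sqrt{1+\beta^2}\le e^{1/2}\beta$. This yields
\[
\vol_R(|f(x)|\le\beta)\le\Bint_{B(R)}\phi_\beta(f(x))\,dx.
\]

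Second, I would approximate $\phi_\beta$ by a polynomial in order to express the integral in terms of moments of $f$. Using the Lagrange remainder $|e^{-x}-\sum_{k=0}^K(-x)^k/k!|\le x^{K+1}/(K+1)!$ valid for $x\ge0$, and substituting $x=f(y)^2/(2\beta^2)$,
\[
\Bint_{B(R)}\phi_\beta(f)\,dy\le e^{1/2}\sum_{k=0}^{K}\frac{(-1)^k}{k!(2\beta^2)^k}\Bint_{B(R)}f^{2k}\,dy+\frac{e^{1/2}}{(K+1)!(2\beta^2)^{K+1}}\Bint_{B(R)}f^{2K+2}\,dy.
\]
Applying the quantitative form of Proposition \ref{prop:Lp mom} (readable off its proof),
\[
\Bint_{B(R)}f^{2k}\,dy=\frac{(2k)!}{2^k k!}\bigl(1+O_k(1/N)\bigr)+O_{N,k}(R^{-\Lambda-3/2}),
\]
I would identify the principal contribution as a Taylor truncation of $\bE[\phi_\beta(Z)]\le e^{1/2}\beta$, and split the remainder into a moment-matching $O_K(1/N)$ error, a Taylor-tail error, and an $O_{N,K}(R^{-\Lambda-3/2})$ error.

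Finally, I would balance parameters. For each fixed $D>1$, I would choose $K=K(D)$ large enough that, in the range $\beta\ge N^{-D}$, both the Gaussian Taylor tail and the moment-matching errors are dominated by $N^{-D}$; taking $R$ sufficiently large (depending on $N,K$) absorbs the remaining term into $O_N(R^{-\Lambda-3/2})$. For $\beta<N^{-D}$, monotonicity of the sub-level-set measure in $\beta$ reduces the problem to the previous case applied at the threshold $\beta=N^{-D}$. The main obstacle is that the Taylor remainder $(f^2/(2\beta^2))^{K+1}/(K+1)!$ produces, after integration, a contribution of order $\binom{2K+2}{K+1}2^{-2K-2}\beta^{-2K-2}$, which deteriorates rapidly for small $\beta$; balancing this against the $O_k(1/N)$ moment-matching errors (whose implicit constants also depend on $K$) is the key technical step and is what forces the regime split in $\beta$ described above.
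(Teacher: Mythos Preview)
Your approach has a genuine gap that cannot be repaired by parameter-balancing. The problem is that the Taylor expansion of $\phi_\beta(t)=e^{1/2}\exp(-t^2/(2\beta^2))$, once pushed through Gaussian moments, produces a \emph{divergent} series whenever $\beta<1$. Concretely, the ``principal contribution'' you propose to identify,
\[
e^{1/2}\sum_{k=0}^{K}\frac{(-1)^k}{k!(2\beta^2)^k}\,\bE[Z^{2k}]
=e^{1/2}\sum_{k=0}^{K}\frac{(-1)^k(2k)!}{(k!)^2(4\beta^2)^k},
\]
is precisely the Taylor series of $(1+1/\beta^2)^{-1/2}$ in powers of $1/\beta^2$, which has radius of convergence $1$; for $\beta<1$ its terms behave like $(-1)^k/(\sqrt{\pi k}\,\beta^{2k})$ and the partial sums oscillate with amplitude $\asymp\beta^{-2K}$. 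Thus the truncated sum is \emph{not} close to $\bE[\phi_\beta(Z)]\le e^{1/2}\beta$; the two differ by the Gaussian Taylor tail, which is of order $\binom{2K+2}{K+1}(4\beta^2)^{-K-1}\asymp\beta^{-2K-2}/\sqrt{K}$ and gets \emph{worse}, not better, as $K$ grows. The same blow-up contaminates the moment-matching error: replacing $\Bint f^{2k}$ by $\bE[Z^{2k}]$ costs $C_K N^{-1}\beta^{-2K}$, and for $\beta$ near $N^{-D}$ this is $C_K N^{2DK-1}\gg N^{-D}$ for any $K\ge1$. No choice of $K=K(D)$ controls these terms in the regime $\beta\in[N^{-D},1]$ that the lemma actually addresses.

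The paper's proof avoids this obstruction by working on the Fourier side: Hal\'asz' inequality bounds $\vol_R(|f|\le\beta)$ by $\beta\int_{|t|\le1/\beta}|\psi_R(t)|\,dt$, and the characteristic function $\psi_R$ is computed \emph{exactly} (up to $O_N(R^{-\Lambda-3/2})$) via the Jacobi--Anger expansion and Graf's addition theorem as $J_0(\sqrt{2}t/\sqrt{N})^N$. This closed form decays like $e^{-Ct^2}$ for $|t|\lesssim N^{1/2}$ and like $\alpha^N$ for larger $|t|$, giving the claimed $\beta+N^{-D}$ after integration. The key difference is that the characteristic-function computation packages \emph{all} moments simultaneously, capturing the cancellations that the finite-order polynomial approximation necessarily destroys.
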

The proof relies on the following Hal\`{a}sz' anti-concentration inequality  \cite{H} and \cite[Lemma 6.2]{NV1}.
\begin{lem}[Halasz' bound]
	\label{Halas'z}
	Let $X$ be a real-valued random variable and let $\psi(t)=\mathbb{E}[\exp(itX)]$ be its characteristic function then there exists some absolute constant $C>0$ such that
	\begin{align}
	\mathbb{P}(|X|\leq 1)\leq C \int_{|t|\leq 1} |\psi(t)|dt. \nonumber 
	\end{align}
\end{lem}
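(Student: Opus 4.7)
The plan is to apply Halász's inequality (Lemma \ref{Halas'z}) to the rescaled random variable $X\coloneqq f/\beta$ on the probability space $(B(R),\vol_R)$, which yields
\[ \vol_R(|f|\le \beta) = \vol_R(|X|\le 1) \le C\int_{|t|\le 1}|\psi(t/\beta)|\,dt = C\beta\int_{|s|\le 1/\beta}|\psi(s)|\,ds \]
after the substitution $s=t/\beta$, where $\psi(s)\coloneqq\Bint_{B(R)}e^{isf(x)}\,dx$ is the characteristic function of $f$ under $\vol_R$. It thus suffices to show $\int_{|s|\le 1/\beta}|\psi(s)|\,ds \lesssim 1 + N^{-D}\beta^{-1} + O_N(R^{-\Lambda-3/2})\beta^{-1}$.

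To compute $\psi$, I would rewrite $f$ in the real form $f(x) = \sqrt{2/N}\sum_{n=1}^N\cos\theta_n(x)$ with $\theta_n(x)\coloneqq 2\pi\langle r_n,x\rangle+\alpha_n$ (where $a_n=e^{i\alpha_n}$, using $a_{-n}=\overline{a_n}$, $r_{-n}=-r_n$), factor $e^{isf(x)}=\prod_n e^{is\sqrt{2/N}\cos\theta_n(x)}$, and apply the Jacobi--Anger expansion $e^{iz\cos\theta}=\sum_{k\in\mathbb{Z}}i^k J_k(z)e^{ik\theta}$ to each factor. Integrating term-by-term (valid by absolute convergence at fixed $N$, $s$) gives
\[ \psi(s) = P(s) + E(s), \qquad P(s)\coloneqq \prod_{n=1}^N J_0\bigl(s\sqrt{2/N}\bigr), \]
where the tuple $\vec k=0$ contributes the Bessel product $P(s)$, and any $\vec k\in\mathbb{Z}^N\setminus\{0\}$ satisfies $\sum_n k_nr_n\neq 0$ by the $\mathbb{Q}$-linear independence \eqref{1}, so by \eqref{5} its contribution to $E(s)$ decays like $O_N(R^{-\Lambda-3/2})$ (with constants depending on $\max_n|k_n|$).

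The main term $P(s)$ is controlled by Bessel-function asymptotics: from $\log J_0(z) = -z^2/4 + O(z^4)$ one obtains $|P(s)|\le e^{-s^2/2 + O(s^4/N)}$ for $|s|\lesssim N^{1/4}$, while the asymptotic $|J_0(z)|\lesssim |z|^{-1/2}$ for large $z$ handles the rest; quantitatively $\int_{\mathbb{R}}|J_0(z)|^N\,dz \asymp N^{-1/2}$, which after the substitution $z = s\sqrt{2/N}$ gives $\int_{\mathbb{R}}|P(s)|\,ds = O(1)$ uniformly in $N$. This produces the leading $O(\beta)$ contribution. For $E(s)$, I would truncate the Jacobi--Anger series at $\max_n|k_n|\le M=M(D)$: the finite truncated sum is bounded uniformly in $s$ by $O_{N,M}(R^{-\Lambda-3/2})$ via \eqref{5}, while the tail is controlled by the crude estimate $|J_k(z)|\le (|z|/2)^{|k|}/|k|!$ for $|k|\ge 1$, choosing $M$ large enough in terms of $D$ so that the tail's contribution to $\beta\int_{|s|\le 1/\beta}|E(s)|\,ds$ is at most $N^{-D}$.

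The main obstacle will be balancing these two bounds against the potentially large range of integration $|s|\le 1/\beta$. The Gaussian main term $P(s)$ is integrable uniformly in $N$, which absorbs the large range cleanly and produces the $\beta$ contribution; but the non-resonant remainder $E(s)$ is not automatically small on that range, and the truncation $M$ must be selected delicately so that the polynomial-in-$R$ saving from \eqref{5} survives after integration on $[-1/\beta,1/\beta]$ while the Jacobi--Anger tail still fits into $N^{-D}$.
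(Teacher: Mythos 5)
There is a genuine gap: your proposal does not prove the statement under review at all. The statement is Lemma \ref{Halas'z} itself, i.e. the general anti-concentration inequality $\mathbb{P}(|X|\leq 1)\leq C\int_{|t|\leq 1}|\psi(t)|\,dt$ valid for an \emph{arbitrary} real-valued random variable $X$. Your very first step is to ``apply Halász's inequality (Lemma \ref{Halas'z})'' to $X=f/\beta$, so as a proof of that lemma the argument is circular, and everything that follows (the Jacobi--Anger expansion, the use of rational independence via \eqref{1} and \eqref{5}, the Bessel-product estimate $\int_{\mathbb{R}}|J_0(z)|^N dz\asymp N^{-1/2}$, the truncation in $M(D)$) concerns the specific trigonometric sum $f$ and is really a sketch of the downstream Lemma \ref{anti-concentration} on $\vol_R(|f|\le\beta)$, not of the statement you were asked to prove. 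Nothing in your text addresses a general $X$; indeed no property of $f$ can be relevant to Lemma \ref{Halas'z}.

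For the record, the paper does not prove this lemma either: it quotes it from Halász \cite{H} and \cite[Lemma 6.2]{NV1}. A self-contained proof is short but needs a different idea, namely a Fourier test-function (kernel) argument: pick $h\ge 0$ with $h\gtrsim 1$ on $[-1,1]$ and with $\widehat h$ supported in $[-1,1]$ (a Fejér-type kernel such as $h(x)=\big(\tfrac{\sin(x/2)}{x/2}\big)^2$ works), so that $\mathbb{P}(|X|\le 1)\lesssim \mathbb{E}[h(X)]=\int \widehat h(t)\,\psi(t)\,dt\le C\int_{|t|\le 1}|\psi(t)|\,dt$. As an aside, read as an attempt at Lemma \ref{anti-concentration} your sketch is broadly consistent with the paper's Section \ref{growth of $f$} (the paper writes $f=\sum b_n\cos+c_n\sin$ and collapses the resulting double Bessel sum via Graf's addition theorem to $J_0(\sqrt{2}t/\sqrt{N})^N$, whereas your phase formulation reaches the same product directly), but that is a different statement from the one under review, and your treatment of the error term $E(s)$ over the long range $|s|\le 1/\beta$ would still need the quantitative care you yourself flag.
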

We are now ready to prove Lemma \ref{anti-concentration}.
\begin{proof}[Proof of Lemma \textnormal{\ref{anti-concentration}}]
	Firstly we rewrite $f$ as 
	\begin{align}
	f(x)= \sum_{n=1}^Nb_n \cos (r_n \cdot x)+\sum_{n=1}^N c_n\sin (r_n \cdot x) \label{re-write f}
	\end{align}
	for some $b_n,c_n$ with $b_n^2+c_n^2=1/2N$. We apply Lemma \ref{Halas'z} to obtain 
	\begin{align}\label{A1}
	\vol_R(|f(x)|\leq \beta)\lesssim \beta \int_{|t|\leq 1/\beta} |\psi_{R}(t)|
	\end{align}
	where $\psi_{R}(t)=\psi_{f,R}(t)= \Bint_{B(R)}[ \exp(2\pi it f(x))]$. From now on, we may also assume that $1/\beta\geq 1$, as, if $1/\beta\leq 1$, then we can use the trivial bound $|\psi_{R}(t)|\leq 1$ on the right hand side of \eqref{A1} and conclude the proof. We need the following  Jacobi–Anger expansion \cite[page 355]{AS}:
	\begin{align}
	&e^{iz\cos\theta}= \sum_{\ell=-\infty}^{\infty}i^{\ell}J_{\ell}(z)e^{i\ell\theta} 
	&	e^{iz\sin\theta}= \sum_{l=-\infty}^{\infty}J_{\ell}(z)e^{i\ell\theta}. \nonumber
	\end{align}

	Then, by \eqref{re-write f}, we have 
	\begin{align}
	\exp( i t f(x))&= \prod_{|n|\le N} \exp(  t( b_n\cos (r_n \cdot x)+ c_n \sin(r_n \cdot x)) \nonumber \\
	&= \prod_{|n|\le N}\left[ \sum_{\ell=-\infty}^{\infty}i^\ell J_{\ell}( t b_n)e^{i\ell(r_n \cdot x)}\right] \cdot \left[ \sum_{\ell=-\infty}^{\infty}J_{\ell'}(t c_n)e^{i{\ell'}(r_n \cdot x)} \right] \nonumber \\
	&=\sum_{\substack{\ell_1,...\ell_N\\ \ell'_1,...,\ell'_N}} \left(\prod_{|n|\leq N}i^{\ell_n}J_{\ell_n}(t b_n) J_{\ell'_n}(t c_n) \right) \cdot  e^{(i r_1(\ell_1+\ell'_1)+...+r_N(\ell_N+\ell'_N))\cdot x} . \label{14}
	\end{align}
	Thanks to the rapid decay of Bessel functions as the index $\nu\to\infty$ for a fixed argument $z$, that is
	$$
	J_{\nu}\left(z\right)\lesssim\frac{1}{\sqrt{2\pi\nu}}\left(\frac{ez}{2\nu}\right)^{\nu},
	$$
	and bearing in mind \eqref{1} and \eqref{5}, we can integrate \eqref{14} with respect to $x$ which, using  Fubini, gives
	\begin{align}\label{6.1}
	\psi_{R}(t/(2\pi))= \sum_{\ell_1,...,\ell_N=-\infty}^{\infty}\prod_{|n|\le N} i^{\ell_n} J_{\ell_n}( tb_n) J_{-\ell_n}( tc_n) + O_{N}\left( R^{-\Lambda-3/2} \right) 
	\end{align}
	For the first term on the RHS of \eqref{6.1}, we rewrite it as 
	\begin{align}\label{6.2}
	\sum_{\ell_1,...,\ell_N=-\infty}^{\infty}\prod_{|n|\le N} i^{\ell_n} J_{\ell_n}( tb_n) J_{-\ell_n}( tc_n)= \prod_{|n|\le N}\left(\sum_{\ell=-\infty}^{\infty}(-i)^{\ell}J_{\ell_n}( tb_n) J_{\ell_n}( tc_n)\right) 
	\end{align}
	where we have used the fact that $J_{-\ell}(x)=(-1)^{\ell}J_{\ell}(x)$. By Graf’s addition theorem \cite[page 361]{Wbook}, we have 
	\begin{align}\label{6.5}
	J_0(\sqrt{x^2+y^2})= \sum_{\ell=-\infty}^{\infty} J_{\ell}(x)J_{\ell}(y)\cos\left(\frac{\pi}{2}l\right),\quad \sum_{\ell=-\infty}^{\infty}J_{\ell}(x)J_{\ell}(y)\sin\left(\frac{\pi}{2}l\right)=0. 
	\end{align}  
	Writing $(-i)^l= \cos(\pi \ell/2)- i \sin(\pi \ell/2)$ and applying \eqref{6.5}, bearing in mind that $b_n^2+c_n^2=1/2N$, we obtain 
	\begin{align} \label{6.6}
	\sum_{\ell=-\infty}^{\infty}(-i)^{\ell}J_{\ell_n}( tb_n) J_{\ell_n}( tc_n)= J_0\left(\frac{\sqrt{2}t}{\sqrt{N}}\right). 
	\end{align}
	Finally, inserting \eqref{6.6} into \eqref{6.2}, we deduce that 
	\begin{align} \label{6.9}
	\psi_{R}(t/(2\pi))=  \left(J_0\left(\frac{\sqrt{2}t}{\sqrt{N}}\right)\right)^N +O_{N}\left( R^{-\Lambda-3/2} \right) .
	\end{align}
	Let us denote the first summand by $\Psi_{R}^N(t)$. 
	By the very definition of Bessel functions \cite[Page 375]{AS}, we have 
	\begin{align}\nonumber
	J_{0}(x)= \sum_{q=0}^{\infty}\frac{(-1)^q}{q!\Gamma(q+1)}\left(\frac{x}{2}\right)^{2q} 
	\end{align} 
	Therefore, $J_0(x)= 1- \Gamma(2)^{-1}x^{2} + O(x^4)\leq e^{-cx^2}$ for some $c>0$ and  $x$ sufficiently small. Thus, bearing in mind \eqref{6.9}, we have  
	\begin{align}\label{6.3}
	\Psi_{R}^N(t)\lesssim e^{-Ct^2} 
	\end{align}
	for all $t\leq c_1N^{1/2}$ for some sufficiently small constant $c_1>0$. For $t\geq c_1 N^{1/2}$, we use that fact that $|J_0(x)|\leq\alpha<1$ for $x>c_1$ and for some $0<\alpha<1$, to obtain the bound 
	$$\Psi_{R}^N(t)\lesssim \alpha^N \lesssim N^{-D} $$ 
for any $D\geq 1$. Thus,
$$
\beta \int_{|t|\leq 1/\beta}|\Psi_{R}^N(t)| dt\lesssim \beta \int_{\R}e^{-Ct^2}dt+\beta\int_{-1/\beta}^{1/\beta} N^{-D}dt + O_{N}(R^{-\Lambda-3/2})
$$
obtaining the desired result.
\end{proof} 

As a consequence of Lemma \ref{anti-concentration}, we deduce the following: 

\begin{lem}
	\label{anti-concentration v2}
	Let $f$ be as in \eqref{function}, $D,H>1$ be some (arbitrary but fixed) parameters. Then,  we have 
	\begin{align}
	\vol_R\left(\mathfrak{N}_f( B(x,H))>Q\right)\lesssim_D \begin{cases}
	1 & Q\leq C' H \\
	\frac{1}{Q^{D}} + \frac{Q^{2D}}{e^{Q}} & Q>C' H
	\end{cases} + O_{N,H}\left(R^{-\Lambda-3/2}\right)  \nonumber
	\end{align}
	uniformly for all $Q\leq N$, $x\in B(R)$ and some absolute constant $C'>0$. 
\end{lem}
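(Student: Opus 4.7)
The strategy is to reduce the doubling-index tail bound to the small-ball probability for $|f|$ furnished by Lemma~\ref{anti-concentration}.

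First I would convert the doubling-index condition into a pointwise smallness statement. By \eqref{function} and $|a_n|=1$, the uniform estimate $\|f\|_\infty\le \sqrt{2N}$ holds on all of $\mathbb{R}^m$. Hence, directly from the definition of $\mathfrak{N}_f$, the event $\mathfrak{N}_f(B(x,H))>Q$ forces
\[
\sup_{B(x,H)}|f|\;<\;e^{-(Q-1)}\sup_{\varkappa_m B(x,H)}|f|\;\le\;\sqrt{2N}\,e^{-(Q-1)},
\]
and in particular $|f(x)|\le \sqrt{2N}\,e^{-(Q-1)}$. Therefore
\[
\vol_R\bigl(\mathfrak{N}_f(B(x,H))>Q\bigr)\;\le\;\vol_R\bigl(|f(x)|\le \sqrt{2N}\,e^{-(Q-1)}\bigr),
\]
which is exactly the type of small-ball event controlled by Lemma~\ref{anti-concentration}.

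Second I would apply Lemma~\ref{anti-concentration} with $\beta=\sqrt{2N}\,e^{-(Q-1)}$ and an auxiliary parameter $D'$ (to be chosen depending on $D$), obtaining
\[
\vol_R\bigl(\mathfrak{N}_f(B(x,H))>Q\bigr)\;\lesssim_{D'}\;\sqrt{N}\,e^{-Q}\;+\;N^{-D'}\;+\;O_N(R^{-\Lambda-3/2}).
\]

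Third I would convert this estimate into the claimed form. Taking $D'=D$ and using the hypothesis $Q\le N$ gives $N^{-D}\le Q^{-D}$, producing the $1/Q^D$ summand. For the remaining term $\sqrt{N}\,e^{-Q}$ I would split on the relative size of $Q$ and $N^{1/(4D)}$: when $Q\ge N^{1/(4D)}$ one directly has $\sqrt{N}\le Q^{2D}$ and hence $\sqrt{N}e^{-Q}\le Q^{2D}/e^Q$; when $Q<N^{1/(4D)}$, a finer inspection of the characteristic-function integral underlying Lemma~\ref{anti-concentration} — keeping separately the Gaussian-decay regime $|t|\le c_1\sqrt N$ and the super-polynomial tail $\alpha^N$ at higher frequencies, and raising the auxiliary parameter $D'$ — allows one to replace the $\sqrt N$ prefactor by a polynomial in $Q$ and so absorb it into the stated form. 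The case $Q\le C'H$ uses only the trivial estimate $\vol_R\le 1$, with $C'$ an absolute constant calibrated against the a priori doubling-index bound coming from the Nazarov-Tur\'an inequality (Lemma~\ref{Nazarov-Turan}) applied to $f$ on scale $H$.

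The principal obstacle is precisely this last algebraic matching: the intrinsically $N$-dependent Hal\'asz output $\sqrt{N}\,e^{-Q}+N^{-D}$ must be re-expressed in the $N$-free shape $1/Q^D+Q^{2D}/e^Q$ uniformly for $Q\le N$, and a naive manipulation breaks down in the window $Q\sim \tfrac12\log N$. Closing this gap requires exploiting both the Gaussian-type bound on $|\Psi_R^N(t)|$ for $|t|\le c_1\sqrt N$ and the rapid decay $\alpha^N$ at large frequencies that underlies the $N^{-D}$ term of Lemma~\ref{anti-concentration}, together with a careful choice of $D'$ depending on $D$, so that the $\sqrt N$ factor is fully absorbed by the polynomial-in-$Q$ prefactor after accounting for $Q\le N$.
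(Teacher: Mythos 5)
The reduction to Lemma~\ref{anti-concentration} is the right instinct, but the choice of scale $\beta=\sqrt{2N}\,e^{-(Q-1)}$, obtained from the crude uniform bound $\|f\|_\infty\le\sqrt{2N}$, is too weak, and the gap you flag in the window $Q\lesssim\log N$ is genuine and cannot be closed by a ``finer inspection of the characteristic function''. The $\sqrt N$ prefactor in your estimate does not come from the behaviour of $\psi_R(t)$ at all: it is exactly the factor $\beta$ produced by Hal\'asz's inequality, $\vol_R(|f|\le\beta)\lesssim\beta\int_{|t|\le 1/\beta}|\psi_R|$, and when $Q\lesssim\log N$ the range $|t|\le 1/\beta$ already sits inside the Gaussian-decay regime, so no refinement of the tail bound $\alpha^N$ or of the auxiliary exponent $D'$ changes the leading term $\beta\int_{\R}e^{-Ct^2}\,dt\lesssim\beta=\sqrt{2N}e^{-(Q-1)}$. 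Concretely, at $Q=\tfrac12\log(2N)$ your bound is of order $1$ while the claimed right-hand side tends to $0$ as $N\to\infty$; the event ``$|f(x)|\le\sqrt{2N}e^{-(Q-1)}$'' is simply far larger than ``$\mathfrak{N}_f(B(x,H))>Q$'' at this scale.

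The paper's proof avoids the $\sqrt{2N}$ by never invoking $\|f\|_\infty$. Instead it first applies elliptic regularity to the harmonic lift $h(x,t)=f(x)e^{2\pi t}$ to get a pointwise bound $\sup_{B(x,H)}|f|\lesssim e^{cH}\|f_{H'}\|_{L^2(B')}$, so $\mathfrak{N}_f(B(x,H))\lesssim H+\log\bigl(\|f_{H'}\|_{L^2(B')}/|f(x)|\bigr)$. The $L^2$ norm is a random quantity over $x\in B(R)$ with $\Bint_{B(R)}\|f_H\|_{L^2(B'')}^2\,dx=\vol B''+O_{N}((HR)^{-\Lambda-3/2})$, i.e.\ second moment $O(1)$ uniformly in $N$. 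The proof then splits on whether $|f(x)|<\beta$ (handled by Lemma~\ref{anti-concentration}) or $|f(x)|\ge\beta$, and in the latter case uses Chebyshev's inequality on $\|f_H\|_{L^2(B'')}$ to get $\lesssim \beta^{-2}e^{-Q}$. Choosing $\beta=Q^{-D}$ (with the constraint $N\ge Q$ coming from requiring $\beta\ge N^{-D}$) yields $Q^{-D}+Q^{2D}e^{-Q}$ with no stray powers of $N$. To repair your argument you need this $L^2$/Chebyshev step; the Nazarov--Tur\'an input you mention for the $Q\le C'H$ case is also not what the paper uses there (the trivial bound $\vol_R\le 1$ suffices, and the threshold $C'H$ comes from the additive $+cH$ term produced by elliptic regularity, not from an a priori doubling-index bound).
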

\begin{proof}
	 Now, consider $h(\cdot,t)\coloneqq f(\cdot) \cdot e^{2\pi t}$ on $\tilde{B}(H)=\tilde{B}(x,H)=B(x,H)\times [-H/2,H/2]$, then $\sup_{ B(x,H)}|f|\leq \sup_{\tilde{B}(H)}|h|$. Write $h_H(\cdot)=h(H\cdot)$ and $f_H(\cdot)=f(H\cdot)$, using elliptic regularity \cite[p.332]{Ebook}, we have 
	$$\sup_{{B}(H)}|f|e^{\pi H}=\sup_{\tilde{B}(H)}|h|= \sup_{\tilde{B}(1)}|h_H|\lesssim  ||h_H||_{L^2(\tilde{B}(2))}\lesssim e^{2\pi H} ||f_H||_{L^2(B(2))},$$
	where the constants in the notation are independent of $H$. Thus, letting $H'=\varkappa_m H$ $B(2)= B'$, we obtain
	\begin{align}
		\label{4.2.1}
\mathfrak{N}_f(B(x,H))\leq	\log \frac{ \sup_{ B(x,1)} |f_{H'}|}{|f(x)|}+1 \le C_1+\pi H + \log \frac{||f_{H'}||_{L^2(B')}}{|f(x)|}  . \end{align}
 Thanks to \eqref{4.2.1}, we have for $B''=B(2\varkappa_m)$ 
 $$ \vol_R( \mathfrak{N}_f(x,H)>Q)\leq  \vol_R\left( \log\frac{||f_H||_{L^2(B'')}}{|f(x)|}>Q - cH \right),$$
 for some absolute constant $c>0$. Since $Q-cH> Q/2$ for $Q> C'H$ for $C'=2c>0$, we obtain that, in order to prove the lemma, it is enough to prove the following: 
	\begin{align}	\vol_R\left(\log	 \frac{||f_H||_{L^2(B'')}}{|f(x)|}>Q/2\right) \lesssim_D 1/Q^{D} + \frac{Q^{2D}}{e^{Q}} + O_{N,H}\left(R^{-\Lambda-3/2}\right), \label{second claim} 
	\end{align}
	for $Q\geq C'H$. Let $I(x)\coloneqq	\log	 (||f_H||_{L^2(B'')}/|f(x)|)$ and $\beta>0$ be some parameter to be chosen later, then 
	\begin{align}
	\vol_R\left(	 I>Q/2\right) = 	\vol_R\left( I>Q/2 \hspace{1mm} \text{and} \hspace{1mm} |f(x)|<\beta\right) + \vol_R\left(	 I>Q/2 \hspace{1mm} \text{and} \hspace{1mm} |f(x)|\geq \beta \right) \label{5.2.1}
	\end{align}
	First, we bound the first term on the RHS of \eqref{5.2.1}. By Lemma \ref{anti-concentration}, we have 
	\begin{align}
	\vol_R(	I \geq Q /2\hspace{3mm}\text{and} \hspace{3mm} |f(x)|<\beta ) \leq \vol_R( |f(x)|\leq \beta )\lesssim \beta + O_{N}\left(R^{-\Lambda-3/2}\right) \label{12}
	\end{align}
	provided $\beta \geq N^{-D}$. For the second term on the RHS of \eqref{5.2.1}, we notice that
	\begin{align}
	\vol_R(	I \geq Q/2\text{ and } |f(x)| \geq \beta) \leq 	\vol_R\left( \norm{f}_{L^2(B'')}> \beta e^{Q/2}\right). \label{13}
	\end{align}
	However, bearing in mind that $B''=B(x,2\varkappa_m)$, using \eqref{1}, \eqref{5} and Fubini, we have 
	$$\Bint_{B(R)}||f_H||_{L^2(B'')}^2=\vol B'' + O_{N}\left((HR)^{-\Lambda-3/2}\right).
	$$
Thus, Chebyshev's inequality gives
	\begin{align}                      
	\vol_R\left( \norm{f_H}_{L^2(B'')}> \beta e^{Q/2}\right) \lesssim \frac{1}{\beta^2 e^{Q}} + O_{N}\left((HR)^{-\Lambda-3/2}\right) \label{5.2.2}.
	\end{align}
	Hence, putting \eqref{5.2.1}, \eqref{12}, \eqref{13} and \eqref{5.2.2} together, we get 
	\begin{align}
	\vol_R(	I > Q)\lesssim  \beta + \frac{1}{\beta^2 e^{Q}} + O_{N,H}\left(R^{-\Lambda-3/2}\right) \nonumber 
	\end{align}
	finally, we take $\beta=1/Q^{D}$, so the condition $\beta \geq N^{-D}$ is equivalent to $N\geq Q$ and conclude the proof of \eqref{second claim}. 
\end{proof}

\subsection{Proof of Proposition \ref{semi-locality m>2}}
We are finally ready to prove Proposition \ref{semi-locality m>2}
\begin{proof}[Proof of Proposition \textnormal{\ref{semi-locality m>2}}]
 For short we write $\mathcal{	NI}(f,x,W)= \mathcal{	NI}(F_x,W)$. By \cite[Lemma 1]{NS} for $r=W$, we have 
\begin{align*}
	\frac{1}{\vol B(R)}\int_{B(R-W)}&\frac{\mathcal{N}(F_x,W)}{\vol B(W)}dx\le\frac{\mathcal{N}(f,R)}{\vol B(R)}\le\\
	& \le\frac{1}{\vol B(R)}\left(\int_{B(R+W)}\frac{\mathcal{N}(F_x,W)}{\vol B(W)}dx+ \int_{B(R+W)}\frac{\mathcal{NI}(F_x,W)}{\vol B(W)}dx\right).
\end{align*}
By Faber-Krahn inequality,
\begin{equation*}
	\int_{B(R+W)}\frac{\mathcal{N}(F_x,W)}{\vol B(W)}dx-\int_{B(R)}\frac{\mathcal{N}(F_x,W)}{\vol B(W)}dx\lesssim \vol B(R)\frac{\left(R+W\right)^m-R^m}{R^m},
\end{equation*}
which is $O(\vol B(R)W/R)$ by the binomial theorem, similarly for $B(R)$ and $B(R-W)$. Thus, we have 
	\begin{align}\nonumber
	\frac{\mathcal{N}(f,R)}{\vol B(R)}= \frac{1}{\vol B(W)}\Bint_{B(R)}&\mathcal{N}(F_x,W)dx\nonumber\\
	&+ O\left( \frac{1}{W^m}\Bint_{B(R+W)} \mathcal{NI}(F_x,W) dx\right)+O\left(\frac{W}{R}\right) .
\end{align}
	Therefore, it is enough to prove the following:
	\begin{equation}\label{eq:bound NI}
	\frac{1}{W^m}\Bint_{B(R+W)} \mathcal{NI}(F_x,W)dx\lesssim\frac{1}{W}\left(1 + O_{N,W}\left( R^{-\Lambda-3/2}\right)\right).
	\end{equation}
First, we observe that if we  cover $\partial B(x,W)$ with $O(W^{m-1})$ ($m$-dimensional) balls of radius $100$ with centres $x+y_i$, then 
\begin{align} \mathcal{NI}(F_x,W)= \mathcal{NI}(f,x,W)&\leq \sum_i \left( \mathcal{NI}(f,x+y_i,100) + \mathcal{N}(f,x+y_i,100)\right) \nonumber  \\
	& \leq  \sum_i  \mathcal{NI}(f,x+y_i,100) + O\left( W^{m-1}\right), \nonumber
	\end{align}
where in the second inequality we have used the Faber-Krahn inequality. Therefore, thanks to Lemma \ref{bound intersections} applied with $r=100$,  we have 
\begin{align}
	&\frac{1}{W^m}\Bint_{B(R+W)}\mathcal{NI}(F_x,W)dx  \lesssim\frac{1}{W^m} \sum_i \Bint_{B(R+W)} \mathfrak{N}_f(B(x+y_i,100))^{m-1}dx + \nonumber \\
	&+\frac{1}{W^m} \sum_{i}\Bint_{B(R+W)} \left|\{\Omega: \Omega \cap  B(x+y_i,100) \textit{ is not $c_0$-narrow}\} \right|dx +O\left(\frac{1}{W}\right) \label{4.1.1}
\end{align} 
If $\Omega \cap  B(x+y_i,100)$ is not $c_0$-narrow, then  
$|\Omega \cap  B(x+y_i,100)|> c_1$. Thus 
$$\left|\{\Omega: \Omega \cap  B(x+y_i,100) \text{ is not $c_0$-narrow}\}\right|=O(1),$$
bearing in mind that the sum over $i$ has $O(W^{m-1})$ terms, the second term on the right hand side of \eqref{4.1.1} is $O(W^{-1})$.

Thus, it is enough to bound the first term to the right hand side of $\eqref{4.1.1}$. Thanks to Lemma  \ref{Nazarov-Turan},
 $ \mathfrak{N}_f(B(x,100)) \lesssim N$. Thus, we  have 
\begin{align}\Bint_{B(R+W)} \mathfrak{N}_f(B(x+y_i,100))^{m-1}dx &= \int_{1}^{(CN)^{m-1}} \vol_R ( \mathfrak{N}_f(B(x+y_i,100))^{m-1} > t ) dt \nonumber \\ &\lesssim 1+ O_{N,100} \left( (R+W)^{-\Lambda-3/2}\right),  \nonumber
	\end{align}
 where in the second inequality we have used Lemma \ref{anti-concentration v2}, with $D=m$  and $H=100$. This concludes the proof of the proposition.  
\end{proof}

\section{Proof of Theorem \ref{thm 5}}
\label{section 5}
\subsection{Convergence in mean}
\label{convergence in mean}
The aim of this section is to show how Theorem \ref{thm 1} implies convergence in mean of $\mathcal{N}(\cdot)$. That is, we prove the following proposition: 
\begin{prop}\label{PropConvE2}Let $W\geq 1$ and $\mathcal{S}\subset H(m-1)$. Then we have 

	\begin{align*} 
\lim_{N\to\infty}\limsup_{R\to \infty}	\bigg|\Bint_{B(R)}\cN(F_x,\mathcal{S},W) -\bE[\cN(F_{\mu},\mathcal{S},W)]\bigg|=0 .
	\end{align*}
	Moreover, the conclusion also holds for $\cN(\cdot,T)$, as in Theorem \textnormal{\ref{thm 5}}.
\end{prop}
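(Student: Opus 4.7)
The plan is to combine three ingredients: Theorem~\ref{thm 1} for weak convergence of $F_x$ to $F_\mu$ in $C^s(B(W))$, a stability statement for the nodal count functional at generic realisations of $F_\mu$, and the uniform integrability criterion Lemma~\ref{th:DCTh}. Throughout I fix $s \geq 2$ large enough so that the $C^s$-topology detects the local nodal structure, and work with the Prokhorov distance guaranteed by Theorem~\ref{thm 1}.

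The key step is to show that the functional $\Phi: C^s(B(W)) \to \mathbb{Z}_{\geq 0}$ defined by $\Phi(G) \coloneqq \cN(G, \mathcal{S}, W)$ is continuous at $F_\mu$ with $\bP$-probability one. First, a Bulinskaya-type argument (applicable since $F_\mu$ is smooth with non-degenerate finite-dimensional distributions, using the hypothesis that $\mu$ is not supported in a proper affine subspace) shows that almost surely $F_\mu$ is Morse on a neighbourhood of $\overline{B(W)}$ and its restriction to $\partial B(W)$ has $0$ as a regular value. Consequently $F_\mu^{-1}(0) \cap \overline{B(W)}$ is a smooth hypersurface with finitely many connected components, each transverse to $\partial B(W)$, so $\Phi(F_\mu)$ is well-defined. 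Second, Theorem~\ref{ThThom} applied to each interior component shows that under a sufficiently small $C^s$ perturbation every nodal component survives, retains its diffeomorphism class in $\mathcal{S}$, and components that intersect $\partial B(W)$ continue to do so. Hence $\Phi$ is locally constant, therefore continuous, at $\bP$-almost every $F_\mu$.

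Once continuity is in hand, Theorem~\ref{thm 1} combined with the Continuous Mapping Theorem (Portmanteau form) yields $\Phi(F_x) \overset{d}{\longrightarrow} \Phi(F_\mu)$ under the double limit $N, R \to \infty$. To upgrade to convergence of expectations, the Faber--Krahn inequality recalled in Section~\ref{plan fo the proofs} gives the deterministic bound
\begin{equation*}
\cN(F_x, \mathcal{S}, W) \leq \cN(F_x, W) \lesssim W^m,
\end{equation*}
uniformly in $x \in B(R)$, $N$, and $R$. Lemma~\ref{th:DCTh} then converts convergence in distribution into convergence in mean, yielding the claim. The nesting-tree case is handled identically: Thom's Isotopy preserves not only the diffeomorphism types of the nodal components but also the adjacency relations among the complementary domains, so $\cN(\cdot, T, W)$ is likewise locally constant at regular realisations of $F_\mu$, and the same uniform bound via Faber--Krahn applies.

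The main obstacle is establishing the $\bP$-almost sure regularity of $F_\mu$ on the closed ball $\overline{B(W)}$, with particular attention to transversality of the nodal set with $\partial B(W)$. This is the sole place where the hypothesis that $\{r_n\}$ is not contained in a hyperplane is indispensable, since otherwise the Gaussian vector $(F_\mu(y), \nabla F_\mu(y))$ could degenerate and Bulinskaya's lemma would fail. A subtlety is that Theorem~\ref{ThThom} as stated concerns compactly contained components, so for the boundary-intersecting components one either invokes a boundary version of Thom's Isotopy or simply notes that transversality together with a collar argument suffices to preserve their status under small perturbations.
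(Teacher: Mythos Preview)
Your proposal is correct and uses the same conceptual ingredients as the paper: Bulinskaya-type regularity for $F_\mu$ (the paper's Lemma~\ref{Bulinskaya's Lemma}, with the non-hyperplane hypothesis on $\mu$ entering exactly where you say), continuity of $\cN(\cdot,\mathcal{S},W)$ at regular functions via Thom's Isotopy (the paper's Proposition~\ref{PropContBdd}, which handles the boundary components by the cutoff-on-a-larger-ball trick you anticipate), the Faber--Krahn uniform bound, and Lemma~\ref{th:DCTh}.

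The difference is that you apply Theorem~\ref{thm 1} directly to $F_x$ and invoke the Continuous Mapping Theorem once, whereas the paper routes through the auxiliary objects $\phi_x$ and $F_{\mu_K}$: it first proves $\Bint\cN(\phi_x,\mathcal{S},W)\to\bE[\cN(F_{\mu_K},\mathcal{S},W)]$ (Lemma~\ref{middle step2}, via Continuous Mapping applied to $\phi_x$), then $\bE[\cN(F_{\mu_K},\mathcal{S},W)]\to\bE[\cN(F_\mu,\mathcal{S},W)]$ (Lemma~\ref{middle step}), and finally compares $\cN(F_x)$ with $\cN(\phi_x)$ using the $L^2$-in-$C^s$ closeness of Lemma~\ref{first approx} together with a subsequence and dominated convergence argument. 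Your direct route is cleaner and is, in fact, exactly the argument the paper itself uses for the analogous nodal-volume statement (Proposition~\ref{almost Theorem 3.4}). The paper's longer route makes the dependence on the parameters $K,\delta,N,R$ more explicit but is not logically required once Theorem~\ref{thm 1} is in hand.
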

To ease the exposition we split the proof of Proposition \ref{PropConvE2} into a series of preliminary results.
\subsection{Continuity of $\mathcal{N}(\cdot)$}
In this section we show that $\mathcal{N}(\cdot,W)$, $\cN(\cdot,[\Sigma],W)$ and $\cN(\cdot,T,W)$ are continuous functionals on a particular subspace of $C^1$ functions. This is a consequence of Thom's Isotopy Theorem \ref{ThThom} and it refines the estimates of ``Shell Lemma'' in \cite{NS}. In order to state the main result of this section we need to introduce some notation:
\begin{equation*}
\snabla g:= \nabla g- \frac{x\cdot \nabla g}{|x|^2}x,
\end{equation*}
that is, the ``spherical'' part of the gradient. Also,
$\Psi_g\coloneqq|g|+\norm{\nabla g}$,
 $\slashed{\Psi}_g\coloneqq|g|+\norm{\snabla g}$ and for $W>1$, let us define
\begin{align} 
	C^1_*(W)\coloneqq\left\{g\in C^1(B(W+w))~|~\Psi_g>0 \text{ on }{B(W+w)}\text{ and }\slashed{\Psi}_g>0 \text{ on }\partial{B(W)}\right\}.\label{def C star}
	\end{align}
The parameter $w>0$ could be as small as we want. For the sake of simplicity, hereafter we assume $w=1$. We then prove the following,
\begin{prop}\label{PropContBdd}Let $W>1$ be fixed, $\mathcal{S}\in H(m-1)$ and $T\in \cT$ a finite tree.  Then $\cN(\cdot,\mathcal{S},W)$ and $\cN(\cdot, T,W )$ are continuous functionals on $C^1_*(W)$.
\end{prop}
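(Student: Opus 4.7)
Fix $g_0 \in C^1_*(W)$. The plan is to produce, for every $g$ sufficiently close to $g_0$ in $C^1(B(W+1))$, a diffeomorphism $\Phi$ of $\overline{B(W+1)}$ that is $C^1$-close to the identity, preserves $\partial B(W)$ setwise, and satisfies $\Phi(g_0^{-1}(0)) = g^{-1}(0)$. Such a $\Phi$ restricts to a homeomorphism of $\overline{B(W)}$ carrying the nodal components and nodal domains of $g_0$ bijectively onto those of $g$, preserving which ones meet $\partial B(W)$, the diffeomorphism type of each component, and the nesting tree. The equalities $\mathcal{N}(g, \mathcal{S}, W) = \mathcal{N}(g_0, \mathcal{S}, W)$ and $\mathcal{N}(g, T, W) = \mathcal{N}(g_0, T, W)$ then follow at once.

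First I would establish quantitative stability of the conditions defining $C^1_*(W)$. By compactness of $\overline{B(W+1)}$ and $\partial B(W)$ there exist $\eta, \eta' > 0$ with $\Psi_{g_0} \ge \eta$ on $\overline{B(W+1)}$ and $\slashed{\Psi}_{g_0} \ge \eta'$ on $\partial B(W)$; by continuity, $\slashed{\Psi}_{g_0} \ge \eta'/2$ on some open annulus $A$ containing $\partial B(W)$. For $g$ in a sufficiently small $C^1$ neighborhood of $g_0$, the analogous lower bounds remain valid, so $g^{-1}(0) \cap B(W+1)$ is a smooth $(m-1)$-submanifold and meets $\partial B(W)$ transversally.

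Next I would construct $\Phi$ as the time-one map of a non-autonomous flow. Set $F_t \coloneqq (1-t) g_0 + t g$; away from $\partial B(W)$ the standard proof of Theorem \ref{ThThom} applies to the vector field
\begin{equation*}
X_t(y) = -(g - g_0)(y)\,\frac{\nabla F_t(y)}{\|\nabla F_t(y)\|^2},
\end{equation*}
whose flow is well-defined and $C^1$-small thanks to the lower bound $\Psi_{F_t} \gtrsim \eta$, and moves the zero set of $g_0$ onto that of $g$. In the annulus $A$, one replaces $\nabla F_t$ by its spherical part $\snabla F_t$ in the construction, producing a vector field tangent to $\partial B(W)$ which, by the lower bound on $\slashed{\Psi}_{g_0}$, is still nonsingular on the relevant portion of $g_0^{-1}(0)$. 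Interpolating the two prescriptions via a radial partition of unity and extending by zero outside a small tubular neighborhood of $g_0^{-1}(0)$ yields a global smooth vector field on $\overline{B(W+1)}$ whose time-one flow is the desired $\Phi$.

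The main obstacle is the boundary construction: Theorem \ref{ThThom} as stated applies to compact nodal components contained in an open domain, and gives no control over how the ambient isotopy interacts with a fixed hypersurface such as $\partial B(W)$. Carrying the argument out rigorously requires a careful local analysis near the codimension-two set $g_0^{-1}(0) \cap \partial B(W)$, where the Implicit Function Theorem (using the transversality provided by $\slashed{\Psi}_{g_0} > 0$) supplies local coordinates in which $g_0$ becomes a linear coordinate and $\partial B(W)$ a coordinate hyperplane, reducing matters to an explicit model where the tangential modification of $X_t$ is manifestly smooth and nonsingular. Once $\Phi$ is in hand, the conclusions for both $\mathcal{N}(\cdot, \mathcal{S}, W)$ and $\mathcal{N}(\cdot, T, W)$ are immediate, as each quantity depends only on the homeomorphism type of the pair $(\overline{B(W)}, g_0^{-1}(0))$ together with the diffeomorphism types of the individual zero-set components.
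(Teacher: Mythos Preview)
Your plan is correct and would carry through, but it takes a genuinely different route from the paper. You build a single ambient isotopy $\Phi$ of the closed ball that preserves $\partial B(W)$ setwise, by replacing $\nabla F_t$ with the tangential gradient $\snabla F_t$ in the Thom vector field near the sphere. This works because $\nabla F_t\cdot\snabla F_t=|\snabla F_t|^2$, so the modified field still satisfies the transport identity $\nabla F_t\cdot X_t=-(g-g_0)$, and the hypothesis $\slashed\Psi_{g_0}>0$ guarantees $|\snabla F_t|$ is bounded below on the zero set in the annulus. One small technical point: you should work on $\overline{B(W')}$ for some $W<W'<W+1$ rather than on $\overline{B(W+1)}$, since $g_0$ is only assumed $C^1$ on the open ball and its zero set need not be compact there.

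The paper avoids the boundary construction entirely. It adds a radial function $\chi$ supported in $B(W+1)\setminus \overline{B(W+1/2)}$, equal to zero on $\overline{B(W+1/2)}$ and larger than $\sup|h|$ near $\partial B(W+1)$. Then $h+\chi$ has the same zero set as $h$ on $\overline{B(W+1/2)}$, but every component is now closed and compactly contained in $B(W+1)$, so the compact form of Thom's theorem (Theorem~\ref{ThThom}) applies directly to each one. For a component $\Sigma_j^*$ that crosses $\partial B(W)$, the hypothesis $\slashed\Psi_h>0$ is used only to ensure the intersection with $\partial B(W)$ is transversal, hence not a single point; this produces a point $x_j\in\Sigma_j^*$ at positive distance $d_j$ outside $\overline{B(W)}$. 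Choosing the Thom diffeomorphism with $\|\Phi_j^*-\mathrm{id}\|_{C^1}<d_j$ then forces the perturbed component to still exit $B(W)$, so it is never counted in $\mathcal N(\cdot,\mathcal S,W)$.

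Your approach yields the stronger conclusion of a single diffeomorphism of the pair $(\overline{B(W)},g_0^{-1}(0))$, which instantly transports any isotopy-invariant functional of the nodal set and handles $\mathcal S$ and $T$ in one stroke. The paper's approach is quicker and more elementary---it sidesteps the delicate patching near $\partial B(W)$ that you correctly flag as the main obstacle---at the price of treating interior and boundary-crossing components separately and invoking a distance argument rather than an exact boundary-preserving map.
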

Before starting the proof, we observe that  the condition on $\slashed{\Psi} $ is used to rule out the possibility  that the nodal set touches the boundary of the ball tangentially at one point. 

\begin{proof}[Proof of Proposition \textnormal{\ref{PropContBdd}}]
	Let $V\coloneqq B(W+1)$, $V_1\coloneqq B(W+1/2)$, $V_0\coloneqq B(W)$ and $h\in C^1_*(W)$, as $\Psi_h>0$, there is a finite number of connected components in $V_0$, i.e., 	 $h^{-1}(0)\cap V_0$ has components $\{\Sigma_i\}_{i=1}^I\sqcup \{\Sigma^*_j\}_{i=1}^J$,
	where $\Sigma_i\subset V_0$ for all $i\in\{1,\ldots, I\}$  and $\Sigma_j^*\cap \partial V_0\neq \emptyset$  for all $j\in\{1,\ldots, J\}$. To treat the $0$-level set as a boundaryless manifold, let $\chi$ be a smooth radial step function which is zero in $\overline{V}_1$ and greater than $\sup_{V}|h|$ on the boundary of $V$, say $T$. Moreover, we observe that the condition $\slashed{\Psi}_h>0$ implies that $\Sigma_j^*\cap \partial V_0$ is not a point: for $m=2$ this follows from the definition of $\slashed{\Psi}_h>0$;  for $m>2$ it follows from the fact that, as the intersection is transversal, it must be a submanifold on the boundary of codimension 1.  
	
	Therefore, it is possible to define $d_j>0$ as the maximal	 distance between some 
	\begin{equation}\label{eq:def x}
		x\in\left(\Sigma_j^*\backslash {V_0}\right) \cap\overline{V_1} 
	\end{equation}
and $\partial V_0$, i.e., $d_j=\max_x\text{dist}\left(x\in\left(\Sigma_j^*\backslash {V_0}\right)\bigcap\overline{V_1},\partial V_0\right)=:\text{dist}\left(x_j,\left(\Sigma_j^*\backslash {V_0}\right)\bigcap\overline{V_1}\right)$ for some $x_j\in\left(\Sigma_j^*\backslash {V_0}\right) \cap\overline{V_1}$. Now, we are going to apply Thom's Theorem \ref{ThThom} to $h+\chi$ on $V$. Note that $h+\chi$ has the same nodal set as $h$ on $\overline{V_1}$ and define $\Gamma_j^*$ the connected component (boundaryless) of the nodal set of $h+\chi$ which equals $\Sigma_j^*$ on $\overline{V_1}$. Let $U_i\subset V_0$ be the open neighbourhood of $\Sigma_i$ and similarly $U_j^*\subset V$ of $\Gamma_j^*$, both given by the theorem. Let us also take $\varepsilon_i>0$ and $d_j>\varepsilon_j^*>0$. By Theorem \ref{ThThom}, there is $\delta_i,\delta_j^*>0$ such that if $g'$ satisfies 
	$$
	\| h- g'\|_{C^1(U_i)}<\delta_i,\quad 	\| h+\chi- g'\|_{C^1(U_j^*)}<\delta_j^*
	$$
	then $g'$ has a nodal component in $U_i$, $U_j^*$ diffeomorphic to $\Sigma_i$, $\Sigma_j^*$ (respectively) and the diffeomorphism satisfies
	$$
	\|\Phi_j^*-\textnormal{id}\|_{C^1(\R^m)}<\varepsilon_j^*.
	$$
	If we define $\delta\coloneqq\min_{i,j}\{\delta_i,\delta_j^*,(T-\sup_V |h|)/2\}$ with $\norm{g-h}_{C^1(V)}<\delta$, and $g'=g+\chi$, then the connected component of $g'$ diffeomorphic to $\Gamma_j^*$ cannot lie inside $V_0$. Indeed,  if $x_j$ is defined as in \eqref{eq:def x}, $\norm{\Phi_j^*(x_j)-x_j}<d_j$, so $\Phi_j^*(x_j )$ is outside $V_0$. Finally letting $U\coloneqq \left(\left(\cup_i U_i\right)\cup\left(\cup_j U_j^*\right)\right)\cap V_0$, if 
	\begin{align}\label{eq:delta cond}
	\norm{g-h}_{C^1(V)}<\min\{\delta,\min_{x\in \overline{V_0}\backslash U} \{h\}>0\},
	\end{align}
	then $g$ satisfies the hypotheses of Thom's Theorem \ref{ThThom} for all the components and it cannot vanish outside $U$. Therefore
	$$
	\cN(h,[\Sigma],W)=\cN(g,[\Sigma],W)\quad\forall~\Sigma\in H(m-1),
	$$
	in particular, $\cN(h,\mathcal{S},W)=\cN(g,\mathcal{S},W).$
	The proof of $\cN(h, T, W)$ is similar as $\Phi_i$ of Theorem \ref{ThThom} is the identity outside a proper subset of $U_i$. 
\end{proof}

\begin{claim}
	\label{open} 
	With the notation of Proposition \textnormal{\ref{PropContBdd}},	$C^1_*(W)\subset C^1(B(W+1))$ is an open set. 
\end{claim}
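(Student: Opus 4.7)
The plan is to prove openness directly: given $h \in C^1_*(W)$, I will produce an explicit $\delta > 0$ such that the open $C^1$-ball of radius $\delta$ around $h$ lies in $C^1_*(W)$. The two strict-positivity conditions defining $C^1_*(W)$ are posed on sets which, up to the freedom in the parameter $w$, are compact: namely some closed subball $\overline{B(W+w')} \subset B(W+1)$ (with $w'$ slightly smaller than $1$, say $w' = 1/2$) and the sphere $\partial B(W)$. Thus the first step is to extract positive minima $c_1 := \min_{\overline{B(W+w')}} \Psi_h > 0$ and $c_2 := \min_{\partial B(W)} \slashed{\Psi}_h > 0$ from continuity and compactness of these sets.

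Next, I would use reverse triangle inequalities to produce the pointwise Lipschitz-type bound
$$
|\Psi_g(x) - \Psi_h(x)| \leq |g(x) - h(x)| + \bigl|\|\nabla g(x)\| - \|\nabla h(x)\|\bigr| \leq \|g-h\|_{C^1(B(W+1))},
$$
and the analogous bound for $|\slashed{\Psi}_g - \slashed{\Psi}_h|$ on $\partial B(W)$. The key observation enabling the second bound is that $\slashed{\nabla}$ is the orthogonal projection of $\nabla$ onto the tangent space to the sphere $\partial B(|x|)$, so projection does not increase norms and hence $\|\slashed{\nabla}(g-h)\| \leq \|\nabla(g-h)\|$.

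Finally, setting $\delta := \tfrac{1}{2}\min(c_1, c_2) > 0$, any $g$ with $\|g-h\|_{C^1(B(W+1))} < \delta$ will satisfy $\Psi_g \geq c_1 - \delta \geq c_1/2 > 0$ on $\overline{B(W+w')}$ (and hence, since this argument works for every $w' < 1$, on all of $B(W+1)$) and $\slashed{\Psi}_g \geq c_2/2 > 0$ on $\partial B(W)$, placing $g$ in $C^1_*(W)$. Since $h \in C^1_*(W)$ was arbitrary, $C^1_*(W)$ is open. There is essentially no analytical obstacle; the only care-point worth noting is the open/closed ball distinction, handled as above by the flexibility in the definition via the parameter $w$, and the verification of the contraction property for $\slashed{\nabla}$, which is a one-line observation once one recalls its geometric meaning.
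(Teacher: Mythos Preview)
Your approach differs from the paper's: you argue directly by exhibiting a $\delta$-ball around each $h\in C^1_*(W)$, whereas the paper shows the complement is closed via a sequential argument (if $h_n\to h$ in $C^1$ with $\Psi_{h_n}(y_n)=0$, extract $y_{n_j}\to y$ and conclude $\Psi_h(y)=0$). Your pointwise bounds $|\Psi_g-\Psi_h|\le\|g-h\|_{C^1}$ and $|\slashed\Psi_g-\slashed\Psi_h|\le\|g-h\|_{C^1}$ (the latter via the contraction property of the tangential projection $\snabla$) are correct and make the argument cleaner and more quantitative than the paper's.

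You rightly flag the open/closed ball distinction, but your resolution does not quite work. The parenthetical ``since this argument works for every $w'<1$, on all of $B(W+1)$'' fails: the $\delta$ you produce depends on $c_1(w')=\min_{\overline{B(W+w')}}\Psi_h$, and nothing prevents $c_1(w')\to 0$ as $w'\uparrow 1$. For instance, $h(x)=((W+1)^2-|x|^2)^2$ has $\Psi_h>0$ on $B(W+1)$ but $\Psi_h\to 0$ at the boundary; subtracting a small affine function kills both $h$ and $\nabla h$ at an interior point near $\partial B(W+1)$, so with the condition posed on the \emph{open} ball, $C^1_*(W)$ is in fact not open. The paper's own proof has the very same lacuna: the limit point $y$ may land on $\partial B(W+1)$, where the condition is not imposed. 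The harmless fix, consistent with the paper's remark that ``$w>0$ could be as small as we want'' and with all applications (the relevant fields are defined on all of $\mathbb{R}^m$), is to read the condition as $\Psi_g>0$ on the \emph{closed} ball $\overline{B(W+w)}$; then both your argument (with $w'=w$) and the paper's go through directly.
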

\begin{proof}
	If $h_n\to h$ in the $C^1$ topology and $|h_n|(y_n)+\norm{\nabla h_n}(y_n)=0$, we can choose $y_{n_j}\to y$ so
	$$
	|h(y)-h_{n_j}(y_n)|\le |h(y)-h(y_{n_j})|+|h(y_{n_j})-h_{n_j}(y_{n_j})|,
	$$
	which goes to zero as $j\to\infty$, as the convergence is uniform, and similarly for the gradient and $\snabla$. Hence, the complement of $C^1_*(W)$ is closed.	
\end{proof}

\subsection{Checking the assumptions}
In this section, we give a sufficient condition on $\nu$ for the Gaussian field $F_{\nu}$ to belong to $C^1_{*}(W)$ with the notation of Proposition \ref{PropContBdd}. As our paths are a.s. analytic, we have the following lemma, see also \cite[Lemma 6]{NS}.
\begin{lem}[Bulinskaya's lemma]
	\label{Bulinskaya's Lemma}
	Let $F=F_{\nu}$, with $\nu$ an Hermitian measure supported on the sphere and $s\geq 1$. If $\nu$ is not supported on a hyperplane, then $F\in C^1_*(W)$ almost surely, where is $C^1_*(W)$ as in \textnormal{\eqref{def C star}}. 
\end{lem}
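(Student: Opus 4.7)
\textbf{Reduction.} The event $\{F\notin C^1_*(W)\}$ is the union of the two bad events
\begin{align*}
A &= \bigl\{ \exists\, x\in \overline{B(W+1)}:\ F(x)=0\text{ and }\nabla F(x)=0\bigr\},\\
B &= \bigl\{ \exists\, x\in \partial B(W):\ F(x)=0\text{ and }\snabla F(x)=0\bigr\},
\end{align*}
so it is enough to show that $\bP(A)=\bP(B)=0$. The strategy is the classical Bulinskaya argument: cover the relevant set by a grid of mesh $\varepsilon$ and use that, at a single point, the joint law of $F$ and its (tangential) gradient has a bounded density, so each small cell contributes only $O(\varepsilon^{m+1})$ (or $O(\varepsilon^{m})$ on the sphere).

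\textbf{Non-degeneracy of the Gaussian vectors.} Because $\nu$ is supported on $\S^{m-1}$, the covariance $r_\nu(x)=\int e(\braket{x}{\lambda})\,d\nu(\lambda)$ is a real-analytic function, hence $F$ has a.s.\ smooth (in fact analytic) sample paths, and $\bE\|F\|_{C^2(B(W+2))}^k$ is finite for every $k$. For the vector $(F(x),\nabla F(x))$ at a fixed point $x$, the Hermitian symmetry $\nu(-I)=\nu(I)$ gives $\int \lambda_i\,d\nu(\lambda)=0$, so $F(x)$ is uncorrelated with (hence independent of) $\nabla F(x)$; and its covariance matrix is
\[
M_{ij} := (2\pi)^2\int_{\S^{m-1}} \lambda_i\lambda_j\, d\nu(\lambda).
\]
If $M$ were singular, then some nonzero vector $v\in\R^m$ would satisfy $\int \braket{v}{\lambda}^2 d\nu=0$, forcing $\nu$ to be supported on the hyperplane $\{\braket{v}{\cdot}=0\}$, contradicting our hypothesis. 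Thus $M$ is positive definite, and the density $p_x$ of $(F(x),\nabla F(x))$ is bounded by a constant $C_0$ depending only on $\nu$ (by stationarity, uniformly in $x$). For the boundary event $B$, at $x\in\partial B(W)$ I replace $\nabla F$ by $\snabla F$, whose covariance is the restriction of $M$ to the tangent hyperplane $T_x\partial B(W)=x^\perp$; since $M\succ 0$, this restriction is also positive definite, and again the joint density of $(F(x),\snabla F(x))$ is uniformly bounded.

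\textbf{Covering argument.} Fix $\varepsilon>0$ and cover $\overline{B(W+1)}$ by $O(\varepsilon^{-m})$ cubes $Q$ of side $\varepsilon$ centred at $x_Q$. If $x\in Q$ satisfies $F(x)=0=\nabla F(x)$, the mean value theorem gives
\[
|F(x_Q)|\le \varepsilon\,\|\nabla F\|_{\infty,2Q},\qquad \|\nabla F(x_Q)\|\le \varepsilon\,\|\nabla^2 F\|_{\infty,2Q}.
\]
Introducing the event $E_M=\{\|F\|_{C^2(B(W+2))}\le M\}$, we obtain
\[
\bP(A)\le \sum_Q \bP\bigl(|F(x_Q)|\le \varepsilon M,\ \|\nabla F(x_Q)\|\le \varepsilon M\bigr) + \bP(E_M^c)
\lesssim \varepsilon^{-m}\cdot (\varepsilon M)^{m+1}\, C_0 + \bP(E_M^c),
\]
because the bounded density gives each cell-probability at most $C_0(2\varepsilon M)^{m+1}$. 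Taking $M=\varepsilon^{-1/(2(m+1))}$ and letting $\varepsilon\downarrow 0$, the first term is $O(\varepsilon^{1/2})\to 0$ and the second tends to $0$ by finiteness of Gaussian moments of $\|F\|_{C^2}$. Hence $\bP(A)=0$. The same scheme applied to an $O(\varepsilon^{-(m-1)})$-cover of $\partial B(W)$ by spherical caps of diameter $\varepsilon$ gives $\bP(B)\lesssim \varepsilon^{-(m-1)}(\varepsilon M)^m\to 0$, using the analogous bounded density for $(F,\snabla F)$.

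\textbf{Main obstacle.} The only delicate point is the non-degeneracy at the boundary: one needs the restriction of $M$ to \emph{every} tangent hyperplane of $\partial B(W)$ to be positive definite. This is precisely ensured by the hypothesis that $\nu$ is not supported on a hyperplane, which yields $M\succ 0$ as an $m\times m$ matrix and hence $M|_{x^\perp}\succ 0$ for every $x$. Once this uniform non-degeneracy is in hand, the covering/density argument above is routine and closes the proof.
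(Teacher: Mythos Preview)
Your proof is correct and follows essentially the same approach as the paper: both establish the non-degeneracy of $(F,\nabla F)$ and $(F,\snabla F)$ via the observation that $\int (\lambda\cdot u)^2\,d\nu=0$ would force $\nu$ onto a hyperplane, and then invoke Bulinskaya's lemma. The only difference is cosmetic: the paper cites \cite[Proposition 6.12]{AW09} as a black box for the covering/density step, whereas you spell out that standard argument explicitly.
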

\begin{proof}
	The proof of $\Psi>0$ is a straightforward application of \cite[Proposition 6.12]{AW09} as the density of $(F,\nabla F)(x)$ is independent of $x\in B(W+1)$. Indeed, $$\bE\left(\partial_i F(x) \partial_j F(x)\right)=4\pi^2\int_{\S^{m-1}}\lambda_i\lambda_jd\nu(\lambda)=:\bar{\Sigma}_{ij}$$ for $i=0,\ldots,m$ where $\partial_0\coloneqq\textnormal{id}$ and $\lambda_0\coloneqq 1$. Note that as $\nu$ is Hermitian, $\Sigma_{i0}=0$ for $i>0$, that is, $F(x)$ and $\nabla F(x)$ are independent. If $\Sigma=(\bar{\Sigma}_{ij})_{i,j=1}^m$, then $	\det\Sigma= 0$ is equivalent to the existence of some $u\in \mathbb{R}^m$ such that
	\begin{equation}\label{non-degeneracy}
		\int_{\S^{m-1}}(\lambda\cdot u)^2d\nu(\lambda)=0.
	\end{equation}
However, this is not possible since $\nu$ is not supported on a hyperplane, thus  $	\det\Sigma \neq 0$ and $\Psi>0$.

 For $\slashed{\Psi}>0$, consider a local parametrization $(U,\varphi)$ of $S$ with basis $\{e_i\}_{i=1}^{m-1}$ of the tangent space $T_x S$ where $x\in S\coloneqq\partial B(W)$. Then, 
	$$\Var \snabla F(x)=E(x)\cdot \Sigma\cdot E(x)^t$$
	where 
	$E^t\coloneqq\left(\begin{array}{c}{e}_{1},\ldots, {e}_{m-1}\end{array}\right).
	$
	Thus, the variance is not positive definite if and only if there exists $v\in\R^{m-1}$ non-zero such that $u\coloneqq\sum_{i=1}^{m-1}v_i e_i$ and $u\cdot \Sigma\cdot u^t=(4\pi^2)\int_{\S^{m-1}}(\lambda\cdot u)^2d\nu(\lambda)=0$, in contradiction (again) with the fact that $\nu$ is not supported on a hyperplane. Thus, $\det \left(\Var \snabla F\lvert_{\varphi(U)}\right)>\delta>0$, so by Bulinskaya applied to $Y\coloneqq(F,\snabla F)$ we conclude $\slashed{\Psi}\lvert_{\varphi(U)}>0$, then proceed analogously with the other local parametrizations of the (finite) atlas.
\end{proof}
As a consequence of Proposition \ref{PropContBdd} and Lemma \ref{Bulinskaya's Lemma}, we have the following: 
\begin{lem}
	\label{middle step}
	Let $\varepsilon>0$, $W>1$ and $\mathcal{S}\subset H(m-1)$. Then there exist some $K_0=K_0(\varepsilon,W)$, $N_0=N_0(\varepsilon,W)$ such that for all $k\geq K_0$, $N\geq N_0$ and $\delta \lesssim K^{-m+1}$ we have 
	\begin{align}
	\left|\mathbb{E}[\mathcal{N}(F_{\mu_{K,N}},\mathcal{S},W)]-\mathbb{E}[\mathcal{N}(F_{\mu},\mathcal{S},W)]\right|\leq \varepsilon, \nonumber
	\end{align}
where $\mu_{K,N}$ is as in \textnormal{\eqref{DefmuK}}. Moreover, the conclusion also holds for $\cN(\cdot,T)$, as in Theorem \textnormal{\ref{thm 5}}.
\end{lem}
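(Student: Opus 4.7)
The plan is to obtain convergence in distribution of $\cN(F_{\mu_{K,N}},\mathcal{S},W)$ to $\cN(F_\mu,\mathcal{S},W)$ via the Continuous Mapping Theorem, and then upgrade to convergence in mean using a uniform bound coming from the Faber--Krahn inequality.

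First, by Lemma \ref{PropConvMeasure2.1}, for any $\varepsilon'>0$ we can choose $K,N$ large so that $d_P(\mu_{K,N},\mu)<\varepsilon'$ on $\S^{m-1}$; in particular, as $K,N\to\infty$ along any admissible sequence, $\mu_{K,N}\rightharpoonup \mu$ weakly. Lemma \ref{LemConvGaussianFields} then yields $F_{\mu_{K,N}}\to F_\mu$ in the Prokhorov sense with respect to the $C^s(B(W+1))$ topology, for any fixed $s\geq 1$.

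Second, I would invoke the functional continuity statement. Proposition \ref{PropContBdd} shows that $\cN(\cdot,\mathcal{S},W)$ and $\cN(\cdot,T,W)$ are continuous on the open subset $C^1_*(W)\subset C^1(B(W+1))$ defined in \eqref{def C star}. By Bulinskaya's Lemma \ref{Bulinskaya's Lemma}, $F_\mu\in C^1_*(W)$ almost surely, provided $\mu$ is not supported on a hyperplane of $\R^m$; this is inherited from the standing assumption on $\{r_n\}$ and is used crucially to ensure $\det\Sigma\neq 0$ in \eqref{non-degeneracy}, hence the conditions $\Psi_{F_\mu}>0$ on $B(W+1)$ and $\slashed{\Psi}_{F_\mu}>0$ on $\partial B(W)$ both hold a.s. Combining the previous paragraph with Claim \ref{open} (openness of $C^1_*(W)$) and the Continuous Mapping Theorem (in its Portmanteau version for continuity on a set of full measure), one obtains
\begin{equation*}
\cN(F_{\mu_{K,N}},\mathcal{S},W)\xrightarrow{d}\cN(F_\mu,\mathcal{S},W)\qquad\text{as }K,N\to\infty,
\end{equation*}
and analogously for $\cN(\cdot,T,W)$.

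Third, to pass from distributional convergence to convergence of expectations, I would invoke the deterministic Faber--Krahn bound $\cN(g,W)\lesssim W^m$ (cited just after \eqref{1.6.1}), which gives $\cN(F_{\mu_{K,N}},\mathcal{S},W)\leq \cN(F_{\mu_{K,N}},W)\lesssim W^m$ uniformly in $K,N$. Hence the family $\{\cN(F_{\mu_{K,N}},\mathcal{S},W)\}$ is uniformly bounded, and uniform integrability (Lemma \ref{th:DCTh}) — or simply bounded convergence — upgrades distributional convergence to convergence in mean, yielding the desired estimate. The nesting-tree case follows identically, since Proposition \ref{PropContBdd} covers $\cN(\cdot,T,W)$ and the same Faber--Krahn upper bound applies.

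The main obstacle is verifying the hypothesis of Bulinskaya's lemma for the limit measure $\mu$, i.e.\ that $\mu$ is not supported on a hyperplane: this is needed so that $\cN(\cdot,\mathcal{S},W)$ is continuous almost everywhere with respect to the law of $F_\mu$, which in turn is what makes the Continuous Mapping Theorem applicable. Note that we do \emph{not} need $F_{\mu_{K,N}}\in C^1_*(W)$ a.s., since continuity is only required at the limit point. The non-degeneracy of $\mu$ is inherited from the standing assumption that the $r_n$ are not all contained in a hyperplane, together with the weak$^*$ convergence $\mu_r\rightharpoonup\mu$ and a compactness argument on the sphere.
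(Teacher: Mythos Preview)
Your proposal is correct and follows essentially the same route as the paper: weak convergence of the fields (via Lemma~\ref{PropConvMeasure2.1}, whose proof already encapsulates your appeal to Lemma~\ref{LemConvGaussianFields}), continuity of $\cN(\cdot,\mathcal{S},W)$ on $C^1_*(W)$ from Proposition~\ref{PropContBdd}, Bulinskaya to land in $C^1_*(W)$ a.s., the Continuous Mapping/Portmanteau Theorem, and the uniform Faber--Krahn bound to pass to expectations. One small point of divergence: the paper invokes Bulinskaya for $F_{\mu_{K,N}}$, whereas you (correctly, for the Continuous Mapping Theorem) apply it to the limit $F_\mu$; your observation that continuity is only needed a.e.\ with respect to the limiting law is the right emphasis.
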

\begin{proof}
	Thanks to Lemma \ref{Bulinskaya's Lemma}, $F_{\mu_{K,N}}\in C_{*}^1(W)$ a.s., thus the lemma follows directly from Lemma \ref{PropConvMeasure2.1} with Portmanteau Theorem. We can apply it in light of the fact that   $\mathcal{N}(F_{\mu_{K,N}},\mathcal{S},W)\le\mathcal{N}(F_{\mu_{K,N}},W)=O(W^m)$ uniformly for all $K$ and $N$ by the Faber-Krahn inequality and Proposition \ref{PropContBdd} which ensures that $\mathcal{N}(F_{\mu_{K,N}},\mathcal{S},W)$ is a continuous functional on $C_{*}^1(W)$. 
\end{proof}
\begin{lem}
\label{middle step2}
	Let $\varepsilon>0$, $W>1$ and $\mathcal{S}\subset H(m-1)$. Then there exist some  $K_0=K_0(\varepsilon,W)$, $N_0=N_0(K,\varepsilon,W)$, $R_0=R_0(N,\varepsilon,W)$ such that for all $K\geq K_0$, $\delta\lesssim K^{-m}$,  $N\geq N_0$ and  $R\geq R_0$, we have 
	\begin{align*} 
		&\bigg|\Bint_{B(R)}\cN(\phi_x,\mathcal{S},W)dx-\bE[\cN(F_{\mu},\mathcal{S},W)]\bigg|< \varepsilon,
	\end{align*}
 Moreover, the conclusion also holds for $\cN(\cdot,T)$, as in Theorem \textnormal{\ref{thm 5}}.
\end{lem}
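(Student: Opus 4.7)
The plan is to deduce the lemma from Lemma \ref{PropConvMeasure1} and Lemma \ref{middle step} via the triangle inequality combined with Portmanteau's theorem. Since the nodal set and the diffeomorphism class of its components are invariant under multiplication by a nonzero constant, $\cN(\phi_x,\mathcal{S},W)=\cN(\kappa_K^{-1}\phi_x,\mathcal{S},W)$, which lets us work with the rescaled $\kappa_K^{-1}\phi_x$ directly controlled by Lemma \ref{PropConvMeasure1}. The triangle inequality then bounds the quantity of interest by two terms: first,
\[
\left|\Bint_{B(R)}\cN(\kappa_K^{-1}\phi_x,\mathcal{S},W)\,dx - \bE[\cN(F_{\mu_K},\mathcal{S},W)]\right|,
\]
and second,
\[
\left|\bE[\cN(F_{\mu_K},\mathcal{S},W)] - \bE[\cN(F_{\mu},\mathcal{S},W)]\right|.
\]
The second term is already controlled by Lemma \ref{middle step} once $K\geq K_0(\varepsilon,W)$ and $N\geq N_0(K,\varepsilon,W)$ are taken large enough.

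For the first term, I would apply Lemma \ref{PropConvMeasure1} with $s=1$ on the slightly enlarged ball $B(W+1)$, obtaining weak convergence (in Prokhorov distance on $C^1(B(W+1))$) of the pushforward of $\vol_R$ under $x\mapsto\kappa_K^{-1}\phi_x$ to the law of $F_{\mu_K}$ as $N, R\to\infty$. Then I invoke Portmanteau's theorem in the version for bounded functionals continuous on a set of full measure under the limit. The functional $\cN(\cdot,\mathcal{S},W)$ is bounded by $\cN(\cdot,W)\lesssim W^m$ via Faber-Krahn, continuous on $C^1_*(W)$ by Proposition \ref{PropContBdd}, and Bulinskaya's Lemma \ref{Bulinskaya's Lemma} ensures $F_{\mu_K}\in C^1_*(W)$ almost surely, provided $\mu_K$ is not supported on a hyperplane. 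This last hypothesis holds for $K$ large enough, since the atoms $\zeta^k$ with $k\in\mathcal{K}^+$ approximate the empirical measure $\mu_r$, whose support $\{r_n\}$ is assumed not to lie on a hyperplane. Hence Portmanteau applies and the first term is less than $\varepsilon/2$ for $R$ sufficiently large depending on $N,\varepsilon,W$.

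The parameter order mirrors the one of Proposition \ref{main prop pairing}: given $\varepsilon$, choose $K=K(\varepsilon,W)$, then $\delta\lesssim K^{-m}$, then $N=N(K,\varepsilon,W)$, and finally $R=R(N,\varepsilon,W)$. The statement for nesting trees $T$ follows identically since Proposition \ref{PropContBdd} also provides continuity of $\cN(\cdot,T,W)$ and the Faber-Krahn bound controls this quantity as well. The main technical hurdle I anticipate is the Portmanteau application to a functional that is merely a.s. continuous with respect to the limit; this is exactly resolved by combining Proposition \ref{PropContBdd} (continuity on $C^1_*(W)$) with Lemma \ref{Bulinskaya's Lemma} (full-measure property of $C^1_*(W)$ under $F_{\mu_K}$), both of which were set up in the previous sections precisely for this step.
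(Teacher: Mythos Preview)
Your proposal is correct and follows essentially the same route as the paper: reduce via Lemma \ref{middle step} to comparing with $\bE[\cN(F_{\mu_K},\mathcal{S},W)]$, then use Lemma \ref{PropConvMeasure1} on $C^1(B(W+1))$ together with Proposition \ref{PropContBdd}, Lemma \ref{Bulinskaya's Lemma} and the Faber--Krahn bound to pass to the limit in mean. The paper phrases the last step as the Continuous Mapping Theorem followed by Lemma \ref{th:DCTh}, whereas you bundle it into a single Portmanteau-type statement for bounded a.s.-continuous functionals; your explicit use of the scale invariance $\cN(\phi_x,\cdot,W)=\cN(\kappa_K^{-1}\phi_x,\cdot,W)$ to match Lemma \ref{PropConvMeasure1} is in fact cleaner than the paper's treatment.
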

\begin{proof}
	 By Lemma \ref{middle step}, it is enough to prove that, under the assumptions of Lemma \ref{middle step2}, we have   
	\begin{align}
		\label{5.3.1}
		\left|\Bint_{B(R)}\mathcal{N}(\phi_x,\mathcal{S},W)dx - \mathbb{E}[\mathcal{N}(F_{\mu_{K}},\mathcal{S},W)]\right|\leq \varepsilon/2
	\end{align}	
	First, since $C^1_*(W)$ is open by Claim \ref{open} and  as $\bP\left(F_{\mu_K} \in C^1_*(W)^c\right)=0$ by Lemma \ref{Bulinskaya's Lemma},   Portmanteau Theorem jointly with Lemma \ref{PropConvMeasure1} gives
	\begin{align}\label{ProbC*}
		&\vol_R\left(\phi_x\in C^1_*(W)\right)\to 1
	\end{align}
	as $R,N$ go to infinity according to Lemma \ref{PropConvMeasure1} depending on $K\geq 1$ (and thus $\delta>0$). Thus,	by the Continuous Mapping Theorem and Lemma \ref{PropConvMeasure1} for $W+1$:
	$$\mathcal{N}(\phi_x,\mathcal{S},W) \overset{d}{\rightarrow} \mathcal{N}(F_{\mu_{K}},\mathcal{S},W).$$ 
	Therefore Lemma \ref{th:DCTh} (using Faber-Krahn inequality) implies the desired result as long as $R,N$ go to infinity as in  Lemma \ref{PropConvMeasure1} depending  on $K\geq 1$ (and thus $\delta>0$).
\end{proof}
\subsection{Proof of Proposition \ref{PropConvE2}}
We are finally ready to prove Proposition \ref{PropConvE2}. 

\begin{proof}[Proof of Proposition \textnormal{\ref{PropConvE2}}]	
In light of Lemma \ref{middle step} and \ref{middle step2} it is enough to prove that, given $\varepsilon>0$, there exists some  $K_0=K_0(\varepsilon,W)$, $N_0=N_0(K,\varepsilon,W)$, $R_0=R_0(N,\varepsilon,W)$ such that for all $K\geq K_0$, $\delta\lesssim K^{-m}$,  $N\geq N_0$ and  $R\geq R_0$, we have 
\begin{align*} 
 &\bigg|\Bint_{B(R)}(\cN(F_x,\mathcal{S},W)- \cN(\phi_x,\mathcal{S},W))dx\bigg|< \varepsilon .
\end{align*}
	First, by Lemma \ref{first approx}, we have 
	\begin{align}
	\Bint_{B(R)}\norm{F_x- \phi_x}^2_{C^s(B(W'))} dx\le C W'^{2(s+l)+m}\left(\delta K^{m-1} + W'^2K^{-2}\right)\left(1+O_N(R^{-\Lambda-3/2})\right) \nonumber,
	\end{align}
	where $W'\coloneqq W+1$. Let \begin{align}\epsilon\equiv\epsilon(K,N,W')\coloneqq C  W'^{2(s+l)+m}\left(\delta K^{m-1} + W'^2K^{-2}\right)\left(1+O_N(R^{-\Lambda-3/2})\right). \label{epsilon}
		\end{align}Let us denote $\phi_x$ by $\phi_x^{\epsilon}$ and similarly for $F_x$, let $B(R)^*$ be the set where $\phi_x\in C^1_*(W)$. We claim that
	\begin{align}
	\Bint_{B(R)^*} \left(\cN(F_x^\epsilon,\mathcal{S},W))- \cN(\phi_x^\epsilon,\mathcal{S},W))\right) dx \to 0 \text{ as }\epsilon\rightarrow 0. \label{6.3.1}
	\end{align}
	For the sake of contradiction, let us suppose that there exist some $\gamma>0$ and a sequence $\epsilon_n\to 0$, such that
	$$
	|\Bint_{B(R)^*} \left(\cN(F_x^{\epsilon_n},\mathcal{S},W))- \cN(\phi_x^{\epsilon_n},\mathcal{S},W))\right) dx~|\ge \gamma.
	$$
	However, Lemma \ref{first approx} gives $\Bint_{ B(R)}\norm{F_x^{\epsilon_n}- \phi_x^{\epsilon_n}}^2_{C^1(B(W'))}dx\to 0$
	as $\epsilon_n\to 0$, thus there exists a subsequence $n_j$ such that with a rescaling $B(R)$ to a ball of radius $1$
	\begin{align}
	\label{4.4}
	\norm{F_{R_{n_j}x}^{\epsilon_{n_j}}- \phi^{\epsilon_{n_j}}_{R_{n_j}x}}^2_{C^1(B(W'))}\to 0
	\end{align}
	$x$-almost surely as $j\to\infty$. However, \eqref{4.4} together with by the continuity of $\cN$ (Lemma  \ref{PropConvE2}), the Faber-Krahn inequality and the Dominated Convergence Theorem, gives
	$$
	|\Bint_{B(1)^*} \cN(F_{R_{n_j}x}^{\epsilon_{n_j}},\mathcal{S},W))- \cN(\phi_{R_{n_j}x}^{\epsilon_{n_j}},\mathcal{S},W)) dx~|\to 0
	$$
	as $j\to \infty$, a contradiction. Finally, bearing in mind the Faber-Krahn inequality, we have the bound
	\begin{align*}
		\Bint_{B(R)}(\cN(F_x^\epsilon,\mathcal{S},W))- \cN(\phi_x^\epsilon,\mathcal{S},W)))dx=&\Bint_{B(R)^*} (\cN(F_x^\epsilon,\mathcal{S},W))- \cN(\phi_x^\epsilon,\mathcal{S},W))) dx+\\
		&+O\left(W'^m\left(1-\vol_R\left(\phi^\epsilon\in C^1_*(W)\right)\right)\right)
	\leq \varepsilon,
\end{align*}
where the inequality holds as long as $\epsilon<\epsilon_0$ given by \eqref{6.3.1} and $R,N$ large enough according to Lemma \ref{PropConvMeasure1} to ensure \eqref{ProbC*}. 

Hence, we make the following choices of the parameters in order to prove the proposition (recall that the parameters $K_0,N_0,R_0$ must be chosen to satisfy i) and ii)), let $K$ large enough, with $\delta\lesssim K^{-m+1}$ as in Lemma \ref{PropConvMeasure2.1}, such that $CW'^{2(s+l)+m}\times$ $\times \left(\delta K^{m-1} + W'^2K^{-2}\right)<\epsilon_0/2$ and such that $K>K_0(\varepsilon/2,W')$ accordingly to Lemma \ref{middle step}. Similarly, let $N>N_0(\varepsilon/2,W')$ with $N_0$ given in Lemma \ref{middle step}. We also take $N$ large enough according to Lemma \ref{PropConvMeasure1} such that	
\begin{equation}\label{ErrorProbPhi}
	O\left(W'^m\left(1-\vol_R\left(\phi^\epsilon\in C^1_*(W)\right)\right)\right)<\varepsilon/2
\end{equation}	
provided $R$ is large enough and the same for \eqref{5.3.1}. Finally, let $R$ large enough according to the two conditions mentioned below \eqref{ErrorProbPhi} and such that in the definition of $\epsilon$ in \eqref{epsilon} we have $O_N(R^{-\Lambda-3/2})<1$.
\end{proof}

\subsection{Concluding the proof of Theorem \ref{thm 5}}
\label{proof of Theorem m=2}
We are finally ready to prove Theorem \ref{thm 5}
\begin{proof}[Proof of Theorem \textnormal{\ref{thm 5}}]
	
Let $\mathcal{S} \subset H(m-1)$ and $T \in \mathcal{T}$ be given, and denote by $\mathcal{N}(f,\cdot,R)$ either $\mathcal{N}(f,\mathcal{S},R)$ or $\mathcal{N}(f,T,R)$. 	Thanks to Proposition \ref{semi-locality m>2} and the fact that the number of nodal domain with fixed topological class or tree type intersecting $B(W)$ is bounded by the total number of nodal domains intersecting $B(W)$, for $W>1$, we have 
\begin{align}
\label{SemilocalityProofMainTh}
	\frac{\mathcal{N}(f,\cdot,R)}{\vol B(R)}=  \frac{1}{\vol B(W)}\Bint_{ B(R)}\mathcal{N}(F_x,\cdot,R) dx
	+ O\left(W^{-1} \right) + O_{N,W}\left(R^{-\Lambda-3/2}\right).
\end{align}
		Now, by Theorem \ref{ThNSSWatomic}, with the same notation, we have 
		\begin{align}
		\frac{\mathbb{E}[ \mathcal{N}(F_{\mu},\cdot, W) ]}{\vol B(W)}= c_{NS}(\cdot, \mu) + a(W) \nonumber,
		\end{align}	
		where $a(W)\rightarrow 0$ as $W\to\infty$.
		By Proposition \ref{PropConvE2} applied with some $\varepsilon_0/2$, we have 
			\begin{align}
			\label{6.1.2}
			\left|\Bint_{B(R)}\cN(F_x, \cdot)dx-\bE[\mathcal{N}(F_{\mu}, \cdot, W)]~\right|<\varepsilon_0
			\end{align}
		if  $K\geq K_0$ with $K_0=K_0(\varepsilon_0/2,W)$, $N\geq N_0$ with $N_0=N_0(K,\varepsilon_0/2,W)$ and $R\geq R_0$ with $R_0=R_0(N,\varepsilon_0/2,W)$. 
		Hence,	putting \eqref{SemilocalityProofMainTh}, \eqref{6.1.2} together, we obtain 
		$$
		\left|\frac{\mathcal{N}(f,\cdot, R)}{\vol{B(R)}}-c_{NS}(\cdot, \mu)\right|\le\frac{ \varepsilon_0}{\vol B(W)} +a(W)+ 	O\left( \frac{1}{W} \right) + O_{N,W}\left(R^{-\Lambda-3/2}\right).
		$$
	Let us now pick some $\varepsilon>0$ and choose first a $W$ large enough (and fix it) so that
		$$
		O\left(W^{-1}\right)+a(W)<\varepsilon/3,
		$$
		then set $K\geq K_0$, $N\geq N_0$ and $R\geq R_0$ with $\varepsilon_0=\vol B(W)\varepsilon/3$ and $R$ large enough such that 
		\begin{equation*}
			O_{N,W}\left(\frac{1}{R^{\Lambda+3/2}}\right)<\varepsilon/3.
		\end{equation*}
\end{proof}

\section{Proof of Theorem \ref{thm 2}.}
\label{nodal volume}
\subsection{Uniform integrability of $\mathcal{	V}(F_x,W)$}
\label{proof of thm 2, I}
We first prove Proposition \ref{anti-concentration prop}:

\begin{proof}[Proof of Proposition \textnormal{\ref{anti-concentration prop}}]
By Lemma \ref{doubling f} for $r=W$, we have 
	\begin{align*}
			\Bint_{B(R)}\mathcal{V}(F_x,W)^{1+\alpha}dx\lesssim (W^{m})^{1+\alpha} + W^{(m-1)(1+\alpha)}\Bint_{B(R)} \mathfrak{N}_f(B(x,3W))^{1+\alpha} dx \nonumber \\
			= (W^{m})^{1+\alpha} + W^{(m-1)(1+\alpha)}\int_1^{CN^{1+\alpha}} \vol_R\left(\mathfrak{N}_f(B(x,3W))^{1+\alpha}>t\right) dt \nonumber 
	\end{align*}
 Changing variables and using Lemma \ref{anti-concentration v2} with $D=\alpha +2$ and $H=3W$,  we find that
 \begin{align}
 \int_1^{CN^{1+\alpha}} \vol_R\left(\mathfrak{N}_f(B(x,W))^{1+\alpha}>t\right) dt
 &= (1+\alpha) \int_1^{CN} t^{\alpha} \vol_R\left(\mathfrak{N}_f(B(x,3W))>t\right) dt \nonumber \\ &\lesssim W^{(1+\alpha)} +O_{N,W}(R^{-\Lambda-3/2}) \nonumber
 \end{align}
as required. 
\end{proof}

\subsection{ Continuity of $\mathcal{V}$.}
\label{proof of thm 2,II}
In this section we prove the following proposition
\begin{prop}\label{almost Theorem 3.4}
	Let $W>1$ be given and $f$ be as in \eqref{function}.	Then we have 
	$$
\lim_{N\to\infty}\limsup_{R\to \infty}	\left| \Bint_{ B(R)}{\mathcal{V}(F_x,W)}- {\mathbb{E}[\mathcal{V}(F_{\mu}, W)]}\right|=0 
	$$
\end{prop}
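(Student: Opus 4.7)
The plan is to mirror the three-step argument in the proof of Proposition \ref{PropConvE2}, replacing the uniform bound $\cN(F_x,W)\lesssim W^m$ coming from Faber--Krahn with the uniform $L^{1+\alpha}$ bound on $\cV(F_x,W)$ provided by Proposition \ref{anti-concentration prop}. The new ingredient needed is a continuity statement for $\cV(\cdot,W)$ on the space $C^1_*(W)$ defined in \eqref{def C star}, so that the convergence in distribution $F_x \overset{d}{\longrightarrow} F_\mu$ furnished by Theorem \ref{thm 1} can be upgraded to convergence of expectations via Lemma \ref{th:DCTh}.

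First I would prove that $\cV(\cdot,W)$ is continuous on $C^1_*(W)$. Given $h\in C^1_*(W)$, the conditions $\Psi_h>0$ on $B(W+1)$ and $\slashed{\Psi}_h>0$ on $\partial B(W)$ ensure that $h^{-1}(0)$ is a smooth $(m-1)$-dimensional submanifold of $B(W+1)$ meeting $\partial B(W)$ transversely. Applying Theorem \ref{ThThom} componentwise, exactly as in the proof of Proposition \ref{PropContBdd}, for every $\varepsilon>0$ there exists $\delta>0$ such that every $g$ with $\|g-h\|_{C^1(B(W+1))}<\delta$ has its zero set (in a neighbourhood of the zero set of $h$) obtained as $\Phi(h^{-1}(0))$ for some $\R^m$-diffeomorphism $\Phi$ with $\|\Phi-\mathrm{id}\|_{C^1(\R^m)}<\varepsilon$. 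The change-of-variables formula applied to the Jacobian of $\Phi$ restricted to the zero set then gives $|\cV(h,W)-\cV(g,W)|=O(\varepsilon)$, once the boundary contribution---coming from components whose intersection with $\partial B(W)$ is shifted by the perturbation---is controlled by the transversality of $h^{-1}(0)$ with $\partial B(W)$ via a coarea argument in a thin collar of $\partial B(W)$.

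Granted this continuity, I would combine Theorem \ref{thm 1}, Lemma \ref{Bulinskaya's Lemma} (which applies because $\mu$ is not supported on a hyperplane, see Section \ref{r_n}) and the openness of $C^1_*(W)$ (Claim \ref{open}) to conclude, via the Continuous Mapping Theorem and Portmanteau, that $\cV(F_x,W)\overset{d}{\longrightarrow}\cV(F_\mu,W)$ as $N,R\to\infty$ in the double-limit sense of Theorem \ref{thm 1}. The uniform integrability hypothesis of Lemma \ref{th:DCTh}, namely $\bE[\cV(F_x,W)^{1+\alpha}]\leq C$ uniformly in $N$ up to an $O_{N,W}(R^{-\Lambda-3/2})$ vanishing error, is precisely Proposition \ref{anti-concentration prop}. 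As in the proof of Proposition \ref{PropConvE2}, it is cleaner to first run this argument with $\phi_x$ in place of $F_x$ and then bound $\Bint_{B(R)}(\cV(F_x,W)-\cV(\phi_x,W))dx$ using Lemma \ref{first approx} and the established continuity of $\cV$; the $L^{1+\alpha}$-control of $\cV(\phi_x,W)$ needed in this last comparison can be obtained either from a parallel anti-concentration estimate applied directly to $\phi_x$ or by using its $C^1$-proximity to $F_x$ together with Proposition \ref{anti-concentration prop}.

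The hard part will be the continuity of $\cV(\cdot,W)$ on $C^1_*(W)$: while Thom's theorem handles interior nodal components cleanly, components intersecting $\partial B(W)$ require careful bookkeeping, since their $(m-1)$-dimensional volume inside $B(W)$ depends on where their boundary sits on $\partial B(W)$ and this boundary shifts by $O(\varepsilon)$ under the perturbation. Transversality, ensured by $\slashed{\Psi}_h>0$, guarantees that this intersection is a smooth $(m-2)$-submanifold of $\partial B(W)$, so the resulting change in $\cV$ can be bounded by $\varepsilon$ times its $(m-2)$-Hausdorff measure via the coarea formula in a collar neighbourhood of $\partial B(W)$; making this error uniform over the distribution of $F_x$ (and not just pointwise in $h$) is the subtle point.
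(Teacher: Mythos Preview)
Your proposal is correct and follows the paper's approach: continuity of $\cV(\cdot,W)$ via Thom's isotopy and the Jacobian change-of-variables, then Theorem~\ref{thm 1} plus Bulinskaya's lemma, the Continuous Mapping Theorem, and uniform integrability from Proposition~\ref{anti-concentration prop} through Lemma~\ref{th:DCTh}. Two simplifications relative to your outline are worth noting. First, the paper does not route through $\phi_x$ here: since $\cV$ is continuous on a set of full $F_\mu$-measure, the Continuous Mapping Theorem applies directly to the convergence $F_x\overset{d}{\to}F_\mu$ from Theorem~\ref{thm 1}, bypassing the $L^2$-comparison with $\phi_x$ that was needed in Proposition~\ref{PropConvE2}. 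Second, the transversality condition $\slashed{\Psi}_h>0$ is in fact \emph{not} needed for the continuity of $\cV$ (the paper remarks this explicitly): after adding the radial cutoff $\chi$ so that all nodal components are closed in $B(W+1)$, the boundary discrepancy is controlled simply by $\cH^{m-1}\big(h^{-1}(0)\cap(B(W+\varepsilon)\setminus B(W))\big)$, which tends to zero as $\varepsilon\to 0$ because $h^{-1}(0)$ is a smooth $(m-1)$-manifold (from $\Psi_h>0$ alone). So the ``hard part'' you flag, and the coarea argument with the $(m-2)$-measure on $\partial B(W)$, are not required.
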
 
The proof of Proposition \ref{almost Theorem 3.4} is similar to the proof of Proposition \ref{PropConvE2}. We just need the following lemma, which follows from Theorem \ref{ThThom}.
\begin{lem}
	\label{continouty nodal length}
	$\mathcal{V}(\cdot, B)$ is a continuous functional on $C^1_{*}(W)$. 
\end{lem}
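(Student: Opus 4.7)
The strategy is to reuse the same Thom isotopy argument as in the proof of Proposition \ref{PropContBdd}. Given $h \in C^1_*(W)$ and a sequence $h_n \to h$ in $C^1(B(W+1))$, I would close up the boundary-crossing components of $h^{-1}(0)$ by adding a radial cutoff $\chi$ as in the proof of Proposition \ref{PropContBdd}, and then invoke Theorem \ref{ThThom} to produce, for all large $n$, an open neighbourhood $U \subset B(W+1/2)$ of the (finitely many) nodal components of $h$, and a diffeomorphism $\Phi_n$ of $\R^m$ with $\|\Phi_n-\mathrm{id}\|_{C^1(\R^m)} < \varepsilon_n$, $\varepsilon_n \to 0$, mapping $h^{-1}(0)\cap U$ onto $h_n^{-1}(0)\cap U$, and equal to the identity outside $U$. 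In particular $h_n$ has no zeros in $B(W+1/2)\setminus U$ for $n$ large, so the whole nodal set of $h_n$ in $B(W)$ lies in $\Phi_n(U)$.

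Next I would apply the area formula on the $(m-1)$-dimensional manifold $\Sigma \coloneqq h^{-1}(0)\cap U$: with $J_{\Phi_n|\Sigma}$ denoting the tangential Jacobian,
\begin{equation*}
\mathcal{V}(h_n,W) = \mathcal{H}^{m-1}\bigl(\Phi_n(\Sigma)\cap B(W)\bigr) = \int_{\Sigma\cap\Phi_n^{-1}(B(W))} J_{\Phi_n|\Sigma}\, d\mathcal{H}^{m-1}.
\end{equation*}
Since $\|D\Phi_n-I\|_{C^0} \leq \varepsilon_n$, we get $J_{\Phi_n|\Sigma} = 1 + O(\varepsilon_n)$ uniformly on $\Sigma$, and therefore
\begin{equation*}
\bigl|\mathcal{V}(h_n,W) - \mathcal{H}^{m-1}\bigl(\Sigma\cap\Phi_n^{-1}(B(W))\bigr)\bigr| \lesssim \varepsilon_n\, \mathcal{H}^{m-1}(\Sigma) = O(\varepsilon_n).
\end{equation*}

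The last step is to compare $\mathcal{H}^{m-1}(\Sigma\cap\Phi_n^{-1}(B(W)))$ with $\mathcal{V}(h,W)=\mathcal{H}^{m-1}(\Sigma\cap B(W))$. The symmetric difference of $\Phi_n^{-1}(B(W))$ and $B(W)$ is contained in the annulus $\{W-\varepsilon_n < |x| < W+\varepsilon_n\}$ by $\|\Phi_n-\mathrm{id}\|_{C^0}\leq\varepsilon_n$. This is where the condition $\slashed{\Psi}_h > 0$ on $\partial B(W)$ enters: it says $\Sigma$ meets $\partial B(W)$ transversally, so by the implicit function theorem and compactness, $\Sigma$ near $\partial B(W)$ is a graph over its transversal intersection with $\partial B(W)$, giving
\begin{equation*}
\mathcal{H}^{m-1}\bigl(\Sigma \cap \{W-\varepsilon_n < |x| < W+\varepsilon_n\}\bigr) \lesssim_h \varepsilon_n.
\end{equation*}
Combining the three estimates yields $|\mathcal{V}(h_n,W)-\mathcal{V}(h,W)| = O(\varepsilon_n) \to 0$, which is the claim.

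The only nontrivial obstacle is the boundary analysis in the last step: the interior continuity is a direct consequence of the area formula applied to the Thom diffeomorphism, but $\mathcal{V}(\cdot,W)$ is measured against a \emph{fixed} ball $B(W)$ while $\Phi_n$ perturbs the nodal set, and one must rule out concentration of $(m-1)$-volume along $\partial B(W)$. The defining transversality condition $\slashed{\Psi}_h > 0$ on $\partial B(W)$ built into $C^1_*(W)$ is exactly what makes this possible.
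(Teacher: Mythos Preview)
Your proof is correct and takes essentially the same approach as the paper: both use the radial cutoff $\chi$ to close up boundary-crossing components, apply Thom's isotopy theorem, and then compare $(m-1)$-volumes via the Jacobian of $\Phi$ being $1+O(\varepsilon)$.

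The one noteworthy difference is the boundary step. The paper remarks just before the lemma that the condition $\slashed\Psi_h>0$ on $\partial B(W)$ is \emph{not} needed here, and in the proof simply bounds the discrepancy by $\mathcal H^{m-1}\bigl(L_{h'}\cap (B(W+\varepsilon)\setminus B(W))\bigr)$, asserting this tends to zero by finiteness of the components. You instead invoke $\slashed\Psi_h>0$ explicitly to obtain transversality of $\Sigma$ with $\partial B(W)$ and hence a quantitative $O_h(\varepsilon)$ bound on the annular piece. Your version is arguably the more careful one: without some transversality condition, a smooth hypersurface can meet $\partial B(W)$ in a set of positive $\mathcal H^{m-1}$-measure (e.g.\ $h(x)=|x|^2-W^2$, whose nodal set is $\partial B(W)$ itself, satisfies $\Psi_h>0$ but has $\slashed\Psi_h=0$ on $\partial B(W)$, and $\mathcal V(\cdot,W)$ is visibly discontinuous there). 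Since the lemma is stated on $C^1_*(W)$, which already carries the $\slashed\Psi_h>0$ assumption, both arguments go through.
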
  
Note that now the condition of $\slashed{\Psi}>0$ is not needed here.                              
\begin{proof}
 For the sake of simplicity, we set $V=B(W+1)$ and $\chi$ as in the proof of Proposition \ref{PropContBdd} so we will only consider boundaryless manifolds. Let $h$ be an arbitrary function of $C^1_*(W)$. We apply Thom's Isotopy Theorem \ref{ThThom} to $h'\coloneqq h+\chi$,  which is identical to $h$ in $B(W)$. Let $\varepsilon>0$ and $\psi$ a local parametrization of a nodal component $L_{h'}$ of $h'$, which is a boundaryless manifold in $B(W+1)$. By Thom's Isotopy Theorem \ref{ThThom}, and with the same notation, $\Phi\circ\psi$ is a local parametrization of a nodal component of $g'$, $L_{g'}$ provided $\| h'- g'\|_{C^1(U)}<\delta$ and $\delta>0$ is small enough as in \eqref{eq:delta cond}. If $J$ denotes the Jacobian matrix and the local parametrization is $\psi:A\to \mathcal{U}$, we have 
\begin{equation*}
	\vol{(\mathcal{U}\cap L_{g'})}=\int_A \sqrt{\det ((J\psi(x))^t\cdot J\psi(x))}dx
\end{equation*}
and by the chain rule if $\Phi\circ\psi:A\to \mathcal{U}'$
\begin{equation*}
	\vol{(\mathcal{U}'\cap L_{g'})}=\int_A \left|\det (J\Phi)(\psi(x))\right| \sqrt{\det ((J\psi(x))^t\cdot J\psi(x))}dx.
\end{equation*}
But, by the standard series for the determinant:
\begin{equation*}
	\det (J\Phi(y))=\det (I+J\Phi(y)-I)=1+\textnormal{tr}(J\Phi(y)-I)+o\left(\norm{J\Phi(y)-I}\right)
\end{equation*}
where $I=J\textnormal{id}$ is the identity and
\begin{equation*}
	|\textnormal{tr}(J\Phi(y)-I)+o\left(\norm{J\Phi(y)-I}\right)|\lesssim \norm{\Phi-\textnormal{id} }_{C^1(\R^m)}<\varepsilon.
\end{equation*}
 Thus, for $L_{h}\subset \bigsqcup_{i} \mathcal{U}_i\subset B(W+1)$ we have $|\vol{(L_{h'})}-\vol{(L_{g'})}|\lesssim \varepsilon \vol{(L_{h'})}.$ Hence, we also have
\begin{equation*}
	|\vol{(L_{g'}\cap B(W))}-\vol{(\Phi^{-1}(L_{g'}\cap B(W)))}|\lesssim \varepsilon \vol{(L_{g'}\cap B(W))}\lesssim \varepsilon (1+\varepsilon) \vol{(L_{h'})}.
\end{equation*}
Finally, as 
$$
|\vol{(\Phi^{-1}(L_{g'}\cap B(W)))}-\vol{(L_{h'}\cap B(W))}|\le \vol{(L_{h'}\cap B(W+\varepsilon)\backslash B(W))}
$$
the continuity of $\mathcal{V}$ at $h$ follows immediately as the are finitely many components $L_{h'}$ (because $h\in C^1_{*}(W)$), by the fact that $\norm{h-g}=\norm{h'-g'}$ for $g'=g+\chi$ and $L_{g'}\cap B(W)=L_{g}\cap B(W)$ (the same for $L_h$).
\end{proof}
We are now ready to prove Proposition  \ref{almost Theorem 3.4}.
\begin{proof}[Proof of Proposition \textnormal{\ref{almost Theorem 3.4}}]
	The proof is similar to the proof of Proposition \ref{PropConvE2} so we omit some details. By Claim \ref{open} and Lemma \ref{Bulinskaya's Lemma} we have
	\begin{align}\label{7.2.2}
	 \bP\left(F_\mu\in C^1_*(W)\right)= 1.
	\end{align} 
Now,  Lemma \ref{continouty nodal length} together with \eqref{7.2.2}, Theorem \ref{thm 1} (applied with $W+1$) and the Continuous Mapping Theorem  imply that 
\begin{align}
	\mathcal{	V}(F_x, W)\overset{d}{\longrightarrow} \mathcal{	V}(F_{\mu}, W)  \label{7.2.3}
\end{align}
where the convergence is in distribution as $R,N\rightarrow \infty$ according to that theorem. Hence, Proposition \ref{almost Theorem 3.4} follows from \eqref{7.2.3}, Proposition \ref{anti-concentration prop} and Lemma \ref{th:DCTh}. 
\end{proof}
\subsection{Concluding the proof of Theorem \ref{thm 2}}
Before concluding the proof of Theorem \ref{thm 2}, we need to following direct application of the Kac-Rice formula: 
\begin{lem}
	\label{Kac-Rice}
	Suppose that $F_{\nu}$ is a centred, stationary Gaussian field defined on $\mathbb{R}^m$ such that $(F,\nabla F)$ is non-degenerate and $\nu$ is supported on $\S^{m-1}$ then there exist some constant $c=c(\nu)$ such that
	\begin{align}
	\mathbb{E}[\mathcal{V}(F_{\nu},W)]= c(\nu)\vol B(W) \nonumber
	\end{align}
\end{lem}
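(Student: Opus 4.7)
The plan is to apply the Kac--Rice formula directly to compute $\mathbb{E}[\mathcal{V}(F_\nu, W)]$ and then exploit stationarity to factor out the volume of the ball. Since $F_\nu$ has $C^1$ sample paths (because $\nu$ is supported on the compact set $\S^{m-1}$, so the series defining derivatives converges) and $(F_\nu, \nabla F_\nu)$ is non-degenerate by hypothesis, the Kac--Rice formula for the expected $(m-1)$-dimensional Hausdorff measure of the nodal set (see e.g. the standard statement in Adler--Taylor or Aza\"is--Wschebor) applies and gives
\begin{equation*}
\mathbb{E}[\mathcal{V}(F_\nu, W)] = \int_{B(W)} \mathbb{E}\bigl[\|\nabla F_\nu(x)\| \,\big|\, F_\nu(x) = 0 \bigr]\, \phi_{F_\nu(x)}(0)\, dx,
\end{equation*}
where $\phi_{F_\nu(x)}$ denotes the density of $F_\nu(x)$.

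Next, I would use that $F_\nu$ is stationary: the joint law of $(F_\nu(x), \nabla F_\nu(x))$ does not depend on $x\in \mathbb{R}^m$. Consequently both the Gaussian density $\phi_{F_\nu(x)}(0)$ and the conditional expectation $\mathbb{E}[\|\nabla F_\nu(x)\| \mid F_\nu(x) = 0]$ are constants in $x$, depending only on $\nu$. Defining
\begin{equation*}
c(\nu) \coloneqq \mathbb{E}\bigl[\|\nabla F_\nu(0)\| \,\big|\, F_\nu(0) = 0 \bigr] \cdot \phi_{F_\nu(0)}(0),
\end{equation*}
the integral collapses to $c(\nu) \vol B(W)$, as required. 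One can further simplify $c(\nu)$ using the fact that $\nu$ is symmetric (Hermitian): then $\mathbb{E}[F_\nu(x)\partial_i F_\nu(x)] = -2\pi i \int_{\S^{m-1}} \lambda_i\, d\nu(\lambda) = 0$, so $F_\nu(x)$ and $\nabla F_\nu(x)$ are independent, and the conditional expectation is just $\mathbb{E}[\|\nabla F_\nu(0)\|]$; however this simplification is not needed for the statement.

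The main obstacle is verifying the applicability of Kac--Rice, since the formula requires both pointwise non-degeneracy of $(F_\nu, \nabla F_\nu)$ (given) and some mild regularity/continuity of the density and paths (to rule out pathological zeros, in the spirit of Bulinskaya's lemma as invoked in Lemma \ref{Bulinskaya's Lemma}). These follow from the Gaussian, $C^1$ nature of $F_\nu$, and the argument in Lemma \ref{Bulinskaya's Lemma} already establishes the non-degeneracy of the required joint distributions on the boundary, so the interior case is immediate. Once the formula is justified, the translation invariance argument is essentially a one-line observation.
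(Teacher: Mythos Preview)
Your proposal is correct and follows essentially the same approach as the paper: apply the Kac--Rice formula (the paper cites \cite[Theorem 6.3]{AT}) to obtain the integral representation, then observe that stationarity makes the integrand constant in the spatial variable. Your justification of the hypotheses is slightly more detailed than the paper's (which simply notes that $F$ is non-degenerate and almost surely analytic), but the argument is the same.
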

\begin{proof}
	For brevity let us write $F=F_{\nu}$. By \cite[Theorem 6.3]{AT}, since $F$ is non-degenerate \eqref{non-degeneracy} and almost surely analytic, we have 
	\begin{align} \label{1.1.1}
	\mathbb{E}[\mathcal{V}(F_{\nu},W)]= \int_{B(W)}\mathbb{E}[|\nabla F(y)| | F(y)]v_{F(y)}(0)dy
	\end{align}
	where $v_{F(y)}(0)$ is the density of $F(y)$ at zero. By stationarity, the integrand in \eqref{1.1.1} is independent of $y$ and the Lemma follows. 
\end{proof}
\begin{rem}
	It is possible to explicitly derive an expression for $	\mathbb{E}[\mathcal{V}(F_{\nu},W)]= \int_{B(W)}\mathbb{E}[|\nabla F(y)| | F(y)]$ in terms of the covariance of $F_{\nu}$ and its derivatives, following computations similar to those of \cite{KKW,EPR21}. However, since the calculations are quite long, we decided not to include them in the article. 
\end{rem}
And lastly the following lemma: 

\begin{lem}\label{locality nodal volume} Let $h:\R^m\to \R$ and $0<r<R$. Then we have
	\begin{equation}
		\cV(h,R-r)\le \int_{B(R)}\frac{\cV(h;x,r)}{\vol{B(r)}} dx \le \cV(h,R+r)
	\end{equation}
\end{lem}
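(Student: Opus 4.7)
The plan is straightforward: apply Fubini/Tonelli after using the symmetric relation $y \in B(x,r) \iff x \in B(y,r)$. Expanding the definition,
\[
\int_{B(R)} \cV(h;x,r)\,dx \;=\; \int_{B(R)} \int_{\{h=0\}} \mathbf{1}_{B(x,r)}(y)\, d\cH^{m-1}(y)\,dx,
\]
and swapping the order of integration yields
\[
\int_{B(R)} \cV(h;x,r)\,dx \;=\; \int_{\{h=0\}} \vol\!\left(B(R) \cap B(y,r)\right) d\cH^{m-1}(y).
\]
This reduction is essentially the whole content of the argument; what remains is elementary geometry of intersections of balls.

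For the lower bound, I restrict the range of integration to $y$ with $|y|\le R-r$: there $B(y,r) \subset B(R)$, so $\vol(B(R) \cap B(y,r)) = \vol B(r)$, giving
\[
\int_{B(R)} \cV(h;x,r)\,dx \;\ge\; \vol B(r) \cdot \cV(h, R-r).
\]
For the upper bound, I use the pointwise inequality $\vol(B(R)\cap B(y,r)) \le \vol B(r)$ and note that the integrand vanishes whenever $|y|\ge R+r$ since then $B(y,r) \cap B(R) = \emptyset$; this gives
\[
\int_{B(R)} \cV(h;x,r)\,dx \;\le\; \vol B(r) \cdot \cV(h, R+r).
\]
Dividing both inequalities by $\vol B(r)$ yields the claim.

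The only point requiring a brief comment is the application of Tonelli's theorem to $\mathbf{1}_{B(x,r)}(y)$ on the product $B(R) \times (\{h=0\}\cap B(R+r))$. This is unproblematic: the function is nonnegative and jointly Borel measurable, so Tonelli applies. If $\cV(h,R+r) = \infty$, the upper bound is trivial; otherwise both sides are finite and the exchange of integrals is unconditional. There is no real obstacle in this lemma: it is the continuous analogue, for the $(m-1)$-dimensional Hausdorff measure, of the integral-geometric sandwich of Nazarov--Sodin used earlier in the paper for nodal domain counts.
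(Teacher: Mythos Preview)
Your proof is correct and follows essentially the same approach as the paper: both apply Fubini/Tonelli to rewrite the integral as $\int_{\{h=0\}} \vol\!\left(B(R)\cap B(y,r)\right) d\cH^{m-1}(y)$ and then use the elementary bounds $\mathbbm{1}_{B(R-r)}(y)\le \vol\!\left(B(R)\cap B(y,r)\right)/\vol B(r) \le \mathbbm{1}_{B(R+r)}(y)$. Your additional remarks on measurability and the case $\cV(h,R+r)=\infty$ are sound and make the argument slightly more explicit than the paper's version.
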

\begin{proof}
	By the definition of $\cV$ and Fubini, we have 
	\begin{align} 
	\int_{B(R)}{\cV(h;x,r)} dx&=	\int_{B(R)}\int_{B(R+r)}\mathbbm{1}_{B(x,r)}(y)\mathbbm{1}_{h^{-1}(0)}(y) d\cH(y)dx. \nonumber \\
	&=\int_{B(R+r)} \mathbbm{1}_{h^{-1}(0)}(y)\vol\left(B(y,r)\cap B(R)\right) d\cH(y), \nonumber
	\end{align}
	so the lemma follows from
	$
	\mathbbm{1}_{B(R-r)}\le\dfrac{\vol\left(B(\cdot,r)\cap B(R)\right)}{\vol{B(r)}}\le \mathbbm{1}_{B(R+r)}.
	$
\end{proof}

We are finally ready to prove Theorem \ref{thm 2}. 
\begin{proof}[Proof of Theorem \textnormal{\ref{thm 2}}] The proof follows closely the proof of Theorem \ref{thm 3}, so we omit some details. Let $\varepsilon>0$ be given, then applying	 Lemma \ref{locality nodal volume} with $r=W$ and dividing by $\vol{B(R)}$, we have
	$$
	\frac{1}{\vol B(R)}\int_{B(R-W)}\frac{\cV(F_x,W)}{\vol{B(W)}} dx\le \frac{\cV(f,R)}{\vol{B(R)}}\le\frac{1}{\vol B(R)} \ \int_{B(R+W)}\frac{\cV(F_x,W)}{\vol{B(W)}} dx. 
	$$
For any $\alpha>0$, Proposition \ref{anti-concentration prop} gives
	\begin{equation*}
		\frac{1}{\vol B(R)}\int_{B(R+W)\backslash B(R)}\frac{\cV(F_x,W)}{\vol{B(W)}}=O_{N,W,\alpha}(R^{-\gamma}),
	\end{equation*}
for some $\gamma>0$. Therefore, as in the proof of Proposition \ref{semi-locality m>2},
	\begin{equation}\label{eq:semiloc vol}
		\frac{\cV(f,R)}{\vol{B(R)}}=\Bint_{B(R)}\frac{\cV(F_x,W)}{\vol{B(W)}}dx+O_{N,W,\alpha}(R^{-\gamma}).
	\end{equation}
Finally, Proposition \ref{almost Theorem 3.4} gives, for every $\varepsilon>0$,  
	$$
	\frac{\mathcal{V}(f,R)}{\vol B(R)}=  \frac{1}{\vol B(W)}\mathbb{E}[\mathcal{V}(F_{\mu},W)] +O( \varepsilon) 
	$$
	for all $N$ and $R$ sufficiently large. The theorem now follows from Lemma \ref{Kac-Rice}. 
\end{proof}

\section{Final comments.}\label{comments}

\subsection{Exact Nazarov-Sodin constant for limiting function?}

As we have seen, Theorem \ref{thm 3} says that there are deterministic functions with the growth rate for the nodal domains count arbitrarily close, increasing $N$, to the Nazarov-Sodin constant. One may wonder whether this constant is attained if $N$ goes to infinity and the functions $\{f_N\}$ as in \eqref{function} (or a rescaling of it) converges, in some appropriate space of functions, to some function $f$. Our argument does not apply outright in the limit $N\rightarrow \infty$ and then $R\rightarrow \infty$. That is, Theorem \ref{thm 3} gives
\begin{equation}
\lim_{N\to\infty}\limsup_{R\to \infty}\left|\frac{\cN(f_N,R)}{\vol B(R)}-c_{NS}\right|=0. \label{9.2}
\end{equation}
However, given a sequence of functions $\tilde{f}_N\coloneqq C_N f_N$ with $C_N>0$ (so $\cN(f_N,R)=\cN(\tilde{f}_N,R)$) such that $\tilde{f}_N\to f$, one could hope that the following holds:
\begin{equation} \label{9.1}
	\lim_{R\to\infty}\lim_{N\to \infty}\left|\frac{\cN(f_N,R)}{\vol B(R)}-c_{NS}\right|=\lim_{R\to\infty}\left|\frac{\cN(f,R)}{\vol B(R)}-c_{NS}\right|=0. 
\end{equation}
 In this section we show that this is not true in general, that is, we give examples of sequences of functions such that $\tilde{f}_N\to f$ and \eqref{9.2} hold but \eqref{9.1} does not, in fact, the growth rate is much smaller. 

	Let us consider the functions
	\begin{equation} \nonumber
	f_N\coloneqq \frac{1}{2N^{1/2}}\sum_{|n|\leq N} e(\langle r_n,  x\rangle),
	\end{equation}
	assume that $\mu$ is the Lebesgue measure on the sphere and define $\tilde{f}_N= N^{-1/2}f_N$. Then,  we have 
	$$
	\tilde{f}_N(x)=\frac{1}{{2N}}\sum_{|n|\leq N} e(\langle r_n,  x\rangle)\overset{N\rightarrow \infty}{\longrightarrow}\int_{\S^{n-1}}e(\langle \omega,  x\rangle) d\sigma(\omega)=:f =  C_m\frac{ J_{\Lambda}\left(2\pi|x|\right)}{|x|^{\Lambda}} $$
where $\Lambda=(m-2)/2$ and the convergence is uniform on compact sets, see \eqref{A.1},  with respect to $x\in \mathbb{R}^m$ and also holds after differentiating any finite number of times.  Thus by Theorem \ref{thm 1}, \eqref{9.2} holds, but 
	\begin{align}
\cN(f,R)=cR(1+o_{R\rightarrow \infty}(1)) \label{9.3}
	\end{align}
	for some known $c>0$, thus \eqref{9.1} does not hold. 	
	We also observe that, using \cite[Theorem 3.1]{EPR19}, it is possible to make more general choices of $a_n$, for example $a_n=\phi(r_n)$ for some sufficiently smooth function $\phi$. With this choice,  either the number of nodal domains of $f$ grows as in \eqref{9.3} or it could even be bounded (for large enough $R$ the nodal set is a non-compact nodal component consisting on layers and an ``helicoid'' connecting them).
	
	 In order to illustrate this change, we show in Figure \ref{FigNod1} below the nodal set for the function
	$$
	g_N(x,y)\coloneqq \frac{1}{N}\sum _{n=1}^N \cos (x \cos \theta_n+ y \sin \theta_n)
	$$
	for different $N$ and $\theta_n$ uniformly distributed over the sphere. As $N$ increases the connected components of the nodal sets near the origin tend to merge and they are close and diffeomorphic to the ones of $J_0$.
	
	Finally, we mention that a similar phenomenon happens for eigenfunctions on the two dimension torus $\mathbb{T}^2=\mathbb{R}^2/\mathbb{Z}^2$. These can be written as 
	\begin{equation*}
	f_{\mathbb{T}^2}(x)= \sum\limits_{\xi\in\mathcal{E}}a_\xi e(\langle \xi,x\rangle),
	\end{equation*}
	where $\mathcal{E}=\mathcal{E}_{E}=\{\xi\in\mathbb{Z}^2:|\xi|^2=E\}$ and $a_{\xi}$ are complex coefficients.  Under some arithmetic conditions and some constrains on the coefficients, in \cite{BU,BW} it is showed that there  are deterministic realizations of the RWM, that is
	\begin{equation}\label{eq:nodal TT}
	\cN(f)= c_{NS}(\mu)\cdot E(1+o(1)).
	\end{equation}	
However, since the points $\xi/\sqrt{E}\in \S^1$ ``generically'' become equidistributed on $\S^1$ \cite{EH99}, the function 
$$\tilde{f}= \frac{1}{|\mathcal{E}|} \sum\limits_{\xi\in\mathcal{E}} e(\langle \xi,x\rangle), $$
once rescaled, presents the same limiting behaviour described by \eqref{9.3}. In particular, for $f_{1/\sqrt{E}}\coloneqq f\left(E^{-1/2}\right)$, considering the periodicity, \eqref{eq:nodal TT} and $R$ large enough,
\begin{equation*}
	\cN(f_{1/\sqrt{E}},R)\sim \vol B(R)\cdot c_{NS}(\mu)(1+o(E^0)),
\end{equation*}
still an approximated constant.
\begin{figure}[h!] 
	\begin{subfigure}[b]{0.40\textwidth}
		\includegraphics[width=\textwidth,height=1.05\textwidth]{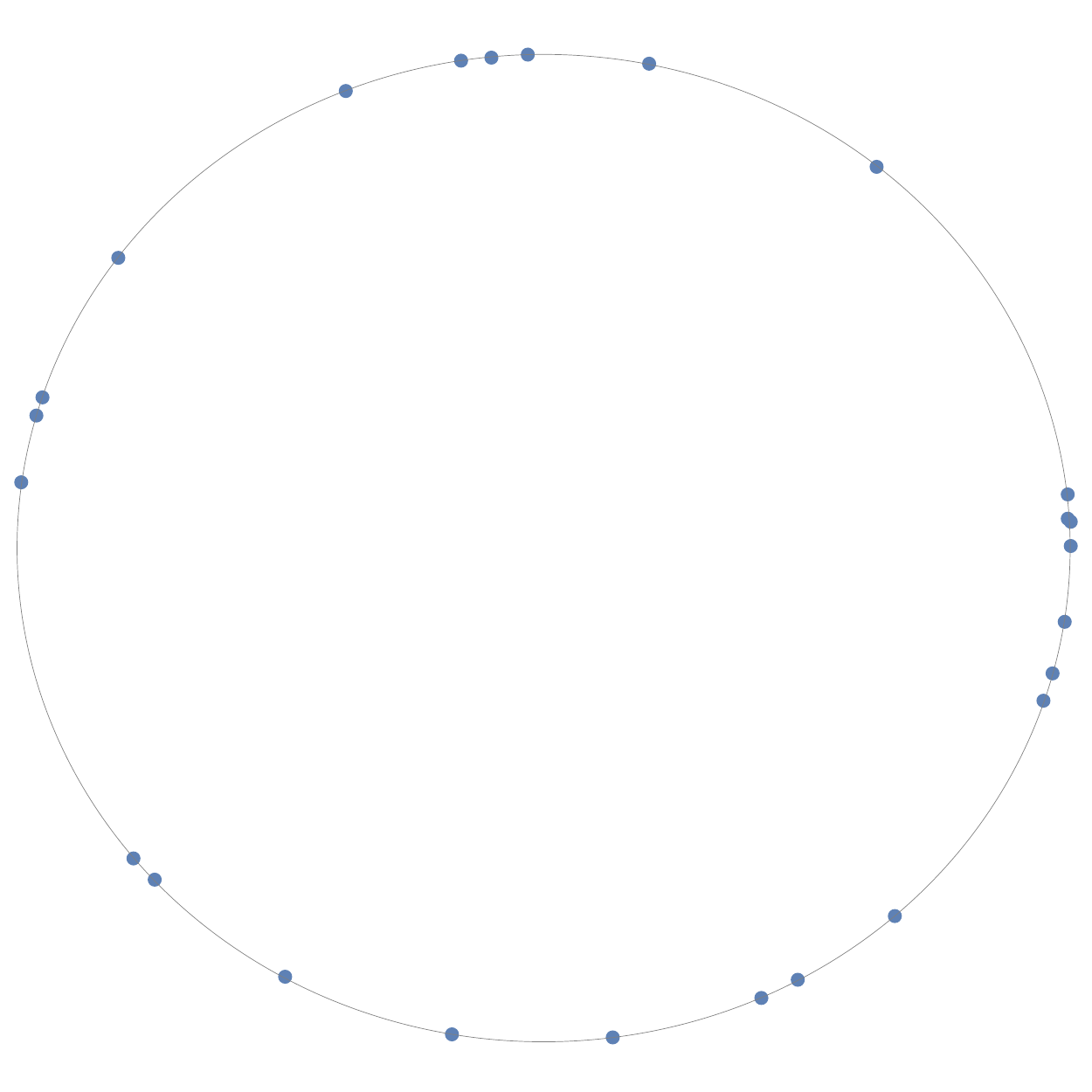}
		\caption{25 points.}
	\end{subfigure}
	\begin{subfigure}[b]{0.40\textwidth}
		\includegraphics[width=\textwidth,height=1.05\textwidth]{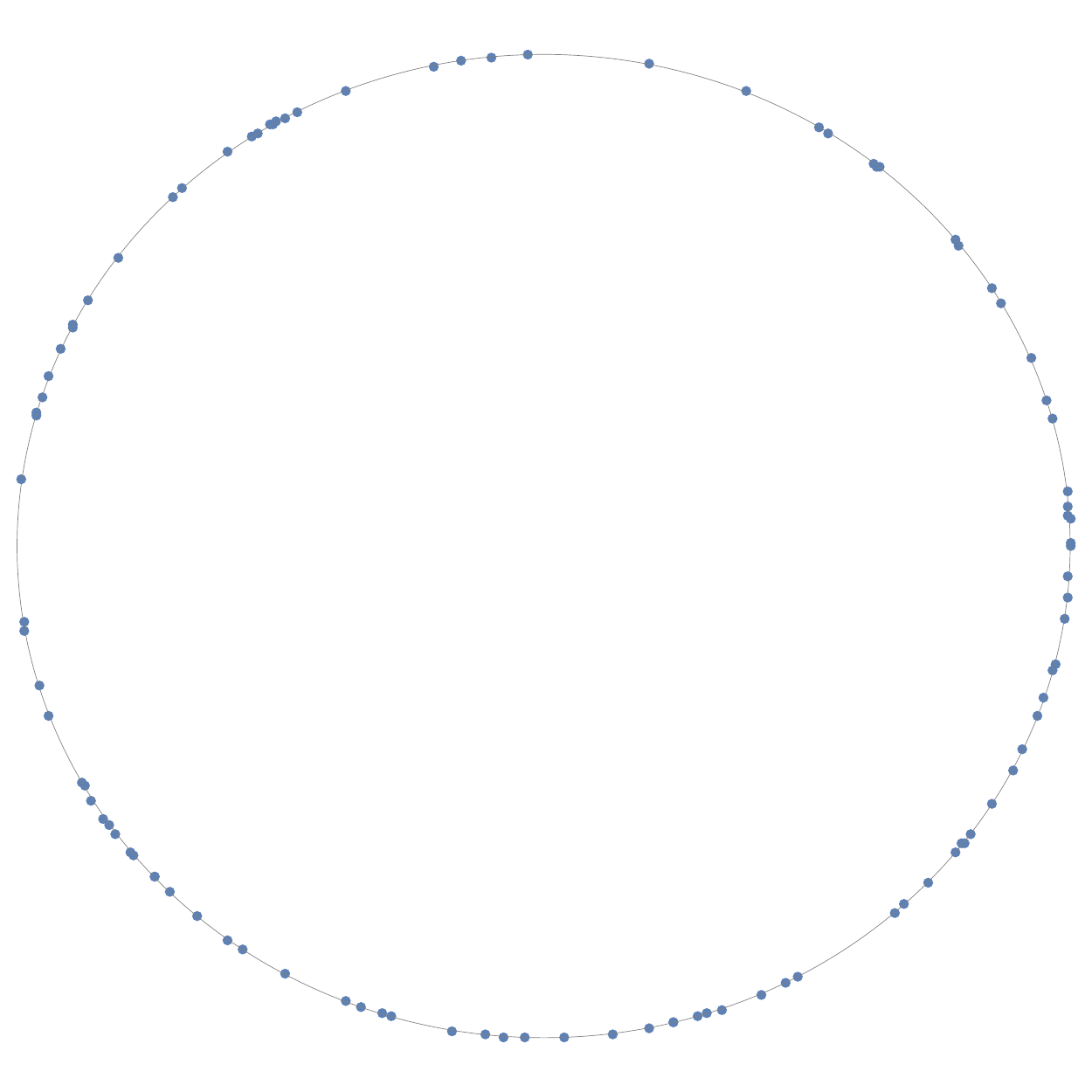}
		\caption{100 points.}
	\end{subfigure}  
	\begin{subfigure}[b]{0.40\textwidth}
		\includegraphics[width=\textwidth]{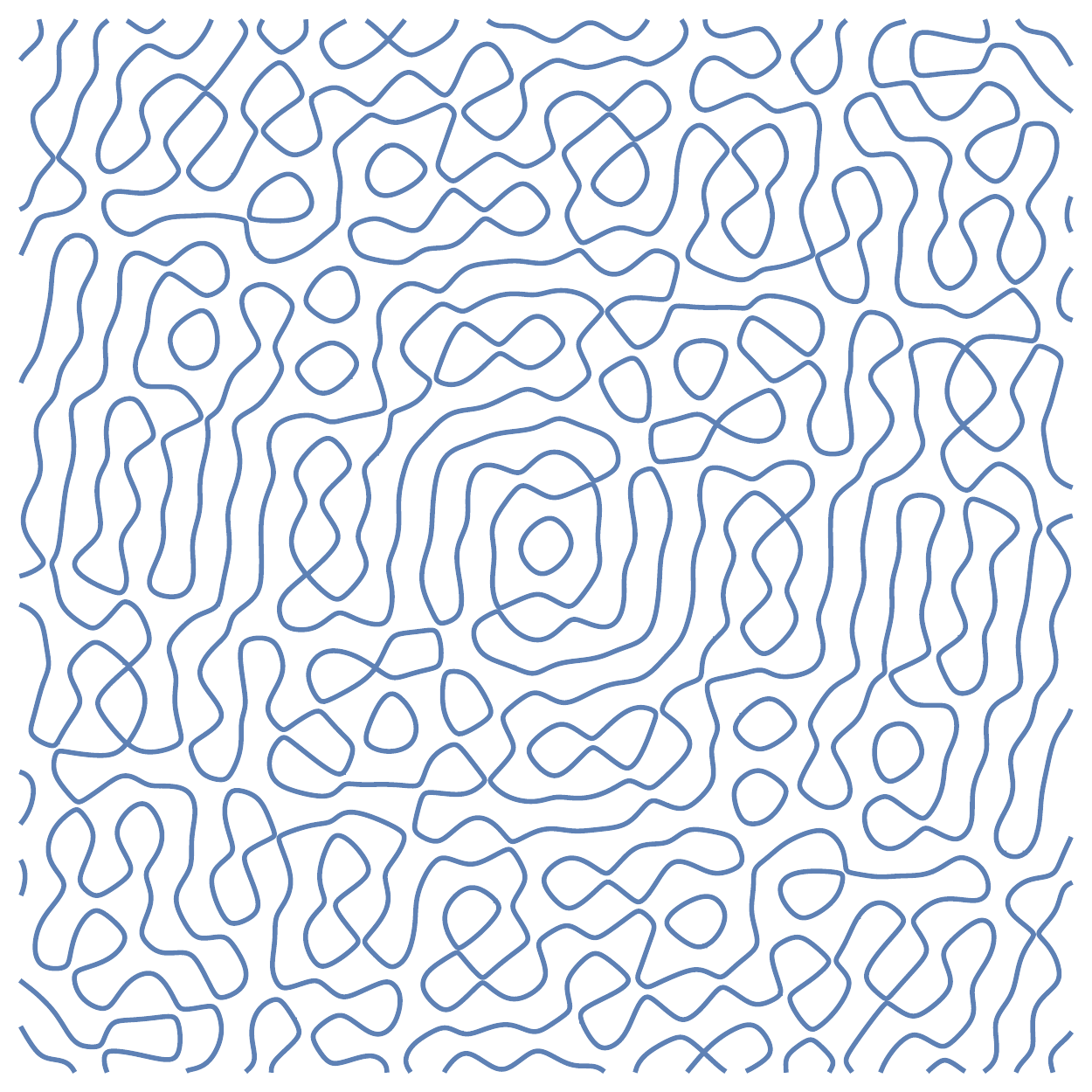}
		\caption{Nodal set for the function with 25 points.}
	\end{subfigure} 
	\begin{subfigure}[b]{0.40\textwidth}
		\includegraphics[width=\textwidth]{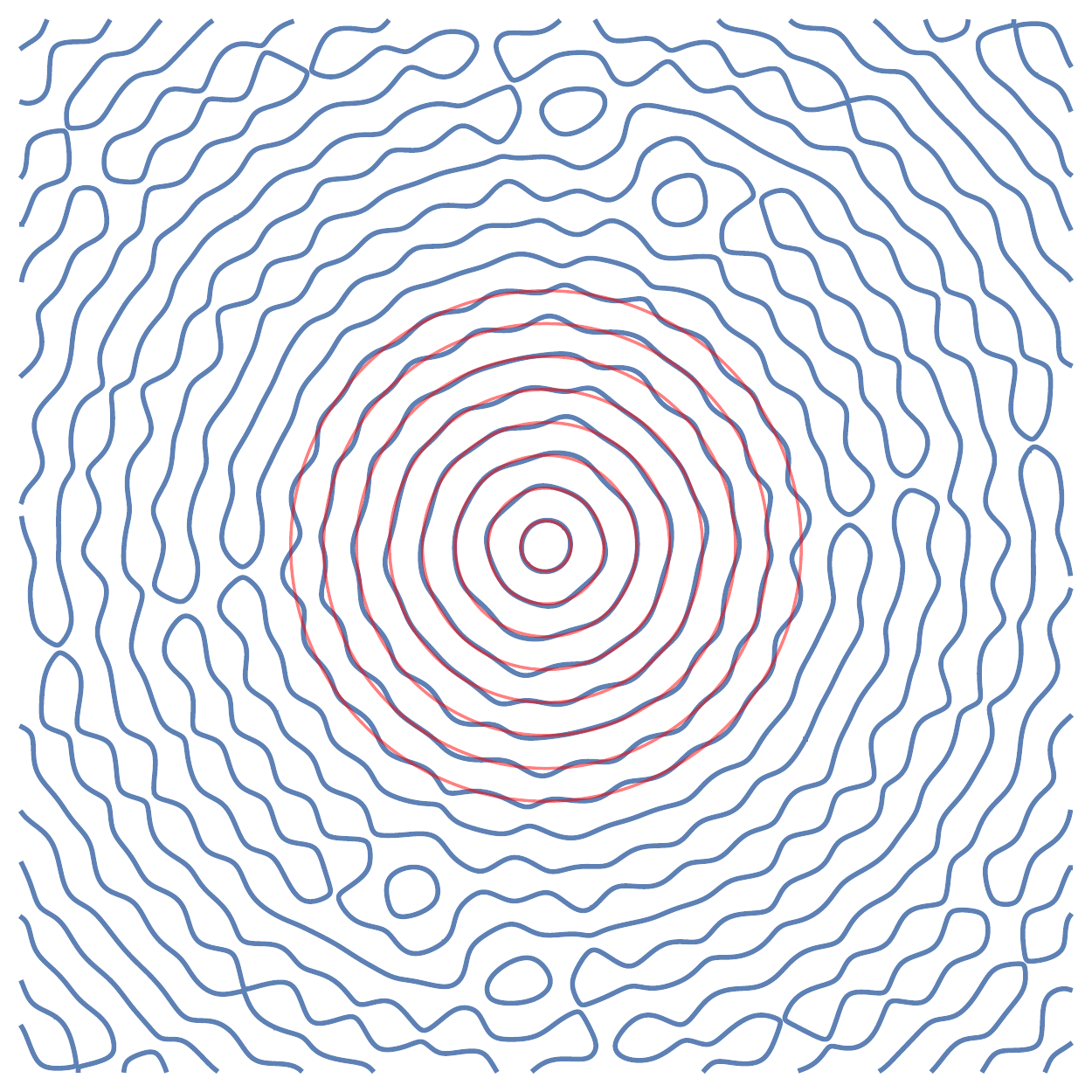}
		\caption{Nodal set for the function with 100 points. In red for $J_0(|x|)$.}
	\end{subfigure}         
	\caption{Nodal set for the function $g_N$ for different $N$ and the points on the sphere $\S^1$. We have represented in red the first eight connected components of $J_0(|\cdot|)^{-1}(0).$} \label{FigNod1}
\end{figure}

\subsection{On a question of Kulberg and Wigman}\label{sec:KW}
The methods we have developed allow us to solve a question raised by Kulberg and Wigman, \cite[Section 2.1]{KW2}, on the continuity of
$$
\mu\mapsto \mathbb{E}[\mathcal{N}(F_{\mu},R)]
$$
on the plane. Indeed, we can strengthen Lemma \ref{middle step} to give a more general result as in Proposition \ref{prop:cont conj}. This solves their question on any dimension, not only $m=2$.
\begin{prop}\label{prop:cont conj}
	Let $\mu_n$ be a sequence of measures on the sphere $\mathbb{S}^{m-1}$ not supported on a hyperplane converging weakly to $\mu$ as $n\to\infty$. Then
	\begin{equation*}
		\mathbb{E}[\mathcal{N}(F_{\mu_{n}},R)]\to\mathbb{E}[\mathcal{N}(F_{\mu},R)]
	\end{equation*}
	for any $R>0$.
\end{prop}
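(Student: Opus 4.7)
The plan is to follow the same strategy as the proof of Lemma \ref{middle step}, now for an arbitrary weakly convergent sequence $\mu_n \to \mu$, using the Continuous Mapping Theorem together with uniform integrability. The essential technology is already developed in the previous sections; one just has to verify that the hypotheses of each tool apply to the Gaussian fields $F_{\mu_n}$ and $F_\mu$.

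First, choose $W = R + 1$ and apply Lemma \ref{LemConvGaussianFields} to obtain $d_P(F_{\mu_n}, F_\mu) \to 0$ in the $C^1(B(R+1))$ topology, i.e.\ $F_{\mu_n} \overset{d}{\longrightarrow} F_\mu$ in this space. Next, recall that by Claim \ref{open} the subset $C^1_*(R) \subset C^1(B(R+1))$ is open, and by Proposition \ref{PropContBdd} the functional $\mathcal{N}(\cdot, R)$ is continuous on $C^1_*(R)$. In order to apply the Continuous Mapping Theorem at the random variable $F_\mu$, we need $F_\mu \in C^1_*(R)$ almost surely; this is precisely the content of Bulinskaya's Lemma \ref{Bulinskaya's Lemma}, which applies provided the limit measure $\mu$ is itself not supported on a hyperplane. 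Granting this, the Continuous Mapping Theorem yields
\begin{equation*}
	\mathcal{N}(F_{\mu_n}, R) \overset{d}{\longrightarrow} \mathcal{N}(F_\mu, R) \quad \text{as } n \to \infty.
\end{equation*}

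To upgrade distributional convergence to convergence of expectations, invoke the Faber-Krahn inequality, which gives the deterministic bound $\mathcal{N}(F, R) \lesssim R^m$ uniformly in $F$ (and hence uniformly in $n$). This uniform boundedness trivially implies uniform integrability of $\{\mathcal{N}(F_{\mu_n}, R)\}_n$, so Lemma \ref{th:DCTh} (or Portmanteau applied to the bounded measurable functional) yields $\mathbb{E}[\mathcal{N}(F_{\mu_n}, R)] \to \mathbb{E}[\mathcal{N}(F_\mu, R)]$, which is the desired conclusion.

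The main subtle point is the hypothesis on the limit measure. The statement assumes each $\mu_n$ is not supported on a hyperplane, but weak convergence does not preserve this property (for instance, a sequence of rotated uniform measures could concentrate on a single point). For Bulinskaya's lemma to give $F_\mu \in C^1_*(R)$ a.s.\ one needs $\mu$ itself not to be supported on a hyperplane; this should either be added as a hypothesis or deduced separately. Once this is assumed, the proof is a direct assembly of the ingredients of Section \ref{section 5}. An analogous statement holds for $\mathcal{N}(\cdot, \mathcal{S}, R)$ and $\mathcal{N}(\cdot, T, R)$ with the same proof, and for the nodal volume $\mathcal{V}(\cdot, R)$ using Lemma \ref{continouty nodal length} combined with Proposition \ref{anti-concentration prop} in place of the Faber-Krahn bound to supply uniform integrability.
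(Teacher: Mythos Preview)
Your proof is correct and follows essentially the same approach as the paper's: apply Lemma \ref{LemConvGaussianFields} to get $d_P(F_{\mu_n},F_\mu)\to 0$, invoke Bulinskaya's Lemma \ref{Bulinskaya's Lemma} and the continuity of $\mathcal{N}$ on $C^1_*(R)$ (Proposition \ref{PropContBdd}), then use the Faber--Krahn bound and Portmanteau/uniform integrability to pass to expectations. Your observation about the limit measure is well taken: the paper applies Bulinskaya's lemma to $F_\mu$ without checking that $\mu$ is itself not supported on a hyperplane, and this does not follow from the hypothesis on the $\mu_n$ alone (e.g.\ $\mu_n=(1-1/n)\delta_{x_0}+(1/n)\sigma$ converges to $\delta_{x_0}$); so the same implicit assumption is present in both arguments. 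One minor caveat on your final remark: Proposition \ref{anti-concentration prop} concerns the deterministic family $F_x$ rather than Gaussian fields $F_{\mu_n}$, so it does not directly supply uniform integrability of $\mathcal{V}(F_{\mu_n},R)$; for that one would instead use a second-moment bound via Kac--Rice or a direct doubling-index estimate for Gaussian fields.
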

\begin{proof}
	Thanks to Lemma \ref{Bulinskaya's Lemma}, $F_{\mu_{n}},F_{\mu}\in C_{*}^1(B(W))$ almost surely. From Lemma \ref{LemConvGaussianFields} we can stablish $d_P(F_{\mu_n},F_\mu)\to 0$. Thus the proposition follows directly Portmanteau Theorem,  as  
	$\mathcal{N}(F_{\mu_n},R)=O(R^m)$ by the Faber-Krahn inequality, uniformly for all $n$, and it is continuous by Proposition \ref{PropContBdd}. 
\end{proof}
\begin{rem}
	The same holds for topological classes and trees with an analogous, \textit{mutatis mutandis}, proof.
\end{rem}
With our techniques we can also extend the discrepancy functional that was introduced in \cite[Proposition 1.2]{KW2} for any dimension, topologies and nesting trees. More precisely, we have:
\begin{prop} The discrepancy functional exists, that is,
	\begin{equation}\label{DiscFunct}
	\lim_{R\to\infty}\bE\left|\frac{\mathcal{N}(F_\mu,\cdot,R)}{\vol S(R)}-c(\cdot,\mu)\right|
	\end{equation}	  
exists and it is finite. 
\end{prop}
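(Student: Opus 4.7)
The plan is to leverage what was already established in the proof of Theorem \ref{ThNSSWatomic}, where the $L^1$ convergence of $\mathcal{N}(F_\mu,\cdot,R)/\vol B(R)$ was implicit. Recall that by Wiener's Ergodic Theorem applied to $\Phi_r^{\textnormal{a}}$, combined with the integral-geometric sandwich, the pointwise limit
$$
c(F_\mu,\cdot,\mu)\coloneqq \lim_{R\to\infty}\frac{\mathcal{N}(F_\mu,\cdot,R)}{\vol B(R)}
$$
exists both almost surely and in $L^1$ (it is a genuine random variable because $F_\mu$ need not be ergodic when $\mu$ has atoms). The constant $c(\cdot,\mu)$ appearing in Theorem \ref{ThNSSWatomic} is precisely $\bE[c(F_\mu,\cdot,\mu)]$, which is finite by the Faber-Krahn bound $\mathcal{N}(F_\mu,\cdot,R)\lesssim R^m$.

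Given this, the existence of \eqref{DiscFunct} follows from the continuity of $L^1$ norms under $L^1$ convergence. Concretely, I would apply the reverse triangle inequality in the form
$$
\left|\,\left|\frac{\mathcal{N}(F_\mu,\cdot,R)}{\vol B(R)}-c(\cdot,\mu)\right|-\left|c(F_\mu,\cdot,\mu)-c(\cdot,\mu)\right|\,\right|\le \left|\frac{\mathcal{N}(F_\mu,\cdot,R)}{\vol B(R)}-c(F_\mu,\cdot,\mu)\right|,
$$
and then take expectations of both sides. Since the right-hand side tends to zero as $R\to\infty$ by the $L^1$ convergence noted above, this yields
$$
\lim_{R\to\infty}\bE\left|\frac{\mathcal{N}(F_\mu,\cdot,R)}{\vol B(R)}-c(\cdot,\mu)\right|=\bE\left|c(F_\mu,\cdot,\mu)-c(\cdot,\mu)\right|,
$$
and this limit is finite because $c(F_\mu,\cdot,\mu)\in L^1(\Omega,\bP)$ and $c(\cdot,\mu)$ is a deterministic constant.

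There is no serious obstacle here: the entire argument rests on upgrading the a.s. convergence provided by the Ergodic Theorem to $L^1$ convergence, which was already justified in the proof of Theorem \ref{ThNSSWatomic} via the Faber-Krahn domination and dominated convergence applied to $\Phi_r^{\textnormal{a}}$. The only subtlety worth flagging is that the limiting discrepancy $\bE|c(F_\mu,\cdot,\mu)-c(\cdot,\mu)|$ is generically strictly positive precisely when $F_\mu$ fails to be ergodic (i.e.\ when $\mu$ carries atoms, as is the case for the measures $\mu_{K,N}$ of \eqref{DefmuK}); when $F_\mu$ is ergodic—for instance when $\mu$ is absolutely continuous on $\S^{m-1}$—the random variable $c(F_\mu,\cdot,\mu)$ is a.s.\ equal to its expectation $c(\cdot,\mu)$, and the discrepancy vanishes. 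The same argument, with $\mathcal{N}(\cdot,\cdot,R)$ replaced by $\mathcal{N}(\cdot,\mathcal{S},R)$ or $\mathcal{N}(\cdot,T,R)$, extends the conclusion to topological classes and nesting trees.
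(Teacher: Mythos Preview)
Your proof is correct and follows exactly the same approach as the paper: both deduce the result directly from the $L^1$ convergence established in Theorem~\ref{ThNSSWatomic}, arriving at the identity $\lim_{R\to\infty}\bE\big|\mathcal{N}(F_\mu,\cdot,R)/\vol B(R)-c(\cdot,\mu)\big|=\bE\big|c(F_\mu,\cdot,\mu)-c(\cdot,\mu)\big|$. Your version is simply more explicit, spelling out the reverse triangle inequality that the paper leaves implicit in the phrase ``as the convergence is in $L^1$''.
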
	
\begin{proof}
	Following the proof of Theorem \ref{ThNSSWatomic},
	$$
	\lim_{R\to\infty}\bE\left|\frac{\mathcal{N}(F_\mu,\cdot,R)}{\vol S(R)}-c(\cdot,\mu)\right|=\bE\left|c(G,\cdot,\mu)-c(\cdot,\mu)\right|,
	$$
	as the convergence is in $L^1$. 	
\end{proof}
	From the proof we see that the discrepancy functional, \eqref{DiscFunct}, is zero if and only if $c(F_\mu,\cdot,R)$ is a.s. a constant, that is, the limit of the nodal counts is non-random. This is true, in particular, if the field is ergodic. This functional measures how far we are from the ergodic situation of $\lim_{R\to\infty} \frac{\mathcal{N}(G,\cdot,R)}{\vol S(R)}$ being, a.s., a constant.
\section*{Acknowledgement.}
The authors would like to thank Alberto Enciso, Daniel Peralta-Salas, Mikhail Sodin and Igor Wigman for many useful discussions, Alejandro Rivera and Maxime Ingremeau for stimulating conversations and Alon Nishry for valuable comments on the first draft of the article.  The first author would like to thank professor Igor Wigman for supporting his visit to the King's College in London. The second author would like to thank Alberto Enciso, Daniel Peralta-Salas for supporting his visit to the ICMAT in Madrid and  G. Baldi for suggesting the construction in Example \ref{example2}. A.R. is
supported by the grant MTM-2016-76702-P of the Spanish Ministry of Science and in part by the ICMAT--Severo Ochoa grant
SEV-2015-0554. A.R. is also a postgraduate fellow of the City Council of Madrid at the Residencia de Estudiantes (2020--2022). A.S. was supported by the Engineering and Physical Sciences Research Council [EP/L015234/1], the EPSRC Centre for Doctoral Training in Geometry and Number Theory (The London School of Geometry and Number Theory), University College London.

\appendix
\section{Gaussian fields lemma.}\label{AppendixGaussianFields}

\label{descarding phi}
Adapting \cite[Lemma 7.2]{EPRBeltrami} for our situation we can show:
\begin{lem}
	\label{LemConvGaussianFieldsApp}
	Let $s\geq 0$ and $W\geq 1$, be given. Moreover, suppose that $\{\mu_n\}_{n\in \mathbb{N}}$ be a sequence of probability measures on $\mathbb{S}^{m-1}$ such that $\mu_n$ weak$^{\star}$ converges  to $\mu$. Then,
	\begin{align}
d_{P}(F_{\mu_n},F_{\mu})\longrightarrow 0 \text{ as }~n \rightarrow \infty \nonumber
	\end{align}
	where the convergence is with respect to the $C^s(B(W))$ topology. 
\end{lem}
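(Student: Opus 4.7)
The plan is to verify the two ingredients that, by the discussion in Section~\ref{SectWeakConv}, imply convergence in the Prokhorov metric on $C^s(B(W))$: (a) convergence of finite-dimensional distributions of $F_{\mu_n}$ together with all derivatives up to order $s$, and (b) tightness of the laws of $F_{\mu_n}$ in $C^s(\overline{B(W)})$, verified via Lemma~\ref{tightness}.

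For (a), I would use that the joint law of $(D^{\alpha_1} F_{\mu_n}(x_1),\dots,D^{\alpha_\ell} F_{\mu_n}(x_\ell))$ is centred Gaussian, hence determined by its covariance matrix. Differentiating under the spectral integral,
\[
\mathrm{Cov}\bigl(D^{\alpha_i} F_{\mu_n}(x_i),D^{\alpha_j} F_{\mu_n}(x_j)\bigr)=\mathrm{Re}\!\int_{\S^{m-1}}(2\pi i)^{|\alpha_i|}(-2\pi i)^{|\alpha_j|}\lambda^{\alpha_i+\alpha_j}e(\langle x_i-x_j,\lambda\rangle)\,d\mu_n(\lambda).
\]
The integrand is a fixed continuous bounded function on $\S^{m-1}$, so weak-$*$ convergence $\mu_n\to\mu$ yields entrywise convergence of the covariance matrices. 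Since Gaussian laws depend continuously on their covariance, the finite-dimensional distributions converge, which upgrades via the Cramér--Wold device to joint convergence for any finite collection of evaluation functionals $D^{\alpha}\mathrm{ev}_{x_i}$.

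For (b), the key uniform bound is that $\mu_n$ is a probability measure on $\S^{m-1}$, so $|\lambda^\alpha|\le 1$ and hence, for every multi-index $\alpha$,
\[
\mathbb{E}\bigl[|D^\alpha F_{\mu_n}(y)|^2\bigr]\le (2\pi)^{2|\alpha|}, \qquad
\mathbb{E}\bigl[|D^\alpha F_{\mu_n}(y)-D^\alpha F_{\mu_n}(y')|^2\bigr]\le C_\alpha |y-y'|^2,
\]
where the second bound comes from the elementary estimate $1-\cos t\le t^2/2$ applied to the spectral representation. By Gaussianity these $L^2$ bounds self-improve to bounds for every even moment, uniformly in $n$. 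Chebyshev's inequality applied to the first bound gives condition~(i) of Lemma~\ref{tightness}. For condition~(ii), applied to $|\alpha|=s$, one invokes a Gaussian Kolmogorov--Chentsov argument (as, e.g., in \cite[Appendix A]{NS}): the $L^{2p}$ estimate on increments of $D^\alpha F_{\mu_n}$ implies uniform control of the modulus of continuity $\omega_{D^\alpha F_{\mu_n}}(\delta)$, namely $\mathbb{E}[\omega_{D^\alpha F_{\mu_n}}(\delta)^{2p}]\lesssim \delta^{2p-m(1+\epsilon)}$ for $p$ large, and Chebyshev then yields $\lim_{\delta\to 0}\limsup_{n\to\infty}\mathbb{P}(\omega_{D^\alpha F_{\mu_n}}(\delta)\ge\varepsilon)=0$ as required.

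Finally, combining (a) and (b), Prokhorov's theorem on the separable metric space $C^s(\overline{B(W)})$ yields $d_P(F_{\mu_n},F_\mu)\to 0$, completing the proof. The only mildly technical step is (b): the Gaussian Kolmogorov--Chentsov estimate is standard but requires care to record that the implicit constants depend only on the uniform spectral moment bounds on $\S^{m-1}$, hence are independent of $n$; this is exactly why restricting spectral support to the fixed compact set $\S^{m-1}$ makes tightness automatic.
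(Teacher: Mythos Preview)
Your proof is correct and follows the same high-level strategy as the paper: convergence of finite-dimensional distributions via covariance convergence, plus tightness from uniform spectral moment bounds, combined by the discussion in Section~\ref{SectWeakConv}. The only difference is in how tightness is established. You verify the two conditions of Lemma~\ref{tightness} directly, using a Kolmogorov--Chentsov argument on the increments of $D^\alpha F_{\mu_n}$. The paper instead bounds $\mathbb{E}\|F_{\mu_n}\|_{H^t(B(W))}^2$ uniformly in~$n$ for $t$ large, invokes the Sobolev embedding $H^t\hookrightarrow C^{s+1}$ to get $\sup_n\mathbb{E}\|F_{\mu_n}\|_{C^{s+1}(B(W))}^2<\infty$, and then uses Chebyshev together with the compact embedding $C^{s+1}\hookrightarrow C^s$ (Arzel\`a--Ascoli) to exhibit compact sets of measure $\ge 1-\varepsilon$ directly. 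The paper's route is slightly shorter since it bypasses the modulus-of-continuity criterion altogether; your route has the advantage of using exactly the tightness lemma already recorded in the paper. Both exploit the same underlying fact, that spectral support in the fixed compact $\S^{m-1}$ makes all derivative moments uniformly bounded.
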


\begin{proof}
	 Since $\mu_n$  weak$^{\star}$-converges  to $\mu$ and the exponential is a bounded continuous function, by the discussion in Section \ref{Gaussian random fields} and Portmanteau Theorem, we have for any $x,y\in B(W)$
	 \begin{align}\label{A.1}
	 \begin{split}
	 K_n(x,y)\coloneqq\mathbb{E}[F_{\mu_n}(x)F_{\mu_n}(y)]=\int e(\langle x-y,\lambda \rangle)d\mu_n(\lambda) \to\\
	 \to \int e(\langle x-y,\lambda \rangle)d\mu(\lambda)=
	 \mathbb{E}[F_{\mu}(x)F_{\mu}(y)]=K(x,y)
	 \end{split}	 
	 \end{align}	
	 Since, $\mu_n$ and $\mu$ are compactly supported, we may differentiate under the integral sign in \eqref{A.1}, it follows that \eqref{A.1} holds after taking derivatives.	 
	Thanks to the Cauchy-Schwarz inequality and the fact that $\mu_N$ is a probability measure, we have 
	$$
	|K_n(x,y)-K_n(x',y')|\le C\norm{x-x'+y-y'}
	$$
	as $|e^{ix}-e^{iy}|\le |x-y|$ for any $x,y\in\R.$ Therefore $K_n$ is equicontinuous, $|K_n(x,y)|\leq 1$ and it converges pointwise; thus, the Arzel\`a-Ascoli Theorem  implies that the convergence in \eqref{A.1} is uniform for all $x,y \in B(W)$, together with its derivatives. Now, for any integer~$t\geq0$, the mean of the $H^t$-norm of~$F_{\mu_n}$ is uniformly bounded:
	\begin{align*}
	\bE\|F_{\mu_n}\|_{H^t(B(W))}^2=\sum_{|\alpha|\leq t}\bE\int_{B(W)}
	|D^\alpha F_{\mu_n}(x)|^2\, dx
	=\sum_{|\alpha|\leq t}\int_{B(W)}  D_x^\alpha D_y^\alpha
	K_{n}(x,y)\big|_{y=x}\, dx \\
	\xrightarrow[n\to\infty]{\phantom{n}}
	\sum_{|\alpha|\leq t}\int_{B(W)} D_x^\alpha
	D_y^\alpha K(x,y)\big|_{y=x}\, dx<M_{t,W}\,,
	\end{align*}
	where, in the last line, we have used \eqref{A.1}. As the constant~$M_{t,W}$ is independent of~$N$, Sobolev's
	inequality ensures that one can now take any sufficiently large $t$ to
	conclude that
	\[
	\sup_n\bE \|F_{\mu_n}\|_{C^{s+1}(B(W))}^2\leq C\sup_n\bE \|F_{\mu_n}\|^2_{H^t(B(W))}<M
	\]
	for some constant~$M$ that only depends on~$W,t$. Thus, for any~$\varepsilon>0$ and any sufficiently large $n$, we have 
	\[
	\nu_{n}^W\big(\big\{ F_{\mu_n}\in C^{s+1}(B(W)): \|F_{\mu_n}\|_{C^{s+1}(B(W))}^2>M/\varepsilon\big\}\big)<\varepsilon\,,
	\]
	where $\nu_{n}^W$ is the measure on the space of $C^s(B(W))$ functions corresponding to the random field $F_{\mu_n}$. Accordingly, the sequence of probability measures $\nu^{W}_n$ is tight. Indeed, by Lagrange and Arzelà-Ascoli the closure of the set
	$$
	\big\{ F_{\mu_n}\in C^{s+1}(B(W)): \|F_{\mu_n}\|_{C^{s+1}(B(W))}^2\le M/\varepsilon\big\}
	$$
	is precompact with the $C^s$ topology, so we can conclude by the very definition of tightness, see Section \ref{SectWeakConv}.
	\end{proof}

\section{Upper bound on $\mathcal{NI}$.}
\label{copying}
In this section are going to prove Lemma \ref{abs bound} following \cite[Section 3.2]{CLMM20} (that is, the following argument is due to F. Nazarov and we claim no originality). We  will need the following (rescaled) result \cite[Lemma 2.5]{CLMM20}:
\begin{lem}
	\label{Lemma 2.5 CLMM20}
	Let $r>1$ and $\rho>0$, let $f$ be a Laplace eigenfunction with eigenvalue $4\pi^2r^2$ on $B(\rho)$. Suppose that an open set $\Omega\subset B(\rho)$ is $c_0$-narrow (on scale $1/r$) and $f=0$ on $\partial \Omega \cap B(\rho)$. Then, for every $\varepsilon>0$, sufficiently small depending on $c_0$ and $m$,  if  
	$$ \frac{|\Omega|}{|B_{\rho}|}\leq \varepsilon^{m-1},$$ then 
	$$\sup_{ \Omega \cap B(\rho/2)} |f|\leq  e^{-c/\varepsilon}\sup_{\Omega \cap B(\rho)}|f| $$
\end{lem}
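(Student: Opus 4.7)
The statement is cited as a rescaled version of [CLMM20, Lemma 2.5], so my proof plan breaks into two parts: (i) the reduction to CLMM20's original setting, and (ii) a sketch of the underlying argument.

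First I would reduce to the case $r=1$. If $f$ satisfies $\Delta f + 4\pi^2 r^2 f = 0$ on $B(\rho)$, set $\tilde f(x) := f(x/r)$ on $B(r\rho)$; then $\Delta \tilde f + 4\pi^2 \tilde f = 0$. The image of $\Omega$ under $x\mapsto rx$ is open in $B(r\rho)$; the $c_0$-narrowness of $\Omega$ on scale $1/r$ translates into $c_0$-narrowness of the rescaled set on scale $1$, and the ratio $|\Omega|/|B(\rho)|$ is scale-invariant, so the hypothesis $|\Omega|/|B(\rho)|\le\varepsilon^{m-1}$ is preserved. Since the $\sup$ bounds in the conclusion are invariant under rescaling, it suffices to work with eigenvalue $4\pi^2$ on $B(r\rho)$. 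This puts us precisely in the hypotheses of [CLMM20, Lemma 2.5].

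Next I would explain the mechanism of the underlying proof. The standard tool is to remove the eigenvalue by lifting to an harmonic function in one extra dimension: $H(x,t):=f(x)e^{2\pi t}$ satisfies $\Delta_{x,t} H=0$ on $B(\rho)\times[-\rho,\rho]$ (cf.\ the proof of Lemma \ref{doubling f}), and the lifted narrow set $\Omega\times(-a,a)$ remains narrow on scale $1$ for a suitably small $a$. One then applies a Landis/Nazarov--style growth lemma for harmonic functions vanishing on the boundary of a narrow domain: at each unit ball the $c_0$-narrowness hypothesis, combined with standard harmonic measure estimates (or equivalently capacity bounds for thin sets), forces
\[
\sup_{\Omega\cap B(y,1/2)}|H|\le q(c_0)\,\sup_{\Omega\cap B(y,1)}|H|
\]
for some $q(c_0)<1$, an estimate which propagates the decay as one moves inside $\Omega$.

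The main obstacle, and the point where the hypothesis $|\Omega|/|B(\rho)|\le\varepsilon^{m-1}$ becomes essential, is extracting the quantitative rate $e^{-c/\varepsilon}$ rather than a polynomial rate. The idea is that the global smallness of $|\Omega|$ controls the length of a chain of balls needed to realise the sup transfer from $\Omega\cap B(\rho/2)$ to $\Omega\cap B(\rho)$, and the relative volume $\varepsilon^{m-1}$ upgrades the per-step contraction factor $q(c_0)$ to one of the form $e^{-c/\varepsilon}$ via an iteration whose number of steps scales like $1/\varepsilon$. This iteration, which is the core technical content of [CLMM20, Lemma 2.5], is the hard part; once it is in hand, the conclusion for $f$ follows by restricting the harmonic lift to the slice $\{t=0\}$. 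In the write-up I would therefore invoke [CLMM20, Lemma 2.5] after the rescaling step above, and refer the reader to that paper for the full chain-of-balls argument.
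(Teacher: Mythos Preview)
Your proposal is correct and in fact goes beyond what the paper does: the paper gives no proof of this lemma at all, simply stating it as ``the following (rescaled) result [CLMM20, Lemma 2.5]'' and invoking it directly. Your rescaling reduction in part (i) is exactly what justifies the word ``rescaled'' in that citation, and your sketch in part (ii) of the harmonic lift plus Landis-type narrow-domain growth is an accurate outline of the CLMM20 argument being cited.
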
                                                                                                                                                                                                                                                                                                    
We are finally ready to prove Lemma \ref{bound intersections}
\begin{proof}[Proof of Lemma \textnormal{\ref{bound intersections}}]
	First, we rescale $f$ to $f_r$ so that $\mathcal{	NI}(f,x,r)= \mathcal{	NI}(f_r,x,1)$ and  we may assume that every nodal domain is $c_0$-narrow. Let  $Z= \mathcal{	NI}(f_r,x, 1)$, $\Omega_i$ be the elements of $\mathcal{	NI}(f_r,x, 1)$,  $D=\mathfrak{N}_{f_{2r}}(B(x,1)) +r= \mathfrak{N}_f(B(x,2r))+r$ and finally let $B(x)=B(x,1)$. Suppose that $Z> 2^{m+2} c_1^{-1}\cdot  D^{m-1}$ for some constant $c_1=c_1(m)$ to be chosen later, we are going to derive a contradiction.
	
	 Let $x_i \in \Omega_i \cap B(x, 1)$ and define 
$$ S(p)=\left| \{\Omega_i: |\Omega_i \cap B(x_i,2^{-j})|\leq c_1^{-1}D^{-m+1} |B(x_i,2^{-j})| \hspace{1mm} \text{for} \hspace{1mm} j\in\{0,...,p\}\}  \right| .$$
First we are going to show that $S(0)\neq \emptyset$. Indeed, since 
$$\sum_i | \Omega_i \cap B(x_i,1)| \leq |B(2)|,$$ 
for at least $(3/4)Z$ nodal domains, we have 
$$ \frac{| \Omega_i \cap B(x_i,1)| }{2^m|B(1)|}\leq \frac{2^2}{Z},$$
thus $S(0)\geq (3/4)Z$.

Now, we claim the following:
\begin{claim}
	\label{claim Nazarov}
Let $p\geq 1$, then there are at most $Z 4^{-p-2}$ nodal domains $\Omega_i \in S(p)\backslash S(p+1).$
\end{claim}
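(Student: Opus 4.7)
The plan is to prove Claim~\ref{claim Nazarov} by combining two analytic ingredients --- Lemma~\ref{Lemma 2.5 CLMM20} (propagation of smallness across narrow scales) and Lemma~\ref{Remez for f} (Remez-type inversion on the non-narrow final scale) --- with a combinatorial packing bound for ``low-value'' subballs of an eigenfunction of bounded doubling index. Fix $p \geq 1$ and write $L_p := |S(p) \setminus S(p+1)|$. For each $\Omega_i$ contributing to $L_p$, the definition gives narrowness of $\Omega_i$ inside $B(x_i, 2^{-j})$ for $j = 0, 1, \ldots, p$ with density at most $\varepsilon^{m-1}$ where $\varepsilon := c_1^{-1/(m-1)} D^{-1}$, together with density strictly above $\varepsilon^{m-1}$ inside $B(x_i, 2^{-p-1})$.

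I would first iterate Lemma~\ref{Lemma 2.5 CLMM20} once per scale $\rho = 2^{-j}$ for $j = 0, \ldots, p$, using that the same parameter $\varepsilon$ is admissible at every step. This yields the chain
\[
\sup_{\Omega_i \cap B(x_i, 2^{-p-1})} |f_r| \;\leq\; e^{-(p+1) c/\varepsilon}\,\sup_{\Omega_i \cap B(x_i, 1)} |f_r| \;\leq\; e^{-(p+1)\,c\,c_1^{1/(m-1)}\,D}\,M,
\]
with $M := \sup_{B(x, 2)} |f_r|$ and $c = c(c_0, m)$. The non-narrowness at the final scale then allows Lemma~\ref{Remez for f} to invert this smallness: applying it with $E = \Omega_i \cap B(x_i, 2^{-p-1})$ and using the monotonicity bound $\mathfrak{N}_{f_r}(B(x_i, 2^{-p})) \lesssim D$,
\[
\sup_{B(x_i, 2^{-p-1})} |f_r| \;\leq\; (c_1 D^{m-1})^{c_2 D}\,\sup_{\Omega_i \cap B(x_i, 2^{-p-1})}|f_r| \;\leq\; (c_1 D^{m-1})^{c_2 D}\,e^{-(p+1)\,c\,c_1^{1/(m-1)}\,D}\,M.
\]
Choosing $c_1 = c_1(c_0, m)$ large enough that the linear-in-$D$ exponent from the iteration dominates the $D \log D$ growth of the Remez factor would secure a per-domain smallness estimate of the form $\sup_{B(x_i, 2^{-p-1})} |f_r| \leq 4^{-p-2} M$ in the admissible regime of $D$.

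The final step converts this per-domain smallness into the cardinality bound $L_p \leq Z \cdot 4^{-p-2}$ by invoking a packing principle: after extracting a disjoint Vitali subcollection of $\{B(x_i, 2^{-p-1})\}_{i \in S(p) \setminus S(p+1)}$ of cardinality at least $L_p / K_m$, the number of disjoint subballs of $B(x, 2)$ on which $|f_r|$ stays below $4^{-p-2} M$ is controlled by the doubling index $D$ and the depth $p$. Combining this with the standing contradiction hypothesis $Z > 2^{m+2} c_1^{-1} D^{m-1}$ and rearranging would yield the desired $L_p \leq Z \cdot 4^{-p-2}$.

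The main obstacle is this last step. Steps 1 and 2 are reasonably mechanical once $\varepsilon$ is identified, but the conversion of per-ball smallness into a cardinality bound that decays as $4^{-p-2}$ hinges on the packing/counting principle for low-value balls of eigenfunctions with bounded doubling index, which is the analytic heart of the argument borrowed from \cite{CLMM20}. A subsidiary technical point, addressed by the choice of $c_1$ in Step~3, is the delicate balance between the linear-in-$D$ exponential decay from iterating Lemma~\ref{Lemma 2.5 CLMM20} and the $D \log D$ growth of the Remez factor; if $c_1$ is not chosen sufficiently large the balance can degrade when $D$ is large, so the precise quantitative calibration --- and the ability to exploit that the regime $D$ is small renders the hypothesis $Z > 2^{m+2} c_1^{-1} D^{m-1}$ already very restrictive --- is essential to a fully uniform argument.
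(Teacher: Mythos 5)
Your proposal diverges from the paper at the Remez step, and the divergence creates a gap that the paper's argument does not have. You apply the Remez inequality (Lemma~\ref{Remez for f}) \emph{separately} for each $\Omega_i$, with $E = \Omega_i \cap B(x_i, 2^{-p-1})$ on the small ball $B(x_i, 2^{-p-1})$, obtaining a per-domain smallness bound $\sup_{B(x_i,2^{-p-1})}|f_r| \lesssim 4^{-p-2}\sup_{B(x,2)}|f_r|$. You then correctly observe that you are stuck: you would need a ``packing/counting principle for low-value balls of eigenfunctions with bounded doubling index'' to convert this per-ball bound into the cardinality estimate $|S(p)\setminus S(p+1)| \leq Z\,4^{-p-2}$, and you flag this as the heart of the matter. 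No such packing lemma is invoked in \cite{CLMM20} or in this paper; it is a new ingredient you have introduced to fill a gap of your own making.

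The paper circumvents this entirely by applying Lemma~\ref{Remez for f} \emph{once, globally}, with $E = \bigcup_{\Omega_i \in S(p)} \Omega_i \cap B(x_i, 2^{-p-1})$ inside the single fixed ball $B(2)$. Since the $\Omega_i$ are pairwise disjoint, $|E| = \sum_i |\Omega_i \cap B(x_i,2^{-p-1})|$. The iterated propagation-of-smallness bound \eqref{AC.1} shows $\sup_E |f_r| \leq (2^{-p-1})^{c_2 D}\sup_{B(2)}|f_r|$ (the same inequality holds on each piece of $E$, hence on $E$). Feeding this into the global Remez inequality bounds $(|E|/|B(2)|)^{CD}$ by $(2^{-p-1})^{c_2 D}$, and, after the rescaling $|B(2)| = 2^{m(p+2)}|B(x_i,2^{-p-1})|$, choosing $c_1$ so that $c_2$ dominates $mC$ yields the measure bound
\[
\frac{\sum_i |\Omega_i \cap B(x_i,2^{-p-1})|}{|B(x_i,2^{-p-1})|} \leq 4^{-p-2}.
\]
Since every $\Omega_i \in S(p)\setminus S(p+1)$ has $|\Omega_i \cap B(x_i,2^{-p-1})|/|B(x_i,2^{-p-1})| > c_1 D^{-m+1}$ by definition, a one-line Markov count gives at most $4^{-p-2}/(c_1 D^{-m+1}) = 4^{-p-2} c_1^{-1}D^{m-1} < Z\,4^{-p-2}$ such domains, where the last inequality uses the standing contradiction hypothesis $Z > 2^{m+2}c_1^{-1}D^{m-1}$. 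No Vitali extraction, no packing lemma, and no auxiliary counting principle for low-value balls is needed --- the global Remez application bounds the total measure directly, and the cardinality bound falls out by Markov. A secondary point: your invocation of ``$\mathfrak{N}_{f_r}(B(x_i,2^{-p})) \lesssim D$'' requires an almost-monotonicity statement for the doubling index on nested balls, which is true in the Logunov--Malinnikova machinery but is another moving part the paper avoids; since the paper only ever uses $D=\mathfrak{N}_f(B(x,2r))+r$ at the single top scale $B(2)$, it never needs to control doubling indices on small subballs.
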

\begin{proof}
 From now on, fix some $p\geq 1$ and assume that $\Omega_i\in S(p)$, we wish to apply Lemma \ref{Lemma 2.5 CLMM20} with $\varepsilon=c_1^{\frac{1}{m-1}}D^{-1}$, therefore we assume $c_1$ is sufficiently small in terms of $m$ and $c_0$.  Hence, we may apply Lemma \ref{Lemma 2.5 CLMM20}  to $B(x_i,2^{-j})$ for $j=0,...,p$, to see that 
 \begin{align}
 \label{AC.1}
\frac{\sup_{\Omega_i\cap B(x_i, 2^{-p})}|f_r|}{\sup_{B(x_i,2)}|f_r|}\leq (2^{-p-1})^{c_2D}
 \end{align} 
for some $c_2=c_2(m)>0$ and for all $i$. On the other hand, Lemma \ref{Remez for f} applied  to $f_r$, with $B(2)$ and $E=\cup_{\Omega_i\in S_p} \Omega_i \cap B(x_i,2^{-p-1})$, in light of the fact that nodal domains are disjoint, gives 
\begin{align}
	\left(\frac{\sum_i |\Omega_i \cap B(x_i,2^{-p-1})|}{|B(x_i,2)|}\right)^{CD}\lesssim \frac{\sup_{E} |f_r|}{\sup_B |f_r|} , \nonumber
\end{align}
which, together with \eqref{AC.1}, implies 
\begin{align}
	\label{AC.2}
		\left(\frac{\sum_i |\Omega_i \cap B(x_i,2^{-p-1})|}{B(x_i,2)}\right)^{CD}\lesssim (2^{-p-1})^{c_2D}
\end{align}
Rescaling we have  $B(x_i,2)= 2^{m(p+2)} B(x_i,2^{-p-1})$, thus \eqref{AC.2} can be rewritten as 
\begin{align}
	\left(\frac{\sum_i |\Omega_i \cap B(x_i,2^{-p-1})|}{B(x_i,2^{-p-1})}\right)^{CD}\lesssim (2^{-p-1})^{c_2D- mCD},
\end{align}
which, taking $c_2$ sufficiently large, that is, $c_1$ small depending on $m$, implies that 
$$\frac{\sum_i |\Omega_i \cap B(x_i,2^{-p-1})|}{B(x_i,2^{-p-1})} \leq 4^{-p-2}. $$
Hence there are at most $Z \cdot 4^{-p-2}$ nodal domains satisfying 
 \begin{align}
	\frac{|\Omega_i \cap B(x_i,2^{-p-1})|}{|B(x_i,2^{-p-1})|}> c_1 D^{-m+1},  \nonumber
\end{align}
and the claim follows.  
\end{proof}
Using the claim and the fact that $S(p+1)\subset S(p)$, we see that, for each $p\geq 0$ $|S(p)|\geq Z-Z\sum_p 4^{-p}\geq Z/2$. However, since the number of nodal domains is finite and 
\begin{align}
	\nonumber
	&\frac{|\Omega \cap B(x,\rho)|}{|B(x,\rho)|}= 1 \text{ for }\rho \text{ small enough, }
\end{align} 
we have that $S(p)$ is empty for $p$ sufficiently large, a contradiction. 
\end{proof}

\bibliographystyle{siam}
{\bibliography{DerandomBib}}

\end{document}